\newtheorem{theorem}{Theorem}[section]
\newtheorem{corollary}[theorem]{Corollary}
\newtheorem{lemma}[theorem]{Lemma}
\newtheorem{proposition}[theorem]{Proposition}
\newtheorem{definition}[theorem]{Definition}
\newtheorem{hypothesis}[theorem]{Hypothesis}
\newtheorem{remark}[theorem]{Remark}
\newtheorem{example}[theorem]{Example}
\newtheorem{conjecture}[theorem]{Conjecture}
\newcommand{\hooklongrightarrow}{\lhook\joinrel\longrightarrow}
\newcommand{\twoheadlongrightarrow}{\relbar\joinrel\twoheadrightarrow}
\newcommand{\ra}{\rightarrow}
\newcommand{\lra}{\longrightarrow}
\newcommand{\ul}{\underline}
\newcommand{\bA}{\mathbb A}
\newcommand{\bC}{\mathbb C}
\newcommand{\bG}{\mathbb G}
\newcommand{\Q}{\mathbb Q}
\newcommand{\bR}{\mathbb R}
\newcommand{\bT}{\mathbb T}
\newcommand{\Z}{\mathbb Z}
\newcommand{\bU}{\mathbb U}
\newcommand{\cN}{\mathcal N}
\newcommand{\co}{\mathcal O}
\newcommand{\cR}{\mathcal R}
\newcommand{\cH}{\mathcal H}
\newcommand{\cC}{\mathcal C}
\newcommand{\cS}{\mathcal S}
\newcommand{\cD}{\mathcal D}
\newcommand{\cW}{\mathcal W}
\newcommand{\cT}{\mathcal T}
\newcommand{\cM}{\mathcal M}
\newcommand{\cF}{\mathcal F}
\newcommand{\cE}{\mathcal E}
\newcommand{\cU}{\mathcal U}
\newcommand{\cZ}{\mathcal Z}
\newcommand{\fn}{\mathfrak n}
\newcommand{\fh}{\mathfrak h}
\newcommand{\fm}{\mathfrak{m}}
\newcommand{\ub}{\mathfrak b}
\newcommand{\fp}{\mathfrak p}
\newcommand{\ug}{\mathfrak g}
\newcommand{\fX}{\mathfrak X}
\newcommand{\ft}{\mathfrak t}
\newcommand{\fa}{\mathfrak a}
\newcommand{\fI}{\mathfrak I}
\newcommand{\fu}{\mathfrak u}
\newcommand{\fw}{\mathfrak w}
\DeclareMathOperator{\gl}{\mathfrak gl}
\DeclareMathOperator{\tr}{\mathrm tr}
\DeclareMathOperator{\GL}{\mathrm GL}
\DeclareMathOperator{\Fil}{\mathrm Fil}
\DeclareMathOperator{\Res}{\mathrm Res}
\DeclareMathOperator{\Gal}{\mathrm Gal}
\DeclareMathOperator{\Hom}{\mathrm Hom}
\DeclareMathOperator{\End}{\mathrm End}
\DeclareMathOperator{\cris}{\mathrm cris}
\DeclareMathOperator{\rig}{\mathrm rig}
\DeclareMathOperator{\an}{\mathrm an}
\DeclareMathOperator{\Spec}{\mathrm Spec}
\DeclareMathOperator{\dR}{\mathrm dR}
\DeclareMathOperator{\Frob}{\mathrm Frob}
\DeclareMathOperator{\Ind}{\mathrm Ind}
\DeclareMathOperator{\unr}{\mathrm unr}
\DeclareMathOperator{\lc}{\mathrm lc}
\DeclareMathOperator{\Ker}{\mathrm Ker}
\DeclareMathOperator{\Ext}{\mathrm Ext}
\DeclareMathOperator{\Spm}{\mathrm Spm}
\DeclareMathOperator{\Spf}{\mathrm Spf}
\DeclareMathOperator{\Ima}{\mathrm Im}
\DeclareMathOperator{\lalg}{\mathrm lalg}
\DeclareMathOperator{\id}{\mathrm id}
\DeclareMathOperator{\dett}{\mathrm det}
\DeclareMathOperator{\alg}{\mathrm alg}
\DeclareMathOperator{\cyc}{\mathrm cyc}
\DeclareMathOperator{\soc}{\mathrm soc}
\DeclareMathOperator{\Ad}{\mathrm Ad}
\DeclareMathOperator{\red}{\mathrm red}
\DeclareMathOperator{\WD}{\mathrm WD}
\DeclareMathOperator{\HT}{\mathrm HT}
\DeclareMathOperator{\rk}{\mathrm rk}
\DeclareMathOperator{\wt}{\mathrm wt}
\DeclareMathOperator{\tri}{\mathrm tri}
\DeclareMathOperator{\reg}{\mathrm reg}
\DeclareMathOperator{\Norm}{\mathrm Norm}
\DeclareMathOperator{\diag}{\mathrm diag}
\DeclareMathOperator{\sm}{\mathrm sm}
\DeclareMathOperator{\pdR}{\mathrm pdR}
\DeclareMathOperator{\val}{\mathrm val}
\DeclareMathOperator{\crr}{\mathrm cr}
\begin{document}
\title{Companion points and locally analytic socle for $\GL_2(L)$}
\author{Yiwen Ding}
\thanks{This work was supported by EPSRC grant EP/L025485/1.}
\maketitle
\selectlanguage{english}
\begin{abstract}Let  $L$ be a finite extension of $\Q_p$, we prove under mild hypothesis Breuil's locally analytic socle conjecture for $\GL_2(L)$, showing the existence of all the companion points on the definite (patched) eigenvariety. This work relies on infinitesimal ``R=T" results for the patched eigenvariety and the comparison of (partially) de Rham families and (partially) Hodge-Tate families. This method allows in particular to find companion points of non-classical points.
\end{abstract}
\tableofcontents
\section{Introduction}\addtocontents{toc}{\protect\setcounter{tocdepth}{1}}

In this paper, we prove (under mild technical hypotheses) Breuil's locally analytic socle conjecture for $\GL_2(L)$ where $L$ is a finite extension of $\Q_p$.

\subsection*{Breuil's locally analytic socle conjecture for $\GL_2(L)$}Let $L_0$ be the maximal unramified extension over $\Q_p$ in $L$, $d:=[L:\Q_p]$, $d_0:=[L_0:\Q_p]$, $E$ be a finite extension of $\Q_p$ big enough to contain all the $\Q_p$-embeddings of $L$ in $\overline{\Q_p}$, and $\Sigma_L:=\Hom_{\Q_p}(L,\overline{\Q_p})=\Hom_{\Q_p}(L,E)$.

Let $\rho_L$ be a two dimensional crystalline representation of $\Gal_L$ over $E$ with distinct Hodge-Tate weights $(-k_{1,\sigma},-k_{2,\sigma})_{\sigma\in \Sigma_L}$ ($k_{1,\sigma}>k_{2,\sigma}$) (where we use the convention that the Hodge-Tate weight of the cyclotomic character is $-1$), let $\alpha$, $\widetilde{\alpha}$ be the eigenvalues of crystalline Frobenius $\varphi^{d_0}$ on $D_{\cris}(\rho_L)$, and suppose $\alpha \widetilde{\alpha}^{-1}\neq 1, p^{\pm d_0}$. Put $\delta:=\unr(\alpha)\prod_{\sigma\in \Sigma_L} \sigma^{k_{1,\sigma}} \otimes \unr(\widetilde{\alpha}) \prod_{\sigma\in \Sigma_L} \sigma^{k_{2,\sigma}}$ (as a character of $T(L)\cong L^{\times}\times L^{\times}$, the subgroup of $\GL_2(L)$ of diagonal matrices), and for $J\subseteq \Sigma_L$, put $\delta_J^c:=\delta \big(\prod_{\sigma\in J} \sigma^{k_{2,\sigma}-k_{1,\sigma}} \otimes \prod_{\sigma\in J} \sigma^{k_{1,\sigma}-k_{2,\sigma}}\big)$ where $\unr(z)$ denotes the unramified character of $L^{\times}$ sending uniformizers to $z$; we define $\widetilde{\delta}$, $\widetilde{\delta}_J^c$ the same way as $\delta$, $\delta_J^c$ by exchanging $\alpha$ and $\widetilde{\alpha}$. Recall $\rho_L$ is trianguline, and  there exists $\Sigma\subseteq \Sigma_L$ (resp. $\widetilde{\Sigma}\subseteq \Sigma_L$) such that $\delta_{\Sigma}^c$ is a trianguline parameter of $\rho_L$, also called a \emph{refinement} of $\rho_L$.
The refinement $\delta_{\Sigma}^c$ \big(resp. $\widetilde{\delta}_{\widetilde{\Sigma}}^c$\big) is called \emph{non-critical} if $\Sigma=\emptyset$ (resp. $\widetilde{\Sigma}=\emptyset$). Note the information of $\Sigma$ and $\widetilde{\Sigma}$ is lost when passing to the Weil-Deligne representation associated to $\rho_L$, thus is invisible in classical local Langlands correspondence. In fact, in terms of filtered $\varphi$-modules, we have
\begin{eqnarray*}
  \Sigma &=& \big\{\sigma\in \Sigma_L \ |\ \text{$\Fil^i D_{\dR}(\rho_L)_{\sigma}$ is an eigenspace of $\varphi^{d_0}$ of eigenvalue $\alpha$,}\ -k_{1,\sigma}<i\leq -k_{2,\sigma}\big\}, \\
   \widetilde{\Sigma} &=& \big\{\sigma\in \Sigma_L \ |\ \text{$\Fil^i D_{\dR}(\rho_L)_{\sigma}$ is an eigenspace of $\varphi^{d_0}$ of eigenvalue $\widetilde{\alpha}$,}\ -k_{1,\sigma}<i\leq -k_{2,\sigma}\big\},
\end{eqnarray*}
where $D_{\dR}(\rho_L)\cong D_{\cris}(\rho_L)\otimes_{L_0} L\cong \otimes_{\sigma\in \Sigma_L} D_{\dR}(\rho_L)_{\sigma\in \Sigma_L}$ is naturally equipped with an $E$-linear action of $\varphi^{d_0}$, and $\Fil^i D_{\dR}(\rho_L)_{\sigma}$ is the one dimensional non-trivial Hodge filtration of $D_{\dR}(\rho_L)_{\sigma}$ for $-k_{1,\sigma}<i\leq -k_{2,\sigma}$.

For a continuous very regular character $\chi$ (cf. (\ref{equ: cclg-dal})) of $T(L)$ over $E$, put $$I(\chi):=\soc (\Ind_{\overline{B}(L)}^{\GL_2(L)} \chi)^{\Q_p-\an}, $$ which is an irreducible locally $\Q_p$-analytic representation of $\GL_2(L)$ over $E$. Note if the weight of $\chi$ is dominant, then $I(\chi)$ is locally algebraic. Put $\chi^{\sharp}:=\chi (\unr(p^{-d_0})\otimes \prod_{\sigma\in \Sigma_L} \sigma)$.

Suppose $\rho_L$ is the restriction of certain global modular Galois representation $\rho$ (i.e. $\rho$ is associated to classical automorphic representations; in this paper, we would consider the case of automorphic representations of definite unitary groups). Using global method, we can attach to $\rho$ an admissible unitary Banach representation $\widehat{\Pi}(\rho)$ of $\GL_2(L)$  (e.g. in the case we consider) such that $I(\delta^{\sharp}\delta_B^{-1})\cong I(\widetilde{\delta}^{\sharp}\delta_B^{-1}) \hookrightarrow \widehat{\Pi}(\rho)$ where $\delta_B=\unr(p^{-d_0})\otimes \unr(p^{d_0})$ is the modulus character of the Borel subgroup $B$ (of upper triangular matrices). The representation $\widehat{\Pi}(\rho)$ is expected to be right representation of $\GL_2(L)$ corresponding to $\rho_L$ in the  $p$-adic Langlands program (see \cite{Br0} for a survey). Let $\widehat{\Pi}(\rho)^{\an}$ be the locally $\Q_p$-analytic subrepresentation of $\widehat{\Pi}(\rho)$, which is dense in $\widehat{\Pi}(\rho)$. We have the following Breuil's conjecture (cf. \cite[Conj.8.1]{Br} and \cite{Br13I}) concerning the socle of $\widehat{\Pi}(\rho)^{\an}$.

\begin{conjecture}[Breuil]\label{conj: cclg-las}
For $\chi: T(L)\ra E^{\times}$, $I(\chi)\hookrightarrow \widehat{\Pi}(\rho)^{\an}$ if and only if $\chi=(\delta_J^c)^{\sharp} \delta_B^{-1}$ for $J\subseteq \Sigma$ or $(\widetilde{\delta}_J^c)^{\sharp} \delta_B^{-1}$ for $J\subseteq \widetilde{\Sigma}$.
\end{conjecture}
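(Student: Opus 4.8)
The plan is to transport the statement about locally analytic socles onto the patched eigenvariety $X_p(\overline\rho)$ of the relevant definite unitary group, and there to determine all points lying above $\rho$. Working under the standard hypotheses of the patching construction (in particular $p>2$), one has an $R_\infty[\GL_2(L)]$-module $M_\infty$ — with $R_\infty$ a power series ring over a product of local framed deformation rings — whose dual, localised at the point determined by $\rho$, recovers $\widehat\Pi(\rho)$, and $X_p(\overline\rho)\subseteq(\Spf R_\infty)^{\rig}\times\widehat T$ is the support of the coherent sheaf attached to $J_B\big((M_\infty[1/p])^{\an}\big)$. Combining Emerton's theory of the Jacquet functor, Breuil's adjunction formula for the Orlik--Strauch functor $\cF_{\overline B}^{\GL_2}$ (computing $\Hom_{\GL_2(L)}(\cF_{\overline B}^{\GL_2}(-,-),\widehat\Pi(\rho)^{\an})$ in terms of $J_B(\widehat\Pi(\rho)^{\an})$), the closed-form description of the locally analytic principal series of $\GL_2(L)$ available for very regular $\chi$, and the fact that $\rho_L$ is de Rham at every $\sigma\in\Sigma_L$, one reduces Conjecture~\ref{conj: cclg-las} to the assertion that $(\rho,\delta)\in X_p(\overline\rho)$ if and only if $\delta\in\{(\delta_J^c)^\sharp:J\subseteq\Sigma\}\cup\{(\widetilde\delta_J^c)^\sharp:J\subseteq\widetilde\Sigma\}$, with $\delta:=\chi\delta_B$.

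\textbf{Necessity.} If $(\rho,\delta)\in X_p(\overline\rho)$, then by the theory of global triangulations over eigenvarieties $D_{\rig}(\rho_L)$ is trianguline with a parameter agreeing with the first component of $\delta$ up to a controlled non-negative algebraic twist; since $\rho_L$ is crystalline with $\alpha\widetilde\alpha^{-1}\neq1,p^{\pm d_0}$ and distinct Hodge--Tate weights, its $\varphi^{d_0}$-eigenlines in $D_{\cris}(\rho_L)$ and the position of $\Fil^\bullet D_{\dR}(\rho_L)_\sigma$ are rigid, which forces $\delta\in\{(\delta_J^c)^\sharp:J\subseteq\Sigma_L\}\cup\{(\widetilde\delta_J^c)^\sharp:J\subseteq\Sigma_L\}$. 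To improve $J\subseteq\Sigma_L$ to $J\subseteq\Sigma$ (resp. $\widetilde\Sigma$), I would use the comparison of partially de Rham and partially Hodge--Tate families: the universal $(\varphi,\Gamma_L)$-module over $X_p(\overline\rho)$ is $\sigma$-Hodge--Tate in every direction and specialises at $\rho$ to the crystalline, hence $\sigma$-de Rham, $\rho_L$, and a critical jump of the triangulation at $\sigma$ cannot coexist with $\sigma$-de Rham-ness unless the relevant eigenline already equals $\Fil^iD_{\dR}(\rho_L)_\sigma$ for $-k_{1,\sigma}<i\le-k_{2,\sigma}$, i.e. unless $\sigma\in\Sigma$ (resp. $\widetilde\Sigma$).

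\textbf{Existence.} Conversely fix $J\subseteq\Sigma$ (the $\widetilde\Sigma$-case is symmetric); we must produce $(\rho,(\delta_J^c)^\sharp)\in X_p(\overline\rho)$. The base case $J=\emptyset$ is immediate: $\rho$ is modular and $\delta^\sharp$ has dominant weight, so $\widehat\Pi(\rho)^{\lalg}\ne0$ contains the locally algebraic principal-series constituent for the ordered refinement $(\alpha,\widetilde\alpha)$, whence $(\rho,\delta^\sharp)\in X_p(\overline\rho)$. For general $J$ I would use the infinitesimal ``$R=T$'' for the patched eigenvariety, identifying the completed local rings of $X_p(\overline\rho)$ at the relevant points with the corresponding framed trianguline deformation rings of $\rho_L$; this moves the question to the local structure of the trianguline deformation space at the crystalline point $\rho_L$, where a local-model analysis together with the partially de Rham/Hodge--Tate comparison shows that each $(\rho_L,\delta_J^c)$ with $J\subseteq\Sigma$ lies on it — it can be connected to the classical point inside a subfamily that stays de Rham in the appropriate directions, and $J\subseteq\Sigma$ is exactly the range for which such a partially de Rham subfamily exists. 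Transporting back through the infinitesimal ``$R=T$'' gives $(\rho,(\delta_J^c)^\sharp)\in X_p(\overline\rho)$, and the reduction step then yields $I((\delta_J^c)^\sharp\delta_B^{-1})\hookrightarrow\widehat\Pi(\rho)^{\an}$.

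\textbf{Main obstacle.} The hard part is twofold. First, the infinitesimal ``$R=T$'' for the \emph{patched} eigenvariety at the companion points — which are non-classical and a priori possibly non-reduced, so that the Taylor--Wiles--Kisin ``$R=T$'' for classical automorphic forms does not apply: one must show directly that $\widehat{\co}_{X_p(\overline\rho),x}$ is pro-represented by the expected trianguline deformation functor and understand its singularities. Second, the comparison between partially de Rham and partially Hodge--Tate \emph{families} over a rigid base (rather than for a single $(\varphi,\Gamma_L)$-module), in a form precise enough to pin down the limiting refinement. Granting these, the rest — the adjunction formalism, the structure of $\GL_2(L)$ locally analytic principal series, the classification of triangulations of crystalline representations — is standard or a finite computation.
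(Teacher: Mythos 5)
Your overall frame (patching, Emerton's Jacquet functor, Breuil's adjunction, global triangulation for necessity) is the paper's, but the existence step, which is the whole difficulty, has a genuine gap: the infinitesimal ``$R=T$'' theorem (Thm.\ref{thm: cclg-pox}, Cor.\ref{equ: cclg-oms}) only identifies $\widehat{\co}_{X_p(\overline{\rho},\ul{\lambda}_J),x}$ with the completed local ring of $\fX_{\overline{\rho}^p}^{\square}\times \bU^g\times\iota_p^{-1}(X^{\square}_{\tri,J-\dR}(\overline{\rho}_p,\ul{\lambda}_J^{\natural}))$ at a point $x$ that is \emph{already known} to lie on the patched eigenvariety. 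It cannot be ``transported back'' to manufacture new points: $X_p(\overline{\rho})$ is only a union of irreducible components of that product space (Thm.\ref{thm: cclg-pwR}), and knowing that $(\rho_L,(\delta_J^c)^{\natural})$ lies on the trianguline variety, or on a partially de Rham subfamily of it, says nothing about whether the component through that point is one of the automorphic ones — which is precisely what the theorem is supposed to prove. So your existence argument is circular exactly where the difficulty sits. The paper's mechanism is different: it inducts on $J$, working at the \emph{already established} companion point $x_J^c$, and compares the two nested strata $X_p(\overline{\rho},\ul{\lambda}_{\Sigma_L\setminus J})\hookrightarrow X_p(\overline{\rho},\ul{\lambda}_{\Sigma_L\setminus J},\Sigma_L\setminus(J\cup\{\sigma\}))$ of the patched eigenvariety. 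The ``$R=T$'' isomorphism together with the purely Galois-theoretic tangent space computation (Thm.\ref{thm: cclg-dxX}, Cor.\ref{cor: cclg-htdr}: the $\Sigma_L\setminus J$-de Rham family is a \emph{proper} closed subspace of the $\Sigma_L\setminus(J\cup\{\sigma\})$-de Rham, $\sigma$-Hodge--Tate family exactly when $\sigma\in\Sigma(x)$) shows the completed local rings of the two strata at $x_J^c$ differ (Cor.\ref{cor: cclg-xts}); Cohen--Macaulayness of $\cM_{\infty}(\ul{\lambda}_J)$, smoothness at $x_J^c$ and a free-rank count then force the corresponding Jacquet-module eigenspaces to differ, and Breuil's adjunction applied to an eigenvector lying in the larger but not the smaller one produces $I((\delta_{J\cup\{\sigma\}}^c)^{\sharp}\delta_B^{-1})\hookrightarrow \Pi_{\infty}^{R_{\infty}-\an}[\fm_{\rho}]$, i.e.\ the next companion point on the correct stratum (Thm.\ref{thm: cclg-lrt}). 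That is an essentially different, and strictly stronger, use of the local model than the one you propose.

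A second, related gap is your reduction of the socle statement to membership of $(\rho,(\delta_J^c)^{\sharp})$ in $X_p(\overline{\rho})$. A point of $X_p(\overline{\rho})$ only gives, via adjunction, a nonzero map from the full Orlik--Strauch induced representation, which need not be nonzero on the constituent $I((\delta_J^c)^{\sharp}\delta_B^{-1})$: it could instead exhibit a different constituent inside $\widehat{\Pi}(\rho)^{\an}$. To get the socle embedding one needs the companion point to lie on the stratum $X_p(\overline{\rho},\ul{\lambda}_{\Sigma_L\setminus J})$, equivalently the eigenvector to come from the $\mathrm{U}(\ug_{\Sigma_L\setminus J})$-finite subrepresentation $\Pi_{\infty}^{R_{\infty}-\an}(\ul{\lambda}_{\Sigma_L\setminus J})$, so that Prop.\ref{prop: cclg-pve} applies; this is the ``partial classicality'' refinement (Conj.\ref{conj: cclg-comp}(2), Lem.\ref{lem: cclg-joa}) which the paper's induction establishes along the way and which remains conjectural in the trianguline non-crystalline setting. (For the necessity direction, by contrast, no de Rham/Hodge--Tate family comparison is needed: global triangulation forces the twisted parameter to be an actual trianguline parameter of the crystalline $\rho_L$, and those are exactly $\delta^c_{\Sigma}$ and $\widetilde{\delta}^c_{\widetilde{\Sigma}}$, which already yields $J\subseteq\Sigma$ or $J\subseteq\widetilde{\Sigma}$.)
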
The ``only if" part would be (in many cases) a consequence of global triangulation theory, while the ``if" part is more difficult.
In modular curve case (thus $L=\Q_p$), this was proved in \cite{BE} using $p$-adic comparison theorems and the theory of overconvergent modular forms. In \cite{Berg14}, Bergdall reproved the result of Breuil-Emerton by studying the geometry of Coleman-Mazur eigencurve at classical points. In \cite{Ding}, some partial results (especially for $|J|=1$) were obtained in unitary Shimura curves case, by showing the existence of overconvergent companion forms over unitary Shimura curves following the strategy of Breuil-Emerton. In this paper, under the so-called Taylor-Wiles hypotheses (see below), we  prove this conjecture in definite unitary groups case, by studying the geometry of (certain stratifications of) the patched eigenvariety of Breuil-Hellmann-Schraen (\cite{BHS1}) at \emph{possibly-non-classial} points.



\subsection*{Main results}Let $F^+$ be a totally real field, and suppose for simplicity in the introduction that there's only one place $u$ of $F^+$ above $p$. Let $F$ be a quadratic imaginary extension of $F^+$ such that $u$ is split in $F$. We fix a place $\widetilde{u}$ of $F$ above $F^+$. To be consistent with the notation in the precedent section, we let $L$ be $F_{\widetilde{u}}\cong F^+_{u}$. Let $G$ be a two variables definite unitary group associated to $F/F^+$ with $G(F_u^+)\cong \GL_2(F_u^+)\cong \GL_2(L)$. We fix a compact open subgroup $U^p=\prod_{v\nmid p, \infty} U_v$ of $G(\bA_{F^+}^{\infty,u})$ with $U_v$ a compact open subgroup of $G(F^+_v)$. Put
\begin{equation*}
  \widehat{S}(U^p,E):=\{f: G(F^+)\backslash G(\bA_{F^+}^{\infty})/U^p \ra E\ |\ \text{$f$ is continuous}\}
\end{equation*}
which is an $E$-Banach space equipped with a continuous unitary action of $\GL_2(L)$ and of some commutative Hecke algebra $\cH^p$ over $\co_E$ outside $p$ (where $\cH^p$ would be big enough to determine Galois representations).

Let $\rho$ be a two dimensional continuous representation of $\Gal_F$ over $E$ associated to classical automorphic representations of $G$. Suppose that we can associate to $\rho$ a maximal ideal $\fm_{\rho}$ of $\cH^p\otimes_{\co_E} E$. We also put the following assumptions (which are often referred to as the \emph{Taylor-Wiles hypotheses})
\begin{itemize}
  \item $p>2$,
  \item $F$ is unramified over $F^+$ and $G$ is quasi-split at all finite places of $F^+$,
  \item $U_v$ is hyperspecial when the finite place $v$ of $F^+$ is inert in $F$,
  \item $\overline{\rho}|_{\Gal_{F(\zeta_p)}}$ is adequate (\cite{Tho}), where $\overline{\rho}$ denotes (the semi-simplification of) the reduction of $\rho$ over $k_E$ (the residue field of $E$)
\end{itemize}Let $\widehat{S}(U^p,E)[\fm_{\rho}]$ denote the  maximal subspace of $\widehat{S}(U^p,E)$ killed by $\fm_{\rho}$. The main result is:
\begin{theorem}\label{thm: cclg-ne2}Conj.\ref{conj: cclg-las} is true for $\widehat{\Pi}(\rho):=\widehat{S}(U^p,E)[\mathfrak{m}_{\rho}]$.
\end{theorem}By Emerton's method (\cite{Em1}), one can construct an eigenvariety $\cE(U^p)$ from $\widehat{S}(U^p,E)$. The closed points of $\cE(U^p)$ can be parameterized by $(\rho',\delta')$ where $\rho'$ is a semi-simple continuous representation of $\Gal_F$ over $\overline{E}$ to which one can associate a maximal ideal $\fm_{\rho'}$ of $\cH^p$, and where  $\delta'$ is a continuous character of $T(L)$ over $\overline{E}$. Moreover, $(\rho',\delta')\in \cE(U^p)(\overline{E})$ if and only if the corresponding eigenspace
\begin{equation*}
  J_B\big(\widehat{S}(U^p,E)^{\an}\big)[\fm_{\rho'}, T(L)=\delta']\neq 0,
\end{equation*}
where $J_B(\cdot)$ denotes the Jacquet-Emerton functor for locally analytic representations. By the very construction and adjunction property of $J_B(\cdot)$, one can deduce from Theorem \ref{thm: cclg-ne2}:
\begin{corollary}\label{cor: cclg-eocp}
Let $\chi$ be a continuous character of $T(L)$ in $E^{\times}$, then $(\rho,\chi)\in \cE(U^p)(\overline{E})$ if and only if $\chi=(\delta_J^c)^{\sharp}$ for $J\subseteq \Sigma$ or $\chi=(\widetilde{\delta}_J^c)^{\sharp}$ for $J\subseteq \widetilde{\Sigma}$.
\end{corollary}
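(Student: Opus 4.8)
We deduce Corollary~\ref{cor: cclg-eocp} from Theorem~\ref{thm: cclg-ne2} through the standard dictionary between companion points on $\cE(U^p)$ and the locally analytic socle of $\widehat{\Pi}(\rho)$. First I would unwind the defining property of $\cE(U^p)$ recalled above: $(\rho,\chi)\in\cE(U^p)(\overline{E})$ if and only if $J_B\big(\widehat{S}(U^p,E)^{\an}\big)[\fm_\rho,\,T(L)=\chi]\neq 0$. Since the prime-to-$p$ Hecke algebra $\cH^p$ acts $\GL_2(L)$-equivariantly on $\widehat{S}(U^p,E)^{\an}$, it commutes with the Jacquet--Emerton functor $J_B$; using the left-exactness of $J_B$ and the identity $\widehat{S}(U^p,E)^{\an}[\fm_\rho]=\widehat{\Pi}(\rho)^{\an}$ (valid because $\widehat{S}(U^p,E)[\fm_\rho]=\widehat{\Pi}(\rho)$ is a closed $\GL_2(L)$-subrepresentation), one identifies $J_B\big(\widehat{S}(U^p,E)^{\an}\big)[\fm_\rho]$ with $J_B\big(\widehat{\Pi}(\rho)^{\an}\big)$. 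Thus $(\rho,\chi)\in\cE(U^p)(\overline{E})$ if and only if $\Hom_{T(L)}\big(\chi,J_B(\widehat{\Pi}(\rho)^{\an})\big)\neq 0$.

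For the ``if'' direction, take $\chi=(\delta_J^c)^{\sharp}$ with $J\subseteq\Sigma$ (the case $\chi=(\widetilde{\delta}_J^c)^{\sharp}$ with $J\subseteq\widetilde{\Sigma}$ being symmetric). As $\alpha\widetilde{\alpha}^{-1}\neq1,p^{\pm d_0}$, the character $\chi\delta_B^{-1}$ is very regular, so $I(\chi\delta_B^{-1})=\soc(\Ind_{\overline{B}(L)}^{\GL_2(L)}\chi\delta_B^{-1})^{\Q_p-\an}$ is defined and irreducible; Theorem~\ref{thm: cclg-ne2} (the ``if'' part of Conjecture~\ref{conj: cclg-las} for $\widehat{\Pi}(\rho)$) provides an embedding $I(\chi\delta_B^{-1})\hookrightarrow\widehat{\Pi}(\rho)^{\an}$. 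The explicit computation of the Jacquet--Emerton module of a locally $\Q_p$-analytic principal series of $\GL_2(L)$ shows that $\chi$ occurs in $J_B\big(I(\chi\delta_B^{-1})\big)$; by left-exactness of $J_B$ we get $\Hom_{T(L)}\big(\chi,J_B(\widehat{\Pi}(\rho)^{\an})\big)\neq 0$, and the first paragraph gives $(\rho,\chi)\in\cE(U^p)(\overline{E})$.

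For the ``only if'' direction, suppose $(\rho,\chi)\in\cE(U^p)(\overline{E})$, so $\Hom_{T(L)}\big(\chi,J_B(\widehat{\Pi}(\rho)^{\an})\big)\neq 0$. By Emerton's adjunction formula a nonzero such element produces a nonzero $\GL_2(L)$-equivariant map from a closed subrepresentation of $(\Ind_{\overline{B}(L)}^{\GL_2(L)}\chi\delta_B^{-1})^{\Q_p-\an}$ containing its socle into $\widehat{\Pi}(\rho)^{\an}$; analysing this map with the help of the submodule structure of the principal series and of the fact that $\widehat{\Pi}(\rho)^{\an}$ consists of the analytic vectors of a unitary admissible Banach representation, one obtains an embedding $I(\chi\delta_B^{-1})\hookrightarrow\widehat{\Pi}(\rho)^{\an}$ (here $\chi\delta_B^{-1}$ is very regular, as is forced by $\chi$ underlying a trianguline parameter of the crystalline $\rho_L$). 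The ``only if'' part of Theorem~\ref{thm: cclg-ne2} then gives $\chi\delta_B^{-1}=(\delta_J^c)^{\sharp}\delta_B^{-1}$ for some $J\subseteq\Sigma$ or $\chi\delta_B^{-1}=(\widetilde{\delta}_J^c)^{\sharp}\delta_B^{-1}$ for some $J\subseteq\widetilde{\Sigma}$, i.e.\ $\chi=(\delta_J^c)^{\sharp}$ for $J\subseteq\Sigma$ or $\chi=(\widetilde{\delta}_J^c)^{\sharp}$ for $J\subseteq\widetilde{\Sigma}$. (Alternatively, this direction follows directly from global triangulation theory applied at $(\rho,\chi)$ together with the local computation, valid since $\rho_L$ is crystalline with distinct Hodge--Tate weights and $\alpha\widetilde{\alpha}^{-1}\neq1,p^{\pm d_0}$, that the trianguline parameters of $\rho_L$ are exactly the $\delta_J^c$ for $J\subseteq\Sigma$ and the $\widetilde{\delta}_J^c$ for $J\subseteq\widetilde{\Sigma}$ up to the eigenvariety normalisation.)

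Combining the paragraphs above gives Corollary~\ref{cor: cclg-eocp}. The deduction is essentially formal given Theorem~\ref{thm: cclg-ne2}; the point requiring the most care, and the one I would expect to be the main obstacle, is the translation between ``$\chi$ occurs in $J_B(\widehat{\Pi}(\rho)^{\an})$'' and ``$I(\chi\delta_B^{-1})\hookrightarrow\widehat{\Pi}(\rho)^{\an}$''. Emerton's adjunction by itself only yields a map out of a subrepresentation of the full locally $\Q_p$-analytic principal series, so upgrading it to an embedding of the irreducible socle $I(\chi\delta_B^{-1})$ requires the explicit submodule and Jacquet-module structure of locally $\Q_p$-analytic principal series of $\GL_2(L)$ and the verification that all characters occurring are very regular; the remaining steps (commutation of $J_B$ with the prime-to-$p$ Hecke action, and passage to $\widehat{\Pi}(\rho)^{\an}$) are routine.
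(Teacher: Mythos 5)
Your ``if'' half is exactly the paper's deduction: Thm.\ref{thm: cclg-ne2} gives $I((\delta_J^c)^{\sharp}\delta_B^{-1})\hookrightarrow \widehat{\Pi}(\rho)^{\an}=\widehat{S}(U^p,E)^{\an}[\fm_{\rho}]$, and applying $J_B$ (equivalently the adjunction of Prop.\ref{prop: cclg-pve}, as in (\ref{equ: cclg-spE})) produces the eigenvalue $\chi=(\delta_J^c)^{\sharp}$ in $J_B\big(\widehat{S}(U^p,E)^{\an}\big)[\fm_{\rho}]$, i.e.\ the desired point of $\cE(U^p)$. The gap is in your primary argument for the ``only if'' half. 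A nonzero element of $\Hom_{T(L)}\big(\chi, J_B(\widehat{\Pi}(\rho)^{\an})\big)$ corresponds, via Emerton/Breuil adjunction, to a nonzero $\GL_2(L)$-map from the Orlik--Strauch representation $\cF_{\overline{B}(L)}^{\GL_2(L)}\big(M(-\wt(\chi))^{\vee},\,\cdot\,\big)$ attached to $\chi\delta_B^{-1}$ (or from Emerton's closed subrepresentation of the full principal series) into $\widehat{\Pi}(\rho)^{\an}$, and nothing forces this map to be nonzero on the socle: it may kill $I(\chi\delta_B^{-1})$ and factor through the other constituents $I(\chi_{J'}^c\delta_B^{-1})$, $J'\neq\emptyset$, which (by the main theorem itself) do occur in $\widehat{\Pi}(\rho)^{\an}$ in general, so neither the submodule structure of the principal series nor unitarity of $\widehat{\Pi}(\rho)$ rules this out. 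The implication ``$(\rho,\chi)\in\cE(U^p)\Rightarrow I(\chi\delta_B^{-1})\hookrightarrow\widehat{\Pi}(\rho)^{\an}$'' is precisely the ``partial classicality'' issue the paper is careful to keep separate: it is the difference between parts (1) and (2) of Conj.\ref{conj: cclg-comp}, between $\cE(U^p,\ul{\lambda}_J)$ and the fibre product $\cE(U^p,\ul{\lambda}_J)'$, and it is exactly what remains conjectural in the trianguline setting (Conj.\ref{conj: cclg-comptri} and the partial classicality conjecture following it). So your claimed upgrade is a genuine gap, not a routine verification.

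The paper proves the ``only if'' direction by the route you relegate to a parenthesis: global triangulation theory applied at the point $(\rho,\chi)$ (see \S\ref{sec: cclg-4.1} and Cor.\ref{cor: cclg-net}). Even there one more step is needed than you state: triangulation does not assert that $\chi$ itself (after the $\sharp$-normalisation) is a trianguline parameter of $\rho_L$, only that some critical shift $\chi_S^c$ with $S\subseteq N(\chi)$ is; one then identifies $\chi_S^c$ with $(\delta_{\Sigma}^c)^{\sharp}$ or $(\widetilde{\delta}_{\widetilde{\Sigma}}^c)^{\sharp}$, checks $S\subseteq\Sigma$ (resp.\ $S\subseteq\widetilde{\Sigma}$) from $S\subseteq \Sigma_L\setminus N(\chi_S^c)$, and undoes the shift to get $\chi=(\delta_{\Sigma\setminus S}^c)^{\sharp}$ (resp.\ $(\widetilde{\delta}_{\widetilde{\Sigma}\setminus S}^c)^{\sharp}$), which lies in the asserted list. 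With that bookkeeping your parenthetical alternative is the paper's proof; your primary route should be replaced by it, or repaired by tracking which constituent the adjunction map actually hits and then invoking the ``only if'' of Thm.\ref{thm: cclg-ne2} for that constituent.
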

In fact, we also get a similar result in trianguline case (cf. Corollary \ref{cor: cclg-net}). The points $(\rho, (\delta_J^c)^{\sharp})$ for $J\subseteq \Sigma$ \big(resp. $\big(\rho, (\widetilde{\delta}_J^c)^{\sharp}\big)$ for $J\subseteq \widetilde{\Sigma}$\big) are called \emph{companion points} of the classical point $(\rho, \delta^{\sharp})$ \big(resp. of $(\rho,\widetilde{\delta}^{\sharp})\big)$.
We refer to \S~\ref{sec: cclg-4.1} for more discussion on the relation between Breuil's locally analytic socle conjecture and the existence of companion points.
\subsection*{Strategy of the proof}Suppose Taylor-Wiles hypotheses, by \cite{CEGGPS}, using Taylor-Wiles-Kisin patching method, one obtains an $R_{\infty}$-admissible unitary Banach representation $\Pi_{\infty}$ of $\GL_2(L)$, where $R_{\infty}$ is the usual patched deformation ring of $\overline{\rho}$. A point is that one can recover $\widehat{S}(U^p,E)_{\overline{\rho}}$ (the localisation at $\overline{\rho}$) from $\Pi_{\infty}$. In particular,
 to prove Conj.\ref{conj: cclg-las}, it is sufficient to prove a similar result for $\Pi_{\infty}[\fm_{\rho}]$ (where we also use $\fm_{\rho}$ to denote the maximal ideal of $R_{\infty}[1/p]$ corresponding to $\rho$).

The patched eigenvariety $X_p(\overline{\rho})$ (cf. \cite{BHS1}) is a rigid space over $E$ with points parameterized by $(\fm_y,\delta')$ where $\fm_y$ is a maximal ideal of $R_{\infty}[1/p]$, and $\delta'$ is a character of $T(L)$ such that $(\fm_y,\delta')\in X_p(\overline{\rho})$ if and only if the eigenspace
\begin{equation*}
  J_B(\Pi_{\infty}^{R_{\infty}-\an})[\fm_y, T(L)=\delta']\neq 0,
\end{equation*}
where ``$R_{\infty}-\an$" denotes the locally $R_{\infty}$-analytic vectors defined in \cite[\S~3.1]{BHS1}. For $J\subseteq \Sigma_L$,
 let $\ul{\lambda}_J:=(k_{1,\sigma}, k_{2,\sigma}+1)_{\sigma\in J}$,  $L(\ul{\lambda}_J)$ be the irreducible algebraic representation of $\Res_{L/\Q_p} \GL_2$ with highest weight $\ul{\lambda}_J$, and $L(\ul{\lambda}_J)'$ be the algebraic dual of $L(\ul{\lambda}_J)$. In particular,  $L(\ul{\lambda}_J)$ and $L(\ul{\lambda}_J)'$ are locally $J$-analytic representations of $\GL_2(L)$. Put
\begin{equation*}\Pi_{\infty}^{R_{\infty}-\an}(\ul{\lambda}_J):=\big(\Pi_{\infty}^{R_\infty-\an} \otimes_E L(\ul{\lambda}_J)'\big)^{\Sigma_L\setminus J-\an} \otimes L(\ul{\lambda}_J), \end{equation*}
which is a closed subrepresentation of $\Pi_{\infty}^{R_{\infty}-\mathrm{an}}$. From $\Pi_{\infty}^{R_{\infty}-\an}(\ul{\lambda}_J)$,  we can construct a rigid closed subspace $X_p(\overline{\rho}, \ul{\lambda}_J)$ of $X_p(\overline{\rho})$ such that $(y,\delta')\in X_p(\overline{\rho},\ul{\lambda}_J)$ if and only if the eigenspace
\begin{equation*}
  J_B(\Pi_{\infty}^{R_{\infty}-\an}(\ul{\lambda}_J))[\fm_y,T(L)=\delta']\neq 0.
\end{equation*}Denote by $\cT_L$ the rigid space over $E$ parameterizing continuous characters of $T(L)$, and put $\cT_L(\ul{\lambda}_J)$ to be the closed subspace of characters $\chi$ with $\wt(\chi)_{\sigma}=(k_{1,\sigma},k_{2,\sigma}+1)$ for $\sigma\in J$ (e.g. see \cite[\S~2]{Ding4} for the weights of characters). By construction, we have the following commutative diagram (which is \emph{not} Cartesian in general)
\begin{equation*}
  \begin{CD}
    X_p(\overline{\rho},\ul{\lambda}_J) @>>> X_p(\overline{\rho}) \\
    @VVV @VVV \\
    \cT_L(\ul{\lambda}_J) @>>> \cT_L
  \end{CD}
\end{equation*}
where the vertical maps send $(y,\delta')$ to $\delta'$. For $J'\subseteq J$, $\Pi_{\infty}^{R_{\infty}-\an}(\ul{\lambda}_J)$ is a subrepresentation of $\Pi_{\infty}^{R_{\infty}-\an}(\ul{\lambda}_{J'})$, and hence $X_p(\overline{\rho},\ul{\lambda}_J)$ is a rigid closed subspace of $X_p(\overline{\rho},\ul{\lambda}_{J'})$ and  the following diagram commutes
\begin{equation*}
  \begin{CD}
    X_p(\overline{\rho},\ul{\lambda}_J) @>>> X_p(\overline{\rho},\ul{\lambda}_{J'}) \\
    @VVV @VVV \\
    \cT_L(\ul{\lambda}_J) @>>> \cT_L(\ul{\lambda}_{J'}).
  \end{CD}
\end{equation*}
Put $X_p(\overline{\rho},\ul{\lambda}_{J},J'):=X_p(\overline{\rho},\ul{\lambda}_{J'})\times_{\cT_L(\ul{\lambda}_{J'})} \cT_L(\ul{\lambda}_J)$, thus $X_p(\overline{\rho},\ul{\lambda}_J)$ is a closed subspace of $X_p(\overline{\rho},\ul{\lambda}_J,J')$.

Now let $J\subseteq \Sigma$ (the case for $\widetilde{\Sigma}$ is the same), and suppose there exists an injection $I((\delta_J^c)^{\sharp} \delta_B^{-1})\hookrightarrow \Pi_{\infty}^{R_{\infty}-\an}[\fm_{\rho}]$ \big(which automatically factors through $\Pi_{\infty}^{R_{\infty}-\an}(\ul{\lambda}_{\Sigma_L\setminus J})[\fm_{\rho}]$\big); suppose $J\neq \Sigma$, we would prove there exists an injection $I((\delta_{J\cup\{\sigma\}}^c)^{\sharp} \delta_B^{-1}) \hookrightarrow \Pi_{\infty}^{R_{\infty}-\an}[\fm_{\rho}]$ for all $\sigma\in \Sigma\setminus J$, from which (the ``if" part of) Conj.\ref{conj: cclg-las} follows (as mentioned before, the ``only if" part is an easy consequence of the global triangulation theory) by induction on $J$ (note the ($J=\emptyset$)-case is known \emph{a priori} by classical local Langlands correspondence). For $S\subseteq \Sigma_L$, we have in fact
\begin{itemize}
  \item $I((\delta_S^c)^{\sharp}\delta_B^{-1}) \hookrightarrow \Pi_{\infty}^{R_{\infty}-\an}[\fm_{\rho}]$ if and only $x_S^c:=(\fm_{\rho}, (\delta_S^c)^{\sharp})\in X_p(\overline{\rho},\ul{\lambda}_{\Sigma_L\setminus S})$.
\end{itemize}
So by assumption, we have a point $x_J^c\in X_p(\overline{\rho},\ul{\lambda}_{\Sigma_L\setminus J})$, and it is sufficient to find the point $x_{J\cup\{\sigma\}}^c$ inside $X_p(\overline{\rho},\ul{\lambda}_{\Sigma_L \setminus (J\cup\{\sigma\})})$ for $\sigma\in \Sigma\setminus J$:
\begin{theorem}Keep the above notation.

  (1) The rigid space $X_p(\overline{\rho},\ul{\lambda}_{\Sigma_L \setminus J})$ is smooth at the point $x_J^c$.

  (2) The following statements are equivalent:

  \begin{enumerate}[label=(\alph*)]\item $\sigma\in \Sigma \setminus J$;
\item the natural projection of complete noetherian local $E$-algebras
    \begin{equation*}
      \widehat{\co}_{X_p(\overline{\rho},\ul{\lambda}_{\Sigma_L\setminus J}, \Sigma_L \setminus (J\cup\{\sigma\})),x_J^c} \twoheadlongrightarrow \widehat{\co}_{X_p(\overline{\rho},\ul{\lambda}_{\Sigma_L\setminus J}),x_J^c}
    \end{equation*}induced by the closed embedding $X_p(\overline{\rho},\ul{\lambda}_{\Sigma_L \setminus J})\hookrightarrow X_p(\overline{\rho}, \ul{\lambda}_{\Sigma_L \setminus J},\Sigma_L\setminus (J\cup\{\sigma\}))$ is \emph{not} an isomorphism;
    \item $x_{J\cup\{\sigma\}}^c\in X_p(\overline{\rho},\ul{\lambda}_{\Sigma_L \setminus (J\cup \{\sigma\})})$.
  \end{enumerate}
\end{theorem}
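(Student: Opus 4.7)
The plan is to reduce both parts to computations in Galois deformation theory, using infinitesimal ``$R=T$''-type descriptions of the complete local rings $\widehat{\co}_{X_p(\overline{\rho},\ul{\lambda}_{\Sigma_L\setminus J}),x_J^c}$ and $\widehat{\co}_{X_p(\overline{\rho},\ul{\lambda}_{\Sigma_L\setminus J},\Sigma_L\setminus(J\cup\{\sigma\})),x_J^c}$ in terms of trianguline deformation rings of $\rho$. Concretely, the first ring should parametrize deformations of $\rho$ that are trianguline at $L$ with parameter deforming $(\delta_J^c)^{\sharp}$ and partially de Rham of weight $(k_{1,\tau},k_{2,\tau})$ at every $\tau\in\Sigma_L\setminus J$; the second allows the Hodge--Tate--Sen weight at the single embedding $\sigma$ to vary freely while keeping the partially de Rham condition at $\Sigma_L\setminus(J\cup\{\sigma\})$. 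The surjection in (b) then corresponds to forgetting the imposed de Rham condition at $\sigma$.

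For part (1), under the regularity hypothesis $\alpha\widetilde{\alpha}^{-1}\neq 1,p^{\pm d_0}$, standard tangent-space computations for trianguline deformation spaces (as developed in the work on the patched eigenvariety) show that the relevant trianguline deformation ring of $\rho_L$ with fixed Hodge--Tate weights on $\Sigma_L\setminus J$ is formally smooth of the expected dimension; combined with the formally smooth contributions away from $p$ and from the patching variables, this yields smoothness of $X_p(\overline{\rho},\ul{\lambda}_{\Sigma_L\setminus J})$ at $x_J^c$. For part (2), the implication (c)$\Rightarrow$(a) is immediate from the global triangulation theory: the point $x_{J\cup\{\sigma\}}^c$ supplies $(\delta_{J\cup\{\sigma\}}^c)^{\sharp}$ as a trianguline parameter of $\rho_L$, and the combinatorial description of $\Sigma$ in terms of Hodge filtrations versus $\varphi^{d_0}$-eigenspaces then forces $\sigma\in\Sigma\setminus J$. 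The implication (c)$\Rightarrow$(b) is formal: $x_{J\cup\{\sigma\}}^c$ sits in the fibre product and specializes to $x_J^c$, but its Hodge--Tate--Sen weight at $\sigma$ differs from $(k_{1,\sigma},k_{2,\sigma}+1)$, so it is not a point of $X_p(\overline{\rho},\ul{\lambda}_{\Sigma_L\setminus J})$ and the projection must have nontrivial kernel.

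The substantive content lies in (a)$\Rightarrow$(b) and (b)$\Rightarrow$(c), both of which rely on the comparison between partially de Rham and partially Hodge--Tate families. For (a)$\Rightarrow$(b), the condition $\sigma\in\Sigma$ says exactly that the $\sigma$-component of the Hodge filtration lies in the $\alpha$-eigenspace of $\varphi^{d_0}$, which is the critical alignment making the Hodge--Tate--Sen weight at $\sigma$ deformable within a trianguline family. Hence the partially Hodge--Tate deformation ring has tangent space strictly larger than the partially de Rham one, and (a)$\Rightarrow$(b) follows via the $R=T$ identification. For (b)$\Rightarrow$(c), the crucial direction, one extracts from the extra infinitesimal direction a formal curve in $X_p(\overline{\rho},\ul{\lambda}_{\Sigma_L\setminus J},\Sigma_L\setminus(J\cup\{\sigma\}))$ through $x_J^c$ along which the Hodge--Tate--Sen weight at $\sigma$ genuinely varies; the comparison theorem between partially de Rham and partially Hodge--Tate families then forces this curve to be partially de Rham at $\sigma$ as well, placing it inside $X_p(\overline{\rho},\ul{\lambda}_{\Sigma_L\setminus(J\cup\{\sigma\})})$, and tracking the trianguline parameter along the curve identifies the limit at the locked weight $(k_{2,\sigma},k_{1,\sigma}+1)$ as $x_{J\cup\{\sigma\}}^c$.

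The hard part is (b)$\Rightarrow$(c): promoting an abstract extra tangent direction to an honest new closed point of the eigenvariety. This is exactly where the comparison between partially de Rham and partially Hodge--Tate families is essential, and where controlling the trianguline parameter along the new deformation is delicate, since one must rule out accidentally landing at some character other than $(\delta_{J\cup\{\sigma\}}^c)^{\sharp}$. The other implications and the smoothness statement should be relatively formal once the $R=T$ description and the de Rham/Hodge--Tate comparison are in place.
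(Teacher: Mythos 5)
Your framework for part (1) and for the equivalence (a)$\Leftrightarrow$(b) is essentially the paper's: both are reduced, via the infinitesimal ``R=T'' local isomorphism between $X_p(\overline{\rho},\ul{\lambda}_{\Sigma_L\setminus J})$ (resp.\ the fibre product $X_p(\overline{\rho},\ul{\lambda}_{\Sigma_L\setminus J},\Sigma_L\setminus(J\cup\{\sigma\}))$) and the partially de Rham (resp.\ partially Hodge--Tate) stratum of the trianguline variety times smooth factors, to tangent-space computations (Thm.\ref{thm: cclg-pox}, Cor.\ref{cor: cclg-xts}); and (c)$\Rightarrow$(a) by global triangulation is also as in the paper. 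Two caveats: what you call ``standard tangent-space computations'' is precisely the content of Thm.\ref{thm: cclg-dxX} and Cor.\ref{cor: cclg-htdr}, which require the partially de Rham Galois cohomology $H^1_{g,J}$ of \cite{Ding4}, and the argument also needs reducedness of $X_p(\overline{\rho},\ul{\lambda}_{\Sigma_L\setminus J})$ at $x_J^c$ (Thm.\ref{thm: cclg-red}) and density of classical points to even have the embedding into the de Rham stratum; moreover your direct argument for (c)$\Rightarrow$(b) is incorrect, since $x_{J\cup\{\sigma\}}^c$ does \emph{not} lie in $X_p(\overline{\rho},\ul{\lambda}_{\Sigma_L\setminus J},\Sigma_L\setminus(J\cup\{\sigma\}))$ (its character has weight at $\sigma$ different from $(k_{1,\sigma},k_{2,\sigma}+1)$), and one closed rigid point does not ``specialize'' to another. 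That direction is redundant given (c)$\Rightarrow$(a)$\Rightarrow$(b), so this slip is not fatal.

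The genuine gap is (b)$\Rightarrow$(c), which you rightly single out as the hard step but for which your mechanism cannot work. On $X_p(\overline{\rho},\ul{\lambda}_{\Sigma_L\setminus J},\Sigma_L\setminus(J\cup\{\sigma\}))$ the weight at $\sigma$ is frozen by the very definition of the fibre product over $\widehat{T}_L(\ul{\lambda}_{\Sigma_L\setminus J})$, and on the trianguline variety fixing the weight of the character fixes the Sen weights; so there is no formal curve through $x_J^c$ in this space along which the Sen weight at $\sigma$ ``genuinely varies''. The extra infinitesimal directions detected by (b) record failure of the de Rham condition at $\sigma$ at \emph{fixed} Sen weights, not motion of the weights; and your next step (that the de Rham/Hodge--Tate comparison forces the curve to be $\sigma$-de Rham) would contradict the choice of a direction outside the de Rham stratum, while the final identification of a ``limit'' with $x^c_{J\cup\{\sigma\}}$ assumes exactly what is to be proven. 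The paper's proof (Thm.\ref{thm: cclg-lrt}) is of a different nature: smoothness (part (1)) together with the Cohen--Macaulay property of $\cM_{\infty}$ makes the relevant modules locally free over the local rings at $x_J^c$, and the non-isomorphism in (b) then forces a strict inequality of ranks, i.e.\ the generalized eigenspace of $J_{B_p}\big(\Pi_{\infty}^{R_{\infty}-\an}(\ul{\lambda}_{\Sigma_L\setminus(J\cup\{\sigma\})})\big)$ at $x_J^c$ strictly contains that of $J_{B_p}\big(\Pi_{\infty}^{R_{\infty}-\an}(\ul{\lambda}_{\Sigma_L\setminus J})\big)$. An eigenvector in the difference, fed through Breuil's adjunction formula (Prop.\ref{prop: cclg-pve}) and the Orlik--Strauch filtration of $\cF_{\overline{B}_p}^{G_p}\big(M_{J'}(-\wt(\delta))^{\vee},\cdot\big)$, cannot factor through the locally algebraic quotient and hence yields a nonzero map $I\big((\delta^c_{J\cup\{\sigma\}})^{\sharp}\delta_B^{-1}\big)\ra \Pi_{\infty}^{R_{\infty}-\an}(\ul{\lambda}_{\Sigma_L\setminus(J\cup\{\sigma\})})[\fm_{\rho}]$, which is exactly the statement $x_{J\cup\{\sigma\}}^c\in X_p(\overline{\rho},\ul{\lambda}_{\Sigma_L\setminus(J\cup\{\sigma\})})$. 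This locally analytic representation-theoretic and commutative-algebra input is entirely absent from your proposal, so the key implication remains unproven.
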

The smoothness of $X_p(\overline{\rho},\ul{\lambda}_{\Sigma_L \setminus J})$ follows from the same arguments of \cite{BHS2} (see also \cite{Berg15}). A key point is obtaining (a bound for) the dimension of the tangent space of $X_p(\overline{\rho},\ul{\lambda}_{\Sigma_L \setminus J})$ at $x_J^c$ via Galois cohomology calculation (in our case, it would in particular involve some partially de Rham Galois cohomology considered in \cite{Ding4}).

For (2) (from which Theorem \ref{thm: cclg-ne2} follows), the direction (c) $\Rightarrow$ (a) follows easily from global triangulation theory; (b) $\Rightarrow$ (c) follows from some locally analytic representation theory (e.g. Breuil's adjunction formula) and some commutative algebra arguments (cf. Theorem \ref{thm: cclg-lrt}). The equivalence between (a) and (b) is rather a consequence of infinitesimal ``$R=T$" results, which we explain in more details for the rest of the introduction.

Denote by $R_{\infty}^p$ the ``prime-to-$p$" part of $R_{\infty}$ \big(where $R_{\infty}\cong R_{\infty}^p \widehat{\otimes}_{\co_E} R_{\overline{\rho}_p}^{\square}$\big). By \cite[Thm. 1.1]{BHS1}, we have a natural embedding
\begin{equation}\label{equ: cclg-poa}X_p(\overline{\rho})\hooklongrightarrow (\Spf R_{\infty}^p)^{\rig}\times X_{\tri}^{\square}(\overline{\rho}_p)\end{equation}
 which induces moreover an isomorphism between $X_p(\overline{\rho})$ and a union if irreducible components (equipped with the reduced closed rigid subspace structure) of $(\Spf R_{\infty}^p)^{\rig}\times X_{\tri}^{\square}(\overline{\rho}_p)$, where $\overline{\rho}_p:=\overline{\rho}|_{\Gal_L}$ and $X_{\tri}^{\square}(\overline{\rho}_p)$ is the trianguline variety (cf. \cite{Hel}, \cite[\S~2]{BHS1}) whose closed points can be parameterized as $(\rho_p', \delta')$ where $\rho_p'$ is a framed deformation of $\overline{\rho}_p$ over $\overline{E}$, $\delta'$ is a continuous character of $T(L)$. Using partially de Rham data, one can get a stratification of $X_{\tri}^{\square}(\overline{\rho})$:
for $S'\subseteq S \subseteq \Sigma_L$, one can get a closed rigid subspace $X_{\tri,S'-\dR}^{\square}(\overline{\rho},\ul{\lambda}_S)$ of $X_{\tri}^{\square}(\overline{\rho})$ satisfying in particular  $(\rho'_p, \delta')\in X_{\tri,S'-\dR}^{\square}(\overline{\rho},\ul{\lambda}_S)$ if and only if $\rho'_p$ is $S'$-de Rham and $\wt(\delta')_{\sigma}=(k_{1,\sigma},k_{2,\sigma})$ for $\sigma\in S$ \big(in particular, $\rho'_p$ is $S'$-de Rham and $S\setminus S'$-Hodge-Tate\big). For $\sigma\in \Sigma_L\setminus J$, we can prove (cf. Theorem \ref{thm: cclg-pox}) that (\ref{equ: cclg-poa}) induces a commutative diagram
\begin{equation*}
  \begin{CD}
    X_p(\overline{\rho}, \ul{\lambda}_{\Sigma_L\setminus J}) @>>> (\Spf R_{\infty}^p)^{\rig}\times X_{\tri,\Sigma_L\setminus J-\dR}^{\square}(\overline{\rho}, \ul{\lambda}_{\Sigma_L\setminus J}) \\
    @VVV @VVV \\
    X_p(\overline{\rho}, \ul{\lambda}_{\Sigma_L\setminus J},\Sigma_L\setminus (J\cup \{\sigma\})) @>>> (\Spf R_{\infty}^p)^{\rig}\times X_{\tri,\Sigma_L\setminus (J\cup \{\sigma\})-\dR}^{\square}(\overline{\rho}, \ul{\lambda}_{\Sigma_L\setminus J})
  \end{CD},
\end{equation*}
and moreover the horizontal maps are ``local isomorphisms" at $x_J^c$ (see Theorem \ref{thm: cclg-pox} and Corollary \ref{equ: cclg-oms} for precise statements,  note also the vertical maps are closed embeddings). Denote by $z_J^c$ the image of $x_J^c$ in $X_{\tri}^{\square}(\overline{\rho}_p)$, the equivalence of (a) and (b) thus follows from (cf. Corollary \ref{cor: cclg-htdr}, Corollary \ref{cor: cclg-xts}):
\begin{itemize}\item $\sigma\in \Sigma\setminus J$ if and only if the complete local rings of $X_{\tri,\Sigma_L\setminus J-\dR}^{\square}(\overline{\rho}, \ul{\lambda}_{\Sigma_L\setminus J})$ \big(which is a \emph{$\Sigma_L\setminus J$-de Rham} family\big) and $X_{\tri,\Sigma_L\setminus (J\cup \{\sigma\})-\dR}^{\square}(\overline{\rho}, \ul{\lambda}_{\Sigma_L\setminus J})$ \big(which is a \emph{$\Sigma_L\setminus (J\cup\{\sigma\})$-de Rham} and \emph{$\sigma$-Hodge-Tate} family\big) at $z_J^c$ are different,
\end{itemize}
which is a pure Galois result and follows by Galois cohomology calculations.

Let's remark the particular global context that we are working in is not important for the above arguments. For example, assuming similar patching result for the completed $H^1$ of Shimura curves, one can probably prove Breuil's locally analytic socle conjecture in that case using the same arguments. On the other hand, by comparison of eigenvarieties, it might be also possible to deduce from Corollary \ref{cor: cclg-eocp} the existence of overconvergent Hilbert companion forms.

The results indicated above were the contents of a previous version of this paper, which are contained in the section \S~2 - \S~4 of the current version.  In the recent work of Breuil-Hellmann-Schraen \cite{BHS3}, by developing  a deep theory on the local model of the trianguline variety,  Breuil's locally analytic socle conjecture is now proved in general $\GL_n(L)$-case (also under the Taylor-Wiles hypothesis). We find that the existence of such a local model allows a better understanding of the closed  subspaces (e.g. $X_p(\overline{\rho}, \ul{\lambda}_J)$) of the (patched) eigenvariety, that we studied. In (the new section) \S~5, we prove a partial classicality result (Theorem \ref{thm: cclg-pcl}) and obtain a description of the locally analytic socle in \emph{trianguline case} (i.e. may not be crystalline, see Corollary \ref{cor: cclg-soctri}), which were conjectured in the former version of the paper. We refer to the body of the text for more detailed and more precise statements (with slightly different notations).

\subsection*{Acknowledgement}
I would like to thank Christophe Breuil for suggesting to consider the patched eigenvariety when I noticed the existence of companion points could be deduced from infinitesimal ``R=T" results by the above arguments. I benefit a lot from the study workshop on patched eigenvariety in Morningside center organised by Yongquan Hu, Xu Shen, Yichao Tian, and I would like to thank the organisers and all the participants, and especially thank Liang Xiao for many inspiring discussions. I also would like to thank Rebecca Bellovin, Christophe Breuil, Toby Gee, and Yongquan Hu for remarks on this work or helpful discussions. I thank the anonymous referee for the careful reading and helpful comments.

\addtocontents{toc}{\protect\setcounter{tocdepth}{2}}
\section{Trianguline variety revisited}
\subsection{Trianguline variety and some stratifications}\label{sec: cclg-2.1}Let $L$ be a finite extension of $\Q_p$, $\co_L$ the ring of integers of $L$, $\varpi_L$ a uniformizer of $\co_L$, $d_L:=[L:\Q_p]$, $L_0$ the maximal unramified extension over $\Q_p$ in $L$, $q_L:=|\co_L/\varpi_L|=p^{[L_0:\Q_p]}$, $\Gal_L$ the absolute Galois group of $L$, and $\Sigma_L$ the set of $\Q_p$-embeddings of $L$ in $\overline{\Q_p}$. Let $E$ be a finite extension of $\Q_p$ big enough containing all the embeddings of $L$ in $\overline{\Q_p}$, $\co_E$ be the ring of integers of $E$, $\varpi_E$ a uniformizer of $\co_E$, $k_E:=\co_E/\varpi_E$.

Let $\overline{r}_L:=\Gal_L\ra \GL_2(k_E)$ be a two dimensional continuous representation of $\Gal_L$ over $k_E$. Denote by $R_{\overline{r}_L}^{\square}$ the framed universal deformation ring of $\overline{r}_L$, which is a complete noetherian local $\co_E$-algebra. Put $X_{\overline{r}_L}^{\square}:=(\Spf R_{\overline{r}_L}^{\square})^{\rig}$, which is thus a rigid space over $E$.

Denote by $\cT_L$ the rigid space over $E$ parameterizing continuous characters of $T(L)$ (where $T$ denotes the subgroup of $\GL_2$ of diagonal matrices), thus
\begin{equation*}
    \cT_L(\overline{E})=\big\{\delta: T(L) \ra \overline{E}^{\times}, \  \text{$\delta$ is continuous}\big\}.
\end{equation*}
A continuous character $\delta=\delta_1\otimes \delta_2: T(L)\ra \overline{E}^{\times}$ is called \emph{regular} if (where $\unr(z)$ denotes the unramified character of $L^{\times}$ sending uniformizers to $z$)
\begin{equation}\label{equ: cclg-aj1}
\begin{cases}\text{$\delta_i\delta_j^{-1}\neq \prod_{\sigma\in \Sigma_L} \sigma(z)^{k_{\sigma}}$ for all $\ul{k}_{\Sigma_L}=(k_{\sigma})_{\sigma\in \Sigma_L}\in \Z_{\geq 0}^{d_L}$, $i\neq j$},
 \\ \text{$\delta_i \delta_j^{-1}\neq \unr(q_L)\prod_{\sigma\in \Sigma_L} \sigma(z)^{k_{\sigma}}$ for $\ul{k}_{\Sigma_L} \in \Z_{\geq 1}^{d_L}$, $i\neq j$};
 \end{cases}
\end{equation}
$\delta$ is called \emph{very regular} if
\begin{equation}\label{equ: cclg-dal}
  \begin{cases}\text{$\delta_1\delta_2^{-1}\neq \prod_{\sigma\in \Sigma_L} \sigma(z)^{k_{\sigma}}$ for all $\ul{k}_{\Sigma_L}=(k_{\sigma})_{\sigma\in \Sigma_L}\in \Z^{d_L}$},
 \\ \text{$\delta_i \delta_j^{-1}\neq \unr(q_L)\prod_{\sigma\in \Sigma_L} \sigma(z)^{k_{\sigma}}$ for $\ul{k}_{\Sigma_L} \in \Z^{d_L}$, $i\neq j$}.
 \end{cases}
\end{equation}
Let $\wt(\delta)=(\wt(\delta_1)_{\sigma},\wt(\delta_2)_{\sigma})_{\sigma\in \Sigma_L}\in \overline{E}^{2d_L}$  be the weight of $\delta$, $\wt(\delta)$ is called dominant (resp. strictly dominant) if $\wt(\delta_1)_{\sigma}-\wt(\delta_2)_{\sigma}\in \Z_{\geq 0}$ \big(resp. $\wt(\delta_1)_{\sigma}-\wt(\delta_2)_{\sigma}\in \Z_{\geq 1}$\big) for all $\sigma\in \Sigma_L$. If $\wt(\delta)$ is strictly dominant, then  $\delta$ is regular if and only if $\delta$ is very regular.

Let $\cT_L^{\reg}$ be the subset of $\cT_L(\overline{E})$ of regular characters, which is in fact Zariski-open in $\cT_L$. Put
\begin{equation*}U_{\tri}^{\square,\reg}:=\big\{(r,\delta)\in X_{\overline{r}_L}^{\square}(\overline{E}) \times \cT_L^{\reg}\ \big|\ \text{$r$ is trianguline of parameter $\delta$}\big\}.
\end{equation*}
 Following \cite[Def. 2.4]{BHS1}, let $X_{\tri}^{\square}(\overline{r}_L)\hookrightarrow X_{\overline{r}_L}^{\square} \times \cT_L$ be the (reduced) Zariski-closure of $U_{\tri}^{\square, \reg}$ in $X_{\overline{r}_L}^{\square} \times \cT_L$. Recall
\begin{theorem}[$\text{\cite[Thm. 2.6 (i)]{BHS1}}$]\label{thm: cclg-tridim}
 $X_{\tri}^{\square}(\overline{r}_L)$ is equidimensional of dimension $4+3d_L$.
\end{theorem}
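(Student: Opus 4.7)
The plan is to reduce to proving that the Zariski-open subset $U_{\tri}^{\square,\reg}$ is itself smooth and equidimensional of dimension $4+3d_L$; since $X_{\tri}^{\square}(\overline{r}_L)$ is by definition the (reduced) Zariski-closure of $U_{\tri}^{\square,\reg}$ in $X_{\overline{r}_L}^{\square} \times \widehat{T}_L$, its dimension and equidimensionality follow at once. To compute $\dim U_{\tri}^{\square,\reg}$, I would exhibit an auxiliary smooth rigid space $\mathcal{Y}$ together with a surjective smooth morphism $\pi: \mathcal{Y} \to U_{\tri}^{\square,\reg}$ whose fibers are $\bG_m^2$-torsors.

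\textbf{The auxiliary space and its dimension.} Concretely, I would take $\mathcal{Y}$ to parameterize quadruples $(\delta_1,\delta_2,c,\phi)$, where $(\delta_1,\delta_2) \in \widehat{T}_L^{\reg}$, $c \in \Ext^1_{(\varphi,\Gamma)}(\cR(\delta_2),\cR(\delta_1)) \cong H^1_{(\varphi,\Gamma)}(\cR(\delta_1\delta_2^{-1}))$, and $\phi$ is a framing on the $\Gal_L$-representation attached to the rank-$2$ extension so constructed. The regularity condition (\ref{equ: cclg-aj1}) is chosen precisely so that $H^0 = H^2 = 0$ for $\cR(\delta_1\delta_2^{-1})$ uniformly on $\widehat{T}_L^{\reg}$. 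Combined with the Euler characteristic formula $\chi = -d_L$ for rank-$1$ $(\varphi,\Gamma)$-modules over $L$, this forces $\dim H^1 = d_L$ throughout. By the theory of $(\varphi,\Gamma)$-modules in rigid analytic families (Kedlaya--Pottharst--Xiao), these $H^1$'s glue into a locally free sheaf of rank $d_L$ on $\widehat{T}_L^{\reg}$, so $\mathcal{Y}$ is smooth of dimension
\[
\dim \widehat{T}_L + d_L + \dim \GL_2 = (2+2d_L) + d_L + 4 = 6+3d_L.
\]

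\textbf{Descent via the $\bG_m^2$-torsor.} The map $\pi$ sends $(\delta_1,\delta_2,c,\phi)$ to the framed trianguline representation with trianguline parameter $\delta_1 \otimes \delta_2$, and its image is exactly $U_{\tri}^{\square,\reg}$. On the regular locus the triangulation of $D_{\rig}(r)$ with parameter $\delta$ is unique as a sub-$(\varphi,\Gamma)$-module; however the identifications $\cR(\delta_1) \cong \gr_1$ and $\cR(\delta_2) \cong \gr_2$ are unique only up to separate scalings, yielding a free $\bG_m^2$-action on $\mathcal{Y}$ whose orbits are exactly the fibers of $\pi$. Hence $\pi$ is smooth of relative dimension $2$, and
\[
\dim U_{\tri}^{\square,\reg} = \dim \mathcal{Y} - 2 = 4+3d_L,
\]
as required.

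\textbf{Main difficulty.} The essential technical input is the theory of $(\varphi,\Gamma)$-modules in rigid analytic families, especially the interpolation of the cohomology groups $H^i_{(\varphi,\Gamma)}(\cR(\delta))$ and the classification of trianguline extensions by $\Ext^1$ at the family level; with this machinery in hand the computation reduces to the clean parameter count above. A secondary but crucial point is the uniqueness of the triangulation on the regular locus, which is what guarantees that the fibers of $\pi$ are precisely $\bG_m^2$-torsors and not something larger, and this is where the regularity condition (\ref{equ: cclg-aj1}) is fully exploited.
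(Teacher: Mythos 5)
The paper does not actually prove this statement: Thm.\ \ref{thm: cclg-tridim} is quoted from \cite[Thm.2.6(i)]{BHS1}, so your proposal can only be measured against the argument given there. Your outline is indeed the skeleton of that argument (and of its $L=\Q_p$ antecedents): rigidify a point of $U_{\tri}^{\square,\reg}$ by the parameter $(\delta_1,\delta_2)$, an extension class in $H^1_{(\varphi,\Gamma)}(\cR(\delta_1\delta_2^{-1}))$ --- of dimension $d_L$ by the Euler characteristic formula once regularity kills $H^0$ and $H^2$ --- and a framing; then use uniqueness of the triangulation on the regular locus to see that the only ambiguity is the $\bG_m^2$ of trivializations of the two graded pieces, so that $(2+2d_L)+d_L+4-2=4+3d_L$. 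The numerology and the torsor statement are correct.

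However, two steps you treat as formalities are exactly where the real work lies, and as written they are gaps. First, your $\mathcal{Y}$ is \emph{not} the total space of an $H^1$-bundle (times $\GL_2$) over $\widehat{T}_L^{\reg}$: for the framing $\phi$ to make sense, the extension $D$ of $\cR(\delta_2)$ by $\cR(\delta_1)$ must be \'etale (slope zero) and its associated Galois representation must be a deformation of the \emph{fixed} residual representation $\overline{r}_L$. These conditions are not Zariski-open on the bundle and even exclude whole regions of $\widehat{T}_L^{\reg}$ (no extension can be \'etale unless, e.g., $v_p(\delta_1(\varpi_L)\delta_2(\varpi_L))=0$), so $\dim\mathcal{Y}=6+3d_L$ does not follow from the parameter count; controlling the \'etale locus in families is precisely the content of \cite{Hel} and of the construction in \cite{BHS1}, where the auxiliary space is built over the framed deformation space $X_{\overline{r}_L}^{\square}$ (so \'etaleness and the residual condition are automatic) and its smoothness and dimension come from a deformation-theoretic computation whose obstruction space vanishes by regularity. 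Second, you call $U_{\tri}^{\square,\reg}$ ``Zariski-open'' and deduce equidimensionality of the closure from equidimensionality of $U_{\tri}^{\square,\reg}$; but $U_{\tri}^{\square,\reg}$ is a priori only a set of closed points, and without knowing that it is (at least locally) a locally closed subspace of $X_{\overline{r}_L}^{\square}\times\widehat{T}_L$ --- the openness/density statement which is part (ii) of the same theorem in \cite{BHS1} and rests on the interpolation results of \cite{KPX}, \cite{Liu} --- the dimension of its Zariski closure is not controlled by your computation of the fibers of $\pi$. So the proposal records the correct strategy and the correct numbers, but the two essential technical inputs (\'etaleness in families with fixed reduction, and openness of the regular trianguline locus) are assumed rather than proved.
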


Let $A$ be an artinian local $E$-algebra, $r_A$ a free $A$-module of rank $n$ equipped with a continuous action of $\Gal_L$. Using the isomorphism
\begin{equation*}
  L \otimes_{\Q_p} A \xlongrightarrow{\sim} \oplus_{\sigma\in \Sigma_L} A, \ a\otimes b \mapsto (\sigma(a)b)_{\sigma\in \Sigma_L},
\end{equation*}we see $D_{\dR}(r_A):=(B_{\dR}\otimes_{\Q_p} r_A)^{\Gal_L}$ admits a decomposition $D_{\dR}(r_A)\xrightarrow{\sim} \oplus_{\sigma\in \Sigma_L} D_{\dR}(r_A)_{\sigma}$. For $\sigma\in \Sigma_L$, $r_A$ is called \emph{$\sigma$-de Rham} if $D_{\dR}(r_A)_{\sigma}$ is a free $A$-module of rank $n$; for $J\subseteq \Sigma_L$, $r_A$ is called \emph{$J$-de Rham} if $r_A$ is $\sigma$-de Rham for all $\sigma\in J$. In particular, $r_A$ is de Rham if $r_A$ is $\Sigma_L$-de Rham.

Let $J\subseteq \Sigma_L$, $\ul{k}_J:=(k_{1,\sigma},k_{2,\sigma})_{\sigma\in J}\in \Z^{2|J|}$, and suppose $k_{1,\sigma}\neq k_{2,\sigma}$ for all $\sigma\in J$. Denote by $\cT_L(\ul{k}_J)$  the reduced closed subspace of $\cT_L$ with
\begin{equation*}
  \cT_L(\ul{k}_J)(\overline{E})=\big\{\delta=\delta_1\otimes \delta_2\in \cT_L(\overline{E})\ \big|\ \wt(\delta_i)_{\sigma}=k_{i,\sigma}, \forall \sigma\in J, i=1,2\big\}.
\end{equation*}
Actually, $\cT_L(\ul{0}_J)$ is the rigid space parameterizing locally $\Sigma_L\setminus J$-analytic characters of $T(L)$ (cf. \cite[\S~6.1.4]{Ding}).
Let $X_{\tri}^{\square}(\overline{r}_L,\ul{k}_J):=X_{\tri}^{\square}(\overline{r}_L) \times_{\cT_L} \cT_L(\ul{k}_J)$. Note that for $(r,\delta)\in X_{\tri}^{\square}(\overline{r}_L,\ul{k}_J)(\overline{E})$, the Hodge-Tate weights of $r$ at $\sigma\in J$ are $\{-k_{1,\sigma},-k_{2,\sigma}\}$ (e.g. see \cite[Prop. 2.9]{BHS1}, where we use the convention that the Hodge-Tate weight of the cyclotomic character is $-1$).

Let $X_{\tri,J-\dR}^{\square}(\overline{r}_L, |\ul{k}_J|)$ denote the closed subspace of $X_{\tri}^{\square}(\overline{r}_L)$ satisfying that
\begin{itemize}
  \item for any artinian local $E$-algebra $A$ and $f: \Spec A \hookrightarrow X_{\tri}^{\square}(\overline{r}_L)$, the morphism $f$ factors through $X_{\tri,J-\dR}^{\square}(\overline{r}_L, |\ul{k}_J|)$ if and only if the associated $\Gal_L$-representation $r_A$ is $J$-de Rham of Hodge-Tate weights $(-k_{1,\sigma},-k_{2,\sigma})_{\sigma\in J}$, i.e. $$\Fil^{-k_{i,\sigma}}D_{\dR}(r_A)_{\sigma}/\Fil^{-k_{i,\sigma}+1} D_{\dR}(r_A)_{\sigma}$$ is a free $A$-module of rank $1$ for all $i=1,2$, $\sigma\in J$.
\end{itemize}
The existence of $X_{\tri, J-\dR}^{\square}(\overline{r}_L, |\ul{k}_J|)$ follows essentially from \cite[Thm.  5.2.4]{Bel15} (see also \cite{Sha}). Actually, one can modify the proof of \cite[Thm.  5.2.4]{Bel15},  by replacing ``$D_{\dR}^K(V)$" by ``$\oplus_{\sigma\in J}D_{\dR}^K(V)_{\sigma}$", to obtain a closed subspace $X^{[-N,N]}$ with $N\in \Z_{>0}$ sufficiently large (resp. a closed subspace $X_i$ for $i=1, 2$) of $X_{\tri}^{\square}(\overline{r}_L)$ such that a morphism $f: \Spec A \hookrightarrow X_{\tri}^{\square}(\overline{r}_L)$ factors through $X^{[-N,N]}$ (resp. $X_i$) if and only if  the $A$-module $\Fil^{-N} D_{\dR}(r_A)_{\sigma}/\Fil^N D_{\dR}(r_A)_{\sigma}$  (resp. $\Fil^{-k_{i,\sigma}}D_{\dR}(r_A)_{\sigma}/\Fil^{-k_{i,\sigma}+1} D_{\dR}(r_A)_{\sigma}$) is  free of rank bigger than $2$ (resp. of rank bigger than $1$) for $\sigma\in J$. Then it is not difficult to see $X_{\tri,J-\dR}^{\square}(\overline{r}_L, |\ul{k}_J|)=X^{[-N,N]}\cap X_1\cap X_2$.

Let
\begin{multline}\label{equ: cclg-Jir} X_{\tri,J-\dR}^{\square}(\overline{r}_L,\ul{k}_J):= X_{\tri,J-\dR}^{\square}(\overline{r}_L,|\ul{k}_J|)
\times_{X_{\tri}^{\square}(\overline{r}_L)} X_{\tri}^{\square}(\overline{r}_L,\ul{k}_J)\\ \cong X_{\tri,J-\dR}^{\square}(\overline{r}_L,|\ul{k}_J|)\times_{\cT_L} \cT_L(\ul{k}_J).
\end{multline}
which is a closed subspace of $X_{\tri}^{\square}(\overline{r}_L,\ul{k}_J)$ and of $X_{\tri,J-\dR}^{\square}(\overline{r}_L,|\ul{k}_J|)$. For $w=(w_{\sigma})_{\sigma\in J}\in \cS_2^{|J|}$, denote by $\ul{k}_J^w:=(k_{w_{\sigma}^{-1}(1), \sigma}, k_{w_{\sigma}^{-1}(2), \sigma})_{\sigma\in J}$. We have as closed subpaces of $X_{\tri}^{\square}(\overline{r}_L)$:
\begin{equation}\label{equ: cclg-JdR0}
  X_{\tri, J-\dR}^{\square}(\overline{r}_L, |\ul{k}_J|) =\bigsqcup_{w\in \cS_2^{|J|}} X_{\tri, J-\dR}^{\square}(\overline{r}_L, \ul{k}_J^w).
\end{equation}
For $J'\subset J$, we have $X_{\tri}^{\square}(\overline{r}_L, \ul{k}_J)\cong X_{\tri}^{\square}(\overline{r}_L,\ul{k}_{J'})\times_{\cT_L(\ul{k}_{J'})} \cT_L(\ul{k}_J)$.  We put
\begin{equation}\label{equ: cclg-sJj}
  X_{\tri,J'-\dR}^{\square}(\overline{r}_L,\ul{k}_J):= X_{\tri,J'-\dR}^{\square}(\overline{r}_L,\ul{k}_{J'})\times_{\cT_L(\ul{k}_{J'})} \cT_L(\ul{k}_J).
\end{equation}
In particular, for any finite extension $E'$ of $E$, we have
 \begin{multline*}
   X_{\tri,J'-\dR}^{\square}(\overline{r}_L,\ul{k}_J)(E')\\ =\big\{(r,\delta_1\otimes \delta_2)\in X_{\tri}^{\square}(\overline{r}_L)(E')\ |\ \forall \sigma\in J,\  \wt(\delta_i)_{\sigma}=k_{i,\sigma},\ \text{and $r$ is $J'$-de Rham}\big\}.
 \end{multline*}
We have the following commutative diagram of rigid spaces over $E$ (compare with (\ref{equ: cclg-a1j}) below)
\begin{equation}\footnotesize \label{equ: cclg-rdJ}
  \begin{CD}
    X_{\tri,J-\dR}^{\square}(\overline{r}_L,\ul{k}_J) @>>> X_{\tri,J'-\dR}^{\square}(\overline{r}_L,\ul{k}_J)@>>> X_{\tri,J'-\dR}^{\square}(\overline{r}_L,\ul{k}_{J'})@>>>  X_{\tri}^{\square}(\overline{r}_L,\ul{k}_{J'})@>>> X_{\tri}^{\square}(\overline{r}_L) \\
    @VVV @VVV @VVV @VVV @VVV \\
    \cT_L(\ul{k}_J) @>>> \cT_L(\ul{k}_J) @>>> \cT_L(\ul{k}_{J'}) @>>> \cT_L(\ul{k}_{J'}) @>>>\cT_L
  \end{CD}
\end{equation}
where the horizontal maps are all closed embeddings, and the second and fourth square are cartesian. For a closed subspace $X$ of $X_{\tri}^{\square}(\overline{r}_L)$, put $X(\ul{k}_J):=X\times_{X_{\tri}^{\square}(\overline{r}_L)} X_{\tri}^{\square}(\overline{r}_L,\ul{k}_J)\cong X\times_{\cT_L}\cT_L(\ul{k}_J)$, $X_{J'-\dR}(\ul{k}_J):=X\times_{X_{\tri}^{\square}(\overline{r}_L)} X_{\tri,J'-\dR}^{\square}(\overline{r}_L,\ul{k}_J)$.

\subsection{Tangent spaces} \label{sec: cclg-xdt}For a rigid space $X$ over $E$, $x$ a closed point in $X$, denote by $k(x)$ the residue  field at $x$, and  $T_{X,x}$ the tangent space of $X$ at $x$, with can be identified with the $k(x)$-vector space of morphisms $\Spec k(x)[\epsilon]/\epsilon^2 \ra X$ with the induced map $\Spec k(x)\ra X$ corresponding to $x$.


Let $x=(r,\delta=\delta_1\otimes \delta_2)$ be a closed point in $X_{\tri}^{\square}(\overline{r}_L)$. Suppose that $\delta$ is locally algebraic, i.e. $\wt(\delta)\in \Z^{2d_L}$. Let
\begin{equation}\label{equ: cclg-Domi}
  \Sigma^+(\delta):=\{\sigma\in \Sigma_L\ |\ \wt(\delta_1)_{\sigma}>\wt(\delta_2)_{\sigma}\}, \  \Sigma^-(\delta):=\Sigma_L\setminus \Sigma^+(\delta).
\end{equation}
Suppose that $\delta$ is very regular and $\wt(\delta_1)_{\sigma}\neq \wt(\delta_2)_{\sigma}$ for all $\sigma\in \Sigma_L$. By \cite[Thm. 6.3.13]{KPX} and \cite[Prop. 2.9]{BHS1}, there exist $\Sigma(x)\subseteq \Sigma^+(\delta)$, $n_{1,\sigma}\in \Z_{>0}$, $n_{2,\sigma}\in \Z$ such that $r$ admits a triangulation of parameter
\begin{equation*}
  \delta'=\delta'_1\otimes \delta_2'=\delta \prod_{\sigma\in \Sigma(x)}\big(\sigma^{n_{1,\sigma}} \otimes \sigma^{n_{2,\sigma}}\big).
\end{equation*}
By \cite[Prop. 2.9]{BHS1}, $n_{1,\sigma}=\wt(\delta_2)_{\sigma}-\wt(\delta_1)_{\sigma}$, $n_{2,\sigma}=-n_{1,\sigma}$.
It is easy to see $\Sigma(x)=\Sigma^+(\delta) \setminus \Sigma^+(\delta')$. Let
\begin{equation*}
  C(r):=\{\sigma\in \Sigma_L \ |\ \text{$r$ is $\sigma$-de Rham}\}.
\end{equation*}
By \cite[Prop.  A.3]{Ding4}, we have
\begin{equation*}
  \Sigma^+(\delta') \subseteq C(r).
\end{equation*}
Suppose that $r$ is $\Sigma(x)$-de Rham.


Let $J\subseteq \Sigma^+(\delta)$, $\ul{k}_J:=(k_{1,\sigma},k_{2,\sigma})_{\sigma\in J}$ with $k_{i,\sigma}=\wt(\delta_i)_{\sigma}$. Suppose $J\subseteq C(r)$, i.e. $r$ is $J$-de Rham. Thus, $x$ is a closed point of $X_{\tri,J-\dR}^{\square}(\overline{r}_L,\ul{k}_J)\hookrightarrow X_{\tri}^{\square}(\overline{r}_L,\ul{k}_J)\hookrightarrow X_{\tri}^{\square}(\overline{r}_L)$.
Let $X$ be a union of irreducible components\footnote{By \cite[Cor.  3.7.10]{BHS3}, we know now $X_{\tri}^{\square}(\overline{r}_L)$ is irreducible at $x$.} of an open subset of $X_{\tri}^{\square}(\overline{r}_L)$ such that $X$ satisfies the accumulation property at $x$ (cf. \cite[Def. 2.11]{BHS1}). The following theorem is due to Breuil-Hellmann-Schraen (cf. \cite[\S~4]{BHS2}).
\begin{theorem}\label{thm: cclg-1ql}
 Keep the above situation, then

  (1) $\dim_{k(x)} T_{X, x}=4+3d_L$;

  (2) $\dim_{k(x)} T_{X(\ul{k}_J),x}=4+3d_L-2|J\cap (\Sigma_L\setminus \Sigma(x))|-|J\cap \Sigma(x)|$;
\end{theorem}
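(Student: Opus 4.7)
I would follow the strategy of Breuil--Hellmann--Schraen from \cite[\S 4]{BHS2}, combining an accumulation/density argument with a Galois-cohomology computation on the deformation-theoretic side, and then interpret (2) as a rank computation for the weight map.

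\emph{Step 1 (Lower bound for (1)).} Since $X$ is a union of irreducible components of an open subset of the equidimensional space $X_{\tri}^{\square}(\overline{r}_L)$ of dimension $4+3d_L$ (Theorem \ref{thm: cclg-tridim}), one has $\dim_x X = 4+3d_L$ at every closed point $x \in X$, and hence $\dim_{k(x)} T_{X,x} \geq 4+3d_L$.

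\emph{Step 2 (Upper bound for (1)).} A tangent vector at $x$ corresponds, via the theory of $(\varphi,\Gamma)$-modules, to an infinitesimal framed trianguline deformation of the $(\varphi,\Gamma)$-module of $r$ with parameter deforming the genuine triangulation character $\delta'$ (recovered from $\delta$ by the twist indexed by $\Sigma(x)$). Using the Ext-theory of trianguline $(\varphi,\Gamma)$-modules as in \cite{BHS1,BHS2} and the very regularity of $\delta$---which guarantees that the relevant $\Ext^1$ groups have the minimal expected dimension and that $\Ext^0,\Ext^2$ contribute only to the framing---this tangent space is bounded above by $4+3d_L$. Combined with Step 1, this yields (1) and moreover shows that $X$ is smooth at $x$.

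\emph{Step 3 (Reduction of (2) to a rank computation).} From the cartesian description $X(\ul{k}_J) = X \times_{\widehat{T}_L} \widehat{T}_L(\ul{k}_J)$, there is an exact sequence
\[
0 \longrightarrow T_{X(\ul{k}_J),x} \longrightarrow T_{X,x} \xrightarrow{\,d\omega_J\,} \bigoplus_{\sigma \in J} k(x)^2,
\]
where $d\omega_J$ is the differential at $x$ of the weight-at-$J$ map. By (1), $\dim_{k(x)} T_{X(\ul{k}_J),x} = (4+3d_L) - \rk d\omega_J$, so (2) is equivalent to the rank formula
\[
\rk d\omega_J \;=\; 2\bigl|J \cap (\Sigma_L \setminus \Sigma(x))\bigr| \,+\, \bigl|J \cap \Sigma(x)\bigr|.
\]

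\emph{Step 4 (Rank of $d\omega_J$, embedding by embedding).} At each $\sigma \in J$ the $\sigma$-component of $d\omega_J$ records the infinitesimal variation of $(\wt(\delta_1)_\sigma, \wt(\delta_2)_\sigma)$ along a tangent vector in $T_{X,x}$. For $\sigma \in J\cap (\Sigma_L\setminus \Sigma(x))$, one has $\delta = \delta'$ at $\sigma$, the point $x$ is non-critical at $\sigma$, and a standard local analysis (as in the crystalline/very regular regime targeted by Definition \ref{def: cclg-xnf}) shows that both weight directions are realized by genuine trianguline deformations---so the local rank is $2$. For $\sigma \in J \cap \Sigma(x)$, the relation $\wt(\delta'_i)_\sigma = \wt(\delta_{3-i})_\sigma$ and the $J$-de Rham hypothesis on $r$ force a codimension-one constraint on infinitesimal weights: within $T_{X,x}$, one of the two weight directions at $\sigma$ is tangent to $X$ (it corresponds to the "companion direction" obtained by flipping the critical set at $\sigma$), while only the complementary direction is transverse to $X$. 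This yields local rank $1$. Summing over $\sigma \in J$ gives the formula and hence (2).

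\emph{Main obstacle.} Step 4 at critical embeddings is the heart of the argument: one must pin down which direction in the two-dimensional weight space is tangent to $X$ and which is transverse, for each $\sigma \in J \cap \Sigma(x)$. This requires a local trianguline deformation analysis at critical points comparing the weight maps associated to $\delta$ and to $\delta'$, and crucially uses the partially de Rham hypothesis on $r$ at $J$---precisely the setting where the partially de Rham Galois cohomology developed in \cite{Ding4} provides the necessary control.
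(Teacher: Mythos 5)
Your skeleton (lower bound from equidimensionality, upper bound via the Ext-computation of \cite{BHS2}, and part (2) as a rank computation for the weight map, with rank $2$ at non-critical and rank $1$ at critical embeddings of $J$) is the right one and agrees with the paper's, but the two places where the actual content sits are either misdescribed or missing. At a critical embedding $\sigma\in J\cap\Sigma(x)$ the image of the $\sigma$-component of the weight differential on $T_{X,x}$ is the \emph{diagonal} line $\{d_{\sigma,1}=d_{\sigma,2}\}$ in the $\sigma$-weight plane, not ``one of the two weight directions'' with the other ``transverse'': the constraint is the (adapted) theorem of Bergdall that for any $\widetilde{r}$ coming from a tangent vector of $X_{\tri}^{\square}(\overline{r}_L)$ at $x$ one has $d_{\sigma,1}=d_{\sigma,2}$ for all $\sigma\in\Sigma(x)$, and it is derived from the lifting of the non-saturated inclusion $\cR_{k(x)[\epsilon]/\epsilon^2}(\widetilde{\delta}_1)\hookrightarrow D_{\rig}(\widetilde{r})$ (global triangulation, which is where the accumulation hypothesis enters), \emph{not} from the $J$-de Rham hypothesis on $r$ nor from the partially de Rham cohomology of \cite{Ding4}; those ingredients are only needed for the refinement Thm.\ref{thm: cclg-dxX}, not for the present statement. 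So your ``main obstacle'' paragraph points at the wrong mechanism and the wrong line.

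More seriously, your Step 4 needs not only the upper bound on each local rank but also that the allowed weight directions are actually \emph{realized by tangent vectors of $X$}, and this is nowhere argued. Producing trianguline deformations of $D_{\rig}(r)$ with arbitrary prescribed infinitesimal weights only gives classes in $\Ext^1_{\Gal_L}(r,r)$ lying in the subspace $V_1$ cut out by the triangulation-lifting condition (this is Lem.\ref{lem: cclg-rve}); to descend to $T_{X,x}$ one needs the equality $\Ima(f)=V_1\cap V$, where $V$ is cut out by Bergdall's diagonal constraint. In the paper this equality is not proved directly: the inclusion $\Ima(f)\subseteq V_1\cap V$ together with $\dim_{k(x)}(V_1\cap V)=\dim_{k(x)}\Ext^1_{\Gal_L}(r,r)-d_L$ gives the upper bound in (1), and equality (hence surjectivity of $\nabla\circ f:T_{X,x}\ra W$, which is exactly what (2) requires) is \emph{forced} by comparing with the lower bound $\dim_{k(x)}T_{X,x}\geq 4+3d_L$. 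Because your plan treats (1) as a black box and then attempts a fresh ``local analysis'' for (2), both the rank-$2$ claim at non-critical embeddings and the rank-$1$ claim at critical ones remain unjustified: without the identification $\Ima(f)=V_1\cap V$ (or a substitute construction of actual one-parameter families inside $X$) you only get upper bounds on the ranks, i.e.\ only the inequality $\dim_{k(x)}T_{X(\ul{k}_J),x}\geq 4+3d_L-2|J\cap(\Sigma_L\setminus\Sigma(x))|-|J\cap\Sigma(x)|$ in the wrong direction.
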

Which together with Theorem \ref{thm: cclg-tridim} implies:
\begin{corollary}\label{coro: cclg-smte}
The rigid space $X$ is smooth at the point $x$.
\end{corollary}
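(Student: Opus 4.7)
The plan is to deduce smoothness at $x$ directly from the tangent space computation in Thm.\ref{thm: cclg-1ql}(1), combined with the equidimensionality statement of Thm.\ref{thm: cclg-tridim}. Since for a rigid analytic space over the characteristic-zero field $E$, regularity of the local ring at a closed point is equivalent to smoothness at that point, it is enough to show that $\co_{X,x}$ is a regular local ring, which in turn reduces to comparing the embedding dimension with the Krull dimension.

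First I would pin down $\dim \co_{X,x}$. By assumption, $X$ is a union of irreducible components of some Zariski-open subset $U$ of $X_{\tri}^{\square}(\overline{r}_L)$ containing $x$, and by Thm.\ref{thm: cclg-tridim} the ambient space is equidimensional of dimension $4+3d_L$. Consequently $U$ is equidimensional of the same dimension, and so every irreducible component of $X$ passing through $x$ has dimension $4+3d_L$. Therefore $\dim \co_{X,x}=4+3d_L$.

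Then I would combine this with Thm.\ref{thm: cclg-1ql}(1), which asserts $\dim_{k(x)}T_{X,x}=\dim_{k(x)}\fm_{X,x}/\fm_{X,x}^2=4+3d_L$. The resulting equality of the embedding dimension and the Krull dimension of $\co_{X,x}$ is exactly the statement that $\co_{X,x}$ is a regular local ring, and over a characteristic-zero field this implies smoothness of the rigid space $X$ at the closed point $x$. There is no real obstacle here once the two inputs are in place; all the substantive work has already been done in Thm.\ref{thm: cclg-1ql}(1), where the upper bound on the tangent space dimension is produced via Galois cohomology computations (in particular involving the partially de Rham cohomology of \cite{Ding4}) and then matched against the expected dimension $4+3d_L$ coming from the equidimensionality of the trianguline variety.
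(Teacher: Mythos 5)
Your argument is exactly the one the paper intends: the corollary is stated as an immediate consequence of Thm.\ref{thm: cclg-1ql}(1) together with the equidimensionality from Thm.\ref{thm: cclg-tridim}, i.e. the tangent space dimension $4+3d_L$ matches the Krull dimension of every component of $X$ through $x$, so $\co_{X,x}$ is regular and, over the characteristic-zero field $E$, $X$ is smooth at $x$. The proposal is correct and takes essentially the same route as the paper.
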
We will give the proof of Theorem \ref{thm: cclg-1ql} for the convenience of the reader and the author. Indeed, from this proof together with some results in \cite{Ding4}, we  can also obtain
\begin{theorem}\label{thm: cclg-dxX}
Let $J'\subset J$, then

(1) $\dim_{k(x)} T_{X_{J-\dR}(\ul{k}_J),x}= 4+3d_L-2|J|$;

(2) $\dim_{k(x)} T_{X_{J'-\dR}(\ul{k}_J),x}=4+3d_L-2|J'|-2|(J\setminus J')\cap (\Sigma_L\setminus \Sigma(x))|-|(J\setminus J')\cap \Sigma(x)|$.
\end{theorem}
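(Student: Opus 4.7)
The plan is to extend the tangent space analysis underlying Theorem \ref{thm: cclg-1ql} by feeding in the partially de Rham deformation theory of \cite{Ding4}. Following the proof of loc.~cit.\ (based on \cite{BHS2}), one identifies $T_{X, x}$ with a framed first-order trianguline deformation space of the $(\varphi,\Gamma)$-module $D_{\rig}(r)$ equipped with its triangulation of parameter $\delta'$; this produces dimension $4 + 3d_L$. The subspace $T_{X(\ul{k}_J), x}$ is then cut out by requiring that the weights of the deformed character $\widetilde{\delta}$ equal $(k_{1,\sigma}, k_{2,\sigma})$ for each $\sigma \in J$. The contribution to the codimension is $2$ at each non-critical $\sigma \in J \cap (\Sigma_L \setminus \Sigma(x))$ and only $1$ at each critical $\sigma \in J \cap \Sigma(x)$, because the twist relating $\delta$ and $\delta'$ at a critical place makes one of the two weight conditions automatic from the trianguline deformation itself; this is how Theorem \ref{thm: cclg-1ql}(2) is obtained.

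To pass from $T_{X(\ul{k}_J), x}$ to $T_{X_{J'-\dR}(\ul{k}_J), x}$, one further restricts to first-order deformations of $r$ that are $\sigma$-de Rham at each $\sigma \in J'$. The crucial input is the computation of partially de Rham $\Ext^1$ groups in trianguline families from \cite{Ding4}, which yields the following dichotomy. For $\sigma \in J' \cap (\Sigma_L \setminus \Sigma(x))$, the $\sigma$-Hodge-Tate condition (already in force through $X(\ul{k}_J)$) together with the non-critical triangulation at $\sigma$ automatically forces the deformation to be $\sigma$-de Rham, so no new constraint is imposed. For $\sigma \in J' \cap \Sigma(x)$, the $\sigma$-de Rham locus sits as a codimension-one hyperplane inside the $\sigma$-Hodge-Tate locus. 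Summing over $\sigma \in J'$ gives
\begin{equation*}
  \dim_{k(x)} T_{X_{J'-\dR}(\ul{k}_J), x} \;=\; \dim_{k(x)} T_{X(\ul{k}_J), x} \;-\; |J' \cap \Sigma(x)|,
\end{equation*}
which combined with Theorem \ref{thm: cclg-1ql}(2) yields part (2). Part (1) is the special case $J' = J$: the accumulated cuts $2|J \cap (\Sigma_L \setminus \Sigma(x))| + |J \cap \Sigma(x)| + |J \cap \Sigma(x)|$ collapse to $2|J|$, producing the clean formula $4 + 3d_L - 2|J|$.

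The main obstacle is the critical branch of the dichotomy: one must exhibit a first-order $\sigma$-Hodge-Tate trianguline deformation of $r$ at a critical $\sigma \in \Sigma(x)$ which fails to be $\sigma$-de Rham, and show this failure accounts for exactly one extra dimension. Equivalently, one needs a precise description of how the Hodge filtration interacts with a critical triangulation on first-order deformations, and this is exactly the content of the main infinitesimal results in \cite{Ding4}. The non-critical branch is the more standard statement that in a trianguline family with non-critical $\sigma$-parameter, $\sigma$-Hodge-Tate upgrades automatically to $\sigma$-de Rham. The remaining work is book-keeping: verifying that these cuts commute with the weight conditions imposed at $\sigma \in J \setminus J'$, and translating between framed and unframed deformation functors exactly as in \cite{BHS2}.
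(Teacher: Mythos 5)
Your skeleton is viable and the dichotomy you assert is in fact equivalent to the statement being proved: starting from $\dim_{k(x)}T_{X(\ul{k}_J),x}$ of Thm.\ref{thm: cclg-1ql}(2), the $J'$-de Rham conditions must cut total codimension $|J'\cap\Sigma(x)|$, and your embedding-by-embedding count (codimension $0$ at $\sigma\in J'\setminus\Sigma(x)$, codimension $1$ at $\sigma\in J'\cap\Sigma(x)$) reproduces exactly the formulas of the theorem, including part (1) as the case $J'=J$. Your route is essentially the paper's argument with the cuts performed in the opposite order: the paper first computes the $J'$-de Rham trianguline piece $\Ext^1_{\Gal_L,g,J'}(r,r)\cap V_1$ (Lem.\ref{lem: cclg-jrx}) and then imposes the weight conditions at $J$ using a surjectivity of the Sen map (Lem.\ref{lem: cclg-arl}, Prop.\ref{prop: cclg-rjg}), whereas you impose the weight conditions first (via Thm.\ref{thm: cclg-1ql}(2)) and then the de Rham conditions.

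The gap is that the dichotomy is the entire content of the theorem, and you dispose of it by citing \cite{Ding4}, where it does not exist in the form you need. What \cite{Ding4} provides is the formalism of partially de Rham cohomology of $B$-pairs (the groups $H^1_{g,J}$, the defining exact sequence, and the inputs reproduced here as Prop.\ref{prop: cclg-wrn}, Cor.\ref{cor: cclg-pdR}, Prop.\ref{prop: cclg-exbp}); the two infinitesimal statements you invoke --- that for self-extensions $\widetilde{r}\in\Ext^1_{\Gal_L}(r,r)$ satisfying (\ref{equ: cclg-iast}) with constant Sen weights at $\sigma$ the $\sigma$-de Rham condition is automatic when $\sigma\notin\Sigma(x)$, and cuts exactly one dimension when $\sigma\in\Sigma(x)$ --- are precisely what this paper proves, not quotes. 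In particular the ``standard'' non-critical branch is not off the shelf: condition (ii) only gives an inclusion $\cR_{k(x)[\epsilon]/\epsilon^2}(\widetilde{\delta}_1)\hookrightarrow D_{\rig}(\widetilde{r})$ whose cokernel need not be free, and one has to run the cohomological computation of Lem.\ref{lem: cclg-jrx} (identifying $\Ima(j)$ with $H^1_{g,\Sigma(x)}(\Gal_L,B_{k(x)}(\delta_0))$ and establishing the surjection onto $H^1_{g,\Sigma(x)\cup J}$), which encodes both branches of your dichotomy at once. Likewise, what you call book-keeping --- that the de Rham cuts at $J'$ are transverse to the weight cuts at $J\setminus J'$ --- is exactly Lem.\ref{lem: cclg-arl}: one must construct explicit $J'$-de Rham trianguline first-order deformations with prescribed weights at the remaining embeddings. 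Once these two lemmas are supplied, your reorganized argument does give the stated dimensions; without them it is an accounting scheme resting on an unproved (though correct) key claim.
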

\begin{corollary}\label{cor: cclg-htdr}If $(J\setminus J') \cap \Sigma(x)\neq \emptyset$, then $X_{J-\dR}(\ul{k}_J)$ is a proper closed subspace of $X_{J'-\dR}(\ul{k}_J)$.
\end{corollary}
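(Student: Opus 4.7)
The plan is to read the corollary off the tangent-space formulas of Theorem \ref{thm: cclg-dxX}. First I would observe that since the $J$-de Rham condition is strictly stronger than the $J'$-de Rham condition (for $J' \subset J$), the inclusion $X_{J-\dR}(\ul k_J) \hookrightarrow X_{J'-\dR}(\ul k_J)$ is a closed embedding of rigid spaces, which is also transparent from the definitions (\ref{equ: cclg-Jir})--(\ref{equ: cclg-sJj}) and the commutative diagram (\ref{equ: cclg-rdJ}). To prove the embedding is proper it suffices to exhibit a single closed point at which the two complete local rings disagree, and I will use $x$ itself, distinguishing the rings at the tangent-space level.

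Subtracting the formulas of Thm.\ref{thm: cclg-dxX}(1) from (2), and splitting $J\setminus J'$ according to its intersection with $\Sigma(x)$ via the disjoint decomposition $|J\setminus J'|=|(J\setminus J')\cap \Sigma(x)|+|(J\setminus J')\cap(\Sigma_L\setminus\Sigma(x))|$, one obtains
\begin{equation*}
\dim_{k(x)} T_{X_{J'-\dR}(\ul k_J),x}-\dim_{k(x)} T_{X_{J-\dR}(\ul k_J),x}=|(J\setminus J')\cap\Sigma(x)|.
\end{equation*}
Under the hypothesis this integer is strictly positive, so $T_{X_{J-\dR}(\ul k_J),x}$ is a proper $k(x)$-subspace of $T_{X_{J'-\dR}(\ul k_J),x}$. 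The induced surjection of complete noetherian local rings $\widehat{\co}_{X_{J'-\dR}(\ul k_J),x}\twoheadrightarrow \widehat{\co}_{X_{J-\dR}(\ul k_J),x}$ can therefore not be an isomorphism, which is precisely the assertion that $X_{J-\dR}(\ul k_J)$ is a proper closed subspace of $X_{J'-\dR}(\ul k_J)$.

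Essentially no obstacle arises once Thm.\ref{thm: cclg-dxX} is available: the corollary is pure bookkeeping on a tangent-space formula. The genuine content sits in Thm.\ref{thm: cclg-dxX}(2), whose proof must combine the tangent-space analysis of Thm.\ref{thm: cclg-1ql} (following \cite{BHS2}) with the partially de Rham and partially Hodge-Tate Galois-cohomology refinements of \cite{Ding4}, in order to separate, for each $\sigma\in J\setminus J'$, the one-dimensional contribution when $\sigma\in\Sigma(x)$ from the two-dimensional contribution when $\sigma\notin\Sigma(x)$; it is exactly this arithmetic that makes $|(J\setminus J')\cap \Sigma(x)|$, rather than some other combinatorial quantity, govern whether the two de Rham loci coincide near $x$.
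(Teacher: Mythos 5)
Your argument is correct and is essentially the paper's own (implicit) proof: the corollary is stated without separate proof as an immediate consequence of Thm.\ref{thm: cclg-dxX}, the intended justification being exactly the tangent-space count you perform, namely $\dim_{k(x)} T_{X_{J'-\dR}(\ul{k}_J),x}-\dim_{k(x)} T_{X_{J-\dR}(\ul{k}_J),x}=|(J\setminus J')\cap \Sigma(x)|$ together with the fact (from (\ref{equ: cclg-Jir})--(\ref{equ: cclg-rdJ})) that the inclusion is a closed embedding. One remark: the hypothesis as printed, $(J\cap J')\cap\Sigma(x)\neq\emptyset$, is evidently a typo for $(J\setminus J')\cap\Sigma(x)\neq\emptyset$ (since $J'\subset J$ the printed condition reads $J'\cap\Sigma(x)\neq\emptyset$, which does not make your difference strictly positive); your proof silently uses the corrected hypothesis, which is the one consistent with Cor.\ref{cor: cclg-xts} and with the way the corollary is used in the introduction.
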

The rest of the section is devoted to the proof of Theorem \ref{thm: cclg-1ql} and \ref{thm: cclg-dxX}.

Since $X_{\tri}^{\square}(\overline{r}_L)$ is equidimensional of dimension $4+3d_L$, to prove Theorem \ref{thm: cclg-1ql} (1), it is sufficient to show $\dim_{k(x)} T_{X,x}\leq 4+3d_L$. As in \cite[(4.21)]{BHS2}, one has an exact sequence:
\begin{equation}\label{equ: cclg-xXL}
  0 \ra K(r) \cap T_{X,x} \ra T_{X,x} \xrightarrow{f} \Ext_{\Gal_L}^1(r,r)
\end{equation}
where $\dim_{k(x)}K(r)$ is a $k(x)$-vector space of dimension $4-\dim_{k(x)} \End_{\Gal_L}(r)$ (see \cite[Lem.  4.13]{BHS2}). Since
\begin{equation}\label{equ: cclg-xlr}\dim_{k(x)} \Ext^1_{\Gal_L}(r,r)=\dim_{k(x)} \End_{\Gal_L}(r)+4d_L,\end{equation} to prove Theorem \ref{thm: cclg-1ql} (1), it is sufficient to prove
\begin{equation}\label{equ: cclg-gll}
  \dim_{k(x)} \Ima(f)\leq \dim_{k(x)} \Ext^{1}_{\Gal_L}(r,r)-d_L.
\end{equation}

We view an element $\widetilde{r}$ of $\Ext^1_{\Gal_L}(r,r)$ as a rank $2$ representation of $\Gal_L$ over $k(x)[\epsilon]/\epsilon^2$, whose Sen weights thus have the form $(\wt(\delta_i)_{\sigma}+\epsilon d_{\sigma,i})_{i=1,2, \sigma\in \Sigma_L}$. We have a $k(x)$-linear map
\begin{equation*}
\nabla: \Ext^1_{\Gal_L}(r,r) \lra k(x)^{2d_L}, \ \tilde{r}\mapsto (d_{\sigma,1}, d_{\sigma,2})_{\sigma\in \Sigma_L},
\end{equation*}
which is known to be surjective (e.g. see \cite[Prop. 4.9]{BHS2}, see also Lemma \ref{lem: cclg-rve} below).

For $t\in T_{X,x}:\Spec k(x)[\epsilon]/\epsilon^2\ra X_{\tri}^{\square}(\overline{r}_L)$, the composition $\Spec k(x)[\epsilon]/\epsilon^2 \ra X_{\tri}^{\square}(\overline{r}_L) \ra X_{\overline{r}_L}^{\square}$ gives a continuous representation $\widetilde{r}: \Gal_L \ra \GL_2(k(x)[\epsilon]/\epsilon^2)$, which in fact equals the image of $t$ in $\Ext_{\Gal_L}^{1}(r,r)$ via $f$. The composition $\Spec k(x)[\epsilon]/\epsilon^2 \ra X_{\tri}^{\square}(\overline{r}_L) \ra  \cT$ gives a character $\widetilde{\delta}=\widetilde{\delta}_1 \otimes \widetilde{\delta}_2: T(L)\ra E[\epsilon]/\epsilon^2$ satisfying $\widetilde{\delta}\equiv \delta \pmod{\epsilon}$. We have \begin{equation*}(\wt(\delta_1)_{\sigma}+\epsilon d_{\sigma,1}, \wt(\delta_2)_{\sigma}+\epsilon d_{\sigma,2})_{\sigma\in \Sigma_L}=\big(\wt(\widetilde{\delta}_1)_{\sigma}, \wt(\widetilde{\delta}_2)_{\sigma}\big)_{\sigma\in \Sigma_L}\in \big(k(x)[\epsilon]/\epsilon^2\big)^{2d_L}.\end{equation*} The representation $\widetilde{r}$ satisfies the following two properties
\begin{equation}\label{equ: cclg-iast}\begin{cases}
  \text{(i) $\nabla(\widetilde{r}) \in W:=\big\{(d_{\sigma,1},d_{\sigma,2})_{\sigma\in \Sigma_L}\ |\ d_{\sigma,1}=d_{\sigma,2}, \ \forall \sigma \in \Sigma(x)\big\}$;}\\
  \text{(ii) there exists an injection of $(\varphi,\Gamma)$-modules over $\cR_{k(x)[\epsilon]/\epsilon^2}$: $\cR_{k(x)[\epsilon]/\epsilon^2}\big(\widetilde{\delta}_1\big) \hookrightarrow D_{\rig}(\widetilde{r})$;}
\end{cases}
\end{equation}
where (ii) follows from the results in \cite{Bergd14} \cite[Prop. 4.3.5]{Liu}, and (i) is due to Bergdall (cf. \cite[Thm. 7.1]{Bergd14}, see also \cite[Lem. 9.6]{Br13II}). Indeed, by an easy variation of the proof of \cite[Lem. 9.6]{Br13II} (or the proof of \cite[Thm. 7.1]{Bergd14}) with the functor $D_{\cris}(\cdot)$ replaced by the $\sigma$-component of $D_{\mathrm{dR}}(\cdot)$ for $\sigma\in\Sigma(x)$, one can deduce from (ii) that $\wt(\delta_2)_{\sigma}-\wt(\delta_1)_{\sigma}$ is a constant Sen weight of $D_{\rig}(\widetilde{r}) \otimes_{\cR_{k(x)[\epsilon]/\epsilon^2}} \cR_{k(x)[\epsilon]/\epsilon^2}\big(\widetilde{\delta}_1^{-1}\big)$, which, by the precedent discussion, has Sen weights $(0, \wt(\delta_2)_{\sigma'}-\wt(\delta_1)_{\sigma'}+\epsilon(d_{\sigma',2}-d_{\sigma',1}))_{\sigma'\in \Sigma_L}$. We deduce then $d_{\sigma,2}-d_{\sigma,1}=0$ for $\sigma\in \Sigma(x)$.

As in \cite[\S~4]{BHS2}, we show the properties (i) (ii) cut off a $k(x)$-vector subspace of $\Ext^1_{\Gal_L}(r,r)$ of dimension $\dim_{k(x)} \Ext^1_{\Gal_L}(r,r)-d_L$ (from which Theorem \ref{thm: cclg-1ql} (1) follows).
Recall $r$ is trianguline of parameter $(\delta_1', \delta_2')$:
\begin{equation*}
  0 \ra \cR_{k(x)}(\delta_1') \ra D_{\rig}(r) \ra \cR_{k(x)}(\delta_2') \ra 0.
\end{equation*}
Consider the composition \big(where the last one is induced by the natural inclusion $\cR_{k(x)}(\delta_1) \hookrightarrow \cR_{k(x)}(\delta_1')$\big)
\begin{multline}\label{equ: cclg-dgr}
  \Ext^1_{(\varphi,\Gamma)}\big(D_{\rig}(r),D_{\rig}(r)\big) \lra \Ext^1_{(\varphi,\Gamma)}\big(\cR_{k(x)}(\delta_1'), D_{\rig}(r)\big)\\
   \lra \Ext^1_{(\varphi,\Gamma)}\big(\cR_{k(x)}(\delta_1'), \cR_{k(x)}(\delta_2')\big) \xlongrightarrow{j} \Ext^1_{(\varphi,\Gamma)}\big(\cR_{k(x)}(\delta_1), \cR_{k(x)}(\delta_2')\big)
\end{multline}
One can check $\widetilde{r}$ satisfies the property (ii) if and only if $D_{\rig}(\widetilde{r})$ lies in the kernel of the above composition, which we denote  by $V_1$. As in \cite[Prop. 4.11]{BHS2}, we have
\begin{lemma}
$\dim_{k(x)}V_1=\dim_{k(x)} \Ext^1_{\Gal_L}(r,r)-(d_L-|\Sigma(x)|)$.
\end{lemma}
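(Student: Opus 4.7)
The plan is to compute $\dim_{k(x)} V_1$ by analysing the three arrows composing (\ref{equ: cclg-dgr}) separately; the argument parallels \cite[Prop.4.11]{BHS2}, adapted to the locally algebraic setting considered here.

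First I would show that the two arrows preceding $j$ in (\ref{equ: cclg-dgr}) are surjective. Applying $\Ext^\bullet_{(\varphi,\Gamma)}(-, D_{\rig}(r))$ to the triangulation short exact sequence $0\to \cR_{k(x)}(\delta_1')\to D_{\rig}(r)\to \cR_{k(x)}(\delta_2')\to 0$, surjectivity of the first arrow reduces to $\Ext^2(\cR_{k(x)}(\delta_2'), D_{\rig}(r))=0$, which a second use of the long exact sequence further reduces to the vanishings $\Ext^2(\cR_{k(x)}(\delta_2'), \cR_{k(x)}(\delta_i'))=0$ for $i=1,2$. By Tate--Colmez duality these $\Ext^2$ groups are dual to certain $\Hom$ groups, and these $\Hom$ groups vanish thanks to the very-regularity assumption on $\delta$ (equivalently of $\delta'$). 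Surjectivity of the second arrow is handled analogously, reducing to $\Ext^2(\cR_{k(x)}(\delta_1'), \cR_{k(x)}(\delta_1'))=0$. Consequently the image of (\ref{equ: cclg-dgr}) equals the image of $j$, so
\[
   \dim_{k(x)} V_1 \;=\; \dim_{k(x)} \Ext^1_{\Gal_L}(r,r) - \dim_{k(x)} \Ima(j).
\]

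Next, the Euler--Poincaré formula for rank one $(\varphi,\Gamma)$-modules over $\cR_{k(x)}$ gives $\dim \Ext^1(\cR_{k(x)}(\delta_1'), \cR_{k(x)}(\delta_2'))= d_L + \dim\Hom + \dim\Ext^2$, and regularity of $\delta$ together with Tate duality annihilates both correction terms. Hence this dimension equals $d_L$, and proving the lemma reduces to showing $\dim_{k(x)}\ker(j)=|\Sigma(x)|$.

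For the final step I would use the short exact sequence
\[
   0\ra \cR_{k(x)}(\delta_1)\ra \cR_{k(x)}(\delta_1')\ra C\ra 0
\]
coming from the natural inclusion (multiplication by $\prod_{\sigma\in\Sigma(x)} t_\sigma^{\wt(\delta_1)_\sigma-\wt(\delta_2)_\sigma}$), whose cokernel is the torsion $(\varphi,\Gamma)$-module $C\cong\bigoplus_{\sigma\in\Sigma(x)}\cR_{k(x)}(\delta_1')/(t_\sigma^{n_\sigma})$ with $n_\sigma=\wt(\delta_1)_\sigma-\wt(\delta_2)_\sigma>0$. Applying $\Hom(-,\cR_{k(x)}(\delta_2'))$ and using $\Hom(\cR_{k(x)}(\delta_1),\cR_{k(x)}(\delta_2'))=0$ (regularity), we identify $\ker(j)\cong\Ext^1(C,\cR_{k(x)}(\delta_2'))$. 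A $\sigma$-wise dimension count, based on the block structure of $C$ and the standard Colmez-style description of $\Ext$-groups of torsion $(\varphi,\Gamma)$-modules, then yields $\dim\Ext^1(C,\cR_{k(x)}(\delta_2'))=|\Sigma(x)|$, completing the proof.

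The main obstacle will be this last dimension count. While the decomposition $C\cong\bigoplus_{\sigma\in\Sigma(x)}\cR_{k(x)}(\delta_1')/(t_\sigma^{n_\sigma})$ makes $|\Sigma(x)|$ the expected answer (one dimension per summand), one must verify carefully that each $\sigma$-summand contributes exactly one independent class \emph{independently} of the multiplicity $n_\sigma$, and that no accidental $\Ext^2$ contributions inflate the count on the boundary-map side. Both verifications rely again on very-regularity; by contrast the preliminary surjectivity arguments are comparatively routine applications of Tate--Colmez duality.
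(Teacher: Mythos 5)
Your proposal is correct and follows essentially the same route as the paper: surjectivity of the first two arrows of (\ref{equ: cclg-dgr}) from very regularity, then a computation of $\ker(j)$ via the torsion cokernel $C$ of the natural inclusion $\cR_{k(x)}(\delta_1)\hookrightarrow\cR_{k(x)}(\delta_1')$. Your identification $\ker(j)\cong \Ext^1(C,\cR_{k(x)}(\delta_2'))$ is just the untwisted form of the paper's computation (carried out there in $B$-pair language, where the kernel appears as $\oplus_{\sigma\in\Sigma(x)}H^0(\Gal_L, B_{\dR,\sigma}^+/t^{n_\sigma})$), and the deferred count of exactly one dimension per $\sigma\in\Sigma(x)$, independent of $n_\sigma$, is indeed what that computation yields, since for $\sigma\in\Sigma(x)$ precisely one graded piece of each torsion block has $\sigma$-weight zero.
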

\begin{proof}\phantom\qedhere
  We sketch the proof. Since $r$ is very regular, the first two maps in (\ref{equ: cclg-dgr}) are surjective. Using results in \cite[\S~ 4.4]{BHS2} (see also the proof of Lemma \ref{lem: cclg-jrx} below, which is in term of $B$-pairs), one has $\dim_{k(x)} \Ima(j)=d_L-|\Sigma(x)|$, thus
  \begin{multline*}\dim_{k(x)} V_1=\dim_{k(x)} \Ext^1_{(\varphi,\Gamma)}\big(D_{\rig}(r),D_{\rig}(r)\big)-(d_L-|\Sigma(x)|)\\ =\dim_{k(x)} \Ext^1_{\Gal_L}(r,r)-(d_L-|\Sigma(x)|). \qed
  \end{multline*}
\end{proof}As in (the proof of) \cite[Prop. 4.12]{BHS2}, one has
\begin{lemma}\label{lem: cclg-rve}
  The induced map $\nabla: V_1\ra  k(x)^{2d_L}$ is surjective.
\end{lemma}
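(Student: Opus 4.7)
The plan is to realize every prescribed Sen weight perturbation by an explicit trianguline deformation of $r$, chosen so that property (ii) of (\ref{equ: cclg-iast}) holds by construction. Given $(d_{\sigma,1},d_{\sigma,2})_{\sigma\in\Sigma_L}\in k(x)^{2d_L}$, I first produce continuous characters $\widetilde{\delta}_1',\widetilde{\delta}_2':L^\times\to(k(x)[\epsilon]/\epsilon^2)^\times$ lifting $\delta_1',\delta_2'$ by prescribing their Sen weights: for $\sigma\notin\Sigma(x)$ I take $\wt(\widetilde{\delta}_i')_\sigma:=\wt(\delta_i')_\sigma+\epsilon d_{\sigma,i}$, while for $\sigma\in\Sigma(x)$ I swap the labels and set $\wt(\widetilde{\delta}_1')_\sigma:=\wt(\delta_1')_\sigma+\epsilon d_{\sigma,2}$ and $\wt(\widetilde{\delta}_2')_\sigma:=\wt(\delta_2')_\sigma+\epsilon d_{\sigma,1}$. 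Such lifts exist because perturbing the Sen weight is an unrestricted deformation parameter for continuous characters of $L^\times$. The swap is forced by the fact that $\delta_1'=\delta_1\prod_{\sigma\in\Sigma(x)}\sigma^{\wt(\delta_2)_\sigma-\wt(\delta_1)_\sigma}$ (symmetrically for $\delta_2'$), so $\delta_1'$ already has Sen weight $\wt(\delta_2)_\sigma$ at $\sigma\in\Sigma(x)$. With this choice, the multiset $\{\wt(\widetilde{\delta}_1')_\sigma,\wt(\widetilde{\delta}_2')_\sigma\}$ equals $\{\wt(\delta_1)_\sigma+\epsilon d_{\sigma,1},\wt(\delta_2)_\sigma+\epsilon d_{\sigma,2}\}$ at every $\sigma$.

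The main obstacle is then the existence of an extension
$$0\to\cR_A(\widetilde{\delta}_1')\to\widetilde{D}\to\cR_A(\widetilde{\delta}_2')\to 0$$
of $(\varphi,\Gamma)$-modules over $\cR_A$ (with $A:=k(x)[\epsilon]/\epsilon^2$) whose reduction modulo $\epsilon$ recovers the triangulation of $D_{\rig}(r)$. By Fontaine--Herr, this amounts to lifting a class in $H^1_{(\varphi,\Gamma)}(\cR(\delta_2'^{-1}\delta_1'))$ to $H^1_{(\varphi,\Gamma)}(\cR_A(\widetilde{\delta}_2'^{-1}\widetilde{\delta}_1'))$, and the obstruction sits in $H^2_{(\varphi,\Gamma)}(\cR(\delta_2'^{-1}\delta_1'))$ via the long exact sequence attached to $0\to\cR(\delta_2'^{-1}\delta_1')\xrightarrow{\cdot\epsilon}\cR_A(\widetilde{\delta}_2'^{-1}\widetilde{\delta}_1')\to\cR(\delta_2'^{-1}\delta_1')\to 0$. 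This $H^2$ vanishes by Tate local duality combined with the very-regularity (\ref{equ: cclg-dal}) of $\delta$: since $\delta_1'\delta_2'^{-1}$ differs from $\delta_1\delta_2^{-1}$ only by the algebraic factor $\prod_{\sigma\in\Sigma(x)}\sigma^{2(\wt(\delta_2)_\sigma-\wt(\delta_1)_\sigma)}$, the Tate-dual $H^0$ of $\cR(\delta_1'\delta_2'^{-1})$ twisted by the cyclotomic character is forced to be zero. Hence the reduction map on $\Ext^1$ is surjective and a lift $\widetilde{D}$ exists.

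Since $D_{\rig}(r)$ is étale and $\widetilde{D}$ is a deformation of it over the Artinian local ring $A$, $\widetilde{D}$ is itself étale and therefore corresponds to some $\widetilde{r}\in\Ext^1_{\Gal_L}(r,r)$. Setting $\widetilde{\delta}_1:=\widetilde{\delta}_1'\prod_{\sigma\in\Sigma(x)}\sigma^{\wt(\delta_1)_\sigma-\wt(\delta_2)_\sigma}$ yields a lift of $\delta_1$, and the natural inclusions $\cR_A(\widetilde{\delta}_1)\hookrightarrow\cR_A(\widetilde{\delta}_1')\hookrightarrow\widetilde{D}$ (obtained by lifting the saturation $\cR(\delta_1)\hookrightarrow\cR(\delta_1')$) verify property (ii), so $\widetilde{r}\in V_1$. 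By the Sen weight computation of the first paragraph, $\nabla(\widetilde{r})=(d_{\sigma,1},d_{\sigma,2})_\sigma$, proving surjectivity of $\nabla|_{V_1}$.
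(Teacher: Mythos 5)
Your proof is correct and follows essentially the same route as the paper: both produce explicit trianguline deformations of $D_{\rig}(r)$ with prescribed Sen weights by lifting the triangulation's extension class over the dual numbers (surjectivity on $H^1$ coming from vanishing of the relevant $H^2$, guaranteed by very regularity since $\delta_1'(\delta_2')^{-1}$ differs from $\delta_1\delta_2^{-1}$ only by an algebraic character) and then twisting by a lift of $\delta_2'$. Your handling of the weight-label swap at $\sigma\in\Sigma(x)$ and the explicit verification of property (ii) via $\cR_A(\widetilde{\delta}_1)\hookrightarrow\cR_A(\widetilde{\delta}_1')\hookrightarrow \widetilde{D}$ just spell out details the paper leaves as ``straightforward''.
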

\begin{proof}
This lemma follows from the fact that the trianguline deformations of $D_{\rig}(r)$ over $E[\epsilon]/\epsilon^2$ are contained in $V_1$ (which is obvious), and the tangent map from the trianguline deformation space to the weight space is surjective. Indeed, for any continuous character $\widetilde{\delta}':L^{\times} \ra (E[\epsilon]/\epsilon^2)^{\times}$ with $\widetilde{\delta}'\equiv \delta_1' (\delta_2')^{-1} \pmod{\epsilon}$, consider the exact sequence of $(\varphi,\Gamma)$-modules over $\cR_{k(x)}$
\begin{equation*}
  0 \ra \cR_{k(x)}(\delta_1'(\delta_2')^{-1})\ra \cR_{k(x)[\epsilon]/\epsilon^2}(\widetilde{\delta}') \ra \cR_{k(x)}(\delta_1'(\delta_2')^{-1}) \ra 0.
\end{equation*}
Since $r$ is very regular, the induced map $H^1_{(\varphi,\Gamma)}\big(\cR_{k(x)[\epsilon]/\epsilon^2}(\widetilde{\delta}')\big)\ra H^1_{(\varphi,\Gamma)}\big(\cR_{k(x)}(\delta_1'(\delta_2')^{-1}\big)$ is surjective. Let $D'\in H^1_{(\varphi,\Gamma)}\big(\cR_{k(x)[\epsilon]/\epsilon^2}(\widetilde{\delta}')\big)$ be a preimage of $[D_{\rig}(r) \otimes_{\cR_{k(x)}} \cR_{k(x)}((\delta_2')^{-1})]$, thus for any continuous character $\widetilde{\delta}_2': L^{\times} \ra (k(x)[\epsilon]/\epsilon^2)^{\times}$ with $\widetilde{\delta}'_2\equiv \delta_2' \pmod{\epsilon}$, we see $D:=D'\otimes_{\cR_{k(x)[\epsilon]/\epsilon^2}}\cR_{k(x)[\epsilon]/\epsilon^2}(\widetilde{\delta}_2')$ is a trianguline deformation of $D_{\rig}(r)$ over $\cR_{k(x)[\epsilon]/\epsilon^2}$ with Sen weights $\big(\wt(\widetilde{\delta}')_{\sigma}+\wt(\widetilde{\delta}_2')_{\sigma}, \wt(\widetilde{\delta}_2')_{\sigma}\big)_{\sigma\in \Sigma_L}$. it is straightforward to see $[D]\in V_1$. For any $(a_{\sigma},b_{\sigma})\in k(x)^{2d_L}$, choose $\widetilde{\delta}'$ and $\widetilde{\delta}_2'$ such that $\wt(\widetilde{\delta}_2')_{\sigma}=b_{\sigma}$, $\wt(\widetilde{\delta}')_{\sigma}=a_{\sigma}-b_{\sigma}$, then $\nabla([D])= (a_{\sigma},b_{\sigma})_{\sigma\in \Sigma_L}$, so $\nabla|_{V_1}$ is surjective.
\end{proof}
By (\ref{equ: cclg-iast}), we have $\Ima(f)\subseteq V_1 \cap \nabla^{-1}(W)$. Since $\nabla|_{V_1}$ is surjective, $\dim_{k(x)} \nabla^{-1}(W)\cap V_1=\Ext^1_{\Gal_L}(r,r)-(d_L-|\Sigma(x)|)-|\Sigma(x)|=\Ext^1_{\Gal_L}(r,r)-d_L$. The  part (1) of Theorem \ref{thm: cclg-1ql} follows (cf. (\ref{equ: cclg-gll})), and one gets equalities \begin{equation}\label{equ: cclg-V1x}
  \begin{cases}
    \Ima(f)=V_1\cap \nabla^{-1}(W),\\
    K(r)\cap T_{X,x}=K(r).
  \end{cases}
\end{equation}
By Lemma \ref{lem: cclg-rve}, the composition
\begin{equation}\label{equ: cclg-jjW}
T_{X,x} \xlongrightarrow{f} \Ima(f) \xlongrightarrow{\nabla}W
\end{equation}
is surjective. Put $W_J:=\big \{(d_{\sigma,1},d_{\sigma,2})_{\sigma\in \Sigma_L}\ |\ d_{\sigma,1}=d_{\sigma,2}=0, \ \forall \sigma \in J\big\}$. By definition, $T_{X(\ul{k}_J),x}\subseteq T_{X,x}$ equals the preimage of $W\cap W_J$ via (\ref{equ: cclg-jjW}), and thus
\begin{equation*}\dim_{k(x)} T_{X(\ul{k}_J),x}=\dim_{k(x)} T_{X,x}-|J\cap \Sigma(x)|-2|J\cap (\Sigma_L\setminus \Sigma(x))|,
\end{equation*}
Theorem \ref{thm: cclg-1ql} (2) follows.

We prove Theorem \ref{thm: cclg-dxX}. For $v\in T_{X,x}$, let $\widetilde{r}$ be the $\Gal_L$-representation over $k(x)[\epsilon]/\epsilon^2$ associated to $f(v)$ (cf. (\ref{equ: cclg-xXL})). By definition and (\ref{equ: cclg-V1x}), $v\in T_{X_{J-\dR}(\ul{k}_J),x}$ if and only if $\widetilde{r}$ satisfies the condition (i) (ii) in (\ref{equ: cclg-iast}) and
\begin{itemize}
  \item[(iii)] $\widetilde{r}$ is $J$-de Rham.
\end{itemize}
Denote by $\Ext^1_{\Gal_L,g,J}(r,r)$ the $k(x)$-vector subspace of $\Ext^1_{\Gal_L}(r,r)$ consisting of $J$-de Rham extensions. By the above discussion, (\ref{equ: cclg-xXL}) and   (\ref{equ: cclg-V1x}), we have an exact sequence
\begin{equation*}
  0 \ra K(r) \ra T_{X_{J-\dR}(\ul{k}_J),x} \xrightarrow{f} \Ext^1_{\Gal_L,g,J}(r,r)\cap V_1 \cap \nabla^{-1}(W)\ra 0.
\end{equation*}
For $J'\subset J$, $v\in T_{X_{J'-\dR}(\ul{k}_J),x}$ if and only if $v\in T_{X_{J'-\dR}(\ul{k}_{J'}),x}$ and $\nabla \circ f(v)\in W_J$, so we have an exact sequence
\begin{equation*}
  0 \ra K(r) \ra T_{X_{J'-\dR}(\ul{k}_{J}),x} \xrightarrow{f}  \Ext^1_{\Gal_L,g,J'}(r,r)\cap V_1 \cap \nabla^{-1}(W\cap W_J) \ra 0.
\end{equation*}
Theorem \ref{thm: cclg-dxX} is then an easy consequence of the following proposition.
\begin{proposition}\label{prop: cclg-rjg}
  (1) $\dim_{k(x)} \Ext^1_{\Gal_L,g,J}(r,r) \cap V_1\cap \nabla^{-1}(W)=\dim_{k(x)} \Ext^1_{\Gal_L}(r,r)-d_L-2|J|$.

  (2) $\dim_{k(x)} \Ext^1_{\Gal_L,g,J'}(r,r) \cap V_1 \cap  \nabla^{-1}(W\cap W_J)=\dim_{k(x)} \Ext^1_{\Gal_L}(r,r)-d_L-2|J'|-2|(J\setminus J') \cap (\Sigma_L\setminus \Sigma(x))|-|(J\setminus J') \cap \Sigma(x)|$.
\end{proposition}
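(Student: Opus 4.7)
The plan is to refine the tangent-space analysis of Thm.\ref{thm: cclg-1ql} by incorporating the partial de Rham condition, decomposing the total codimension as a universal trianguline contribution plus local contributions indexed by $\sigma \in \Sigma_L$.

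First I would check the containment $\Ext^1_{\Gal_L, g, J'}(r, r) \subseteq \nabla^{-1}(W_{J'})$: a first-order deformation that is $\sigma$-de Rham with the fixed Hodge-Tate weights $(-k_{1,\sigma}, -k_{2,\sigma})$ at $\sigma \in J'$ cannot move its Sen weights at $\sigma$, forcing $d_{\sigma,1}=d_{\sigma,2}=0$ for $\sigma\in J'$. Since $J' \subseteq J$ and thus $W_J \subseteq W_{J'}$, the extra constraint from $\nabla^{-1}(W_J)$ only involves indices in $J \setminus J'$.

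By Lem.\ref{lem: cclg-rve}, $\nabla|_{V_1}$ is surjective onto $k(x)^{2d_L}$, so the codimension $d_L - |\Sigma(x)|$ of $V_1$ in $\Ext^1(r,r)$ is concentrated in $\ker \nabla$. The conditions $V$ and $\nabla^{-1}(W_J)$ act purely on the image: their intersection is $\nabla^{-1}(W \cap W_J)$, and $W \cap W_J$ has codimension $2|J| + |\Sigma(x) \setminus J|$ in $k(x)^{2d_L}$. The $J'$-de Rham condition contributes the Sen constraints at $J'$ (already subsumed by $W_J \subseteq W_{J'}$) together with a further kernel-side constraint at each critical embedding $\sigma \in J' \cap \Sigma(x)$. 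The key local input — to be carried out by adapting the sub/quotient/extension argument in the proof of Thm.\ref{thm: cclg-1ql} (1) to the partial de Rham setting via the $(\varphi,\Gamma)$-module and $B$-pair formalism of \cite{Ding4} — is that for $\sigma \in J' \setminus \Sigma(x)$ (non-critical) a deformation in $V_1$ with Sen weights fixed at $\sigma$ is automatically $\sigma$-de Rham, while for $\sigma \in J' \cap \Sigma(x)$ (critical) the $\sigma$-de Rham condition cuts exactly one further dimension in $\ker \nabla$. Verifying these two local claims, together with the joint independence of the constraints across different $\sigma$, is the main technical step.

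Summing image-side and kernel-side codimensions yields total codimension
\begin{equation*}
(d_L - |\Sigma(x)|) + \bigl(2|J| + |\Sigma(x) \setminus J|\bigr) + |J' \cap \Sigma(x)|
\end{equation*}
in $\Ext^1(r,r)$, which, using the partition $|\Sigma(x)| = |\Sigma(x) \cap J'| + |(J \setminus J') \cap \Sigma(x)| + |\Sigma(x) \setminus J|$, rearranges to
\begin{equation*}
d_L + 2|J'| + 2|(J \setminus J') \cap (\Sigma_L \setminus \Sigma(x))| + |(J \setminus J') \cap \Sigma(x)|.
\end{equation*}
This establishes (2), and (1) is the specialisation $J' = J$ in which the last two summands vanish.
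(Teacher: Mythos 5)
Your bookkeeping is right and your reduction is sound: the containment $\Ext^1_{\Gal_L,g,J'}(r,r)\subseteq \nabla^{-1}(W_{J'})$ is correct, the splitting of the codimension into a kernel-side and an image-side contribution for $\nabla$ is exactly how the count must go, your arithmetic reproduces the stated formulas (and you have, correctly, read part (2) with the $J'$-de Rham condition, as the exact sequence preceding the proposition requires). But as a proof there is a genuine gap: the two ``local claims'' you defer, together with their joint independence, are not side verifications — they \emph{are} the proposition. In the paper they are precisely Lem.\ref{lem: cclg-jrx} (the dimension of $\Ext^1_{\Gal_L,g,J}(r,r)\cap V_1$, proved by mapping $H^1_{g,J}(\Gal_L,W(r)\otimes W(r)^{\vee})$ onto $H^1_{g,\Sigma(x)\cup J}(\Gal_L,B_{k(x)}(\delta_2'\delta_1^{-1}))$ and computing both sides with the partially de Rham Euler-characteristic formulas Prop.\ref{prop: cclg-wrn} and Cor.\ref{cor: cclg-pdR}) and Lem.\ref{lem: cclg-arl}; saying they can be obtained ``by adapting the sub/quotient/extension argument'' is a plan, not an argument, and in particular the assertion that at a critical $\sigma$ the de Rham condition cuts \emph{exactly} one dimension (no more, no fewer), independently across $\sigma$, is exactly the surjectivity-plus-dimension computation of Lem.\ref{lem: cclg-jrx}.

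There is also a specific logical slip in the image-side step. You justify ``the conditions $V$ and $\nabla^{-1}(W_J)$ act purely on the image'' by Lem.\ref{lem: cclg-rve}, i.e.\ surjectivity of $\nabla$ on $V_1$. That is not enough: to conclude that intersecting $\Ext^1_{\Gal_L,g,J'}(r,r)\cap V_1$ with $\nabla^{-1}(W\cap W_J)$ costs the full codimension of $W\cap W_J$, you need $W\cap W_J$ to be contained in $\nabla\big(\Ext^1_{\Gal_L,g,J'}(r,r)\cap V_1\big)$, i.e.\ the surjectivity of $\nabla$ on the \emph{partially de Rham} part of $V_1$ onto $W_{J'}$. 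This is the content of Lem.\ref{lem: cclg-arl}, and it is strictly stronger than Lem.\ref{lem: cclg-rve}: the paper proves it by explicitly constructing $J'$-de Rham trianguline deformations (extensions with parameter characters chosen with algebraic, hence fixed, weights at $J'$, then twisted) realizing arbitrary Sen-weight variation outside $J'$. Equivalently, one may absorb this into a strengthened form of your critical-$\sigma$ claim (``the extra condition only constrains $\ker\nabla$ directions''), but then that strengthened claim is what must be proved, and it again requires the explicit construction, not just the cohomological count. So the skeleton matches the paper's proof, but the substance — Lem.\ref{lem: cclg-jrx} and Lem.\ref{lem: cclg-arl} or equivalents — is missing.
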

We deduce Proposition \ref{prop: cclg-rjg} from  the following two lemmas.
\begin{lemma}\label{lem: cclg-jrx}
  $\dim_{k(x)} \Ext^1_{\Gal_L,g,J}(r,r)\cap V_1= \dim_{k(x)}\Ext^1_{\Gal_L}(r,r)-3|J|-(d_L-|\Sigma(x)|-|J\cap \Sigma^+(\delta')|)$.
\end{lemma}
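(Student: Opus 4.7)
The plan is to leverage the triangulation that every element of $V_1$ inherits from the defining condition (ii) of (\ref{equ: cclg-iast}). Saturating the embedding from (ii), any $\widetilde r\in V_1$ admits a lift
\begin{equation*}
0 \to \cR_{k(x)[\epsilon]/\epsilon^2}(\widetilde\delta_1') \to D_{\rig}(\widetilde r) \to \cR_{k(x)[\epsilon]/\epsilon^2}(\widetilde\delta_2') \to 0
\end{equation*}
of the triangulation $(\delta_1', \delta_2')$ of $D_{\rig}(r)$. I would first use this to present $V_1$ as fitting, via the extension-class map, into a comparison with $H^1_{(\varphi,\Gamma)}\bigl(\cR_{k(x)}(\delta_1'(\delta_2')^{-1})\bigr)$ together with the infinitesimal deformation data of the characters $\delta_i'$; this alone already recovers the count $\dim V_1 = \dim_{k(x)}\Ext^1_{\Gal_L}(r,r) - (d_L-|\Sigma(x)|)$ from the preceding lemma.

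The main step is to translate the $J$-de Rham condition on $\widetilde r$ across this triangulation, using the rank-one partially de Rham cohomology computations of \cite{Ding4} (in the language of $B$-pairs, see also \S \ref{sec: cclg-A.1}). Writing $\eta_i := \widetilde\delta_i'$, the $\sigma$-de Rham condition on a trianguline $(\varphi,\Gamma)$-module with parameter $(\eta_1,\eta_2)$ decomposes as: $\eta_1, \eta_2$ must be $\sigma$-de Rham (forcing the $\epsilon$-part of their Sen weights at $\sigma$ to vanish, a codim $2$ constraint), plus, in the \emph{anti-dominant} regime $\wt(\eta_1)_\sigma < \wt(\eta_2)_\sigma$, an additional codimension-one condition that the extension class lie in the $\sigma$-de Rham subspace of $H^1_{(\varphi,\Gamma)}(\cR(\eta_1\eta_2^{-1}))$; in the \emph{dominant} regime $\wt(\eta_1)_\sigma > \wt(\eta_2)_\sigma$, the Hodge filtration is automatic and no extra constraint on the class appears. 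Applying this at each $\sigma\in J$ to $(\widetilde\delta_1',\widetilde\delta_2')$: for $\sigma\in J\cap N(\delta')$ we are in the dominant regime (since $\wt(\delta_1')_\sigma = k_{1,\sigma}> k_{2,\sigma}=\wt(\delta_2')_\sigma$) and only the weight-lift conditions $\wt(\widetilde\delta_i')_\sigma = k_{i,\sigma}$ survive, contributing codimension $2$ on $V_1$; for $\sigma\in J\cap \Sigma(x) = J\setminus N(\delta')$ we are in the anti-dominant regime and must additionally force the extension class to be $\sigma$-de Rham, contributing codimension $3$ in total.

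Summing over $\sigma\in J$, the $J$-de Rham condition cuts $V_1$ down by
\begin{equation*}
2|J\cap N(\delta')| + 3|J\cap \Sigma(x)| = 3|J| - |J\cap N(\delta')|,
\end{equation*}
which combined with the known formula for $\dim V_1$ gives the claimed equality. The main obstacle will be to verify rigorously that the individual codimension constraints at a fixed $\sigma$ (the two weight-lift conditions together with, in the anti-dominant case, the extension-class condition) are mutually transverse, and that conditions at distinct embeddings are independent. This transversality follows from the embedding-wise decomposition of the Sen module of $D_{\rig}(\widetilde r)$, the surjectivity of the Sen map $\nabla$ on $V_1$ already established in Lem.\ref{lem: cclg-rve}, and the rank-one dimension counts of \cite{Ding4} describing the $\sigma$-de Rham subspaces of $H^1_{(\varphi,\Gamma)}$ of a rank-one $(\varphi,\Gamma)$-module.
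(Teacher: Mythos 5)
Your opening reduction is where the argument breaks. By definition $V_1$ is the kernel of the composition (\ref{equ: cclg-dgr}), which ends in $\Ext^1_{(\varphi,\Gamma)}\big(\cR_{k(x)}(\delta_1),\cR_{k(x)}(\delta_2')\big)$, i.e.\ it only encodes condition (ii) of (\ref{equ: cclg-iast}): the existence of an injection $\cR_{k(x)[\epsilon]/\epsilon^2}(\widetilde{\delta}_1)\hookrightarrow D_{\rig}(\widetilde{r})$ lifting the \emph{non-saturated} sub-object $\cR_{k(x)}(\delta_1)\subset\cR_{k(x)}(\delta_1')$. The classes for which the saturated triangulation $(\delta_1',\delta_2')$ itself deforms form the kernel of the composition \emph{before} applying $j$, which has codimension $d_L$ in $\Ext^1_{\Gal_L}(r,r)$, whereas $V_1$ has codimension $d_L-|\Sigma(x)|$. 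So when $\Sigma(x)\neq\emptyset$ — the only case in which the lemma has content — $V_1$ contains an $|\Sigma(x)|$-dimensional space of classes for which the saturation of $\cR_{k(x)[\epsilon]/\epsilon^2}(\widetilde{\delta}_1)$ inside $D_{\rig}(\widetilde{r})$ is not free of rank one and no exact sequence of the form you write exists; this is exactly why the paper, in the discussion after (\ref{equ: cclg-iast}), only extracts the rank-three saturated submodule $D'$ rather than a deformed triangulation, and why Lem.\ref{lem: cclg-rve} asserts merely that trianguline deformations are \emph{contained} in $V_1$. Consequently your case analysis at $\sigma\in J\cap\Sigma(x)$, which presupposes parameters $(\widetilde{\delta}_1',\widetilde{\delta}_2')$, does not apply to precisely the critical directions the lemma must account for; at best it computes the $J$-de Rham classes inside the strictly smaller trianguline locus, not $\Ext^1_{\Gal_L,g,J}(r,r)\cap V_1$.

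Even granting some dévissage, the transversality you defer to the last sentence is the actual content of the lemma, and surjectivity of $\nabla|_{V_1}$ does not supply it: $\nabla$ only controls the Sen-weight directions, while at $\sigma\in J\cap\Sigma(x)$ you are imposing in addition that an extension class become $\sigma$-de Rham, and one must show this cuts the already-constrained space down by exactly one dimension at each such $\sigma$, independently across embeddings and transversally to the condition defining $V_1$. In the paper this is exactly the Claim inside the proof: $j$ restricts to a surjection $H^1_{g,J}\big(\Gal_L,B_{k(x)}(\delta_0')\big)\twoheadrightarrow H^1_{g,\Sigma(x)\cup J}\big(\Gal_L,B_{k(x)}(\delta_0)\big)$, proved by a diagram chase using $H^1_{g,\sigma}\big(\Gal_L,B_{k(x)}(\delta_0')\big)=H^1\big(\Gal_L,B_{k(x)}(\delta_0')\big)$ for $\sigma\in J\cap\Sigma(x)$ together with Prop.\ref{prop: cclg-exbp}; the lemma then follows by identifying $\Ext^1_{\Gal_L,g,J}(r,r)\cap V_1$ with the kernel of the induced surjection from $H^1_{g,J}\big(\Gal_L,W(r)\otimes W(r)^{\vee}\big)$ (of dimension $\dim_{k(x)}\Ext^1_{\Gal_L}(r,r)-3|J|$ by Prop.\ref{prop: cclg-wrn} and Cor.\ref{cor: cclg-pdR}) onto a target of dimension $d_L-|\Sigma(x)|-|J\cap N(\delta')|$. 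Your final codimension count $2|J\cap N(\delta')|+3|J\cap\Sigma(x)|$ does agree numerically with the statement, but the two inputs it rests on — a trianguline structure on all of $V_1$, and the independence of the extension-class conditions — are respectively false and unproved, so the proposal has a genuine gap.
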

\begin{lemma}\label{lem: cclg-arl}
The induced map
\begin{equation*}
    \nabla: \Ext^1_{\Gal_L,g,J}(r,r)\cap V_1 \lra W_J
\end{equation*}
is surjective. \end{lemma}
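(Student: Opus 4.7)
The plan is to adapt the construction used in the proof of Lem.\ref{lem: cclg-rve}, refining the characters involved so that the resulting trianguline deformation of $D_{\rig}(r)$ is also $J$-de Rham. Given $(d_{\sigma,1},d_{\sigma,2})_{\sigma\in\Sigma_L}\in W_J$ (so $d_{\sigma,1}=d_{\sigma,2}=0$ for $\sigma\in J$), I first choose continuous characters $\tilde{\delta}_2':L^\times\to(k(x)[\epsilon]/\epsilon^2)^\times$ deforming $\delta_2'$ and $\tilde{\delta}':L^\times\to (k(x)[\epsilon]/\epsilon^2)^\times$ deforming $\delta_1'(\delta_2')^{-1}$, with Sen weights $\wt(\tilde{\delta}_2')_\sigma=\wt(\delta_2')_\sigma+\epsilon d_{\sigma,2}$ and $\wt(\tilde{\delta}')_\sigma=\wt(\delta_1'(\delta_2')^{-1})_\sigma+\epsilon(d_{\sigma,1}-d_{\sigma,2})$. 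Since $d_{\sigma,i}=0$ for $\sigma\in J$, these can be taken locally $J$-algebraic, making the rank one $(\varphi,\Gamma)$-modules $\cR_{k(x)[\epsilon]/\epsilon^2}(\tilde{\delta}')$ and $\cR_{k(x)[\epsilon]/\epsilon^2}(\tilde{\delta}_2')$ $J$-de Rham.

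Because $r$ is $J$-de Rham by hypothesis and $(\delta_2')^{-1}$ is locally $J$-algebraic, the twist $D_{\rig}(r)\otimes_{\cR_{k(x)}}\cR_{k(x)}((\delta_2')^{-1})$ remains $J$-de Rham, so its class lies in the partially de Rham subspace $H^1_{J\text{-}g}\big(\cR_{k(x)}(\delta_1'(\delta_2')^{-1})\big)$ of $H^1_{(\varphi,\Gamma)}\big(\cR_{k(x)}(\delta_1'(\delta_2')^{-1})\big)$ classifying $J$-de Rham extensions. The key technical input is that the map
\begin{equation*}
H^1_{J\text{-}g}\big(\cR_{k(x)[\epsilon]/\epsilon^2}(\tilde{\delta}')\big)\lra H^1_{J\text{-}g}\big(\cR_{k(x)}(\delta_1'(\delta_2')^{-1})\big)
\end{equation*}
attached to the short exact sequence of rank one modules appearing in the proof of Lem.\ref{lem: cclg-rve} is surjective. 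I would establish this by running the long exact sequence of partially de Rham cohomology and showing the connecting map into $H^2_{J\text{-}g}$ vanishes; very regularity of $\delta$ forces the relevant obstruction space to vanish, using the formalism of partially de Rham $B$-pairs from \S \ref{sec: cclg-A.1} together with the dimension and duality results for partially de Rham cohomology from \cite{Ding4}.

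Given such a lift $[D']\in H^1_{J\text{-}g}(\cR_{k(x)[\epsilon]/\epsilon^2}(\tilde{\delta}'))$, set $D:=D'\otimes_{\cR_{k(x)[\epsilon]/\epsilon^2}}\cR_{k(x)[\epsilon]/\epsilon^2}(\tilde{\delta}_2')$. Exactly as in Lem.\ref{lem: cclg-rve}, $D$ is a trianguline deformation of $D_{\rig}(r)$ with parameters $(\tilde{\delta}'\tilde{\delta}_2',\tilde{\delta}_2')$; moreover, tensoring the $J$-de Rham extension $D'$ by the $J$-de Rham rank one module $\cR(\tilde{\delta}_2')$ preserves $J$-de Rhamness, so $D\cong D_{\rig}(\tilde{r})$ for some $\tilde{r}\in\Ext^1_{\Gal_L,g,J}(r,r)\cap V_1$ with $\nabla(\tilde{r})=(d_{\sigma,1},d_{\sigma,2})_{\sigma\in\Sigma_L}$, establishing the desired surjectivity. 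The main obstacle is proving surjectivity of the partially de Rham $H^1$ comparison map above; once this is granted, the rest is a direct adaptation of the proof of Lem.\ref{lem: cclg-rve}.
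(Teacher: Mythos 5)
Your proposal is correct and follows essentially the same route as the paper: deform $\delta_1'(\delta_2')^{-1}$ and $\delta_2'$ with weights held fixed on $J$ (so the rank one objects stay $J$-de Rham), lift the $J$-de Rham class of the twist of $r$ through the surjection on partially de Rham $H^1$, and twist back to land in $\Ext^1_{\Gal_L,g,J}(r,r)\cap V_1$ with prescribed $\nabla$-image. The only cosmetic differences are that the paper phrases this with $B$-pairs and that the key surjectivity you flag as the "main obstacle" is exactly Prop.\ref{prop: cclg-exbp} (whose proof is the long exact sequence plus the vanishing of $\widetilde{H}^2_J$ guaranteed by very regularity, as you describe).
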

\begin{proof}[Proof of Proposition \ref{prop: cclg-rjg}](1) 
We have
\begin{multline*}
  \dim_{k(x)} \mathrm{Ext}^1_{\mathrm{Gal}_L,g, J}(r,r)\cap V_1 \cap \nabla^{-1}(W)=\dim_{k(x)}\Ext^1_{\Gal_L,g,J}(r,r)\cap V_1 -|\Sigma(x) \cap (\Sigma_L\setminus J)| \\ = \dim_{k(x)} \Ext^1_{\Gal_L}(r,r)-3|J|-(d_L-|\Sigma(x)|-|J\cap \Sigma^+(\delta')|)-|\Sigma(x) \cap (\Sigma_L\setminus J)|
\\ =\dim_{k(x)} \Ext^1_{\Gal_L}(r,r)-d_L-3|J|+|J\cap \Sigma^+(\delta')|+|J\cap \Sigma(x)|
\\ =\dim_{k(x)} \Ext^1_{\Gal_L}(r,r)-d_L-2|J|,
\end{multline*}
where the first equality follows from Lemma \ref{lem: cclg-arl}, the second from Lemma \ref{lem: cclg-jrx}, and the last from  $\Sigma^+(\delta')\sqcup \Sigma(x)=\Sigma^+(\delta)$ and $J\subset \Sigma^+(\delta)$.

(2) By Lemma \ref{lem: cclg-arl} applied to $J'$, we deduce that the following  map is surjective:
\begin{equation*}
 \Ext^1_{\Gal_L,g,J'}(r,r)\cap  V_1 \cap \nabla^{-1}(W) \twoheadlongrightarrow W \cap W_{J'}.
\end{equation*}
It is straightforward to see $\Ext^1_{\Gal_L,g,J'}(r,r) \cap V_1 \cap  \nabla^{-1}(W\cap W_J)$ is the preimage of $W\cap W_J$ via the above map. So (2) follows from (1) and the easy fact that $|W\cap W_{J'}|-|W\cap W_J|=2|(J\setminus J')\cap \Sigma(x)^c|+|(J\setminus J')\cap \Sigma(x)|$.
\end{proof}
We use the language of $B$-pairs in the proof of Lemma \ref{lem: cclg-jrx} and \ref{lem: cclg-arl}.
\begin{proof}[Proof of Lemma \ref{lem: cclg-jrx}]Let $W(r)$ denote the $k(x)$-$B$-pair associated to $r$, which lies in an exact sequence of $k(x)$-$B$-pairs (cf. Appendix \ref{sec: cclg-A.1} below for the notation for $B$-pairs)
\begin{equation*}
  0 \ra B_{k(x)}(\delta_1') \ra W(r) \ra B_{k(x)}(\delta_2') \ra 0.
\end{equation*} Identifying $\Ext^1_{\Gal_L}(r,r)$ and $H^1(\Gal_L, W(r) \otimes W(r)^{\vee})$, $\Ext^1_{\Gal_L, g,J}(r,r) \cap V_1$ equals thus the kernel of the following composition (see (\ref{equ: cclg-dgr}), note we also identify the cohomology of $(\varphi,\Gamma)$-modules with the cohomology of the corresponding $B$-pairs, e.g. see \cite[\S~5]{Na2})
\begin{multline}\label{equ: cclg-x2b}
  H^1_{g,J}\big(\Gal_L, W(r)\otimes W(r)^{\vee}\big) \lra H^1_{g,J}\big(\Gal_L, B_{k(x)}(\delta_2') \otimes W(r)^{\vee}\big) \\
  \lra H^1_{g,J}\big(\Gal_L, B_{k(x)}(\delta_2')\otimes B_{k(x)}((\delta_1')^{-1})\big)\cong H^1_{g,J}\big(\Gal_L, B_{k(x)}(\delta_2'(\delta_1')^{-1})\big)\\ \lra H^1_{g,J}\big(\Gal_L, B_{k(x)}(\delta_2'\delta_1^{-1})\big)
\end{multline}
where the first two maps are  surjective by Proposition \ref{prop: cclg-exbp} (since $\delta$ is very regular), and the last map is induced by the natural inclusion $B_{k(x)}(\delta_2'(\delta_1')^{-1})\hookrightarrow B_{k(x)}(\delta_2'\delta_1^{-1})$. Denote by $\delta_0:=\delta_2'\delta_1^{-1}$, $\delta_0':=\delta_2'(\delta_1')^{-1}$, thus $\delta_0=\delta_0'\prod_{\sigma\in \Sigma(x)} \sigma^{-n_{\sigma}}$, with $n_{\sigma}:=|\wt(\delta_1)_{\sigma}-\wt(\delta_2)_{\sigma}|\in \Z_{\geq 1}$ for $\sigma \in \Sigma_L$, and
\begin{equation}\label{equ: cclg-nxn}
  \wt(\delta_0')_{\sigma}=\begin{cases}
    n_{\sigma} & \sigma \in \Sigma_L \setminus \Sigma^+(\delta') \\
    -n_{\sigma} & \sigma \in \Sigma^+(\delta')
  \end{cases},\ \
  \wt(\delta_0)_{\sigma}=
  \begin{cases}
    \wt(\delta_0')_{\sigma} & \sigma \in \Sigma_L\setminus \Sigma(x) \\
    0 & \sigma\in \Sigma(x)
  \end{cases}.
\end{equation}
One has an exact sequence (of $\Gal_L$-complexes)
\begin{multline*}
  0 \lra \big[B_{k(x)}(\delta'_0)_e \oplus B_{k(x)}(\delta'_0)_{\dR}^+ \ra B_{k(x)}(\delta_0')_{\dR}\big] \\ \lra \big[B_{k(x)}(\delta_0)_e \oplus B_{k(x)}(\delta_0)_{\dR}^+ \ra B_{k(x)}(\delta_0)_{\dR}\big]\\  \lra \big[\oplus_{\sigma\in \Sigma(x)} B_{k(x)}(\delta_0)_{\dR,\sigma}^+/t^{n_{\sigma}} \ra 0 \big] \lra 0
\end{multline*}
which induces
\begin{multline*}
  0 \ra \oplus_{\sigma\in \Sigma(x)} H^0(\Gal_L, B_{k(x)}(\delta_0)_{\dR,\sigma}^+/t^{n_{\sigma}}) \ra H^1(\Gal_L, B_{k(x)}(\delta_0'))\\ \xrightarrow{j} H^1(\Gal_L, B_{k(x)}(\delta_0)) \ra \oplus_{\sigma\in \Sigma(x)} H^1(\Gal_L, B_{k(x)}(\delta_0)_{\dR,\sigma}^+/t^{n_{\sigma}}) \ra 0.
\end{multline*}
By (\ref{equ: cclg-nxn}), one has $B_{k(x)}(\delta_0)_{\dR,\sigma}^+\cong B_{\dR,\sigma}^+$ for $\sigma\in \Sigma(x)$. Thus $\dim_{k(x)} \Ima(j)=d_L -|\Sigma(x)|$. Moreover, by \cite[(7)]{Ding4}, $\Ima(j)=H^1_{g,\Sigma(x)}(\Gal_L, B_{k(x)}(\delta_0))$. We claim that the map $j$ restricts to a surjective map
\begin{equation*}
    H^1_{g,J}(\Gal_L, B_{k(x)}(\delta_0')) \twoheadlongrightarrow H^1_{g,\Sigma(x) \cup J}(\Gal_L, B_{k(x)}(\delta_0)).
\end{equation*}
Indeed we have a commutative diagram
  \begin{equation*}
    \begin{CD}
      H^1(\Gal_L, B_{k(x)}(\delta_0')) @> j >> H^1_{g,\Sigma(x)}(\Gal_L, B_{k(x)}(\delta_0)) \\
      @VVV @VVV\\
      \oplus_{\sigma\in J\cap(\Sigma_L\setminus \Sigma(x))} H^1(\Gal_L, B_{k(x)}(\delta_0')_{\dR,\sigma}^+) @> \cong >> \oplus_{\sigma\in J\cap(\Sigma_L\setminus \Sigma(x))} H^1(\Gal_L, B_{k(x)}(\delta_0)_{\dR,\sigma}^+)
    \end{CD}
  \end{equation*}
where the top map is surjective, and the bottom isomorphism follows from (\ref{equ: cclg-nxn}). Note the kernel of the left vertical map is $H^1_{g,J\cap (\Sigma_L\setminus \Sigma(x))}\big(\Gal_L,B_{k(x)}(\delta_0')\big)\cong H^1_{g,J}\big(\Gal_L, B_{k(x)}(\delta_0')\big)$, since for all $\sigma\in J\cap \Sigma(x)\subseteq \Sigma_L\setminus \Sigma^+(\delta')$, $H^1_{g,\sigma}\big(\Gal_L,B_{k(x)}(\delta_0')\big)=H^1\big(\Gal_L,B_{k(x)}(\delta_0')\big)$ by \cite[Lem. 1.11]{Ding4} and (\ref{equ: cclg-nxn}). The kernel of the right vertical map is by definition $H^1_{g,\Sigma(x) \cup J}(\Gal_L, B_{k(x)}(\delta_0))$. 
The claim then follows.

By this claim, the composition (\ref{equ: cclg-x2b}) induces a surjection
\begin{equation*}
   H^1_{g,J}(\Gal_L, W(r)\otimes W(r)^{\vee})  \twoheadlongrightarrow H^1_{g,\Sigma(x) \cup J}(\Gal_L, B_{k(x)}(\delta_0)).
\end{equation*}
Since $\delta'$ is very regular, by Proposition \ref{prop: cclg-wrn} and Corollary \ref{cor: cclg-pdR}, we have
\begin{multline*}
  \dim_{k(x)} H^1_{g,J}(\Gal_L, W(r) \otimes W(r)^{\vee})\\ =\dim_{k(x)} H^1(\Gal_L, W(r)\otimes W(r)^{\vee})-\sum_{\sigma\in J} \dim_{k(x)} H^0(\Gal_L, (W(r)\otimes W(r)^{\vee})_{\dR,\sigma}^+)\\ =\dim_{k(x)} \Ext^1_{\Gal_L}(r,r)-3|J|,
\end{multline*}
\begin{equation*}
 \dim_{k(x)} H^1_{g,\Sigma(x) \cup J}(\Gal_L, B_{k(x)}(\delta_0))=d_L-|\Sigma(x)|-|J\cap \Sigma^+(\delta')|.
\end{equation*}
The lemma follows.
\end{proof}
\begin{proof}[Proof of Lemma \ref{lem: cclg-arl}]Let $\widetilde{\delta}': L^{\times} \ra (k(x)[\epsilon]/\epsilon^2)^{\times}$ be a continuous character with $\widetilde{\delta}'\equiv \delta_1'(\delta_2')^{-1} \pmod{\epsilon}$. As discussed in  Example \ref{ex: cclg-bpair} (4), the associated $k(x)[\epsilon]/\epsilon^2$-$B$-pair $B_{(k(x)[\epsilon]/\epsilon^2)^{\times}}(\widetilde{\delta})$ is $\sigma$-de Rham if and only if $\wt(\widetilde{\delta})\in \Z$. For any $\widetilde{\delta}': L^{\times} \ra (k(x)[\epsilon]/\epsilon^2)^{\times}$ as above satisfying moreover $\wt(\widetilde{\delta}')_{\sigma}=\wt(\delta)_{\sigma}$ for all $\sigma\in J$, by Proposition \ref{prop: cclg-exbp}, the natural morphism $B_{(k(x)[\epsilon]/\epsilon^2)^{\times}}(\widetilde{\delta}') \ra B_{k(x)}(\delta_1' (\delta_2')^{-1})$ induces a surjection $H^1_{g,J}\big(\Gal_L, B_{(k(x)[\epsilon]/\epsilon^2)^{\times}}(\widetilde{\delta}') \big) \twoheadrightarrow H^1_{g,J}\big(\Gal_L, B_{k(x)}(\delta_1' (\delta_2')^{-1})\big)$.  Let $W'\in H^1_{g,J}\big(\Gal_L, B_{(k(x)[\epsilon]/\epsilon^2)^{\times}}(\widetilde{\delta}') \big)$ be a preimage of $[W(r) \otimes B_{k(x)}((\delta_2')^{-1})]$ which is thus $J$-de Rham, and let $\widetilde{\delta}_2': L^{\times} \ra (k(x)[\epsilon]/\epsilon^2)^{\times}$ be a continuous character with $\widetilde{\delta}_2'\equiv \delta_2' \pmod{\epsilon}$ and $\wt(\widetilde{\delta}_2')_{\sigma}=\wt(\delta_2')_{\sigma}$ for all $\sigma\in J$ \big(thus $B_{k(x)}(\widetilde{\delta}_2')$ is $J$-de Rham\big). it is straightforward to see that $W:=W'\otimes B_{(k(x)[\epsilon]/\epsilon^2)^{\times}}(\widetilde{\delta}_2')$ lies in $\Ext^1_{\Gal_L, g,J}(r,r)\cap V_1$. For any $(a_{\sigma},b_{\sigma})\in k(x)^{2(d_L-|J|)}$, choose $\widetilde{\delta}'$ and $\widetilde{\delta}_2'$ as above which satisfy moreover $\wt(\widetilde{\delta}_2')_{\sigma}=\wt(\delta_2')_{\sigma}+b_{\sigma}\epsilon$, $\wt(\widetilde{\delta}')_{\sigma}=\wt(\delta_1(\delta_2')^{-1})_{\sigma}+(a_{\sigma}-b_{\sigma})\epsilon$ for $\sigma \in J$. We have $\nabla([D])= (a_{\sigma},b_{\sigma})_{\sigma\in \Sigma_L}$, and thus $\nabla: \Ext^1_{\Gal_L, g,J}(r,r)\cap V_1 \ra W_J$ is surjective.\end{proof}

\section{Patched eigenvarieties revisited}\label{sec: cclg-3}
\subsection{Setup and notation}\label{sec: cclg-3.1}We fix embeddings $\iota_{\infty}: \overline{\Q} \hookrightarrow \bC$, and $\iota_{p}: \overline{\Q} \hookrightarrow \overline{\Q_p}$.
Let $F^+$ be a totally real number field, $F$ be a quadratic imaginary extension of $F^+$ and $c$ be the unique non-trivial element in $\Gal(F/F^+)$. Suppose for any finite place $v|p$ of $F^+$, $v$ is split in $F$. Let $E$ be a finite  extension of $\Q_p$ big enough to contain all the embedding of $F^+$ in $\overline{\Q_p}$, with $\co_E$ its ring of integers and $\varpi_E$ a uniformizer. Denote by $\Sigma_v$ the set of $\Q_p$-embeddings of $F_v^+$ in $\overline{\Q_p}$, $\Sigma_p:=\cup_{v|p} \Sigma_v$.

Let $G$ be a $2$-variables definite unitary group over $F^+$ which splits over $F$. We fix an isomorphism of algebraic groups $G\times_{F^+} F \cong \GL_2/F$. If $v$ is a finite place of $F^+$ split in $F$, $\widetilde{v}$ is a place of $F$ dividing $v$, thus $F^+_v \cong F_{\widetilde{v}}$, and we get an isomorphism $i_{\widetilde{v}}: G(F_v^+) \xrightarrow{\sim} \GL_2(F_{\widetilde{v}})$. Let $U^p$ be a compact open subgroup of  $G(\bA_{F^+}^{\infty,p})$ of the form $U^p=\prod_{v \nmid p} U_v$ where $\bA_{F^+}^{\infty,p}$ denotes the finite adeles away from $p$, and $U_v$ is an open compact subgroup of $G(F_v^+)$.

Let $S$ be a finite set of finite places of $F^+$  that split in $F$ containing all the places above $p$ and the places $v$ where $U_v$ is not maximal. Let $S_p$ denote the set of the places of $F^+$ above $p$. We \textbf{fix} a place $\widetilde{v}$ of $F$ above $v$ for $v\in S$ and denote by $\widetilde{v}^c$ its image under $c$.
In particular, for $v|p$, we identify $F_{\widetilde{v}}$ and $F_v^+$, the set $\Sigma_{\widetilde{v}}$ of $\Q_p$-embeddings of $F_{\widetilde{v}}$ in $\overline{\Q_p}$ and $\Sigma_v$. Denote by $F_{\widetilde{v},0}$ the maximal unramified extension of $\Q_p$ inside $F_{\widetilde{v}}$, $\varpi_{\widetilde{v}}$ a uniformizer of $F_{\widetilde{v}}$, $d_{\widetilde{v}}:=[F_{\widetilde{v}}:\Q_p]$, $q_{\widetilde{v}}:=p^{[F_{\widetilde{v},0}:\Q_p]}$, $\val_{\widetilde{v}}(\cdot)$ the additive valuation on $F_{\widetilde{v}}$ normalized by  sending $\varpi_{\widetilde{v}}$ to $1$, and $\unr_{\widetilde{v}}(z)$ the unramified character of $F_{\widetilde{v}}^{\times}$ sending $\varpi_{\widetilde{v}}$ to $z$.

Suppose moreover that for any finite place  $v\notin S$ and $v$ split in $F$, we have $U_v=i_{\widetilde{v}}^{-1}(\GL_2(\co_{F_{\widetilde{v}}}))$ (note that this condition is actually independent of the choice of $\widetilde{v}|v$). For such a place $v$ of $F^+$, let $\bT_v$  be the commutative spherical Hecke algebra \begin{equation*}\co_E[U_v \backslash G(F_v^+)/U_v]\xrightarrow[\sim]{i_{\widetilde{v}}} \co_E[\GL_2(F_{\widetilde{v}})//\GL_2(\co_{F_{\widetilde{v}}})]\cong \co_E[T_{\widetilde{v}},S_{\widetilde{v}}^{\pm 1}]\end{equation*} where $T_{\widetilde{v}}=\GL_2(\co_{F_{\widetilde{v}}})\begin{pmatrix} \varpi_{\widetilde{v}} & 0 \\0 & 1\end{pmatrix} \GL_2(\co_{F_{\widetilde{v}}})$ and $S_{\widetilde{v}}=\GL_2(\co_{F_{\widetilde{v}}}) \begin{pmatrix}
  \varpi_{\widetilde{v}} & 0 \\ 0 & \varpi_{\widetilde{v}}
\end{pmatrix} \GL_2(\co_{F_{\widetilde{v}}})$. One has in fact $i_{\widetilde{v}}^{-1} (T_{\widetilde{v}})=i_{\widetilde{v}^c}^{-1}(S_{\widetilde{v}^c}^{-1} T_{\widetilde{v}^c})$, $i_{\widetilde{v}}^{-1}(S_{\widetilde{v}})=i_{\widetilde{v}^c}^{-1}(S_{\widetilde{v}^c})$ with $v=\widetilde{v}\widetilde{v}^c$. Put $\bT^S:=\varinjlim_{I} (\otimes_{v\in I} \bT_v)$ for $I$ running over finite sets disjoint from $S$ of finite places of $F^+$ which are completely decomposed in $F$, thus $\bT^S$ is a commutative $\co_E$-algebra.

Let $\overline{\rho}$ be a $2$-dimensional continuous absolutely irreducible representation of $\Gal_F$ over $k_E$ such that $\overline{\rho}^{\vee}\circ c \cong \overline{\rho}\otimes \overline{\varepsilon}$ and $\overline{\rho}$ is unramified outside $S$ (where $\varepsilon$ denotes the cyclotomic character). We associate to $\overline{\rho}$ a maximal ideal $\fm(\overline{\rho})$ of $\bT^S$ generated by $\varpi_E$, $T_{\widetilde{v}}-\tr(\overline{\rho}(\Frob_{\widetilde{v}}))$ and $\Norm(\widetilde{v})S_{\widetilde{v}}- \dett(\overline{\rho}(\Frob_{\widetilde{v}}))$ where $\Norm(\widetilde{v})$ denotes the cardinality of the residue field of $F_{\widetilde{v}}$, $\Frob_{\widetilde{v}}$ denotes a geometric Frobenius. 

For a compact open subgroup $U_p$ of $G(F^+\otimes_{\Q} \Q_p)$ and $s\in \Z_{>0}$, put
\begin{equation*}S(U^pU_p,\co_E/\varpi_E^s):=\{f: G(F^+)\backslash G(\bA_{F^+}^{\infty})/(U^pU_p) \ra \co_E/\varpi_E^s\}\end{equation*}
which is a finite $\co_E/\varpi_E^s$-module. Put $\widehat{S}(U^p,\co_E):=\varprojlim_s \varinjlim_{U_p}S(U^pU_p,\co_E/\varpi_E^s)$, $\widehat{S}(U^p,\co_E)_{\overline{\rho}}:=\varprojlim_s \varinjlim_{U_p}S(U^pU_p,\co_E/\varpi_E^s)_{\fm(\overline{\rho})}$, $\widehat{S}(U^p,E)_{*}:=\widehat{S}(U^p,\co_E)_{*}\otimes_{\co_E} E$ with $*\in \{\emptyset, \overline{\rho}\}$. All these $\co_E$-modules (or $E$-vector spaces) are equipped with a natural action of $\bT^S$ via continuous operators. For $*\in \{\emptyset, \overline{\rho}\}$, $**\in \{\co_E, E\}$, $\widehat{S}(U^p,**)_*$ is equipped with a continuous (unitary) action of $G(F^+\otimes_{\Q} \Q_p)\cong \prod_{v|p} \GL_2(F_{\widetilde{v}})$ commuting with $\bT^S$.

 Recall the automorphic representations of $G(\bA_{F^+})$ are the irreducible constituants of the $\bC$-vector space of functions $f: G(F^+)\backslash G(\bA_{F^+})\ra \bC$, which are
\begin{itemize}
  \item $\cC^{\infty}$ when restricted to $G(F^+\otimes_{\Q} \bR)$,
  \item locally constant when restricted to $G(\bA_{F^+}^{\infty})$,
  \item $G(F^+\otimes_{\Q} \bR)$-finite,
\end{itemize}
where $G(\bA_{F^+})$ acts on this space via right translation. An automorphic representation $\pi$ is isomorphic to $\pi_{\infty}\otimes_{\bC} \pi^{\infty}$ where $\pi_{\infty}=W_{\infty}$ is an irreducible algebraic representation of $G(F^+\otimes_{\Q} \bR)$ over $\bC$ and $\pi^{\infty}\cong \Hom_{G(F^+\otimes_{\Q}\bR)}(W_{\infty}, \pi)\cong \otimes'_{v}\pi_v$ is an irreducible smooth representation of $G(\bA_{F^+}^{\infty})$. The algebraic representation $W_{\infty}$ is defined over $\overline{\Q}$ via $\iota_{\infty}$, and we denote by $W_p$ its base change to $\overline{\Q_p}$, which is thus an irreducible algebraic representation of $G(F^+\otimes_{\Q} \Q_p)$ over $\overline{\Q_p}$. Via the decomposition $G(F^+\otimes_{\Q} \Q_p)\xrightarrow{\sim} \prod_{v\in \Sigma_p} G(F^+_v)$, one has $W_p\cong \otimes_{v\in \Sigma_p} W_v$ where $W_v$ is an irreducible algebraic representation of $G(F^+_v)$. One can also prove $\pi^{\infty}$ is defined over a number field via $\iota_{\infty}$ (e.g. see \cite[\S~6.2.3]{BCh}). Denote by $\pi^{\infty,p}:=\otimes'_{v\nmid p} \pi_v$, thus $\pi \cong \pi^{\infty,p}\otimes_{\overline{\Q}} \pi_p$. Let $m(\pi)\in \Z_{\geq 1}$ be the multiplicity of $\pi$ in the space of functions as above.
\begin{proposition}[$\text{\cite[Prop. 5.1]{Br13II}}$]\label{prop: gln-stm}
One has a $G(F^+ \otimes_{\Q} \Q_p)  \times \bT^S$-invariant isomorphism
\begin{equation*}
  \widehat{S}(U^p,E)^{\lalg} \otimes_E \overline{\Q_p} \cong \bigoplus_{\pi}\Big((\pi^{\infty,p})^{U^p} \otimes_{\overline{\Q}}(\pi_p \otimes_{\overline{\Q}} W_p)\Big)^{\oplus m(\pi)},
\end{equation*}
where   $\widehat{S}(U^p,E)^{\lalg}$ denotes the locally algebraic subrepresentation of $\widehat{S}(U^p,E)$, $\pi\cong \pi_{\infty}\otimes_{\bC} \pi^{\infty}$ runs through the automorphic representations of $G(\bA_{F^+})$ and $W_p$ is associated to $\pi_{\infty}$ as above.
\end{proposition}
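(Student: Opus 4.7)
The plan is to prove this by decomposing the locally algebraic vectors according to algebraic weight, identifying each weight-$W$ piece with a classical space of algebraic automorphic forms on the definite unitary group $G$, and then invoking the multiplicity-one-style spectral decomposition that is available because $G(F^+\otimes_{\Q}\bR)$ is compact.

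First I would recall that a vector $f\in \widehat{S}(U^p,E)$ is locally algebraic at $p$ precisely when it lies in a sub $G(F^+\otimes_{\Q}\Q_p)$-representation of the form $\pi^{\infty}_p\otimes_E W$ with $\pi^{\infty}_p$ smooth and $W$ irreducible algebraic. Combined with the fact that the irreducible algebraic representations of $G(F^+\otimes_{\Q}\Q_p)$ are absolutely irreducible over $E$ (after enlarging $E$), this yields a canonical decomposition
\[
\widehat{S}(U^p,E)^{\lalg}\;\cong\;\bigoplus_{W}\; S(U^p,W^{\vee})\otimes_{E}W,
\]
where $W$ runs over irreducible algebraic representations of $G(F^+\otimes_{\Q}\Q_p)$ over $E$ and
\[
S(U^p,W^{\vee}):=\bigl\{f:G(F^+)\backslash G(\bA^{\infty}_{F^+})/U^p\to W^{\vee}\ \big|\ f\ \text{smooth and}\ G(F^+\otimes_{\Q}\Q_p)\text{-equivariant}\bigr\}.
\]

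Next I would extend scalars to $\overline{\Q_p}$ and identify $S(U^p,W^{\vee})\otimes_{E}\overline{\Q_p}$ with a classical space of $\bC$-valued algebraic automorphic forms. The algebraic representation $W$, being defined over $\overline{\Q}$, can be transported along $\iota_{\infty}\circ\iota_{p}^{-1}$ to an irreducible algebraic representation $W_{\infty}$ of $G(F^+\otimes_{\Q}\bR)$ over $\bC$. Because $G(F^+\otimes_{\Q}\bR)$ is compact (the definite condition), the resulting identification realizes $S(U^p,W^{\vee})\otimes_{E}\overline{\Q_p}$ as $\Hom_{G(F^+\otimes_{\Q}\bR)}\!\bigl(W_{\infty},\,\mathcal{A}(G)\bigr)^{U^p}$, where $\mathcal{A}(G)$ denotes the space of functions on $G(F^+)\backslash G(\bA_{F^+})$ described in the paragraph before the proposition.

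Now I would apply the discrete spectral decomposition $\mathcal{A}(G)\cong\bigoplus_{\pi}\pi^{\oplus m(\pi)}$ (which is available and clean precisely because the Archimedean group is compact, so there are no continuous-spectrum or Eisenstein contributions to worry about). Taking $W_{\infty}$-isotypic components kills all $\pi$ with $\pi_{\infty}\not\cong W_{\infty}$ and, for those that survive, contributes a factor of $\pi^{\infty}=(\pi^{\infty,p})^{U^p}\otimes\pi_{p}$ after taking $U^p$-invariants. Summing over $W$ (equivalently, over $\pi_{\infty}$) and tensoring back with $W$, one obtains
\[
\widehat{S}(U^p,E)^{\lalg}\otimes_{E}\overline{\Q_p}\;\cong\;\bigoplus_{\pi}\Bigl((\pi^{\infty,p})^{U^p}\otimes_{\overline{\Q}}(\pi_{p}\otimes_{\overline{\Q}}W_{p})\Bigr)^{\oplus m(\pi)},
\]
since $W\otimes_{E}\overline{\Q_p}=W_{p}$ by the definition of $W_p$ in the excerpt. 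The $\bT^S$-equivariance is automatic: at places $v\notin S$ split in $F$, $\bT_{v}$ acts through its usual unramified action on $(\pi_{v})^{U_{v}}$.

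The main technical point is the careful bookkeeping of the two embeddings $\iota_{\infty}$ and $\iota_{p}$: one must check that the identification of the algebraic $\overline{\Q_p}$-representation $W_{p}$ on the $p$-adic side with the algebraic $\bC$-representation $W_{\infty}=\pi_{\infty}$ on the Archimedean side is compatible with the $G(F^+\otimes_{\Q}\Q_p)$-action coming from locally algebraic vectors and with the isomorphisms $i_{\widetilde{v}}$ at places above $p$. Everything else reduces to the now-classical Matsushima-type identity in the compact case, for which I would simply cite \cite[Prop.5.1]{Br13II}.
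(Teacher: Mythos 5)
Your proposal is correct, and it is essentially the argument behind the paper's proof, which consists solely of the citation to \cite[Prop.~5.1]{Br13II}: that proof is precisely your weight-by-weight decomposition of the locally algebraic vectors, the identification of each weight piece with a space of algebraic automorphic forms, and the discrete (Matsushima-type) decomposition that holds because $G(F^+\otimes_{\Q}\bR)$ is compact, with coefficients transported via $\iota_{\infty}$ and $\iota_p$. The only point you gloss over that deserves mention is the rationality of $\pi^{\infty}$ over a number field (recalled in the paper just before the statement, via \cite[\S 6.2.3]{BCh}), which is what makes the $\overline{\Q}$-tensor products on the right-hand side, and the descent of the archimedean isotypic component to the $p$-adic side, meaningful.
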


Let $G_p:=\prod_{v|p} \GL_2(F_{\widetilde{v}})$ (which is isomorphic to $G(F^+\otimes_{\Q} \Q_p)$), $B_p:=\prod_{v|p} B(F_{\widetilde{v}})$ with $B(F_{\widetilde{v}})$ the Borel subgroup of upper triangular matrices, $\overline{B}_p$ the opposite of $B_p$, $N_p\cong \prod_{v|p} N(F_{\widetilde{v}})$ the unipotent radical of $B_p$, and  $T_p\cong \prod_{v|p} T(F_{\widetilde{v}})=:\prod_{v|p} T_{\widetilde{v}}$ the Levi subgroup of $B_p$ (with $T$ the subgroup of $\GL_2$ of diagonal matrices). Let $K_p:=\prod_{v|p} \GL_2(\co_{F_{\widetilde{v}}})$. For a closed subgroup $H$ of $G_p$, denote by $H^0:=H\cap K_p$. Denote by $\ug_p$, $\ub_p$, $\overline{\ub}_p$, $\fn_p$, $\ft_p$, $\ug_{\widetilde{v}}$, $\ub_{\widetilde{v}}$, $\overline{\ub}_{\widetilde{v}}$, $\fn_{\widetilde{v}}$, $\ft_{\widetilde{v}}$ the Lie algebras of $G_p$, $B_p$, $\overline{B}_p$, $N_p$, $T_p$, $\GL_2(F_{\widetilde{v}})$, $B(F_{\widetilde{v}})$, $\overline{B}(F_{\widetilde{v}})$, $N(F_{\widetilde{v}})$, $T(F_{\widetilde{v}})$ respectively. Recall one has an isomorphism $\ug_p\otimes_{\Q_p} E\cong \prod_{v|p} \prod_{\sigma\in \Sigma_{\widetilde{v}}} \ug_{\widetilde{v}}\otimes_{F_{\widetilde{v}},\sigma}E$, for $J\subseteq \Sigma_p$, put $\ug_J:=\prod_{v|p} \prod_{\sigma\in J_{\widetilde{v}}} \ug_{\widetilde{v}}\otimes_{F_{\widetilde{v}},\sigma}E$ with $J_{\widetilde{v}}:=J\cap \Sigma_{\widetilde{v}}$. Similarly, we have Lie algebras over $E$: $\ub_J$, $\overline{\ub}_J$ etc.

A weight of $\ft_p\otimes_{\Q_p} E$ (with values in $\overline{E}$) will be denoted by $\ul{\lambda}_{\Sigma_p}=(\lambda_{1,\sigma},\lambda_{2,\sigma})_{\sigma\in \Sigma_p}\in \overline{E}^{2|\Sigma_p|}$ with
\begin{equation*}\ul{\lambda}_{\Sigma_p}\big(\prod_{\sigma\in \Sigma_p}\diag(a_{\sigma}, d_{\sigma})\big)=\sum_{\sigma\in \Sigma_p} a_{\sigma}\lambda_{1,\sigma}+d_{\sigma} \lambda_{2,\sigma}.\end{equation*}
For $J\subseteq \Sigma_p$, let $\ul{\lambda}_J:=(\lambda_{1,\sigma},\lambda_{2,\sigma})_{\sigma\in J}$, which we view as a weight of $\ft_p\otimes_{\Q_p} E$ via
\begin{equation*}
\ul{\lambda}_{J}\big(\prod_{\sigma\in \Sigma_p}\diag(a_{\sigma}, d_{\sigma})\big)=\sum_{\sigma\in J} a_{\sigma}\lambda_{1,\sigma}+d_{\sigma} \lambda_{2,\sigma}.
\end{equation*}
We call $\ul{\lambda}_J$ dominant (with respect to $\ub_p$) if $\lambda_{1,\sigma}-\lambda_{2,\sigma}\in \Z_{\geq 0}$ for all $\sigma \in J$. If $\ul{\lambda}_J$ is integral, i.e. $\lambda_{i,\sigma}\in \Z$ for all $\sigma\in J$,  and dominant, then there exists a unique irreducible algebraic (and locally $J$-analytic) representation $L(\ul{\lambda}_J)$ of $G_p$ over $E$ with highest weight $\ul{\lambda}_J$ and one has $L(\ul{\lambda}_J)\cong \otimes_{\sigma\in J} L(\ul{\lambda}_{\{\sigma\}})$. For a locally $\Q_p$-analytic representation $V$ of $G_p$, $\ul{\lambda}_J$ integral and dominant, put
\begin{equation}\label{equ: cclg-lam}V(\ul{\lambda}_J):=\big(V \otimes_E L(\ul{\lambda}_J)'\big)^{\Sigma_p\setminus J-\an} \otimes_E L(\ul{\lambda}_J)
\end{equation}
(where $L(\ul{\lambda}_J)'$ denotes the dual of $L(\ul{\lambda}_J)$, ``$\Sigma_p\setminus J-\an$'' denotes the locally $\Sigma_p\setminus J$-analytic vectors, see \S~\ref{sec: cclg-A.2} below) which is in fact a subrepresentation of $V$.

For a locally $\Q_p$-analytic character $\delta=\delta_1\otimes \delta_2$ of $T_p$ over $E$, let $\delta_{\widetilde{v}}=\delta_{1,\widetilde{v}}\otimes \delta_{2,\widetilde{v}}:=\delta|_{T_{\widetilde{v}}}$ for $v|p$. Let $$\wt(\delta)=(\wt(\delta)_{1,\sigma},\wt(\delta)_{2,\sigma})_{\sigma \in \Sigma_p}:=(\wt(\delta_1)_{\sigma}, \wt(\delta_2)_{\sigma})_{\sigma\in \Sigma_p}$$ be the induced weight of $\ft_p\otimes_{\Q_p} E$. For an  integral weight $\ul{\lambda}_{\Sigma_p}$ of $\ft_p\otimes_{\Q_p} E$, denote by $\delta_{\ul{\lambda}_{\Sigma_p}}$ the algebraic character of $T_p$ with weight $\ul{\lambda}_{\Sigma_p}$, i.e. $\delta_{\ul{\lambda}_{\Sigma_p}}=\prod_{\sigma\in \Sigma_p} \sigma^{\lambda_{1,\sigma}}\otimes \sigma^{\lambda_{2,\sigma}}$. A locally algebraic character $\delta$ of $T_p$ (resp. of $T_{\widetilde{v}}$ for $v|p$) is called spherically algebraic if $\delta\delta_{\wt(\delta)}^{-1}$ is unramified.


Denote by $\cT_p$ (resp. $\cT_{\widetilde{v}}$) the rigid space over $E$ parameterizing locally $\Q_p$-analytic characters of $T_p$ (resp. of $T_{\widetilde{v}}$), thus $\cT_p\cong \prod_{v|p} \cT_{\widetilde{v}}$. For $J\subset \Sigma_p$ (resp. $J_{\widetilde{v}}:=J\cap \Sigma_{\widetilde{v}}$), denote by $\cT_{p,J}$ \big(resp. $\cT_{\widetilde{v},J_{\widetilde{v}}}$\big) the rigid closed subspace of $\cT_p$ \big(resp. of $\cT_{\widetilde{v}}$\big) parameterizing locally $J$-analytic (resp. locally $J_{\widetilde{v}}$-analytic) characters of $T_p$ (resp. of $T_{\widetilde{v}}$), thus $\cT_{p,J}\cong \prod_{v|p} \cT_{\widetilde{v},J_{\widetilde{v}}}$. The rigid space $\cT_{\widetilde{v},J_{\widetilde{v}}}$ is smooth and equidimensional of dimension $2+2|J_{\widetilde{v}}|$, thus $\cT_{p,J}$ is smooth and equidimensional of dimension $2|S_p|+2|J|$. Indeed, one has a smooth morphism (e.g. see \cite[\S~6.1.4]{Ding})
\begin{equation}\label{equ: cclg-arv}
  \cT_{\widetilde{v}} \lra (\bA^1\times \bA^1)^{|\Sigma_{\widetilde{v}}|}, \ \chi\mapsto \wt(\chi),
\end{equation}
and $\cT_{\widetilde{v},J_{\widetilde{v}}}$ is just the preimage of $\big\{(\lambda_{1,\sigma}, \lambda_{2,\sigma})_{\sigma\in \Sigma_{\widetilde{v}}}\ |\ \lambda_{i,\sigma}=0, \ \forall \sigma\in \Sigma_{\widetilde{v}}\setminus J_{\widetilde{v}}, \ i=1,2\big\}$. More generally, let $\ul{\lambda}_J=(\lambda_{1,\sigma},\lambda_{2,\sigma})_{\sigma\in J}\in E^{2|J|}$, $\ul{\lambda}_{J_{\widetilde{v}}}:=(\lambda_{1,\sigma},\lambda_{2,\sigma})_{\sigma\in J_{\widetilde{v}}}\in E^{2|J_{\widetilde{v}}|}$ for $v|p$, denote by $\cT_{\widetilde{v}}(\ul{\lambda}_{J_{\widetilde{v}}})$ the preimage of   $\big\{(\lambda'_{1,\sigma}, \lambda'_{2,\sigma})_{\sigma\in \Sigma_{\widetilde{v}}}\ |\ \lambda'_{i,\sigma}=\lambda_{i,\sigma}, \ \forall \sigma\in J_{\widetilde{v}}, \ i=1,2\big\}$ via (\ref{equ: cclg-arv}), and put $\cT_{p}(\ul{\lambda}_J):=\prod_{v|p} \cT_{\widetilde{v}}(\ul{\lambda}_{J_{\widetilde{v}}})$. Note for any locally $\Q_p$-analytic character $\delta$ of $T_p$ over $E$ with $\wt(\delta)_{i,\sigma}=\lambda_{i,\sigma}$ for $\sigma\in J$, $i=1,2$, the isomorphism
\begin{equation*}
  \cT_{p} \xlongrightarrow{\sim} \cT_p, \  \delta'\mapsto \delta'\delta,
\end{equation*}
induces an isomorphism $\cT_{p,\Sigma_p\setminus J}\xrightarrow{\sim} \cT_{p}(\ul{\lambda}_J)$. Let $T_p^0:=K_p \cap T_p$, and denote by $\cT_p^0$ the rigid space over $E$ parameterizing locally $\Q_p$-analytic characters of $T_p^0$, thus the restriction map induces a projection
\begin{equation*}
  \cT_p \twoheadlongrightarrow \cT_p^0.
\end{equation*}
Denote by $\cT_{p,J}^0$, $\cT_{\widetilde{v}}^0$, $\cT_{\widetilde{v},J_{\widetilde{v}}}^0$, $\cT_{p}^0(\ul{\lambda}_J)$, $\cT_{\widetilde{v}}^0(\ul{\lambda}_{J_{\widetilde{v}}})$ the image in $\cT_p^0$ of $\cT_{p,J}$, $\cT_{\widetilde{v}}$, $\cT_{\widetilde{v},J_{\widetilde{v}}}$, $\cT_{p}(\ul{\lambda}_J)$, $\cT_{\widetilde{v}}(\ul{\lambda}_{J_{\widetilde{v}}})$ respectively.

Let $V$ be an $E$-vector space equipped with an $E$-linear action of $A$ (with $A$ a set of operators) and $\chi$ be a system of eigenvalues of $A$. Denote by $V[A=\chi]$ the $\chi$-eigenspace, $V\{A=\chi\}:=\{v\in V\ |\ \exists N\in \Z_{>0} \text{ such that } (a-\chi(a))^N v=0 \ \forall a\in A\}$. If $A$ is moreover an $\co_E$-algebra with an ideal $\fI$, denote by $V[\fI]$ the subspace of vectors killed by $\fI$, and $V\{\fI\}:=\varinjlim_{n} V[\fI^n]$.


\subsection{Eigenvarieties}\label{sec: cclg-3.2}We briefly recall  some properties of the eigenvariety of $G$ (with tame level $U^p$).

Let $\overline{\rho}$ be a $2$-dimensional continuous absolutely irreducible representation $\overline{\rho}$ of $\Gal_F$ over $k_E$ such that $\overline{\rho}^{\vee}\circ c \cong \overline{\rho}\otimes \overline{\varepsilon}$, $\overline{\rho}$ is unramified outside $S$ and $\widehat{S}(U^p,E)^{\lalg}_{\overline{\rho}}\neq 0$ (where $\widehat{S}(U^p,E)^{\lalg}_{\overline{\rho}}$ denotes the locally analytic subrepresentation of $\widehat{S}(U^p,E)_{\overline{\rho}}$ of $G_p$). Let $J\subseteq \Sigma_p$, $\ul{\lambda}_J=(\lambda_{1,\sigma},\lambda_{2,\sigma})_{\sigma\in J}\in \Z^{2|J|}$ with $\lambda_{1,\sigma}\geq \lambda_{2,\sigma}$ for all $\sigma\in J$. Consider $\widehat{S}(U^p,E)_{\overline{\rho}}^{\an}(\ul{\lambda}_J)$ (cf. (\ref{equ: cclg-lam}), where $\widehat{S}(U^p,E)_{\overline{\rho}}^{\an}$ is the locally $\Q_p$-analytic subrepresentation of $\widehat{S}(U^p,E)_{\overline{\rho}}$ of $G_p$), which is an admissible locally $\Q_p$-analytic representation of $G_p$ equipped with an action of $\bT^S$ which commutes with $G_p$. Note $\widehat{S}(U^p,E)_{\overline{\rho}}^{\an}(\ul{\lambda}_J)$ is in fact a closed subrepresentation of $\widehat{S}(U^p,E)_{\overline{\rho}}^{\an}$. Applying the Jacquet-Emerton functor (\cite{Em11}), we get an essentially admissible locally $\Q_p$-analytic representation of $T_p$: $J_{B_p}\big(\widehat{S}(U^p,E)_{\overline{\rho}}^{\an}(\ul{\lambda}_J)\big)$, which is also equipped with an action of $\bT^S$ commuting with $T_p$. By \cite[\S~6.4]{Em04}, to $J_{B_p}\big(\widehat{S}(U^p,E)_{\overline{\rho}}^{\an}(\ul{\lambda}_J)\big)$, one can naturally associate a coherent sheaf $\cM(U^p,\ul{\lambda}_J)_{\overline{\rho}}$ over $\cT_{p}$ such that
\begin{equation*}
  \Gamma\big(\cT_p, \cM(U^p, \ul{\lambda}_J)_{\overline{\rho}}\big) \xlongrightarrow{\sim} J_{B_p}\big(\widehat{S}(U^p,E)_{\overline{\rho}}^{\an}(\ul{\lambda}_J)\big)'
\end{equation*}
where $\Gamma(X,\cF)$ denotes the sections of a sheaf $\cF$ on a rigid space $X$, $(\cdot)'$ denotes the continuous dual equipped with strong topology. This association is functorial, so $\cM(U^p,\ul{\lambda}_J)_{\overline{\rho}}$ is equipped with an $\co(\cT_p)$-linear action of $\bT^S$. Using Emerton's method (cf. \cite[\S~2.3]{Em1}), we can construct an eigenvariety form the triplet $\big\{\cM(U^p,\ul{\lambda}_J)_{\overline{\rho}}, \co(\cT_p), \bT^S\big\}$:

\begin{theorem}
  There exists a rigid space $\cE(U^p, \ul{\lambda}_J)_{\overline{\rho}}$ over $E$ finite over $\cT_p$ and equipped with a morphism
  \begin{equation*}
    \co(\cT_p) \otimes_{\co_E} \bT^S \lra \co\big(\cE(U^p,\ul{\lambda}_J)_{\overline{\rho}}\big)
  \end{equation*}
  such that
  \begin{enumerate}
    \item a point of $\cE(U^p,\ul{\lambda}_J)_{\overline{\rho}}$ is determined by its induced closed point $\delta: T_p\ra \overline{E}^{\times}$ of $\cT_p$ and its induced system of eigenvalues $\fh: \bT^S \ra \overline{E}$, and will be denoted by $(\fh,\delta)$;
    \item $(\fh,\delta)\in \cE(U^p,\ul{\lambda}_J)_{\overline{\rho}}(\overline{E})$ if and only if the eigenspace $$\big(J_{B_p}\big(\widehat{S}(U^p,E)_{\overline{\rho}}^{\an}(\ul{\lambda}_J)\big)\otimes_E \overline{E}\big)[\bT^S=\fh, T_p=\delta]\neq 0.$$
  \end{enumerate}
\end{theorem}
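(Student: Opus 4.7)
The plan is to invoke Emerton's general eigenvariety construction \cite[\S 2.3]{Em1} applied to the datum $\bigl(\cM(U^p,\ul{\lambda}_J)_{\overline{\rho}},\,\co(\widehat{T}_p),\,\bT^S\bigr)$. The key preliminary step is to verify that $J_{B_p}\bigl(\widehat{S}(U^p,E)_{\overline{\rho}}^{\an}(\ul{\lambda}_J)\bigr)$ is essentially admissible as a locally $\Q_p$-analytic representation of $T_p$, which is what makes $\cM(U^p,\ul{\lambda}_J)_{\overline{\rho}}$ a coherent sheaf on $\widehat{T}_p$ in the first place. By Emerton's theorem that $J_{B_p}$ sends admissible locally analytic $G_p$-representations to essentially admissible $T_p$-representations, this reduces to the admissibility of $\widehat{S}(U^p,E)_{\overline{\rho}}^{\an}(\ul{\lambda}_J)$ itself: each of the three operations in (\ref{equ: cclg-lam}) --- tensoring with the finite-dimensional $L(\ul{\lambda}_J)'$, passing to locally $\Sigma_p\setminus J$-analytic vectors, and tensoring back with $L(\ul{\lambda}_J)$ --- preserves admissibility of locally analytic representations, so $\widehat{S}(U^p,E)_{\overline{\rho}}^{\an}(\ul{\lambda}_J)$ is a closed admissible subrepresentation of $\widehat{S}(U^p,E)_{\overline{\rho}}^{\an}$.

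Next, the continuous action of $\bT^S$ on $\widehat{S}(U^p,E)_{\overline{\rho}}$ commutes with $G_p$, hence restricts to a continuous $T_p$-equivariant action on $J_{B_p}\bigl(\widehat{S}(U^p,E)^{\an}_{\overline{\rho}}(\ul{\lambda}_J)\bigr)$, which dualises to an $\co(\widehat{T}_p)$-linear action on $\cM(U^p,\ul{\lambda}_J)_{\overline{\rho}}$. Let $\cA$ be the image of the natural map $\co(\widehat{T}_p)\otimes_{\co_E}\bT^S\to \End_{\co(\widehat{T}_p)}\bigl(\cM(U^p,\ul{\lambda}_J)_{\overline{\rho}}\bigr)$; because $\cM(U^p,\ul{\lambda}_J)_{\overline{\rho}}$ is coherent, $\cA$ is a coherent sheaf of $\co(\widehat{T}_p)$-algebras. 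Define $\cE(U^p,\ul{\lambda}_J)_{\overline{\rho}}$ to be the relative analytic spectrum of $\cA$ over $\widehat{T}_p$: by construction it is a rigid space, finite over $\widehat{T}_p$, and comes equipped with the structural map $\co(\widehat{T}_p)\otimes_{\co_E}\bT^S\to \co\bigl(\cE(U^p,\ul{\lambda}_J)_{\overline{\rho}}\bigr)$ required by the statement.

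It remains to verify the two properties. A closed point $x$ of $\cE(U^p,\ul{\lambda}_J)_{\overline{\rho}}$ projects to a closed point $\delta\in\widehat{T}_p$ and is determined by an $\overline{E}$-algebra homomorphism from the fibre $\cA_\delta$ to $\overline{E}$; since $\cA$ is generated over $\co(\widehat{T}_p)$ by the image of $\bT^S$, this homomorphism amounts to a system of eigenvalues $\fh:\bT^S\to \overline{E}$, giving (1). For (2), the support of $\cM(U^p,\ul{\lambda}_J)_{\overline{\rho}}$ viewed as an $\cA$-module equals $\cE(U^p,\ul{\lambda}_J)_{\overline{\rho}}$, and a closed point $(\fh,\delta)$ lies in this support if and only if the localisation of $\cM(U^p,\ul{\lambda}_J)_{\overline{\rho}}$ at the corresponding maximal ideal is non-zero; by the contragredient duality between essentially admissible locally analytic $T_p$-representations with commuting $\bT^S$-action and coherent $\cA$-modules on $\widehat{T}_p$, this non-vanishing is exactly the non-vanishing of the eigenspace $\bigl(J_{B_p}(\widehat{S}(U^p,E)^{\an}(\ul{\lambda}_J)_{\overline{\rho}})\otimes_E\overline{E}\bigr)[\bT^S=\fh,\,T_p=\delta]$. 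The delicate step in the argument is ensuring the admissibility of $\widehat{S}(U^p,E)_{\overline{\rho}}^{\an}(\ul{\lambda}_J)$ and its compatibility with the $\bT^S$-action; once this is in hand, the construction and its properties are a direct application of the formalism of \cite[\S 2.3]{Em1}.
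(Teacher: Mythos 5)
Your proposal follows exactly the route the paper takes (and only sketches): establish that $\widehat{S}(U^p,E)_{\overline{\rho}}^{\an}(\ul{\lambda}_J)$ is a closed admissible subrepresentation (the paper's Cor.\ref{cor: cclg-clsu} via \cite[Prop.6.4]{ST03}), apply the Jacquet--Emerton functor to get an essentially admissible $T_p$-representation with commuting $\bT^S$-action, and then feed the resulting triplet $\big(\cM(U^p,\ul{\lambda}_J)_{\overline{\rho}},\co(\widehat{T}_p),\bT^S\big)$ into Emerton's eigenvariety machine of \cite[\S 2.3]{Em1}, whose formal properties give (1) and (2). This matches the paper's construction, with your write-up merely making the relative-Spec step and the support/eigenspace dictionary explicit.
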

\begin{remark}\label{rem: cclg-seeo}
  By definition (and \cite[Lem. 7.2.12]{Ding}),
  \begin{equation}\label{equ: JacLambda}J_{B_p}(\widehat{S}(U^p,E)_{\overline{\rho}}^{\an}(\ul{\lambda}_J))\cong J_{B_p}\big((\widehat{S}(U^p,E)^{\an}_{\overline{\rho}}\otimes_E L(\ul{\lambda}_J)')^{\Sigma_p\setminus J-\an}\big) \otimes_E \delta_{\ul{\lambda}_J},\end{equation} from which we deduce $\cM(U^p,\ul{\lambda}_J)_{\overline{\rho}}$ is supported on $\cT_p(\ul{\lambda}_J)$ and hence the natural morphism $\cE(U^p,\ul{\lambda}_J)_{\overline{\rho}}\ra \cT_p$ factors through $\cT_p(\ul{\lambda}_J)$.
\end{remark}
Recall a point $z=(\fh,\delta)$ of $\cE(U^p,\ul{\lambda}_J)_{\overline{\rho}}$ is called \emph{classical} if
\begin{equation*}
   \big(J_{B_p}\big(\widehat{S}(U^p,E)_{\overline{\rho}}(\ul{\lambda}_J)^{\lalg}\big)\otimes_E \overline{E}\big)[\bT^S=\fh, T_p=\delta]\neq 0.
\end{equation*}
\begin{theorem}\label{thm: cclg-eicl}
  The rigid space $\cE(U^p,\ul{\lambda}_J)_{\overline{\rho}}$ is equidimensional of dimension $2|\Sigma_p\setminus J|$, and the set of classical points is Zariski-dense in $\cE(U^p,\ul{\lambda}_J)_{\overline{\rho}}$ and accumulates at points $(\fh,\delta)$ (see \cite[\S~3.3.1]{BCh}) with $\delta$ locally algebraic.
\end{theorem}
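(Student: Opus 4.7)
The plan is to apply Emerton's general eigenvariety construction (\cite[\S 2.3]{Em1}; see also \cite{Em04}) to the triple $\{\cM(U^p,\ul{\lambda}_J)_{\overline{\rho}}, \co(\widehat{T}_p), \bT^S\}$, following the strategy that is by now standard (cf. also \cite{BHS1}) for producing such spaces and establishing their basic geometric properties. By Remark \ref{rem: cclg-seeo} the coherent sheaf $\cM(U^p,\ul{\lambda}_J)_{\overline{\rho}}$ is supported on $\widehat{T}_p(\ul{\lambda}_J)\subset \widehat{T}_p$, which is smooth of pure dimension $2|\Sigma_p\setminus J|$, giving at once the upper bound $\dim \cE(U^p,\ul{\lambda}_J)_{\overline{\rho}}\leq 2|\Sigma_p\setminus J|$.

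For equidimensionality, I would work locally over an admissible affinoid $U\subset \widehat{T}_p^0(\ul{\lambda}_J)$ and use the compactness of a suitable element of $T_p^+$ acting on the Jacquet module $J_{B_p}\big(\widehat{S}(U^p,E)^{\an}_{\overline{\rho}}(\ul{\lambda}_J)\big)$ restricted to $U$ (this is the standard compactness input for essentially admissible Jacquet modules, cf. \cite{Em11}). Emerton's machinery then realises the preimage of $U$ in the eigenvariety as a finite cover of a Fredholm hypersurface $\cZ \hookrightarrow U\times \bG_m$. Since Fredholm hypersurfaces over a smooth base are equidimensional of the same dimension as the base, and standard non-vanishing arguments ensure that every irreducible component of $\cE(U^p,\ul{\lambda}_J)_{\overline{\rho}}$ dominates an irreducible component of $\cZ$, equidimensionality of dimension $2|\Sigma_p\setminus J|$ follows.

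For Zariski-density and accumulation of classical points, I would combine Prop.\ref{prop: gln-stm} with a small-slope classicality criterion. Dominant integral weights $\ul{\lambda}_{\Sigma_p}$ extending $\ul{\lambda}_J$ are Zariski-dense in $\widehat{T}_p(\ul{\lambda}_J)$, and at each such weight Prop.\ref{prop: gln-stm} identifies the locally algebraic vectors with classical automorphic forms. A classicality theorem à la Emerton--Coleman then says that a closed point $(\fh,\delta)\in \cE(U^p,\ul{\lambda}_J)_{\overline{\rho}}$ whose $T_p$-character has the form $\delta=\delta_{\ul{\lambda}_{\Sigma_p}}\delta^{\sm}$ with $\delta^{\sm}$ of sufficiently small slope is classical. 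Since over any admissible affinoid of $\widehat{T}_p^0(\ul{\lambda}_J)$ the slopes of the Fredholm hypersurface are uniformly bounded, the small-slope hypothesis is automatically satisfied at a Zariski-dense set of integral weights in any given open neighbourhood; this yields both the Zariski-density statement and the accumulation property at each classical point.

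The main technical obstacle is the small-slope classicality criterion in the partially $\ul{\lambda}_J$-analytic setting: one must show that an eigenvector in $J_{B_p}\big(\widehat{S}(U^p,E)^{\an}_{\overline{\rho}}(\ul{\lambda}_J)\big)$ with dominant algebraic weight $\ul{\lambda}_{\Sigma_p}$ extending $\ul{\lambda}_J$ and non-critical slope in the $(\Sigma_p\setminus J)$-directions in fact lies in $\widehat{S}(U^p,E)^{\lalg}_{\overline{\rho}}(\ul{\lambda}_J)$. In the fully analytic case $J=\emptyset$ this is Emerton's theorem; for the partial case I expect the adaptation to be routine, because the $J$-directions are algebraic by the very definition (\ref{equ: cclg-lam}) of $\widehat{S}^{\an}(\ul{\lambda}_J)$, so Emerton's criterion applies to the remaining $(\Sigma_p\setminus J)$-directions after tensoring with $L(\ul{\lambda}_J)'$ and passing to $(\Sigma_p\setminus J)$-analytic vectors.
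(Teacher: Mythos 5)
Your skeleton matches the paper's (the paper omits the proof, pointing to the identical argument in the patched case: Prop.\ref{prop: cclg-Fen}, Cor.\ref{cor: cclg-o0e}, Prop.\ref{prop: cclg-dtf}, Thm.\ref{thm: cclg-try}): Emerton's machinery applied to the triple, local finiteness over the weight space via a Fredholm hypersurface for equidimensionality, and density/accumulation of classical points via a small-slope classicality criterion together with boundedness of slopes over affinoids. One small correction: $\widehat{T}_p(\ul{\lambda}_J)$ is \emph{not} of dimension $2|\Sigma_p\setminus J|$ (it has dimension $2|\Sigma_p\setminus J|+2|S_p|$, the $\bG_m$-directions survive), so support on it does not ``at once'' give the upper bound; the correct count is exactly your Fredholm argument over $\widehat{T}_p^0(\ul{\lambda}_J)$, which moreover needs the fact that $\widehat{S}(U^p,E)^{\an}|_H\cong \cC^{\Q_p-\an}(H,E)$ (and its partially analytic variant, cf. (\ref{equ: cclg-PiJ})) so that \cite[Prop.4.2.36]{Em11} applies -- this is the second key point in the paper's remark and should be said explicitly.

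The genuine gap is the step you flag as the main obstacle and then declare routine: the partial classicality criterion. The proposed reduction -- ``apply Emerton's criterion to the remaining $(\Sigma_p\setminus J)$-directions after tensoring with $L(\ul{\lambda}_J)'$ and passing to $(\Sigma_p\setminus J)$-analytic vectors'' -- does not literally make sense, because the space of locally $(\Sigma_p\setminus J)$-analytic vectors is not the space of locally analytic vectors of an admissible Banach representation of a smaller $p$-adic group, and no Coleman-style analytic continuation is available in this partially analytic setting; so the $J=\emptyset$ theorem cannot be invoked verbatim. The paper proves the needed statement (Lem.\ref{equ: cclg-dtX} and Prop.\ref{prop: cclg-dtf}, the analogue of \cite[Cor.6.2.28]{Ding}) purely representation-theoretically: by Breuil's adjunction formula (Prop.\ref{prop: cclg-pve}) a $\delta$-eigenvector yields a nonzero map from $\cF_{\overline{B}_p}^{G_p}\big(M_J(-\wt(\delta))^{\vee},\psi_{\delta}\delta_{B_p}^{-1}\big)$ into the unitary Banach space; if the point were not classical, this map would not factor through the locally algebraic quotient and would embed some irreducible constituent of $I(\delta_{J'}^c\delta_{B_p}^{-1})$ with $\emptyset\neq J'\subseteq \Sigma_p\setminus J$ into a unitary representation, hence produce a $G_p$-invariant lattice, which \cite[Prop.5.1]{Br} forbids under the slope bound (\ref{equ: cclg-qv1}). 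You must either reproduce this adjunction-plus-lattice argument or actually prove your claimed adaptation; as written, that ingredient is missing, and the rest of your density argument (Zariski-density of suitable dominant weights, slope bounds on affinoids à la \cite{Che}) only works once it is in place.
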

\begin{remark}The proof of the theorem is omitted, since it is an easy variation of that in the patched eigenvariety case given below (and the same as in the case of eigenvarieties for unitary Shimura curves \cite[Prop. 7.2.30]{Ding}), where there are two key points:
 \begin{enumerate}\item a classicality result as Proposition \ref{prop: cclg-dtf} (see also \cite[Cor. 7.2.28]{Ding});
  \item there exists an open compact normal subgroup $H$ of $G_p$ such that $\widehat{S}(U^p,E)_{\overline{\rho}}^{\an}|_H\cong \cC^{\Q_p-\an}(H,E)$ as locally $\Q_p$-analytic representations of $H$,  where the latter denotes the space of locally $\Q_p$-analytic functions of $H$ equipped with the right regular action of $H$; this fact in particular allows \cite[Prop. 4.2.36]{Em11} to apply.
 \end{enumerate}
\end{remark}
Put $\cE(U^p)_{\overline{\rho}}:=\cE(U^p,\lambda_{\emptyset})_{\overline{\rho}}$. The following proposition follows easily from the natural $G_p\times \bT^S$-invariant injection $\widehat{S}(U^p,E)_{\overline{\rho}}^{\an}(\ul{\lambda}_J)\hookrightarrow \widehat{S}(U^p,E)_{\overline{\rho}}^{\an}$.
\begin{proposition}
  There exists a natural closed embedding
  \begin{equation*}
    \cE(U^p, \ul{\lambda}_J)_{\overline{\rho}} \hooklongrightarrow \cE(U^p)_{\overline{\rho}}, \ (\fh,\delta)\mapsto (\fh, \delta).
  \end{equation*}
\end{proposition}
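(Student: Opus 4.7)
The plan is to transport the tautological inclusion $\widehat{S}(U^p,E)^{\an}_{\overline{\rho}}(\ul{\lambda}_J)\hookrightarrow \widehat{S}(U^p,E)^{\an}_{\overline{\rho}}$ (which is immediate from the definition (\ref{equ: cclg-lam})) through Emerton's machinery. This inclusion is closed and $G_p\times \bT^S$-equivariant. Since the Jacquet-Emerton functor $J_{B_p}(\cdot)$ is left exact on admissible locally $\Q_p$-analytic representations, applying it yields a $T_p\times \bT^S$-equivariant closed embedding
\begin{equation*}
  J_{B_p}\big(\widehat{S}(U^p,E)^{\an}_{\overline{\rho}}(\ul{\lambda}_J)\big) \hooklongrightarrow J_{B_p}\big(\widehat{S}(U^p,E)^{\an}_{\overline{\rho}}\big)
\end{equation*}
of essentially admissible locally analytic representations of $T_p$.

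Next I would pass to strong continuous duals. By the Fourier-type equivalence of \cite[\S 6.4]{Em04} between essentially admissible representations of $T_p$ and coherent sheaves on $\widehat{T}_p$ already used to define $\cM(U^p,\ul{\lambda}_J)_{\overline{\rho}}$ and $\cM(U^p)_{\overline{\rho}}$, a closed embedding on the representation side corresponds to a surjection on the coherent sheaf side. Hence the above embedding dualizes to a $\bT^S$-equivariant surjection of coherent $\co(\widehat{T}_p)$-modules
\begin{equation*}
  \cM(U^p)_{\overline{\rho}} \twoheadlongrightarrow \cM(U^p,\ul{\lambda}_J)_{\overline{\rho}}.
\end{equation*}

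Finally I would invoke Emerton's construction (\cite[\S 2.3]{Em1}): both eigenvarieties are obtained as the relative spectra over $\widehat{T}_p$ of the images of $\co(\widehat{T}_p)\otimes_{\co_E}\bT^S$ in the endomorphism algebras of the respective coherent sheaves. Any endomorphism acting as zero on $\cM(U^p)_{\overline{\rho}}$ acts as zero on its quotient, so the above surjection induces a surjection of these image algebras, and thus a closed embedding
\begin{equation*}
  \cE(U^p,\ul{\lambda}_J)_{\overline{\rho}} \hooklongrightarrow \cE(U^p)_{\overline{\rho}}
\end{equation*}
compatible with the structure maps to $\widehat{T}_p$ and with the $\bT^S$-actions; on closed points it sends $(\fh,\delta)$ to $(\fh,\delta)$ as desired. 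The only potentially delicate step is the compatibility of the Fourier duality with the formation of closed subrepresentations, but this is built into the construction of the coherent sheaves already used in the statement of the previous theorem, so no new input is required.
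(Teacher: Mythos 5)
Your argument is correct and is exactly the route the paper takes: the paper deduces the proposition from the natural $G_p\times \bT^S$-equivariant closed inclusion $\widehat{S}(U^p,E)^{\an}_{\overline{\rho}}(\ul{\lambda}_J)\hookrightarrow \widehat{S}(U^p,E)^{\an}_{\overline{\rho}}$, which after applying $J_{B_p}$ and dualizing gives a surjection $\cM(U^p)_{\overline{\rho}}\twoheadrightarrow \cM(U^p,\ul{\lambda}_J)_{\overline{\rho}}$ and hence a closed immersion of the eigenvarieties via Emerton's construction. You merely spell out the details (left exactness of $J_{B_p}$, the anti-equivalence with coherent sheaves on $\widehat{T}_p$, and the induced surjection of image algebras) that the paper leaves implicit, so no further comment is needed.
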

By discussion in Remark \ref{rem: cclg-seeo}, we have a natural  commutative diagram
\begin{equation*}
  \begin{CD}\cE(U^p, \ul{\lambda}_J)_{\overline{\rho}} @>>> \cE(U^p)_{\overline{\rho}}  \\
  @VVV @VVV \\
  \cT_{p}(\ul{\lambda}_J) @>>> \cT_p
  \end{CD}.
\end{equation*}Note this diagram should \emph{not} be cartesian in general. Indeed, as we would see later in patched eigenvariety case, the difference between  $\cE(U^p,\ul{\lambda}_J)$ and $\cE(U^p,\ul{\lambda}_J)':=\cE(U^p)_{\overline{\rho}}\times_{\cT_p} \cT_p(\ul{\lambda}_J)$ somehow would be the key point for the existence of companion points.

Recall there exists a family of $\Gal_F$-representations on $\cE(U^p)_{\overline{\rho}}$, in particular, for any  point $z=(\fh,\delta)$ of $\cE(U^p)_{\overline{\rho}}$, there exists a $2$-dimensional continuous representation $\rho_z$ of $\Gal_{F}$ over $k(z)$, which is unramified outside $S$ and satisfies
\begin{equation*}
  \rho_z(\Frob_{\widetilde{v}})^2 -\fh(T_{\widetilde{v}}) \Frob_{\widetilde{v}}+\Norm(\widetilde{v}) \fh(S_{\widetilde{v}})=0
\end{equation*}
for $v\notin S$ completely decomposed in $F$, and $\widetilde{v}|v$, $\Frob_{\widetilde{v}}\in \Gal_{F_{\widetilde{v}}}$ is a geometric Frobenius. The (semi-simplification of the) reduction of $\rho_z$ modulo $\varpi_{k(z)}$ (a uniformizer of $k(z)$) is isomorphic to $\overline{\rho}$.
\begin{theorem}\label{thm: cclg-pen}Let $z\in \cE(U^p)_{\overline{\rho}}$. For $v|p$, the restriction $\rho_{z,\widetilde{v}}:=\rho_z|_{\Gal_{F_{\widetilde{v}}}}$ is trianguline. If $z\in \cE(U^p,\ul{\lambda}_J)_{\overline{\rho}}$ for $J\subseteq \Sigma_p$ (which implies $\wt(\delta)_{i,\sigma}=\lambda_{i,\sigma}$ for $\sigma\in J$, $i=1,2$), then for $v|p$, $\rho_{z,\widetilde{v}}$ is moreover $J_{\widetilde{v}}$-de Rham of Hodge-Tate weights $(-\lambda_{1,\sigma}, 1-\lambda_{2,\sigma})$ for $\sigma\in J_{\widetilde{v}}$ where $J_{\widetilde{v}}:=J\cap \Sigma_{\widetilde{v}}$.
\end{theorem}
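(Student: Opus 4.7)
The plan is to combine Zariski-density of classical points on the eigenvarieties, provided by Theorem \ref{thm: cclg-eicl}, with closedness of the relevant Galois-theoretic conditions in rigid analytic families. First I would dispatch the triangulinity claim on the full eigenvariety $\cE(U^p)_{\overline{\rho}}$: at a classical point $z=(\fh,\delta)$, the local component $\pi_v$ for $v|p$ corresponds via classical local Langlands to the Weil-Deligne representation attached to $\rho_z|_{\Gal_{F_{\widetilde{v}}}}$, while the eigencharacter $\delta_{\widetilde{v}}$ appearing in the Jacquet-Emerton module records a refinement; the resulting $\rho_{z,\widetilde{v}}$ is then crystalline, hence trianguline, of parameter determined by $\delta_{\widetilde{v}}$ (up to the usual $\delta_B^{-1}$ shift). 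Zariski-density of such classical points together with the global triangulation theorem in rigid analytic families (Kedlaya--Pottharst--Xiao, Liu; see also the treatment on eigenvarieties in \cite{BHS1}) then propagates triangulinity to every point of $\cE(U^p)_{\overline{\rho}}$, and in particular to every $z\in\cE(U^p,\ul{\lambda}_J)_{\overline{\rho}}$ via the closed embedding of the latter into the former.

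For the Hodge--Tate weight statement on $\cE(U^p,\ul{\lambda}_J)_{\overline{\rho}}$, the plan is again to start at classical points, which remain Zariski-dense by Theorem \ref{thm: cclg-eicl}. At such a point Proposition \ref{prop: gln-stm} identifies the contribution with an automorphic representation $\pi$ whose archimedean algebraic part $W_p$ carries at the places in $J$ the algebraic weight encoded by $\ul{\lambda}_J$; combined with the classical compatibility between highest weights of $W_p$ and Hodge--Tate weights of the associated crystalline Galois representation, together with the convention of the introduction that the cyclotomic character has Hodge--Tate weight $-1$, one reads off the Hodge--Tate weights $(-\lambda_{1,\sigma},1-\lambda_{2,\sigma})$ at $\sigma\in J_{\widetilde{v}}$. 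To propagate this to arbitrary points I would invoke that the Hodge--Tate--Sen weights of the family of Galois representations over $\cE(U^p,\ul{\lambda}_J)_{\overline{\rho}}$ are rigid analytic functions agreeing with $\wt(\delta)$ up to the fixed shift, and that by Remark \ref{rem: cclg-seeo} the identity $\wt(\delta)_{i,\sigma}=\lambda_{i,\sigma}$ for $\sigma\in J$ holds identically on $\cE(U^p,\ul{\lambda}_J)_{\overline{\rho}}$.

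The substantive step, and the one I expect to be the main obstacle, is upgrading \emph{Hodge--Tate with the correct weights at $\sigma\in J_{\widetilde{v}}$} to the genuine $J_{\widetilde{v}}$-de Rham assertion. My plan is to invoke the partially de Rham family theory developed in \cite{Ding4}, in parallel with the construction of $X_{\tri,J-\dR}^{\square}(\overline{r}_L,\ul{k}_J)'$ in Section \ref{sec: cclg-2.1}: Shah's theorem \cite{Sha} cuts out a closed subspace of $\cE(U^p,\ul{\lambda}_J)_{\overline{\rho}}$ consisting of those $z$ at which $\rho_{z,\widetilde{v}}$ is $J_{\widetilde{v}}$-de Rham with the prescribed Hodge--Tate weights. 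Classical points are automatically fully de Rham and hence lie in this closed subspace, so by Zariski-density it must coincide with all of $\cE(U^p,\ul{\lambda}_J)_{\overline{\rho}}$. The delicate point is to verify that the passage to $V(\ul{\lambda}_J)$ in \eqref{equ: cclg-lam}, i.e.\ taking $\Sigma_p\setminus J$-analytic vectors after the appropriate algebraic twist, is indeed compatible through the Jacquet--Emerton functor and the eigenvariety construction with $J$-de Rhamness on the Galois side; the bridge between these two notions is essentially supplied by \cite{Ding4}, but the argument requires careful bookkeeping of the normalizations $\delta^{\sharp}$, $\delta_B$, and the cyclotomic twist.
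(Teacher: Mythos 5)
Your proposal is correct and takes essentially the same route as the paper: the triangulinity statement is deduced from Zariski-density of classical points (Thm.\ref{thm: cclg-eicl}) plus the global triangulation theory of \cite{KPX}, \cite{Liu}, and the $J_{\widetilde{v}}$-de Rham statement with the prescribed Hodge--Tate weights comes from applying Shah's results \cite{Sha} to $\cE(U^p,\ul{\lambda}_J)_{\overline{\rho}}$ together with that same density. The ``delicate'' compatibility you worry about at the end is not actually needed: the only representation-theoretic input is that the weight map is constant equal to $\ul{\lambda}_J$ on $J$ over $\cE(U^p,\ul{\lambda}_J)_{\overline{\rho}}$ (Rem.\ref{rem: cclg-seeo}) and that classical (hence de Rham) points are Zariski-dense there, after which Shah's closedness/interpolation result does the rest on the Galois side.
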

\begin{remark}By Theorem \ref{thm: cclg-eicl}, the first part of Theorem \ref{thm: cclg-pen} follows from the global triangulation theory (\cite{KPX}, \cite{Liu}) applied to $\cE(U^p)_{\overline{\rho}}$;  and the second part follows from results of Shah \cite[Thm. 2]{Sha} applied to $\cE(U^p,\ul{\lambda}_J)_{\overline{\rho}}$.
\end{remark}
In particular, roughly speaking, $\cE(U^p,\ul{\lambda}_J)_{\overline{\rho}}$ gives a $J_{\widetilde{v}}$-de Rham family for $v|p$ while $\cE(U^p,\ul{\lambda}_J)_{\overline{\rho}}'$ should \emph{only} give a $J_{\widetilde{v}}$-Hodge-Tate family for $v|p$. We will show a more clear  picture in the patched eigenvariety case.



\subsection{Patched eigenvarieties}
\subsubsection{Patched Banach representation}Denote by $R_{\overline{\rho},S}$ the deformation ring (which is a complete noetherian  local $\co_E$-algebra with residue field $k_E$) which pro-represents the functor associating to  local artinian $\co_E$-algebras $A$ the sets of isomorphism classes of deformations $\rho_A$ of $\overline{\rho}$ over $A$ such that $\rho_A$ is  unramified outside $S$ and  $\rho_A^{\vee}\circ c\cong \rho_A \otimes \varepsilon$.
For a finite place $\widetilde{v}$ of $F$, denote by $\overline{\rho}_{\widetilde{v}}:=\overline{\rho}|_{\Gal_{F_{\widetilde{v}}}}$,  $R_{\overline{\rho}_{\widetilde{v}}}^{\bar{\square}}$  the maximal reduced and $p$-torsion free quotient of $R_{\overline{\rho}_{\widetilde{v}}}^{\square}$, and $R_{\overline{\rho}_S}^{\bar{\square}}:=\widehat{\otimes}_{v\in S} R_{\overline{\rho}_{\widetilde{v}}}^{\bar{\square}}$. Fix $g\in \Z_{\geq 1}$, let $R_{\infty}:=R_{\overline{\rho}_S}^{\bar{\square}}\llbracket x_1,\cdots, x_g \rrbracket$, $S_{\infty}:=\co\llbracket y_1,\cdots, y_q \rrbracket$ with $q=g+[F^+:\Q]+4|S|$, and $\fa=(y_1,\cdots, y_q)\subset S_{\infty}$.
In the following, we assume the so-called Taylor-Wiles hypothesis.
\begin{hypothesis}\label{hypo: cclg-TW}Suppose
  \begin{itemize}
  \item $p>2$,
  \item $F$ is unramified over $F^+$ and $G$ is quasi-split at all finite places of $F^+$,
  \item $U_v$ is hyperspecial when the finite place $v$ of $F^+$ is inert in $F$,
  \item $\overline{\rho}|_{\Gal_{F(\zeta_p)}}$ is adequate (\cite{Tho}).
\end{itemize}
\end{hypothesis}
By \cite[Prop. 6.7]{Tho}, $\widehat{S}(U^p,E)_{\overline{\rho}}$ is naturally equipped with a $R_{\overline{\rho},S}$-action. Moreover, the action of $R_{\overline{\rho},S}$ on $\widehat{S}(U^p,E)_{\overline{\rho}}$ factors through $R_{\overline{\rho},S} \twoheadrightarrow R_{\overline{\rho},\cS}$, where $R_{\overline{\rho},\cS}$ is the deformation ring associated to the deformation problem (as in \cite[\S~2.3]{CHT}, and we use the notation of \emph{loc. cit.})
\begin{equation*}
  \cS=\big(F/F^+, S, \widetilde{S}, \co_E, \overline{\rho}, \varepsilon^{-1} \delta_{F/F^+}^2, \{R_{\overline{\rho}_{\widetilde{v}}}^{\bar{\square}}\}_{v\in S}\big)
\end{equation*}
where $\widetilde{S}=\{\widetilde{v}\ |\ v\in S\}$, and $\delta_{F/F^+}$ is the quadratic character of $\Gal_{F^+}$ associated to the extension $F/F^+$. By shrinking $U_v$ for certain places $v\nmid p$ that split in $F$ (and hence enlarging $S$ consequently), we suppose\begin{equation*}
  G(F^+)\cap (hU^p K_p h^{-1})=\{1\}, \  \forall h\in G(\bA_{F^+}^{\infty})
\end{equation*}
where we also use $K_p$ to denote $\prod_{v|p} i_{\widetilde{v}}^{-1}(\GL_2(\co_{F_{\widetilde{v}}}))$ (which is independent of the choice of $\widetilde{v}$). By \cite{CEGGPS}, we have as in  \cite[Thm. 3.5]{BHS1}
\begin{enumerate}
  \item a continuous $R_{\infty}$-admissible unitary representation $\Pi_{\infty}$ of $G_p$ over $E$ together with a $G_p$-stable and $R_{\infty}$-stable unit ball $\Pi_{\infty}^o\subset \Pi_{\infty}$;
  \item a morphism of local $\co_E$-algebras $S_{\infty}\ra R_{\infty}$ such that $M_{\infty}:= \Hom_{\co_L}(\Pi_{\infty}^o, \co_E)$ is finite projective as $S_{\infty}\llbracket K_p\rrbracket$-module;
  \item a surjection $R_{\infty}/\fa R_{\infty}\twoheadrightarrow R_{\overline{\rho},\cS}$ and a  $G_p\times R_{\infty}/\fa R_{\infty}$-invariant isomorphism $\Pi_{\infty}[\fa]\cong \widehat{S}(U^p,E)_{\overline{\rho}}$, where $R_{\infty}$ acts on $\widehat{S}(U^p,E)_{\overline{\rho}}$ via $R_{\infty}/\fa R_{\infty}\twoheadrightarrow R_{\overline{\rho},\cS}$.
\end{enumerate}
\subsubsection{Patched eigenvariety and some stratifications}\label{sec: cclg-3.3.2}
Recall (cf. \cite[\S~3.1]{BHS1}) for an $R_{\infty}$-admissible representation $\Pi$ of $G_p$ over $E$,  a vector $v\in \Pi$ is called \emph{locally $R_{\infty}$-analytic} if it is locally $\Q_p$-analytic for the action of $\Z_p^s \times G_p$ with respect to a presentation $\co_E\llbracket \Z_p^s\rrbracket \twoheadrightarrow R_{\infty}$. And it is shown in \emph{loc. cit.}, this definition is independent of the choice of the presentation. As in \emph{loc. cit.}, denote by $\Pi^{R_{\infty}-\an}$ the subspace of locally $R_{\infty}$-analytic vectors.

Let $J\subseteq \Sigma_p$, $\ul{\lambda}_J:=(\lambda_{1,\sigma}, \lambda_{2,\sigma})_{\sigma\in J}\in \Z^{2|J|}$, and suppose $\lambda_{1,\sigma}\geq \lambda_{2,\sigma}$ for all $\sigma\in J$. Consider $\Pi_{\infty}^{R_{\infty}-\an}(\ul{\lambda}_J)$ (cf. (\ref{equ: cclg-lam})), which is a locally $\Q_p$-analytic $\text{U}(\ug_J)$-finite representation of $G_p$ (cf. \S~Appendix \ref{sec: cclg-A.2}), stable under $R_{\infty}$ and is moreover admissible as a locally $\Q_p$-analytic representation of $G_p\times \Z_p^s$ \big(where the action  $\Z_p^s$ is induced by that of $R_{\infty}$ via any presentation $\co_E\llbracket \Z_p^s \rrbracket \twoheadrightarrow R_{\infty}$\big). In fact, $\Pi_{\infty}^{R_{\infty}-\an}(\ul{\lambda}_J)$ is a closed subrepresentation of $\Pi_{\infty}^{R_{\infty}-\an}$ by Proposition \ref{prop: cclg-cnfp} (where the closedness follows from the same argument as in Corollary \ref{cor: cclg-clsu}). Note also that, for $J'\subseteq J$, by Lemma \ref{lem: cclg-j'js} (3), $\Pi_{\infty}^{R_{\infty}-\an}(\ul{\lambda}_J)$ is a closed subrepresentation of $\Pi_{\infty}^{R_{\infty}-\an}(\ul{\lambda}_{J'})$.

Applying Jacquet-Emerton functor, we get a locally $\Q_p$-analytic representation $$J_{B_p}\big(\Pi_{\infty}^{R_{\infty}-\an}(\ul{\lambda}_J)\big)$$ of $T_p$ equipped with a continuous action of $R_{\infty}$, which is moreover essentially admissible as locally $\Q_p$-analytic representation of $T_p\times \Z_p^s$. Let $\fX_{\infty}:=(\Spf R_{\infty})^{\rig}$, and $R_{\infty}^{\rig}:=\co\big(\fX_{\infty}\big)$. The strong dual $J_{B_p}\big(\Pi_{\infty}^{R_{\infty}-\an}(\ul{\lambda}_J)\big)'$ is thus a coadmissible $R_{\infty}^{\rig}\widehat{\otimes}_E \co(\cT_p)$-module, which corresponds to a coherent sheaf $\cM_{\infty}(\ul{\lambda}_J)$ over $\fX_{\infty}\times \cT_p$ such that
\begin{equation*}
   \Gamma\big(\fX_{\infty}\times \cT_p, \cM_{\infty}(\ul{\lambda}_J)\big) \cong  J_{B_p}\big(\Pi_{\infty}(\ul{\lambda}_J)\big)'.
\end{equation*}
Let $X_p(\overline{\rho},\ul{\lambda}_J)$ be the support of $\cM_{\infty}(\ul{\lambda}_J)$ on $\fX_{\infty}\times \cT_p$. In particular, for $x=(y,\delta)\in \fX_{\infty}\times \cT_p$, $x\in X_p(\overline{\rho},\ul{\lambda}_J)$ if and only if the corresponding eigenspace
\begin{equation*}
J _{B_p}\big(\Pi_{\infty}^{R_{\infty}-\an}(\ul{\lambda}_J)\big)[\fm_y,T_p=\delta]\neq 0,
\end{equation*}
where $\fm_y$ denote the maximal ideal of $R_{\infty}[\frac{1}{p}]$ corresponding to $y$. Similarly as in (\ref{equ: JacLambda}), we have \begin{equation}\label{equ: cclg-pgyy}J_{B_p}\big(\Pi_{\infty}^{R_{\infty}-\an}(\ul{\lambda}_J)\big)\cong J_{B_p}\big((\Pi_{\infty}^{R_{\infty}-\an}\otimes_E L(\ul{\lambda}_J)')^{\Sigma_p\setminus J-\an}\big)\otimes_E \delta_{\ul{\lambda}_J},\end{equation} it is straightforward to see the action of $\co(\cT_p)$ on $J_{B_p}\big(\Pi_{\infty}^{R_{\infty}-\an}\big)$ factors through $\co(\cT_p(\ul{\lambda}_J))$, and hence $\cM_{\infty}(\ul{\lambda}_J)$ is supported on $\fX_{\infty}\times \cT_p(\ul{\lambda}_J)$. So the natural injection $X_p(\overline{\rho},\ul{\lambda}_J)\hookrightarrow \fX_{\infty}\times \cT_p$ factors through $\fX_{\infty}\times \cT_p(\ul{\lambda}_J)$.

Let $J'\subseteq J$, one has a natural projection of coadmissible $\co\big(\fX_{\infty}\times \cT_p\big)$-modules $\cM(\overline{\rho},\ul{\lambda}_{J'})\twoheadrightarrow \cM(\overline{\rho},\ul{\lambda}_J)$ induced by the natural inclusion $J_{B_p}\big(\Pi_{\infty}^{R_{\infty}-\an}(\ul{\lambda}_J)\big) \hookrightarrow J_{B_p}\big(\Pi_{\infty}^{R_{\infty}-\an}(\ul{\lambda}_{J'})\big)$. Consequently,  $X_p(\overline{\rho},\ul{\lambda}_J)$ is naturally a rigid closed subspace of $X_p(\overline{\rho},\ul{\lambda}_{J'})$, and we have a commutative diagram
\begin{equation*}
  \begin{CD}
    X_p(\overline{\rho},\ul{\lambda}_J) @>>>  X_p(\overline{\rho},\ul{\lambda}_{J'})  \\
    @VVV @VVV \\
    \cT_{p}(\ul{\lambda}_{J}) @>>>  \cT_{p}(\ul{\lambda}_{J'})
  \end{CD}.
\end{equation*}
Let $X_p(\overline{\rho}):=X_p(\overline{\rho},\ul{\lambda}_{\emptyset})$ which is the so-called \emph{patched eigenvariety}  constructed in \cite{BHS1}. Put $X_p(\overline{\rho},\ul{\lambda}_J)':=X_p(\overline{\rho})\times_{\cT_p} \cT_p(\ul{\lambda}_J)$, $X_p(\overline{\rho},\ul{\lambda}_J,J'):=X_p(\overline{\rho},\ul{\lambda}_{J'})\times_{\cT_p(\ul{\lambda}_{J'})} \cT_p(\ul{\lambda}_J)$. We have thus a commutative diagram  (compare with (\ref{equ: cclg-rdJ})):
\begin{equation}\label{equ: cclg-a1j}
   \begin{CD}X_p(\overline{\rho},\ul{\lambda}_J)@>>> X_p(\overline{\rho},\ul{\lambda}_{J},J') @>>> X_p(\overline{\rho},\ul{\lambda}_{J'})  @>>>X_p(\overline{\rho},\ul{\lambda}_{J'})' @>>> X_p(\overline{\rho}) \\
   @VVV @VVV @VVV @VVV @VVV \\
   \cT_{p}(\ul{\lambda}_{J}) @>>> \cT_{p}(\ul{\lambda}_{J}) @>>> \cT_{p}(\ul{\lambda}_{J'}) @>>> \cT_{p}(\ul{\lambda}_{J'}) @>>> \cT_p
   \end{CD}
\end{equation}
where the horizontal maps are closed embeddings, and the second and fourth square are cartesian.
\subsubsection{Structure of $X_p(\overline{\rho},\ul{\lambda}_J)$}We fix an isomorphism $\co_E\llbracket \Z_p^q \rrbracket \cong S_{\infty}$. Since $\Pi_{\infty}^{\vee}$ is a finite projective  $S_{\infty}\llbracket K_p \rrbracket[\frac{1}{p}]$-module, so is $\big(\Pi_{\infty} \otimes_E L(\ul{\lambda}_J)'\big)^{\vee}$. Thus for any pro-$p$ compact open subgroup $K_p'$ of $K_p$, one has
\begin{equation*}
  \big(\Pi_{\infty}\otimes_E L(\ul{\lambda}_J)'\big)|_{\Z_p^q \times K_p'} \cong \cC(\Z_p^q \times K_p', E)^{\oplus r} \cong \big(\cC(\Z_p^q,E)\widehat{\otimes}_E \cC(K_p',E)\big)^{\oplus r},
\end{equation*}
\begin{equation}\label{equ: cclg-PiJ}\big(\Pi_{\infty}^{R_{\infty}-\an}\otimes_E L(\ul{\lambda}_J)'\big)^{\Sigma_p \setminus J-\an}\big|_{\Z_p^q \times K_p'}\cong \big(\cC^{\an}(\Z_p^q, E) \widehat{\otimes}_E \cC^{\Sigma_p\setminus J-\an}(K_p',E)\big)^{\oplus r'},\end{equation}
for certain $r, r'\in \Z_{>0}$.
For $v|p$, let $z_{\widetilde{v}}:=\begin{pmatrix} \varpi_{\widetilde{v}} & 0 \\ 0 & 1\end{pmatrix}\in T_{\widetilde{v}}$, $T_p^+$ be the monoid in $T_p$ generated by $T_p^0$ and $z_{\widetilde{v}}$ for all $v|p$, and let $z:=(z_{\widetilde{v}})_{v|p} \in T_p$. One has a natural projection
\begin{equation}\label{equ: cclg-projwt}\cT_p\twoheadlongrightarrow \cT_{p}^0\times \bG_m,\ \delta\mapsto \delta|_{T_p^0} \times \delta(z)
 \end{equation}which induces projections $\cT_{p,\Sigma_p\setminus J}\twoheadrightarrow \cT^0_{p,\Sigma_p\setminus J}\times \bG_m$, $\cT_p(\ul{\lambda}_J) \twoheadrightarrow \cT^0_p(\ul{\lambda}_J)\times \bG_m$. Denote by $\cW_{\infty}:=(\Spf S_{\infty})^{\rig}\times  \cT_p^0$, $\cW_{\infty,\Sigma_p\setminus J}:=(\Spf S_{\infty})^{\rig}\times \cT_{p,\Sigma_p\setminus J}^0$, and $\cW_{\infty}(\ul{\lambda}_J):=(\Spf S_{\infty})^{\rig}\times \cT_{p}^0(\ul{\lambda}_J)$. The natural morphism $(\Spf R_{\infty})^{\rig} \ra (\Spf S_{\infty})^{\rig}$ together with (\ref{equ: cclg-projwt}) give thus a morphism
\begin{equation*}\kappa: \fX_{\infty}\times \cT_p\lra \cW_{\infty}\times \bG_m.
\end{equation*}
Consider $J_{B_p}\big((\Pi_{\infty}^{R_{\infty}-\an}\otimes_E L(\ul{\lambda}_J)')^{\Sigma_p\setminus J-\an}\big)$, which is a locally $\Sigma_p\setminus J$-analytic representation of $T_p$ equipped with an action of $R_{\infty}$ (commuting with $T_p$), and is essentially admissible as locally analytic representation of $T_p\times \Z_p^q$.  By (\ref{equ: cclg-PiJ}) and (the proof of) \cite[Prop. 4.2.36]{Em11}, $ J_{B_p}\big((\Pi_{\infty}^{R_{\infty}-\an}\otimes_E L(\ul{\lambda}_J)')^{\Sigma_p\setminus J-\an}\big)^{\vee}$ is moreover a coadmissible $\co\big(\cW_{\infty}\times \bG_m\big)$-module, and hence is a coadmissible $\co\big(\cW_{\infty,\Sigma_p\setminus J}\times \bG_m\big)$-module \big(since the action of $\co(\cW_{\infty})$ factors though $\co(\cW_{\infty,\Sigma_p\setminus J})$\big).
By the isomorphism (\ref{equ: cclg-pgyy}), $J_{B_p}\big(\Pi_{\infty}^{R_{\infty}-\an}(\ul{\lambda}_J)\big)^{\vee}$ is thus a coadmissible $\co\big(\cW_{\infty}(\ul{\lambda}_J)\times \bG_m\big)$-module, in other words, the push forward $\cN(\overline{\rho},\ul{\lambda}_J):=\kappa_* \cM(\overline{\rho},\ul{\lambda}_J)$ is a coherent sheaf on $\cW_{\infty}(\ul{\lambda}_J)\times \bG_m$. We have as in \cite[Prop. 5.3]{BHS1} (recall $N_p^0=N_p\cap K_p$):
\begin{lemma}\label{lem: strucPaEv}(1) There exist an admissible covering of $\cW_{\infty,\Sigma_p \setminus J}$ by affinoid opens $U_1\subset U_2\subset \cdots \subset U_h \subset \cdots $, and a commutative diagram
\begin{equation*}
  \begindc{\commdiag}[400]
  \obj(0,1)[a]{$ \Big(\big(\big(\Pi_{\infty}^{R_{\infty}-\an}\otimes_E L(\ul{\lambda}_J)'\big)^{\Sigma_p\setminus J-\an}\big)^{N_p^0}\Big)^{\vee}$}
  \obj(3,1)[b]{$\cdots$}
  \obj(4,1)[c]{$V_{h+1}$}
  \obj(6,1)[d]{$V_{h+1}\otimes_{A_{h+1}} A_h$}
  \obj(8,1)[e]{$V_n$}
    \obj(0,0)[a']{$ \Big(\big(\big(\Pi_{\infty}^{R_{\infty}-\an}\otimes_E L(\ul{\lambda}_J)'\big)^{\Sigma_p\setminus J-\an}\big)^{N_p^0}\Big)^{\vee}$}
  \obj(3,0)[b']{$\cdots$}
  \obj(4,0)[c']{$V_{h+1}$}
  \obj(6,0)[d']{$V_{h+1}\otimes_{A_{h+1}} A_h$}
  \obj(8,0)[e']{$V_h$}
  \mor{a}{b}{}
  \mor{b}{c}{}
  \mor{c}{d}{}
  \mor{d}{e}{$\beta_h$}
    \mor{a'}{b'}{}
  \mor{b'}{c'}{}
  \mor{c'}{d'}{}
  \mor{d'}{e'}{$\beta_h$}
  \mor{a}{a'}{$z$}
  \mor{c}{c'}{$z_{h+1}$}
  \mor{d}{d'}{$z_{h+1}\otimes \id$}[\atright,\solidarrow]
  \mor{e}{e'}{$z_h$}
  \mor{e}{d'}{$\alpha_h$}
  \enddc,
\end{equation*}
where $A_h:=\Gamma(U_h, \co_{\cW_{\infty,\Sigma_p\setminus J}})$, $V_h$ is a Banach $A_h$-module satisfying the condition (Pr) of \cite{Bu}, $z_h$ and $\beta_h$ are  $A_h$-linear and compact, $\alpha_h$ is continuous $A_h$-linear, and there is an isomorphism of $\co(\cW_{\infty,\Sigma_p\setminus J})$-modules
  \begin{equation*}\Big(\big(\big(\Pi_{\infty}^{R_{\infty}-\an}\otimes_E L(\ul{\lambda}_J)'\big)^{\Sigma_p\setminus J-\an}\big)^{N_p^0}\Big)^{\vee}\xlongrightarrow{\sim} \varprojlim_{h} V_h\end{equation*}
  which commutes with the Hecke action of $z$ on the left object, and that of $(z_h)_{h\in \Z_{\geq 1}}$ on the right.

  (2) The statement in (1) also holds with the rigid space $\cW_{\infty,\Sigma_p\setminus J}$ replaced by $\cW_{\infty}(\ul{\lambda}_J)$, and $\big(\big(\Pi_{\infty}^{R_{\infty}-\an}\otimes_E L(\ul{\lambda}_J)'\big)^{\Sigma_p\setminus J-\an}\big)^{N_p^0}$ replaced by $\big(\Pi_{\infty}^{R_{\infty}-\an}(\ul{\lambda}_J)\big)^{N_p^0}$.
\end{lemma}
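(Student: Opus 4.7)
The plan is to adapt Emerton's construction of the Jacquet functor in the essentially admissible setting (\cite[\S 4.2]{Em11}) to the $R_\infty$-analytic patched setting, along the lines of \cite[\S 3.2]{BHS1}. The trivialization (\ref{equ: cclg-PiJ}) is the concrete input, and the output is a Coleman-Buzzard style compact-operator presentation over an admissible affinoid cover of the weight space.

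For part (1), I would first take $N_p^0$-invariants in (\ref{equ: cclg-PiJ}). Since $N_p^0 \subseteq K_p'$ commutes with the $\Z_p^q$-action, this yields a $T_p^+\times S_\infty$-equivariant isomorphism
$$\Big(\big(\Pi_\infty^{R_\infty-\an}\otimes_E L(\ul\lambda_J)'\big)^{\Sigma_p\setminus J-\an}\Big)^{N_p^0} \;\cong\; \cC^{\an}(\Z_p^q,E)\,\widehat{\otimes}_E\, W,$$
where $W := \cC^{\Sigma_p\setminus J-\an}(K_p',E)^{N_p^0}$ is a locally $\Sigma_p\setminus J$-analytic $T_p^0$-representation carrying a commuting $T_p^+$-action. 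Dualizing, the left-hand side of the target isomorphism is a coadmissible module over $\co((\Spf S_\infty)^{\rig})\,\widehat{\otimes}_E\,\co(\widehat T^0_{p,\Sigma_p\setminus J}) = \co(\cW_{\infty,\Sigma_p\setminus J})$. I would then take an admissible affinoid exhaustion $U_1\subseteq U_2\subseteq \cdots$ of $\cW_{\infty,\Sigma_p\setminus J}$, for instance the product of closed polydisks of radius $p^{-1/h}$ in $(\Spf S_\infty)^{\rig}$ with the preimage under (\ref{equ: cclg-arv}) of an exhaustion of the weight space of locally $\Sigma_p\setminus J$-analytic characters. Setting $A_h := \Gamma(U_h,\co_{\cW_{\infty,\Sigma_p\setminus J}})$, define $V_h$ by base-changing to $A_h$ an orthonormalizable Banach completion $W_h^\vee$ of the dual of $W$ at radius of analyticity $1/h$; the inverse-limit identification is then the defining coadmissibility of the module on the left-hand side over $\cW_{\infty,\Sigma_p\setminus J}$.

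The main obstacle is the production of the compact Hecke operator $z_h$ together with the required factorization. This is the familiar Emerton-Coleman compactness phenomenon: each $z_{\widetilde v}$ strictly contracts $N_{\widetilde v}^0$, so $zN_p^0z^{-1}\subsetneq N_p^0$ is a proper open subgroup of finite index. Consequently the action of $z$ on $N_p^0$-invariants refines the radius of analyticity by one step on $K_p'$, so it factors through the nuclear inclusion $W_{h+1}^\vee \hookrightarrow W_h^\vee$ coming from the general theory of locally analytic functions. I would define $\alpha_h: V_h \to V_{h+1}\otimes_{A_{h+1}}A_h$ as the dual of the analyticity-refinement map base-changed to $A_h$, and $\beta_h: V_{h+1}\otimes_{A_{h+1}}A_h \to V_h$ as the natural restriction; the identities $\beta_h\circ\alpha_h = z_h$ and $\alpha_h\circ\beta_h = z_{h+1}\otimes 1_{A_h}$ then both encode the single factorization ``extension-of-analyticity followed by compact inclusion'' that expresses compactness of $z$. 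Compactness survives the base change to $A_h$ because $\cC^{\an}(\Z_p^q,E)$ is nuclear Fréchet and $A_h$ is $E$-affinoid. This is essentially the argument of \cite[\S 3.2]{BHS1} transplanted to the $\ul\lambda_J$-partial setting.

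Part (2) is formal from (1) via (\ref{equ: cclg-pgyy}): as $T_p$-representations one has $\big(\Pi_\infty^{R_\infty-\an}(\ul\lambda_J)\big)^{N_p^0}\cong \big((\Pi_\infty^{R_\infty-\an}\otimes_E L(\ul\lambda_J)')^{\Sigma_p\setminus J-\an}\big)^{N_p^0}\otimes_E \delta_{\ul\lambda_J}$. Multiplication by $\delta_{\ul\lambda_J}$ realizes the isomorphism $\widehat T^0_{p,\Sigma_p\setminus J}\xrightarrow{\sim}\widehat T^0_p(\ul\lambda_J)$, $\delta'\mapsto\delta'\delta_{\ul\lambda_J}$ recorded after (\ref{equ: cclg-arv}); pulling back the covering $\{U_h\}$ and the operators $(z_h,\alpha_h,\beta_h)$ of (1) along this twist (and rescaling by $\delta_{\ul\lambda_J}(z)$) furnishes the corresponding data on $\cW_\infty(\ul\lambda_J)$.
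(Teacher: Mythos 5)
Your proposal is correct and follows essentially the same route as the paper: the paper proves (1) by invoking the trivialization (\ref{equ: cclg-PiJ}) and the argument of \cite[Prop.5.3]{BHS1} (itself \cite[Prop.4.2.36]{Em11}), which is exactly the compact-operator presentation you spell out, and proves (2) via the $R_\infty\times T_p^+$-equivariant twist by $\delta_{\ul{\lambda}_J}$ as you do. The only difference is that you unpack the cited Emerton--Coleman--Buzzard mechanism explicitly, which the paper leaves to the references.
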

\begin{proof}
  (1) follows from (\ref{equ: cclg-PiJ}) as in \cite[Prop. 5.3]{BHS1} (see also \cite[Prop. 4.2.36]{Em11}). As in the proof   of \cite[Lem. 7.2.12]{Ding}, we have a natural $R_{\infty}\times T_p^+$-equivariant isomorphism
\begin{equation*}
  \big(\Pi_{\infty}^{R_{\infty}-\an}(\ul{\lambda}_J)\big)^{N_p^0} \xlongrightarrow{\sim} \big(\big(\Pi_{\infty}^{R_{\infty}-\an}\otimes_E L(\ul{\lambda}_J)'\big)^{\Sigma_p\setminus J-\an}\big)^{N_p^0}\otimes_{E} \delta_{\ul{\lambda}_J},
\end{equation*}
which together with (1) imply (2).
\end{proof}Denote by $Z(\overline{\rho},\ul{\lambda}_J)$ the support of $\cN(\overline{\rho},\ul{\lambda}_J)$ in $\cW_{\infty}(\ul{\lambda}_J) \times \bG_m$.  The morphism $\kappa$ induces thus a morphism $\kappa: X(\overline{\rho},\ul{\lambda}_J) \ra Z(\overline{\rho},\ul{\lambda}_J)$. Denote by $g: Z(\overline{\rho},\ul{\lambda}_J)\ra \cW_{\infty}(\ul{\lambda}_J)$ and $\omega_{\infty}: X_p(\overline{\rho},\ul{\lambda}_J) \ra \cW_{\infty}(\ul{\lambda}_J)$ the natural morphisms. Thus $\omega_{\infty}=g\circ \kappa$.  As in \cite[Lem. 2.10, Prop. 3.11]{BHS1}, one can deduce from Lemma \ref{lem: strucPaEv}:
\begin{proposition}\label{prop: cclg-Fen}
  (1) $Z(\overline{\rho},\ul{\lambda}_J)$ is a Fredholm hypersurface in $\cW_{\infty}(\ul{\lambda}_J)\times \bG_m$, and there exists an admissible covering $\{U_i\}_{i\in I}$ of $Z(\overline{\rho},\ul{\lambda}_J)$ by affinoids $U_i$ such that  the morphism $g$ induces a finite surjective map from $U_i$ to an affinoid open $W_i$ of $\cW_{\infty}(\ul{\lambda}_J)$ and $U_i$ is a connected component of $g^{-1}(W_i)$. Moreover, for $i\in I$, $\Gamma(U_i,\cN_{\infty}(\ul{\lambda}_J))$ is a finite projective $\Gamma(W_i,\co_{\cW_{\infty}(\ul{\lambda}_J)})$-module.

  (2) There exists an admissible covering $\{\cU_i\}_{i\in I}$ of $X_p(\overline{\rho},\ul{\lambda}_J)$ by affinoids $\cU_i$ such that for all $i$ there exists an open affinoid $W_i$ of $\cW_{\infty}(\ul{\lambda}_J)$ such that the morphism $\omega_{\infty}$ induces a finite surjective morphism from each irreducible component of $\cU_i$  onto $W_i$ and that $\Gamma(\cU_i,\co_{X_p(\overline{\rho})})$ is isomorphic to a $\Gamma(W_i,\co_{\cW_{\infty}(\ul{\lambda}_J)})$-subalgebra of the endomorphism ring of a finite projective $\Gamma(W_i,\co_{\cW_{\infty}(\ul{\lambda}_J)})$-module.
\end{proposition}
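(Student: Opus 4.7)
The plan is to follow the argument of \cite[Lem.2.10, Prop.3.11]{BHS1} mutatis mutandis, feeding in the inverse-limit presentation from part (2) of the preceding lemma in place of its analogue in \emph{loc.\,cit.} and replacing the base $\cW_\infty$ by $\cW_\infty(\ul{\lambda}_J)$. For (1), I would fix the admissible affinoid covering $\{U_h\}$ of $\cW_\infty(\ul{\lambda}_J)$ provided by the lemma, with $A_h = \Gamma(U_h,\co_{\cW_\infty(\ul{\lambda}_J)})$ and Banach $A_h$-modules $V_h$ carrying compact $A_h$-linear operators $z_h$. Form the Fredholm determinants
\begin{equation*}
F_h(T) := \det\nolimits_{A_h}(1-Tz_h \mid V_h) \in A_h\{\{T\}\}.
\end{equation*}
The relations $\beta_h\circ\alpha_h = z_h$ and $\alpha_h\circ\beta_h = z_{h+1}\otimes 1_{A_h}$ force $F_{h+1}|_{A_h} = F_h$, so the $F_h$ glue to a global Fredholm series $F \in \co(\cW_\infty(\ul{\lambda}_J))\{\{T\}\}$; its zero locus $Z_F \subset \cW_\infty(\ul{\lambda}_J)\times\bG_m$ (with $T = X^{-1}$ and $X$ the coordinate on $\bG_m$) is a Fredholm hypersurface in the sense of Coleman-Mazur/Conrad. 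A point $(w,\alpha)$ lies in the support of $\cN(\overline{\rho},\ul{\lambda}_J)$ iff $\alpha$ is a (generalised) eigenvalue of $z$ on the fibre at $w$, which happens iff $F(w,\alpha^{-1}) = 0$; this identifies $Z(\overline{\rho},\ul{\lambda}_J) = Z_F$.

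Next I would invoke the standard slope decomposition machinery (Coleman, Buzzard, Conrad): over a suitable admissible affinoid open $W_i \subset \cW_\infty(\ul{\lambda}_J)$ one finds a factorisation $F|_{W_i} = Q_i \cdot R_i$ with $Q_i$ a polynomial of degree $n_i$ whose zeros are disjoint from those of $R_i$, and defining $U_i := \{Q_i = 0\} \subset g^{-1}(W_i)$ realises $U_i$ as a connected component of $g^{-1}(W_i)$ finite surjective over $W_i$. Riesz theory for the compact operator $z$ then produces a canonical direct sum decomposition of $\Gamma(g^{-1}(W_i),\cN(\overline{\rho},\ul{\lambda}_J))$ into a piece $M_i$ on which $z$ has characteristic polynomial $Q_i$ (and which is finite projective over $\Gamma(W_i,\co)$) and a complementary piece; one has $M_i = \Gamma(U_i,\cN(\overline{\rho},\ul{\lambda}_J))$, giving the remaining claims of (1).

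For (2), set $\cU_i := \kappa^{-1}(U_i)$; after further refining the $W_i$ (and hence the $U_i$) if necessary, this is an affinoid open of $X_p(\overline{\rho},\ul{\lambda}_J)$. The Hecke algebra $R_\infty^{\rig}\widehat{\otimes}_E \co(\widehat{T}_p)$ acts on $\cM(\overline{\rho},\ul{\lambda}_J)$ commuting with $z$, hence preserves $M_i$, yielding a map
\begin{equation*}
R_\infty^{\rig}\widehat{\otimes}_E \co(\widehat{T}_p) \lra \End_{\Gamma(W_i,\co)}(M_i),
\end{equation*}
whose image is, by construction of $X_p(\overline{\rho},\ul{\lambda}_J)$ as the scheme-theoretic support of $\cM(\overline{\rho},\ul{\lambda}_J)$, exactly $\Gamma(\cU_i,\co_{X_p(\overline{\rho},\ul{\lambda}_J)})$. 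Since $M_i$ is finite projective over $\Gamma(W_i,\co)$, the endomorphism ring is finite projective and so is the subalgebra $\Gamma(\cU_i,\co_{X_p(\overline{\rho},\ul{\lambda}_J)})$; the finite surjectivity of $\omega_\infty$ from each irreducible component of $\cU_i$ onto $W_i$ is deduced, exactly as in \cite[Prop.3.11]{BHS1}, from the equidimensionality of the weight map and the fact that an irreducible component maps to a Zariski-closed equidimensional subset of $W_i$ of the same dimension.

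The principal obstacle is bookkeeping rather than substance: one must verify that the twist by $\delta_{\ul{\lambda}_J}$ appearing in the isomorphism $J_{B_p}(\Pi_\infty^{R_\infty-\an}(\ul{\lambda}_J)) \cong J_{B_p}((\Pi_\infty^{R_\infty-\an}\otimes L(\ul{\lambda}_J)')^{\Sigma_p\setminus J-\an})\otimes \delta_{\ul{\lambda}_J}$ is compatible with the compact operator $z$ on the $N_p^0$-invariants (up to the shift of the $\bG_m$-coordinate by $\delta_{\ul{\lambda}_J}(z)$) and with the inverse-limit presentation, so that the coherent sheaf constructed from the tower $(V_h,z_h)$ of part (2) of the preceding lemma genuinely equals $\kappa_*\cM(\overline{\rho},\ul{\lambda}_J) = \cN(\overline{\rho},\ul{\lambda}_J)$. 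Once this compatibility is recorded, the arguments of \cite[Lem.2.10, Prop.3.11]{BHS1} transfer essentially word-for-word.
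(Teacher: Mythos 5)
Your proposal is correct and follows essentially the same route as the paper, which at this point simply invokes the preceding lemma together with \cite[Lem.2.10, Prop.3.11]{BHS1}: the Fredholm-series/Riesz decomposition machinery you spell out, the choice $\cU_i=\kappa^{-1}(U_i)$, and the identification of $\Gamma(\cU_i,\co_{X_p(\overline{\rho},\ul{\lambda}_J)})$ with the image of the acting algebra in $\End_{\Gamma(W_i,\co)}(M_i)$ are exactly the content of that citation (compare also Rem.\ref{rem: cclg-neii}). The compatibility with the twist by $\delta_{\ul{\lambda}_J}$ that you flag is precisely what part (2) of the preceding lemma records, so no further argument is needed.
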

\begin{remark}\label{rem: cclg-neii}
 As in the proof of \cite[Prop. 3.11]{BHS1}, one can take $\cU_i$ (in (2)) to be $\kappa^{-1}(U_i)$ (with $U_i$ as in (1)), and then $\Gamma(\cU_i,X_p(\overline{\rho}, \ul{\lambda}_J))$ is just the $\Gamma(W_i,\co_{\cW_{\infty}(\ul{\lambda}_J)})$-subalgebra of the endomorphism ring of the finite projective $\Gamma(W_i,\co_{\cW_{\infty}(\ul{\lambda}_J)})$-module $\Gamma(U_i,\cN_{\infty}(\ul{\lambda}_J))\cong \Gamma(\cU_i, \cM_{\infty}(\ul{\lambda}_J))$ generated by the operators in $R_{\infty}\times T_p$.
\end{remark}
\begin{corollary}\label{cor: cclg-o0e}
  (1) The rigid space $X_p(\overline{\rho}, \ul{\lambda}_J)$ is equidimensional of dimension
   \begin{equation*}
     g+4|S|+3[F^+:\Q]-2|J|,
   \end{equation*}locally finite over $\cW_{\infty}(\ul{\lambda}_J)$ and does not have embedded components.


  (2) The coherent sheaf $\cM_{\infty}(\ul{\lambda}_J)$ is Cohen-Macaulay over $X_p(\overline{\rho}, \ul{\lambda}_J)$.
\end{corollary}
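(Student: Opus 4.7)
The plan is to deduce the corollary directly from Proposition \ref{prop: cclg-Fen} combined with standard commutative-algebra facts. I would start from the admissible affinoid covering $\{\cU_i\}_{i \in I}$ of $X_p(\overline{\rho},\ul{\lambda}_J)$ and the corresponding open affinoids $\{W_i\}_{i \in I}$ of $\cW_\infty(\ul{\lambda}_J)$ provided by Proposition \ref{prop: cclg-Fen}(2). Since $\cW_\infty(\ul{\lambda}_J) = (\Spf S_\infty)^{\rig} \times \widehat{T}_p^0(\ul{\lambda}_J)$ is smooth and equidimensional (a product of an open polydisc with an \'etale cover of an affine subspace cut out by the weight constraints defining $\ul{\lambda}_J$), a direct dimension count gives $\dim \cW_\infty(\ul{\lambda}_J) = g + 4|S| + 3[F^+:\Q] - 2|J|$; each $W_i$, being open in this smooth equidimensional space, has the same dimension. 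Since each irreducible component of $\cU_i$ is finite and surjective onto $W_i$, it has the same Krull dimension, giving equidimensionality of $X_p(\overline{\rho},\ul{\lambda}_J)$ with the claimed dimension. Local finiteness of $\omega_\infty$ over $\cW_\infty(\ul{\lambda}_J)$ is built into the covering.

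For (2) I would set $A_i := \Gamma(W_i, \co_{\cW_\infty(\ul{\lambda}_J)})$, $B_i := \Gamma(\cU_i, \co_{X_p(\overline{\rho},\ul{\lambda}_J)})$, and $M_i := \Gamma(\cU_i, \cM_\infty(\ul{\lambda}_J)) \cong \Gamma(U_i, \cN_\infty(\ul{\lambda}_J))$. By Remark \ref{rem: cclg-neii}, $M_i$ is finite projective over the regular ring $A_i$, and $B_i$ is the $A_i$-subalgebra of $\End_{A_i}(M_i)$ generated by the Hecke operators coming from $R_\infty \times T_p$, hence module-finite over $A_i$. Finite projectivity together with regularity of $A_i$ gives that $M_i$ is Cohen-Macaulay over $A_i$ of depth equal to $\dim A_i$. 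Applying the standard fact that depth is preserved under module-finite ring extensions, together with the equality $\dim B_i = \dim A_i$ established in part (1), one concludes that $M_i$ is Cohen-Macaulay over $B_i$ of depth $\dim B_i$, proving (2).

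The absence of embedded components in (1) then follows: the inclusion $B_i \hookrightarrow \End_{A_i}(M_i)$ makes $M_i$ a faithful $B_i$-module, and a faithful Cohen-Macaulay module of full dimension over a noetherian local ring forces the ring to have no embedded associated primes. The main technical obstacle is carrying out the dimension count for $\cW_\infty(\ul{\lambda}_J)$ carefully (the precise dimension relies on the definition of the patching variables $S_\infty$ and the weight constraints coming from $\ul{\lambda}_J$) and checking that depth transfers correctly from $A_i$ to $B_i$ along the module-finite extension; both steps are standard but require attention to the commutative-algebra details in the rigid-analytic setting.
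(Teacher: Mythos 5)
Your proposal is correct and takes essentially the same route as the paper, whose proof consists of citing \cite[Cor.3.12]{BHS1} (see also \cite[Prop.6.4.2]{Che}) for (1) and \cite[Lem.3.8]{BHS2} for (2) — i.e.\ exactly the finite-surjective-onto-the-smooth-weight-space dimension count and the finite-projective/depth-transfer argument that you unpack from Prop.\ref{prop: cclg-Fen} and Rem.\ref{rem: cclg-neii}. The only caveat is that your count $\dim \cW_{\infty}(\ul{\lambda}_J)=g+4|S|+3[F^+:\Q]-2|J|$ presupposes $q=g+[F^+:\Q]+4|S|$ as in \cite{BHS1}; with the value $q=g+2[F^+:\Q]+4|S|$ stated earlier in the paper one would get an extra $[F^+:\Q]$, so that stated $q$ is evidently a misprint rather than a gap in your argument.
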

\begin{proof}
  (1) follows by the same argument as in \cite[Cor. 3.12]{BHS1} (see also \cite[Prop. 6.4.2]{Che}). (2) follows by the same argument as in \cite[Lem. 3.8]{BHS2}.
\end{proof}
\begin{corollary}Let $J'\subseteq J$, we have

(1) $\dim X_p(\overline{\rho},\ul{\lambda}_{J})'=g+4|S|+3[F^+:\Q]-2|J|$ \big(of the same dimension of $X_p(\overline{\rho},\ul{\lambda}_{J})$\big).

  (2) $\dim X_p(\overline{\rho},\ul{\lambda}_{J},J')=g+|4|S|+3[F^+:\Q]-2|J|$ \big(of the same dimension of $X_p(\overline{\rho},\ul{\lambda}_{J})$\big).
\end{corollary}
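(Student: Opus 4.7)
The plan is to deduce both assertions from Cor \ref{cor: cclg-o0e} and the fiber-product definitions encoded in (\ref{equ: cclg-a1j}). Part (2) reduces to part (1) because
\[X_p(\overline{\rho},\ul{\lambda}_J,J') = X_p(\overline{\rho},\ul{\lambda}_{J'})'\times_{\widehat{T}_p(\ul{\lambda}_{J'})}\widehat{T}_p(\ul{\lambda}_J),\]
so one runs the same argument as (1) on the space $X_p(\overline{\rho},\ul{\lambda}_{J'})'$ of dimension $g+4|S|+3[F^+:\Q]-2|J'|$ obtained in part (1), with the closed embedding $\widehat{T}_p(\ul{\lambda}_J)\hookrightarrow\widehat{T}_p(\ul{\lambda}_{J'})$ cut out by $2(|J|-|J'|)$ local equations via the etale weight map.

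For part (1), the lower bound is immediate: the closed embedding $X_p(\overline{\rho},\ul{\lambda}_{J'})\hookrightarrow X_p(\overline{\rho},\ul{\lambda}_{J'})'$ from (\ref{equ: cclg-a1j}) together with Cor \ref{cor: cclg-o0e} gives $\dim X_p(\overline{\rho},\ul{\lambda}_{J'})' \geq g+4|S|+3[F^+:\Q]-2|J'|$. For the matching upper bound, I would unpack the fiber product $X_p(\overline{\rho},\ul{\lambda}_{J'})' = X_p(\overline{\rho})\times_{\widehat{T}_p}\widehat{T}_p(\ul{\lambda}_{J'})$. Since the weight map $\widehat{T}_p\to(\bA^1)^{2[F^+:\Q]}$ is etale and $\widehat{T}_p(\ul{\lambda}_{J'})$ is the preimage of a codimension-$2|J'|$ linear subspace, $X_p(\overline{\rho},\ul{\lambda}_{J'})'$ is cut out locally in $X_p(\overline{\rho})$ by the $2|J'|$ equations $\wt(\delta_i)_\sigma-\lambda_{i,\sigma}=0$ for $\sigma\in J'$, $i=1,2$. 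By Cor \ref{cor: cclg-o0e}, $X_p(\overline{\rho})$ is equidimensional of dimension $g+4|S|+3[F^+:\Q]$ with no embedded components and $\cM_\infty$ is Cohen-Macaulay over it; the Cohen-Macaulay property together with Krull's theorem forces the cut-out space to have dimension exactly $g+4|S|+3[F^+:\Q]-2|J'|$, provided these $2|J'|$ equations form a regular sequence.

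The expected main obstacle is verifying this regular-sequence property on each irreducible component of $X_p(\overline{\rho})$---equivalently, that the locally finite weight map $\omega_\infty: X_p(\overline{\rho})\to\cW_\infty$ (Prop \ref{prop: cclg-Fen}) does not collapse any irreducible component into the proper closed weight substratum $\cW_\infty(\ul{\lambda}_{J'})\subsetneq\cW_\infty$, which would cause one of the weight equations to vanish identically on that component and prevent the sequence from being regular. This can be handled via the Zariski-density of classical points in $X_p(\overline{\rho})$ (a patched analogue of the density statement in Thm \ref{thm: cclg-eicl}): classical points carry a Zariski-dense range of dominant integral weights, so each irreducible component meets classical points with weight at $J'$ distinct from $\ul{\lambda}_{J'}$, ruling out containment in $\omega_\infty^{-1}(\cW_\infty(\ul{\lambda}_{J'}))$ and hence yielding the matching upper bound.
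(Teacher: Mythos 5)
Your reduction of (2) to (1) rests on a false identification. By the paper's definition (see the sentence above (\ref{equ: cclg-a1j}) and the cartesian second square of that diagram), $X_p(\overline{\rho},\ul{\lambda}_J,J')=X_p(\overline{\rho},\ul{\lambda}_{J'})\times_{\widehat{T}_p(\ul{\lambda}_{J'})}\widehat{T}_p(\ul{\lambda}_J)$, i.e. the fibre product is taken over the genuine stratum $X_p(\overline{\rho},\ul{\lambda}_{J'})$, not over $X_p(\overline{\rho},\ul{\lambda}_{J'})'$. Your identity would give $X_p(\overline{\rho},\ul{\lambda}_{J'})'\times_{\widehat{T}_p(\ul{\lambda}_{J'})}\widehat{T}_p(\ul{\lambda}_J)=X_p(\overline{\rho})\times_{\widehat{T}_p}\widehat{T}_p(\ul{\lambda}_J)=X_p(\overline{\rho},\ul{\lambda}_J)'$, i.e. it would collapse the chain $X_p(\overline{\rho},\ul{\lambda}_J)\subseteq X_p(\overline{\rho},\ul{\lambda}_J,J')\subseteq X_p(\overline{\rho},\ul{\lambda}_J)'$ whose strictness is precisely what Cor.\ref{cor: cclg-xts} and Thm.\ref{thm: cclg-lrt} exploit; so as written you compute the dimension of the wrong space. (The number still comes out right, because the sandwich $X_p(\overline{\rho},\ul{\lambda}_J)\hookrightarrow X_p(\overline{\rho},\ul{\lambda}_J,J')\hookrightarrow X_p(\overline{\rho},\ul{\lambda}_J)'$ plus part (1) applied to $J$ and Cor.\ref{cor: cclg-o0e}(1) gives (2) immediately --- but that is not the argument you gave.)

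The more serious gap is in your upper bound for (1). Krull's height theorem only bounds the codimension of the cut locus from above, i.e. it reproves the lower bound; to get the upper bound you must show the $2|J'|$ weight functions cut properly, and the property you propose to verify --- that no irreducible component of $X_p(\overline{\rho})$ maps into the weight substratum, via density of classical points (Thm.\ref{thm: cclg-try} with $J=\emptyset$) --- only controls the first function. A regular sequence requires each subsequent function to be a nonzerodivisor on the intermediate quotient, and these intermediate cut spaces are not strata $X_p(\overline{\rho},\ul{\lambda}_S)$, so no density or classicality statement applies to them; without control of the fibres of $\omega_{\infty}$ the conclusion can genuinely fail (for the blow-up of $\bA^2$ at a point, each coordinate function is nonzero on the irreducible total space, yet together they cut out the exceptional divisor, of codimension one, not two). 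Note also that the Cohen--Macaulay statement of Cor.\ref{cor: cclg-o0e}(2) concerns the sheaf $\cM_{\infty}(\ul{\lambda}_J)$, not the space, so it cannot be invoked here (and once a regular sequence is granted it is not needed). The ingredient that closes the gap --- and which is why the paper offers no proof beyond citing Cor.\ref{cor: cclg-o0e} --- is local finiteness over the weight space: $X_p(\overline{\rho},\ul{\lambda}_{J'})'$ is exactly $\omega_{\infty}^{-1}\big(\cW_{\infty}(\ul{\lambda}_{J'})\big)$ inside $X_p(\overline{\rho})$, and $X_p(\overline{\rho},\ul{\lambda}_J,J')$ is the preimage of $\cW_{\infty}(\ul{\lambda}_J)$ inside $X_p(\overline{\rho},\ul{\lambda}_{J'})$; since these source spaces are locally finite over $\cW_{\infty}$, resp. $\cW_{\infty}(\ul{\lambda}_{J'})$ (Prop.\ref{prop: cclg-Fen}, Cor.\ref{cor: cclg-o0e}(1)), the preimages have dimension at most $\dim \cW_{\infty}(\ul{\lambda}_{J'})$, resp. $\dim\cW_{\infty}(\ul{\lambda}_J)$, which by Prop.\ref{prop: cclg-Fen}(2) equal $g+4|S|+3[F^+:\Q]-2|J'|$, resp. $g+4|S|+3[F^+:\Q]-2|J|$; the matching lower bounds come from the closed embeddings of the honest strata. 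Your etale-weight-map bookkeeping of the local equations is fine, but the finiteness of $\omega_{\infty}$, not density of classical points, is what makes them cut down the dimension by the full amount.
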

\begin{proof} (1) is  a special case of (2) since $X_p(\overline{\rho}, \ul{\lambda}_J)'=X_p(\overline{\rho}, \ul{\lambda}_J, \emptyset)$.
We have $X_p(\overline{\rho},\ul{\lambda}_{J},J')\cong X_p(\overline{\rho},\ul{\lambda}_{J'})\times_{\cT_p^0(\ul{\lambda}_{J'})} \cT_p^0(\ul{\lambda}_{J}) \cong X_p(\overline{\rho}, \ul{\lambda}_{J'})\times_{\cW_{\infty}(\ul{\lambda}_{J'})} \cW_{\infty}(\ul{\lambda}_{J})$. Let $\cU_i\subseteq X_p(\overline{\rho}, \ul{\lambda}_{J'})$, $W_i\subseteq \cW_{\infty}(\ul{\lambda}_{J'})$ be as in Remark \ref{rem: cclg-neii} (applied with $J$ replaced by $J'$), it is sufficient to show $\cU_i \times_{\cW_{\infty}(\ul{\lambda}_{J'})} \cW_{\infty}(\ul{\lambda}_{J})$ is of dimension $g+4|S|+3[F^+:\Q]-2|J|$. But this follows from the fact $\cW_{\infty}(\ul{\lambda}_J)$ (hence $W_i$) is equidimensional of dimension $g+4|S|+3[F^+:\Q]-2|J|$ and \cite[Lem. 6.2.5, 6.2.10]{Che}.
\end{proof}
%
\subsubsection{Density of classical points}
Recall a point $(y,\delta)$ of $X_p(\overline{\rho})$ is called \emph{classical} if
\begin{equation*}
  J_{B_p}\big(\Pi_{\infty}^{R_{\infty}-\an, \lalg}\big)[\fm_y,T_p=\delta]\neq 0,
\end{equation*}
where ``$\lalg$" denotes the locally algebraic vectors for the $G_p$-action; and we call a point $(y,\delta)$ of $X_p(\overline{\rho},\ul{\lambda}_J)$ classical if it is classical as a point in $X_p(\overline{\rho})$, which is also equivalent to
\begin{equation*}
  J_{B_p}\big(\Pi_{\infty}^{R_{\infty}-\an}(\ul{\lambda}_J)^{\lalg}\big)[\fm_y,T_p=\delta]\neq 0.
\end{equation*}
A  point $(y,\delta)$ of $X_p(\overline{\rho})$ is called \emph{spherical} if $\delta$ is locally algebraic (i.e. $\wt(\delta)\in \Z^{2|\Sigma_p|}$) and $\psi_{\delta}:=\delta \delta_{\wt(\delta)}^{-1}$ is unramified; $(y,\delta)$ is called \emph{very regular} if $\delta$ is locally algebraic and the character
\begin{equation}\label{equ: cclg-add}\delta_{\widetilde{v}}^{\natural}:=\delta_{\widetilde{v}} \big(\unr_{\widetilde{v}}(q_{\widetilde{v}})\otimes \prod_{\sigma\in \Sigma_{\widetilde{v}}} \sigma^{-1}\big)\end{equation}
 is very regular (cf. (\ref{equ: cclg-dal})) for all $v|p$. Let $\delta^{\natural}:=\prod_{v|p} \delta_{\widetilde{v}}^{\natural}$.

 For a locally algebraic character $\delta$ of $T_p$, let $\Sigma^+(\delta)=\cup_{v|p} \Sigma^+(\delta_{\widetilde{v}})$ (where $\Sigma^+(\delta_{\widetilde{v}}) \subseteq \Sigma_{\widetilde{v}}$ is defined as in  (\ref{equ: cclg-Domi}) for $L=F_{\widetilde{v}}$)
For $J'\subseteq \Sigma^+(\delta)$, put
\begin{equation}\label{equ: cclg-cJc}
  \delta_{J'}^c:=\otimes_{v|p} \delta_{\widetilde{v},J_{\widetilde{v}}'}^c:=\otimes_{v|p} \big(\delta_{\widetilde{v}} (\prod_{\sigma\in J_{\widetilde{v}}'} \sigma^{\wt(\delta)_{2,\sigma}-\wt(\delta)_{1,\sigma}-1} \otimes \prod_{\sigma\in J'_{\widetilde{v}}} \sigma^{\wt(\delta)_{1,\sigma}-\wt(\delta)_{2,\sigma}+1})\big).
\end{equation}
Thus $\wt(\delta_{J'}^c)_{i,\sigma}=\wt(\delta)_{i,\sigma}$ if $\sigma\notin J'$; $\wt(\delta_{J'}^c)_{1,\sigma}=\wt(\delta)_{2,\sigma}-1$, $\wt(\delta_{J'}^c)_{2,\sigma}=\wt(\delta)_{1,\sigma}+1$ if $\sigma\in J'$. And we have  $\Sigma^+(\delta_{J'}^c)=\Sigma^+(\delta)\setminus J'$.
In fact, let $s_{J'}:=\prod_{\sigma\in J'}s_{\sigma}$ where $s_{\sigma}$ denotes the (unique) simple reflection in the Weyl group $\cS_2$ of $\ug_{\widetilde{v}}\otimes_{F_{\widetilde{v}},\sigma} E$ \big(note the Weyl group of $\ug_p\otimes_{\Q_p} E$ is isomorphic to the product of the Weyl groups $\cS_2$ of all $\ug_{\widetilde{v}}\otimes_{F_{\widetilde{v}},\sigma} E$\big), then $\delta_{J'}^c=\psi_{\delta} \delta_{s_{J'}\cdot \wt(\delta)}$ (where ``$\cdot$" denotes the dot  action).

For a locally algebraic character $\delta$ of $T_p$ over $E$, put
\begin{equation}\label{equ: cclg-idel}
  I(\delta):=\big(\Ind_{\overline{B}_p}^{G_p} \delta \delta_{\wt(\delta)_{\Sigma^+(\delta)}}^{-1}\big)^{\Sigma_p\setminus \Sigma^+(\delta)-\an}\otimes_E L(\wt(\delta)_{\Sigma^+(\delta)}).
\end{equation}
By \cite[Thm. 4.1]{Br} (see also Proposition \ref{prop: cclg-cnee} below), the representations $\{I(\delta_{J'}^c)\}_{J'\subseteq \Sigma^+(\delta)}$ give the Jordan-H\"older constituents of  $\big(\Ind_{\overline{B}_p}^{G_p} \delta\big)^{\Q_p-\an}$. Note also that $I(\delta)$ is locally algebraic if $\delta$ is dominant. For $v|p$, put \begin{equation*}I_{\widetilde{v}}(\delta_{\widetilde{v}}):=\big(\Ind_{\overline{B}(F_{\widetilde{v}})}^{\GL_2(F_{\widetilde{v}})} \big(\delta \delta_{\wt(\delta)_{\Sigma^+(\delta)}}^{-1}\big)_{\widetilde{v}}\big)^{\Sigma_{\widetilde{v}}\setminus \Sigma^+(\delta)_{\widetilde{v}}-\an}\otimes_E L(\wt(\delta)_{\Sigma^+(\delta)_{\widetilde{v}}}),\end{equation*}
where $\Sigma^+(\delta)_{\widetilde{v}}:=\Sigma^+(\delta)\cap \Sigma_{\widetilde{v}}$, which is a locally $\Q_p$-analytic representation of $\GL_2(F_{\widetilde{v}})$. We have
\begin{equation}\label{equ: cclg-gpa}
  I(\delta)\cong\widehat{\otimes}_{v|p} I_{\widetilde{v}}(\delta_{\widetilde{v}}).
\end{equation}
Note also that $I_{\widetilde{v}}(\delta_{\widetilde{v}})$ is irreducible if $\Sigma^+(\delta)_{\widetilde{v}}\neq \Sigma_{\widetilde{v}}$. Denote by $\delta_{B_p}=\otimes_{v|p} \big(\unr_{\widetilde{v}}(q_{\widetilde{v}}^{-1}) \otimes \unr_{\widetilde{v}}(q_{\widetilde{v}})\big)$ the modulus character of $B_p$, which factors through $T_p$ and thus can also be viewed as a character of $\overline{B}_p$ via $\overline{B}_p\twoheadrightarrow T_p$.
\begin{lemma}\label{equ: cclg-dtX}
  Let $x=(y, \delta)$ be a point of $X_p(\overline{\rho},\ul{\lambda}_J)$ (hence $\wt(\delta)_{\sigma}=\ul{\lambda}_{\sigma}$ for $\sigma\in J$) with $\wt(\delta)$ integral and dominant. If any irreducible constituent of $I(\delta_{J'}^c\delta_{B_p}^{-1})$ does not have $G_p$-invariant lattice for all $\emptyset \neq J'\subseteq \Sigma_p\setminus J$, then $x$ is classical. We call such classical points \emph{$\Sigma_p\setminus J$-very classical}.
\end{lemma}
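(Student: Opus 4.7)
My plan is to apply a Breuil--Emerton style adjunction to promote the eigenvector in the Jacquet--Emerton module to a $G_p$-equivariant morphism from a (partial) locally analytic principal series, and then exploit the unitarity of $\Pi_{\infty}$ to force the image to hit the locally algebraic constituent.

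First I would fix a nonzero vector $v \in J_{B_p}\big(\Pi_{\infty}^{R_{\infty}-\an}(\ul{\lambda}_J)\big)[\fp_y, T_p=\delta]$; by (\ref{equ: cclg-pgyy}) this is equivalent to a nonzero $(\fp_y, \delta\delta_{\ul{\lambda}_J}^{-1})$-eigenvector in $J_{B_p}\big((\Pi_{\infty}^{R_{\infty}-\an}\otimes_E L(\ul{\lambda}_J)')^{\Sigma_p\setminus J-\an}\big)$. Applying the Breuil adjunction formula to the closed subrepresentation $\Pi_{\infty}^{R_{\infty}-\an}(\ul{\lambda}_J)[\fp_y]$ of $\Pi_{\infty}$, the vector $v$ is realised as a nonzero $G_p$-equivariant morphism $\Pi \lra \Pi_{\infty}^{R_{\infty}-\an}(\ul{\lambda}_J)[\fp_y]$, where $\Pi$ is the Orlik--Strauch-type representation $\cF_{\overline{B}_p}^{G_p}(M,\psi_\delta)$ built from the Verma module of $\ug_{\Sigma_p\setminus J}$ with highest weight $-\wt(\delta)|_{\Sigma_p\setminus J}$ tensored with the algebraic factor $L(\ul{\lambda}_J)$ along $J$. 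Because the target is $\ul{\lambda}_J$-algebraic along $J$ by construction in (\ref{equ: cclg-lam}), only simple reflections indexed by $\Sigma_p\setminus J$ can produce new subquotients; hence (combining \cite[Thm.4.1]{Br} with the standard Orlik--Strauch analysis) the Jordan--H\"older constituents of $\Pi$ are exactly $\{I(\delta_{J'}^c\delta_{B_p}^{-1})\}_{J'\subseteq \Sigma_p\setminus J}$, with $J'=\emptyset$ giving the unique locally algebraic one $I(\delta\delta_{B_p}^{-1})$.

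Next I would argue that the image of this morphism necessarily contains $I(\delta\delta_{B_p}^{-1})$. If not, some nonzero irreducible subrepresentation of the image would yield an injection $\pi\hooklongrightarrow \Pi_{\infty}^{R_{\infty}-\an}(\ul{\lambda}_J)[\fp_y]\hooklongrightarrow \Pi_{\infty}$ where $\pi$ is an irreducible component of $I(\delta_{J'}^c\delta_{B_p}^{-1})$ for some $\emptyset\neq J'\subseteq \Sigma_p\setminus J$ \big(using (\ref{equ: cclg-gpa}) and the irreducibility of $I_{\widetilde{v}}$-factors for $\widetilde{v}$ with $J'_{\widetilde{v}}\neq \Sigma_{\widetilde{v}}$ to pick out the relevant component\big). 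But any closed subrepresentation of the unitary Banach representation $\Pi_{\infty}$ inherits a $G_p$-invariant open $\co_E$-lattice from the unit ball $\Pi_{\infty}^o$, contradicting the hypothesis. So the image must contain $I(\delta\delta_{B_p}^{-1})$, and since this constituent is already locally algebraic (because $\wt(\delta)$ is dominant), taking $N_p^0$-invariants produces a nonzero class in $J_{B_p}\big(\Pi_{\infty}^{R_{\infty}-\an}(\ul{\lambda}_J)^{\lalg}\big)[\fp_y, T_p=\delta]$, proving $x$ is classical.

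I expect the main technical obstacle to be the precise identification of the source $\Pi$ and its Jordan--H\"older constituents in the partially-algebraic / partially-analytic setting: one needs to verify that the Breuil adjunction into $\Pi_{\infty}^{R_{\infty}-\an}(\ul{\lambda}_J)[\fp_y]$ really factors through the ``$\Sigma_p\setminus J$-analytic'' Orlik--Strauch representation (so that the $J$-algebraic constraint cuts down the list of constituents to those indexed by subsets of $\Sigma_p\setminus J$). Once this list is pinned down, the unitarity argument on $\Pi_{\infty}$ delivers the contradiction cleanly and forces classicality.
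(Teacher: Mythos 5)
Your proposal is correct and follows essentially the same route as the paper: Breuil's adjunction (Prop.\ref{prop: cclg-pve}) applied to $\cF_{\overline{B}_p}^{G_p}\big(M_J(-\wt(\delta))^{\vee},\psi_{\delta}\delta_{B_p}^{-1}\big)$, whose Jordan--H\"older factors are the $I(\delta_{J'}^c\delta_{B_p}^{-1})$ for $J'\subseteq \Sigma_p\setminus J$, together with unitarity of $\Pi_{\infty}$ and the no-invariant-lattice hypothesis to force any nonzero map to factor through the locally algebraic constituent $I(\delta\delta_{B_p}^{-1})$. The ``technical obstacle'' you flag (that the adjunction lands in the $\text{U}(\ug_J)$-finite quotient $M_J$, i.e.\ the partially algebraic Orlik--Strauch object) is exactly what Prop.\ref{prop: cclg-pve} supplies, so your argument matches the paper's proof.
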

\begin{proof}
 By the adjunction formula Proposition \ref{prop: cclg-pve}, one has (see Appendix \ref{sec: cclg-A.2} for the notation)
 \begin{multline}\label{equ: cclg-pBg}
   \Hom_{T_p}\Big(\delta, J_{B_p}\big(\Pi_{\infty}^{R_{\infty}-\an}(\ul{\lambda}_J)\big)[\fm_y]\Big)\\ \xlongrightarrow{\sim} \Hom_{G_p}\Big(\cF_{\overline{B}_p}^{G_p}\big(\overline{M}_J(-\wt(\delta))^{\vee}, \psi_{\delta}\delta_{B_p}^{-1}\big), \Pi_{\infty}^{R_{\infty}-\an}(\ul{\lambda}_J)[\fm_y]\Big).
 \end{multline}
The Jordan-Holder factors of  $\cF_{\overline{B}_p}^{G_p}\big(\overline{M}_J(-\wt(\delta))^{\vee}, \psi_{\delta}\delta_{B_p}^{-1}\big)$ are given by $\{I(\delta_{J'}^c\delta_{B_p}^{-1})\}_{J'\subseteq \Sigma_p\setminus J}$ with multiplicity one. Since $\Pi_{\infty}$ is unitary, by assumption, any non-zero map in the left set of (\ref{equ: cclg-pBg}) factors through $I(\delta\delta_{B_p}^{-1})$, and hence $x$ is classical.
\end{proof}
\begin{proposition} \label{prop: cclg-dtf}Let $(y, \delta)$ be a  point of $X_p(\overline{\rho},\ul{\lambda}_J)$ with $\wt(\delta)$ dominant and integral, if
\begin{equation}\label{equ: cclg-qv1}
\val_{\widetilde{v}}\big(q_{\widetilde{v}}\delta_{\widetilde{v},1}(\varpi_{\widetilde{v}})\big)<\inf_{\sigma\in \Sigma_{\widetilde{v}}\setminus J_{\widetilde{v}}}\{\wt(\delta)_{1,\sigma}-\wt(\delta)_{2,\sigma}+1\}
\end{equation}
  for all $v|p$ with $J_{\widetilde{v}} \neq \Sigma_{\widetilde{v}}$, then the point $(y,\delta)$ is $\Sigma_p\setminus J$-very classical.
\end{proposition}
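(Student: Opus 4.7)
The plan is to apply Lem.\ref{equ: cclg-dtX}: it suffices to show that for every non-empty $J'\subseteq \Sigma_p\setminus J$, no irreducible constituent of $I(\delta_{J'}^c\delta_{B_p}^{-1})$ admits a $G_p$-invariant $\co_E$-lattice. I will reduce this to a single local place and then apply a small-slope non-unitarity criterion for $\GL_2(F_{\widetilde{v}})$.

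First, I reduce to a local problem. By (\ref{equ: cclg-gpa}) one has $I(\delta_{J'}^c\delta_{B_p}^{-1})\cong \otimes_{v|p}I_{\widetilde{v}}((\delta_{J'}^c\delta_{B_p}^{-1})_{\widetilde{v}})$, and any irreducible constituent decomposes as a tensor product $\otimes_{v|p} W_{\widetilde{v}}$ of irreducible constituents of the local factors. A $G_p$-invariant $\co_E$-lattice on such a tensor product forces a $\GL_2(F_{\widetilde{v}})$-invariant lattice on each $W_{\widetilde{v}}$ (fix nonzero vectors in each complementary factor, project and intersect). Therefore it is enough to exhibit, for each non-empty $J'\subseteq \Sigma_p\setminus J$, one place $v$ with $J'_{\widetilde{v}}\neq\emptyset$ such that no irreducible constituent of $I_{\widetilde{v}}((\delta_{J'}^c\delta_{B_p}^{-1})_{\widetilde{v}})$ is unitarizable. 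Any such $v$ automatically satisfies $J_{\widetilde{v}}\neq \Sigma_{\widetilde{v}}$, so hypothesis (\ref{equ: cclg-qv1}) is available at $v$.

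Fix such $v$ and any $\sigma\in J'_{\widetilde{v}}\subseteq \Sigma_{\widetilde{v}}\setminus J_{\widetilde{v}}$. By (\ref{equ: cclg-cJc}), $(\delta_{J'}^c)_{\widetilde{v}}$ is obtained from $\delta_{\widetilde{v}}$ by an explicit algebraic twist concentrated at embeddings in $J'_{\widetilde{v}}$, and since $\wt(\delta)$ is dominant the resulting weights are strictly antidominant at every $\tau\in J'_{\widetilde{v}}$. Hence every irreducible constituent of $I_{\widetilde{v}}((\delta_{J'}^c\delta_{B_p}^{-1})_{\widetilde{v}})$ is non-locally-algebraic at $\sigma$. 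I then invoke the small-slope non-unitarity criterion for irreducible constituents of locally analytic principal series of $\GL_2(F_{\widetilde{v}})$ (established for $L=\Q_p$ in \cite{BE} and in the general case by adaptations of Breuil's arguments, see \cite{Br}): such a constituent admits a $\GL_2(F_{\widetilde{v}})$-invariant lattice only if the slope of the first component of the inducing character is bounded below in terms of the weight gap at $\sigma$. Tracking the algebraic twist (\ref{equ: cclg-cJc}) through this bound, the inequality becomes exactly $\us_{\widetilde{v}}(q_{\widetilde{v}}\delta_{\widetilde{v},1}(\varpi_{\widetilde{v}}))\geq \wt(\delta)_{1,\sigma}-\wt(\delta)_{2,\sigma}+1$, which is precisely the negation of (\ref{equ: cclg-qv1}) at $\sigma\in \Sigma_{\widetilde{v}}\setminus J_{\widetilde{v}}$. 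Contradiction.

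The main obstacle is identifying and citing the correct local non-unitarity criterion in the multi-embedding setting $L\neq \Q_p$: in the $\Q_p$ case this is Coleman's classical small-slope classicality result, but here one needs its analogue for the partially-$J'_{\widetilde{v}}$-analytic, partially-algebraic irreducible constituents of principal series of $\GL_2(F_{\widetilde{v}})$, together with the right translation between the torus parameter $\delta_{\widetilde{v},1}$ and the eigenvariety "$U_p$-eigenvalue''. The bookkeeping of the twist (\ref{equ: cclg-cJc}) on the valuation $\us_{\widetilde{v}}$ is straightforward once the correct local bound is set up. If no direct reference covering all constituents (especially in the reducible case $N(\delta)_{\widetilde{v}}=\Sigma_{\widetilde{v}}$) is available, a fallback is to proceed via Breuil's adjunction formula Prop.\ref{prop: cclg-pve} applied to $\Pi_{\infty}^{R_{\infty}-\an}(\ul{\lambda}_J)$: any putative embedding of a non-algebraic constituent produces a $T_p$-eigenvector in $J_{B_p}$ whose $z_{\widetilde{v}}$-eigenvalue violates the unitary bound forced by the admissibility of $\Pi_{\infty}$.
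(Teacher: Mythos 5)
Your argument is correct and essentially reproduces the paper's proof: reduce via Lem.\ref{equ: cclg-dtX} and the factorization (\ref{equ: cclg-gpa}) to the non-existence of a $\GL_2(F_{\widetilde{v}})$-invariant lattice on the local factor $I_{\widetilde{v}}\big((\delta_{J'}^c)_{\widetilde{v}}\delta_{B(F_{\widetilde{v}})}^{-1}\big)$ (irreducible because $J'_{\widetilde{v}}\neq\emptyset$), then contradict (\ref{equ: cclg-qv1}). The local non-unitarity criterion you were unsure how to cite is exactly \cite[Prop.5.1]{Br}, valid for $\GL_2(F_{\widetilde{v}})$ with $F_{\widetilde{v}}$ an arbitrary finite extension of $\Q_p$, and it yields the (even stronger) bound $\us_{\widetilde{v}}\big(q_{\widetilde{v}}\delta_{\widetilde{v},1}(\varpi_{\widetilde{v}})\big)\geq \sum_{\sigma\in J'_{\widetilde{v}}}\big(\wt(\delta)_{1,\sigma}-\wt(\delta)_{2,\sigma}+1\big)$, so no fallback via the adjunction formula is needed.
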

\begin{proof}
Let $J'\subset \Sigma_p\setminus J$, $J'\neq \emptyset$, and let $v|p$ be such that  $J'_{\widetilde{v}}\neq \emptyset$. We know that the $\GL_2(F_{\widetilde{v}})$-representation $I_{\widetilde{v}}((\delta_{J'}^c)_{\widetilde{v}}\delta_{B(F_{\widetilde{v}})}^{-1})$ is topologically irreducible, and that any irreducible constituent of $I(\delta_{J'}^c\delta_{B_p}^{-1})$ has the form $W \widehat{\otimes}_E I_{\widetilde{v}}((\delta_{J'}^c)_{\widetilde{v}}\delta_{B(F_{\widetilde{v}})}^{-1})$, where $W$ is a certain irreducible representation of $\prod_{v'|p, v'\neq v} \GL_2(F_{\widetilde{v}'})$. If $W \widehat{\otimes}_E I_{\widetilde{v}}((\delta_{J'}^c)_{\widetilde{v}}\delta_{B(F_{\widetilde{v}})}^{-1})$ admits a $G_p$-invariant lattice $\Lambda$, then for any non-zero $w\in W$, one can check that  $\Lambda_w:=\{v\in I_{\widetilde{v}}((\delta_{J'}^c)_{\widetilde{v}}\delta_{B(F_{\widetilde{v}})}^{-1})\ |\ v\otimes w\in \Lambda\}$ is a $\GL_2(F_{\widetilde{v}})$-invariant lattice in $I_{\widetilde{v}}((\delta_{J'}^c)_{\widetilde{v}}\delta_{B(F_{\widetilde{v}})}^{-1})$. By \cite[Prop. 5.1]{Br}, we can deduce \begin{equation*}
 \val_{\widetilde{v}}\big(q_{\widetilde{v}}\delta_{\widetilde{v},1}(\varpi_{\widetilde{v}})\big)\geq \sum_{\sigma\in J_{\widetilde{v}}'}(\wt(\delta)_{1,\sigma}-\wt(\delta)_{2,\sigma}+1),
\end{equation*}
which contradicts to the assumption. The Proposition follows then by Lemma \ref{equ: cclg-dtX}.
\end{proof}
\begin{theorem}\label{thm: cclg-try}
  (1) The set of classical points is Zariski-dense in $X_p(\overline{\rho},\ul{\lambda}_J)$ and accumulates at any point $x=(y,\delta)$ with $\delta$ locally algebraic  (cf. \cite[\S~3.3.1]{BCh}).

  (2)  In $X_p(\overline{\rho},\ul{\lambda}_J)$, the set $\cZ$ of  very regular and $\Sigma_p\setminus J$-very classical points accumulates at points $(y,\delta)$ with $\delta$ locally algebraic and very regular. Moreover, the subset $\cZ_0\subset \cZ$ of spherical points accumulates at spherical very regular points.
\end{theorem}
\begin{proof}
 (1)  For the first part, it is sufficient to prove the classical points are Zariski-dense in any irreducible component $X$ of $X_p(\overline{\rho},\ul{\lambda}_J)$. By \cite[Cor. 6.4.4]{Che} (see also Corollary \ref{cor: cclg-o0e} (1)) and the fact that the locally algebraic characters of $T_p^0$  are Zariski-dense in $\cT_p^0(\ul{\lambda}_J)$, there exists $x=(y,\delta)\in X$ with $\delta$ locally algebraic. Let  $U$  be an irreducible affinoid open neighborhood of $x$ in  $X$ such that
 $\omega_{\infty}: U \ra \omega_{\infty}(U)$ is finite and $\omega_{\infty}(U)=V_1\times V_2$ is an irreducible affinoid open in $\cW_{\infty}(\ul{\lambda}_J)=(\Spf S_{\infty})^{\rig}\times \cT_{p}^0(\ul{\lambda}_J)$ (shrinking $U$ if necessary). Let \begin{equation*}C_1:=\max_{(y',\delta')\in U,v|p, J_{\widetilde{v}}\neq \Sigma_{\widetilde{v}}}\val_{\widetilde{v}}\big(q_{\widetilde{v}}\delta'_{1,\widetilde{v}}(\varpi_{\widetilde{v}})\big).\end{equation*}Let $C_2\geq C_1$, and $Z$ be the set of points $\delta'$ in $V_2$ satisfying
 \begin{enumerate}\item[(i)] $\wt(\delta')$ is integral,
 \item[(ii)] $\inf_{\sigma\in J_{\widetilde{v}}}\{\wt(\delta')_{1,\sigma}-\wt(\delta')_{2,\sigma}+1\}\geq C_2$ for $v|p$ with $J_{\widetilde{v}}\neq \Sigma_{\widetilde{v}}$.
 \end{enumerate} Thus $Z$ is Zariski dense in $V_2$. By \cite[Lem. 6.2.8]{Che}, $\omega_{\infty}^{-1}(V_1\times Z)$ is Zariski-dense in $U$. However, by Proposition \ref{prop: cclg-dtf}, any point in $\omega_{\infty}^{-1}(V_1 \times Z)$ is $\Sigma_p\setminus J$-very classical. The first part of (1) follows. In fact, the above argument shows the classical points accumulate at $x$ (since the set $Z$ in fact accumulates at $\delta$), thus the second part of (1) also follows.

 (2)  Suppose  $x$ (with the above notation) is moreover very regular.
For $v|p$, if $J_{\widetilde{v}}=\Sigma_{\widetilde{v}}$, then for any $(y',\delta')$ in $U$, $\delta'|_{T_{\widetilde{v}}^0}=\delta|_{T_{\widetilde{v}}^0}=\delta_{\ul{\lambda}_{\Sigma_{\widetilde{v}}}}|_{T_{\widetilde{v}}^0}$ \big(since $V_1\times V_2$ is irreducible, and $\cT_{\widetilde{v}}^0(\ul{\lambda}_{\Sigma_{\widetilde{v}}})$ is discrete\big). We shrink $U$ such that $(\delta'_{\widetilde{v}})^{\natural}$ is very regular for all $v|p$ with $J_{\widetilde{v}}=\Sigma_{\widetilde{v}}$ \big(cf. (\ref{equ: cclg-add}) (\ref{equ: cclg-dal}), note that since the weights $\ul{\lambda}_{J_{\widetilde{v}}}=\ul{\lambda}_{\Sigma_{\widetilde{v}}}$ are fixed,  in this case very regular is equivalent to regular\big).
We now consider the places $v|p$ such that $J_{\widetilde{v}}\neq \Sigma_{\widetilde{v}}$. Note first for any point $(y',\delta')$ in $X_p(\overline{\rho},\ul{\lambda}_J)$, one has
 \begin{equation}\label{equ: cclg-etale}
   \val_{\widetilde{v}}\big(\delta'_{1,\widetilde{v}}(\varpi_{\widetilde{v}})\big)+\val_{\widetilde{v}}\big(\delta'_{2,\widetilde{v}}(\varpi_{\widetilde{v}})\big)=0
 \end{equation}
 for all $v|p$ (since $\Pi_{\infty}$ is unitary in particular for the action of the center). Thus for $v|p$, $J_{\widetilde{v}}\neq \Sigma_{\widetilde{v}}$, we can (and do) enlarge $C_2$ such that the property (ii) (together with (\ref{equ: cclg-etale})) implies that $(\delta_{\widetilde{v}}')^{\natural}$ is very regular.
Consequently, any point in $\omega_{\infty}^{-1}(V_1 \times Z)$ will be very regular. The first part of (2) follows.
If $x$ is moreover spherical, the subset of $Z$ of algebraic characters is also Zariski-dense in $V_2$. And any point in $\omega_{\infty}^{-1}(V_1\times Z)$ is spherical, very regular and $\Sigma_p\setminus J$-very classical. The second part of (2)  follows.
\end{proof}
\begin{remark}\label{rem: cclg-tey}
  If $J_{\widetilde{v}}\neq \Sigma_{\widetilde{v}}$ for all $v|p$, one can in fact prove (by the same arguments as above together with \cite[Cor. 6.4.4]{Che}) that the spherical, very regular, $\Sigma_p\setminus J$-very classical points are Zariski dense in $X_p(\overline{\rho},\ul{\lambda}_J)$. However, if $J_{\widetilde{v}}=\Sigma_{\widetilde{v}}$, let $x=(y, \delta)\in X_p(\overline{\rho},\ul{\lambda}_J)$, thus $\wt(\delta)_{\sigma}=(\lambda_{1,\sigma},\lambda_{2,\sigma})$ for $\sigma \in J$ (thus for $\sigma\in \Sigma_{\widetilde{v}}$); since $\cT_{\widetilde{v}}^0(\ul{\lambda}_{\Sigma_{\widetilde{v}}})$ consists of isolated points, if $x'=(r',\delta')$ lies in an irreducible component of $X_p(\overline{\rho},\ul{\lambda}_J)$ containing $x$, then $\delta'|_{T^0_{\widetilde{v}}}=\delta_{T^0_{\widetilde{v}}}$. In particular, if $\delta_{\widetilde{v}}\delta_{\ul{\lambda}_{\widetilde{v}}}^{-1}$ is not unramified, then any irreducible component containing $x$ does not have spherical classical points.
\end{remark}
\subsubsection{Reducedness of patched eigenvarieties}
\begin{theorem}\label{thm: cclg-red}
 The rigid space $X_p(\overline{\rho},\ul{\lambda}_J)$  is reduced at points $x=(y,\delta)$ with $\delta$ very regular and locally algebraic.
\end{theorem}

\begin{proof}The theorem follows from the same argument as in the proof of \cite[Cor. 3.20]{BHS1}.
Let $x=(y,\delta)\in X_p(\overline{\rho}, \ul{\lambda}_J)$ satsify $\delta$ locally algebraic and very regular. By Proposition \ref{prop: cclg-Fen} (see also Remark \ref{rem: cclg-neii}), there exists an affinoid open neighborhood $U=\Spm B$ of $x$ in $X_p(\overline{\rho},\ul{\lambda}_J)$ such that
  \begin{itemize}
    \item $\omega_{\infty}(U)=\Spm A$ is an irreducible affinoid open in $(\Spf S_{\infty})^{\rig} \times \cT_{p}^0(\ul{\lambda}_J)$,
    \item $\omega_{\infty}(U)$ has the form $V_1\times V_2\subseteq (\Spf S_{\infty})^{\rig} \times \cT_{p}^0(\ul{\lambda}_J)$ (with $V_i$ irreducible),
    \item $M:=\Gamma\big(U, \cM_{\infty}(\ul{\lambda}_J)\big)$ is a finite projective $A$-module equipped with an $A$-linear action of $R_{\infty} \times T_p^+$, and $B$ is isomorphic to the $A$-subalgebra of $\End_{A}(M)$ generated by $R_{\infty}$ and $T_p^+$.
  \end{itemize}
  Note if $J_{\widetilde{v}}=\Sigma_{\widetilde{v}}$, then image of $U$ in $\cT_{\widetilde{v}}^0$ is a single point. As in the proof of Theorem \ref{thm: cclg-try}, 
   we can find  a set $Z\subset V_2(\overline{E})$ such that (shrinking $U$ if necessary)
  \begin{enumerate}
    \item $Z$ is Zariski-dense in $V_2$,
    \item any point $x'\in \omega_{\infty}^{-1}(V_1 \times Z)$ is very regular and $\Sigma_p\setminus J$-very classical.
  \end{enumerate}
As in the proof of \cite[Prop. 3.9]{Che05} (see also the proof of \cite[Cor. 3.20]{BHS1}), it is sufficient to prove that for any $\delta_0\in Z$, there exists an affinoid open $U_{\delta_0}$ of $V_1$ (which is thus Zariski-dense in $V_1$) such that for any $z\in U_{\delta_0}\times \{\delta_0\}$,  the action of $B$ (thus of $R_{\infty}$ and $T_p^+$) on $M\otimes_A k(z)$ is semi-simple, where $k(z)$ denotes the residue field at $z$.

  Let $z\in V_1\times \{\delta_0\}$, $\fm_z$ be  the associated maximal ideal of $S_{\infty}[\frac{1}{p}]$, and  $\Sigma:=\Hom_{k(z)}\big(M\otimes_A k(z), k(z)\big)$ (which is of finite dimension over $k(z)$). Since $M\otimes_A k(z)$ is the set of the  sections of $\cM_{\infty}(\ul{\lambda}_J)$ over the finite set $U\cap \omega_{\infty}^{-1}(z)$,  $\Sigma$ is isomorphic to a direct factor of $J_{B_p}\big((\Pi_{\infty}^{R_{\infty}-\an}(\ul{\lambda}_J)\big)[T_p^0=\delta_0]$.  Let $\mu$ be the weight of $\delta_0$, and $\chi$ be a smooth character of $T_p$ such that $\chi|_{T_p^0}=\delta_0\delta_{\mu}^{-1}$. Since $T_p^0$ acts on $\Sigma$ via $\delta_0$, there exists a smooth unramified representation $\Sigma_{\infty}$ of $T_p$ such that $\Sigma\cong \delta_{\mu}\chi \otimes_{k(z)} \Sigma_{\infty}$. By Proposition \ref{prop: cclg-pve}, one has (see Appendix \ref{sec: cclg-A.2} for the notation)
   \begin{multline*}
    \Hom_{T_p}\Big(\delta_{\mu}\chi \otimes_E \Sigma_{\infty}, J_{B_p}\big(\Pi_{\infty}^{R_{\infty}-\an}(\ul{\lambda}_J)[\fm_z]\big) \Big)\\
     \xlongrightarrow{\sim} \Hom_{G_p}\Big(\cF_{\overline{B}_p}^{G_p}\big(\overline{M}_J(-\mu)^{\vee}, \chi \otimes_{k(z)}\Sigma_{\infty}\otimes_{k(z)}\delta_{B_p}^{-1}\big), \Pi_{\infty}^{R_{\infty}-\an}(\ul{\lambda}_J)[\fm_z]\Big).
\end{multline*}
  Since any point in $\omega_{\infty}^{-1}(z)$ is $\Sigma_p\setminus J$-very classical, one can show as in the proof of Lemma \ref{equ: cclg-dtX} that any morphism on the right side factors through the locally algebraic quotient
  \begin{equation*}
    \cF_{\overline{B}_p}^{G_p}\big(\overline{L}(-\mu), \chi\otimes_{k(z)}\Sigma_{\infty}\otimes_{k(z)}\delta_{B_p}^{-1}\big)\cong \big(\Ind_{\overline{B}_p}^{G_p} \chi\otimes_{k(z)}\Sigma_{\infty}\otimes_{k(z)}\delta_{B_p}^{-1} \big)^{\infty}\otimes_E L(\mu).
  \end{equation*}
Since any point in $\omega_{\infty}^{-1}(z)$ is very regular, any irreducible subquotient of
$$W:=\big(\Ind_{\overline{B}_p}^{G_p} \chi\otimes_{k(z)}\Sigma_{\infty}\otimes_{k(z)}\delta_{B_p}^{-1} \big)^{\infty}$$
has the same \emph{smooth type} (cf. \cite[\S~3]{CEGGPS}), denoted by $\sigma_{\sm}$ (which is a finite dimensional smooth representation of $K_p$). Thus $W$ lies in a Berstein component, and there exists a finite dimensional $\cH:=k(z)[K_p\backslash G_p/K_p]$-module $\cM^{\lc}$ such that $W\cong \text{c-}\Ind_{K_p}^{G_p} (\sigma_{\sm}) \otimes_{\cH} \cM^{\lc}$ (e.g. see \cite[\S~3.3]{CEGGPS}). Let $\tau$ be the \emph{inertial type} corresponding to $\sigma_{\sm}$ (cf. \emph{loc. cit.}), $\sigma_{\alg}:=L(\mu)|_{K_p}$ and $\sigma:=\sigma_{\sm}\otimes_{k(z)} \sigma_{\alg}$.
  The theorem then follows from verbatim of the last paragraph of the proof of \cite[Cor. 3.20]{BHS1}, replacing the universal crystalline deformation ring $R_{\overline{\rho}_p}^{\square, \textbf{k}-\crr}$ in \emph{loc. cit. } by the universal potentially crystalline deformation ring of type $\sigma$:
$R_{\overline{\rho}_p}^{\square}(\sigma):=\widehat{\otimes}_{v|p} R_{\overline{\rho}_{\widetilde{v}}}^{\square}(\sigma |_{K_{\widetilde{v}}})$ (see \cite[\S~4]{CEGGPS}, in particular \cite[Lem. 4.17, 4.18]{CEGGPS}).
\end{proof}
\subsection{Infinitesimal ``R=T" results}\label{sec: cclg-3.4}Let $\fX_{\overline{\rho}^p}^{\square}:=\Spf\big(\widehat{\otimes}_{v\in S \setminus \Sigma_p} R_{\overline{\rho}_{\widetilde{v}}}^{\bar{\square}}\big)^{\rig}$, $\fX_{\overline{\rho}_p}^{\square}:=\Spf\big(\widehat{\otimes}_{v|p} R_{\overline{\rho}_{\widetilde{v}}}^{\bar{\square}}\big)^{\rig}$, and $\bU$ be the open unit ball in $\bA^1$. We have thus $(\Spf R_{\infty})^{\rig}\cong \fX_{\overline{\rho}^p}^{\square} \times\bU^g \times \fX_{\overline{\rho}_p}^{\square}$. For $v|p$, denote by $\iota_{\widetilde{v}}$ the following isomorphism (cf. (\ref{equ: cclg-add}))
\begin{equation*}
  \iota_{\widetilde{v}}: \cT_{\widetilde{v}}\xlongrightarrow{\sim} \cT_{\widetilde{v}}, \ \delta_{\widetilde{v}}\mapsto \delta_{\widetilde{v}}^{\natural}.
\end{equation*}
Put $\iota_p:=\prod_{v|p} \iota_{\widetilde{v}}: \cT_p\xrightarrow{\sim} \cT_p$, $\iota_{\widetilde{v}}^{-1}\big(X_{\tri}^{\square}(\overline{\rho}_{\widetilde{v}})\big):=X_{\tri}^{\square}(\overline{\rho}_{\widetilde{v}}) \times_{\cT_{\widetilde{v}}, \iota_{\widetilde{v}}} \cT_{\widetilde{v}}$, $X_{\tri}^{\square}(\overline{\rho}_p):=\prod_{v|p} X_{\tri}^\square(\overline{\rho}_{\widetilde{v}})$, and
\begin{equation*}
  \iota_p^{-1}\big(X_{\tri}^{\square}(\overline{\rho}_p)\big):=X_{\tri}^{\square}(\overline{\rho}_p) \times_{\cT_p, \iota_p} \cT_p\cong \prod_{v|p} \iota_{\widetilde{v}}^{-1}\big(X_{\tri}^{\square}(\overline{\rho}_{\widetilde{v}})\big).
\end{equation*}
Note that $\iota_p^{-1}\big(X_{\tri}^{\square}(\overline{\rho}_p)\big)$ is a closed subspace of $\fX_{\overline{\rho}_p}^{\square}\times \cT_p$. Recall
\begin{theorem}[$\text{\cite[Thm. 3.21]{BHS1}}$]\label{thm: cclg-pwR}
  The natural embedding $X_p(\overline{\rho}) \hookrightarrow (\Spf R_{\infty})^{\rig}\times \cT_p$ factors though
  \begin{equation}\label{equ: cclg-pow}X_p(\overline{\rho})\hooklongrightarrow \fX_{\overline{\rho}^p}^{\square}\times \bU^g \times \iota_p^{-1}\big(X_{\tri}^{\square}(\overline{\rho}_p)\big),\end{equation} and induces an isomorphism between $X_p(\overline{\rho})$ and a union of irreducible components (equipped with the reduced closed rigid subspace structure) of $\fX_{\overline{\rho}^p}^{\square}\times \bU^g \times \iota_p^{-1}\big(X_{\tri}^{\square}(\overline{\rho}_p)\big)$.
\end{theorem}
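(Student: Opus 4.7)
The plan is to prove the theorem in two stages: first, show that the natural closed embedding $X_p(\overline{\rho}) \hookrightarrow (\Spf R_{\infty})^{\rig} \times \widehat{T}_p$ factors through the claimed subspace cut out by trianguline data at the places above $p$; second, compare dimensions at a dense set of crystalline classical points to upgrade this factorization to an identification with a union of irreducible components.

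For the factorization: the Galois representation $\rho$ over $X_p(\overline{\rho})$ has been constructed globally, so for each $v | p$ we obtain a family of Galois representations $\rho_{\widetilde{v}}$ of $\Gal_{F_{\widetilde{v}}}$. At a spherical very regular classical point $x = (y,\delta)$ (which exists in abundance by Theorem~\ref{thm: cclg-try}), $\rho_{y,\widetilde{v}}$ is crystalline with distinct Frobenius eigenvalues, and the classical local Langlands correspondence together with the interpretation of the Jacquet--Emerton eigenvalues on $J_{B_p}(\Pi_{\infty}^{R_{\infty}-\an,\lalg})[\fp_y]$ in terms of refinements produces a canonical triangulation of $\rho_{y,\widetilde{v}}$ of parameter $\iota_{\widetilde{v}}(\delta_{\widetilde{v}})$; the twist $\iota_{\widetilde{v}}$ precisely accounts for the discrepancy between the ``automorphic'' normalization of $\delta$ (involving $\delta_{B(F_{\widetilde{v}})}$ and the cyclotomic character) and the ``Galois'' normalization in the triangulation. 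Since such spherical classical points accumulate and are Zariski-dense in each irreducible component of $X_p(\overline{\rho})$, I would then invoke global triangulation theory (\cite{KPX}, \cite{Liu}) to extend these pointwise triangulations to a triangulation of the family $\rho_{\widetilde{v}}$ over each irreducible component with parameter $\iota_{\widetilde{v}}(\delta_{\widetilde{v}})$. This yields the desired factorization through $\fX_{\overline{\rho}^p}^{\square} \times \bU^g \times \iota_p^{-1}(X_{\tri}^{\square}(\overline{\rho}_p))$.

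For identifying $X_p(\overline{\rho})$ with a union of irreducible components of the ambient space, I would compare dimensions and local structures. By Corollary~\ref{cor: cclg-o0e}, $X_p(\overline{\rho})$ is equidimensional of dimension $g + 4|S| + 3[F^+:\Q]$; the ambient space has dimension $4|S\setminus\{v|p\}| + g + \sum_{v|p}(4+3d_{\widetilde{v}}) = g + 4|S| + 3[F^+:\Q]$ by Theorem~\ref{thm: cclg-tridim}, so the two are equidimensional of the same dimension. Now pick a spherical, very regular, $\Sigma_p$-very classical point $x$ with $\rho_{y,\widetilde{v}}$ crystalline, strictly dominant weight, and non-critical refinement for every $v | p$ (such points are Zariski-dense in $X_p(\overline{\rho})$ by Theorem~\ref{thm: cclg-try} and also accumulate in $\iota_p^{-1}(X_{\tri}^{\square}(\overline{\rho}_p))$ by Definition~\ref{def: cclg-xnf}). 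At such a point, the tangent space calculation of Theorem~\ref{thm: cclg-1ql} gives $\dim_{k(x)} T_{X_{\tri}^{\square}(\overline{\rho}_{\widetilde{v}}),x} = 4 + 3d_{\widetilde{v}}$, so $X_{\tri}^{\square}(\overline{\rho}_p)$ is smooth at the image of $x$; combined with the reducedness of $X_p(\overline{\rho})$ at spherical very regular points (Theorem~\ref{thm: cclg-red}), it follows that $X_p(\overline{\rho})$ is also smooth at $x$ of the same dimension as the ambient space. The closed embedding is therefore an isomorphism of local rings at $x$, so $X_p(\overline{\rho})$ contains an open neighborhood of $x$ in the ambient space.

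The main obstacle is precisely this last step: passing from ``local isomorphism at a Zariski-dense set of points'' to ``union of irreducible components.'' The subtle point is that a priori $X_p(\overline{\rho})$ could cut out a proper closed subspace of an ambient component. To rule this out, I would argue as follows: let $Z$ be an irreducible component of the ambient space which meets $X_p(\overline{\rho})$; since the spherical non-critical classical points accumulating at a generic point of $Z$ all lie in $X_p(\overline{\rho})$ by the local isomorphism above, $X_p(\overline{\rho}) \cap Z$ is a Zariski-closed subspace of $Z$ containing a Zariski-dense subset, hence equals $Z$. Therefore $X_p(\overline{\rho})$ is a union of irreducible components of $\fX_{\overline{\rho}^p}^{\square}\times \bU^g \times \iota_p^{-1}(X_{\tri}^{\square}(\overline{\rho}_p))$, as claimed.
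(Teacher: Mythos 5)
The paper does not prove this statement at all: it is quoted verbatim from \cite[Thm.3.21]{BHS1}, so your sketch has to be measured against the argument given there. Your factorization step is essentially the expected one (a Zariski-dense set of classical points whose local parameters land in the trianguline variety, plus reducedness of $X_p(\overline{\rho})$, cf. Thm.\ref{thm: cclg-red}), with one caveat: at a \emph{critical} classical point the triangulation of $\rho_{y,\widetilde{v}}$ has parameter equal to $\iota_{\widetilde{v}}(\delta_{\widetilde{v}})$ modified on $\Sigma(x)$, not $\iota_{\widetilde{v}}(\delta_{\widetilde{v}})$ itself, so to see that the dense set of points lies in $U_{\tri}^{\square,\reg}$ (and hence in $X_{\tri}^{\square}(\overline{\rho}_p)$, which is only \emph{defined} as a closure) you should use the numerically non-critical points furnished by Prop.\ref{prop: cclg-dtf} and Thm.\ref{thm: cclg-try}; once you do, global triangulation theory is not needed for this step.

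The genuine gap is in your final step. You take an ambient irreducible component $Z$ merely \emph{meeting} $X_p(\overline{\rho})$ and assert that the crystalline classical points accumulating at a generic point of $Z$ lie in $X_p(\overline{\rho})$ ``by the local isomorphism above''. But the local isomorphism was established only at points of $X_p(\overline{\rho})$; a nearby point of $Z$ is a priori just a local Galois/trianguline datum, and there is no reason it should be in the support of the patched module $\cM_{\infty}$ --- that is precisely the modularity-type issue the whole construction is built around. Moreover the statement is false at the level of generality you assert it: an ambient component can meet $X_p(\overline{\rho})$ only in a proper closed subset (e.g. along its intersection with another component), so ``$Z$ meets $X_p(\overline{\rho})$'' cannot imply $Z\subseteq X_p(\overline{\rho})$. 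The step can be avoided entirely, and this is how \cite{BHS1} argues: $X_p(\overline{\rho})$ is reduced (each component contains spherical very regular points), and both $X_p(\overline{\rho})$ and $\fX_{\overline{\rho}^p}^{\square}\times \bU^g \times \iota_p^{-1}\big(X_{\tri}^{\square}(\overline{\rho}_p)\big)$ are equidimensional of the same dimension $g+4|S|+3[F^+:\Q]$ (Cor.\ref{cor: cclg-o0e} and Thm.\ref{thm: cclg-tridim}); hence every irreducible component of $X_p(\overline{\rho})$ is a closed irreducible subset of the ambient space of maximal dimension and is therefore an irreducible component of it --- no smoothness, no choice of non-critical points, and no local isomorphism is required. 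If you prefer your local-isomorphism route, it can be repaired by arguing component-by-component on $X_p(\overline{\rho})$: each component of $X_p(\overline{\rho})$ contains a numerically non-critical spherical very regular classical point, the ambient space is smooth (hence locally irreducible) there, and the local isomorphism then forces that component to coincide with the unique ambient component through the point.
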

For  $x=(y,\delta) \in X_p(\overline{\rho})$ and $v\in S$, denote by $\rho_{x,\widetilde{v}}$ the  image of $x$ in $(\Spf R_{\overline{\rho}_{\widetilde{v}}}^{\bar{\square}})^{\rig}$ via the natural morphism. For $v|p$,  $x_{\widetilde{v}}=(\rho_{x,\widetilde{v}}, \delta_{\widetilde{v}}^{\natural})$ is thus the image of $x$ in $X_{\tri}^{\square}(\overline{\rho}_{\widetilde{v}})$ via (\ref{equ: cclg-pow}).
For a finite place $\widetilde{v}$ of $F$ with $v\nmid p$, a $2$-dimensional $p$-adic representation $\rho_{\widetilde{v}}$ of $\Gal_{F_{\widetilde{v}}}$ is called \emph{generic} if the smooth representation $\pi$ of $\GL_2(F_{\widetilde{v}})$ associated to $\WD(\rho_{\widetilde{v}})$ (the associated Weil-Deligne representation) via local Langlands correspondance is generic (which is equivalent to infinite dimensional in this case).

For a weight $\ul{\lambda}_{\Sigma_p}=(\lambda_{1,\sigma},\lambda_{2,\sigma})_{\sigma\in \Sigma_p}$ of $\ft_p\otimes_{\Q_p} E$, $J'\subseteq \Sigma_p$, denote by $\ul{\lambda}_{J'}^{\natural}:=(\lambda_{1,\sigma}, \lambda_{2,\sigma}-1)_{\sigma\in J'}$. Now let $J\subseteq \Sigma_p$, $\ul{\lambda}_J\in \Z^{2|J|}$ be dominant, put $X_{\tri}^{\square}(\overline{\rho}_p,\ul{\lambda}_J^{\natural}):=\prod_{v|p} X_{\tri}^{\square}(\overline{\rho}_{\widetilde{v}}, \ul{\lambda}_{J_{\widetilde{v}}}^{\natural})$ (cf. \S~\ref{sec: cclg-2.1}). By definitions, the closed embedding (\ref{equ: cclg-pow}) induces a closed embedding
\begin{equation}\label{equ: cclg-rtht}
  X_p(\overline{\rho},\ul{\lambda}_J)' \hooklongrightarrow \fX_{\overline{\rho}^p}^{\square}\times \bU^g \times \iota_p^{-1}\big(X_{\tri}^{\square}(\overline{\rho}_p,\ul{\lambda}_J^{\natural})\big).
\end{equation}
Put $X^{\square}_{\tri,J-\dR}(\overline{\rho}_p, \ul{\lambda}_J^{\natural}):=\prod_{v|p} X_{\tri,J_{\widetilde{v}}-\dR}^{\square}(\overline{\rho}_{\widetilde{v}}, \ul{\lambda}_{J_{\widetilde{v}}}^{\natural})$ (cf. (\ref{equ: cclg-Jir})). By Shah's results \cite[Thm. 2]{Sha} and the density of classical points in $X_p(\overline{\rho},\ul{\lambda}_J)$ (cf. Theorem \ref{thm: cclg-try}), the injection (\ref{equ: cclg-pow}) induces a closed embedding
\begin{equation}\label{equ: cclg-adb}
  X_p(\overline{\rho},\ul{\lambda}_J)_{\red} \hooklongrightarrow  \fX_{\overline{\rho}^p} \times \bU^g \times \iota_p^{-1}\big(X^{\square}_{\tri,J-\dR}(\overline{\rho}_p, \ul{\lambda}_J^{\natural})\big).
\end{equation}

Now suppose $\delta$ is very regular and locally algebraic, we have that $X_p(\overline{\rho},\ul{\lambda}_J)$ is reduced at $x$ by Theorem \ref{thm: cclg-red}. Suppose $\rho_{x,\widetilde{v}}$ is generic for $v\in S\setminus S_p$. By \cite[Lem. 1.3.2 (1)]{BLGGT}, $\fX^{\square}_{\overline{\rho}^p}$ is smooth at $(\rho_{x,\widetilde{v}})_{v\in S\setminus S_p}$ (of dimension $4|S\setminus S_p|$). Let $X$ be the union of irreducible components of $X_p(\overline{\rho})$ containing $x$ (equipped with the reduced closed rigid subspace structure). By Theorem \ref{thm: cclg-pwR}, $X$ has the form
 \begin{equation*}\cup_{i} \big(X^p \times \bU^g \times \iota_p^{-1}(X_{i,p})\big)=\cup_i\Big(X^p\times \bU^g\times \big(\prod_{v|p} \iota_{\widetilde{v}}^{-1}(X_{i,\widetilde{v}}\big))\Big)\end{equation*} where $X^p$ is the (unique) irreducible component of $\fX_{\overline{\rho}^p}^{\square}$ containing $(\rho_{x,\widetilde{v}})_{v\in S\setminus S_p}$, and $X_{i,p}$ \big(resp. $X_{i,\widetilde{v}}$\big) is a  certain irreducible component of $X_{\tri}^{\square}(\overline{\rho}_p)$ \big(resp. $X_{\tri}^{\square}(\overline{\rho}_{\widetilde{v}})$\big) containing $(x_{\widetilde{v}})_{v\in S_p}$ (resp. $x_{\widetilde{v}}$)\footnote{By \cite{BHS3}, we know now $X_{\tri}^{\square}(\overline{\rho}_{\widetilde{v}})$ (resp. $X_{\tri}^{\square}(\overline{\rho}_p)$) is irreducible at $x_{\widetilde{v}}$ (resp.  at $(x_{\widetilde{v}})_{v\in S_p}$), and hence $X_{i,\widetilde{v}}$ (resp. $X_{i,p}$) is actually unique.}. Suppose $x$ is spherical, by Theorem \ref{thm: cclg-try} (and the  proof), $\cup_i X_{i,{\widetilde{v}}}$ satisfies the accumulation property at $x_{\widetilde{v}}$ for all $v|p$. Suppose $\mathrm{wt}(\delta)_{1,\sigma}\neq \mathrm{wt}(\delta)_{2,\sigma}-1$ for all $\sigma\in \Sigma_p$ and that $\rho_{x,\widetilde{v}}$ is $\Sigma(x_{\widetilde{v}})$-de Rham for all $v|p$ (cf. \S~\ref{sec: cclg-xdt}). Thus by Corollary \ref{coro: cclg-smte}, $\cup_i X_{i,{\widetilde{v}}}$ is smooth at $x_{\widetilde{v}}$, and hence for all $i$, $X_{i,\widetilde{v}}$ is equal to each other, which we denote by $X_{\widetilde{v}}$. In summary, $X=X^p \times \bU^g \times \iota_p^{-1} (X_p)$ with $X_p=\prod_{v|p} X_{\widetilde{v}}$, and is smooth at the point $x$.
\begin{theorem}\label{thm: cclg-pox}
  Let $x=(y,\delta)\in X_p(\overline{\rho}, \ul{\lambda}_J)$ be such that $\rho_{x,\widetilde{v}}$ is generic for $v\in S \setminus S_p$, $\rho_{\widetilde{v}}$ is $\Sigma(x_{\widetilde{v}})$-de Rham for all $v|p$, $x$ is spherical, \footnote{\label{foot: sphe}This assumption is to used to ensure that $X_{\widetilde{v}}$ satisfies the accumulation property at $x_{\widetilde{v}}$ for $v|p$ so that we can apply the results in \S~\ref{sec: cclg-xdt}. Now using the theory of \cite{BHS3}, one can probably weaken this condition. The situation is the same for Corollary \ref{cor: cclg-xts}, Corollary \ref{cor: cclg-xts} below. } and $\delta$ is very regular with $\mathrm{wt}(\delta)_{1,\sigma}\neq \mathrm{wt}(\delta)_{2,\sigma}-1$ for all $\sigma\in \Sigma_p$.  Then $X_p(\overline{\rho}, \ul{\lambda}_J)$ is smooth at $x$, and we have a natural isomorphism of complete regular noetherian local $k(x)$-algebras:
   \begin{equation}\label{equ: cclg-oaj}
    \widehat{\co}_{X_p(\overline{\rho},\ul{\lambda}_J),x}\xlongrightarrow{\sim} \widehat{\co}_{X^p\times \bU^g \times \iota_p^{-1}((X_p)_{J-\dR}(\ul{\lambda}_J^{\natural})), x},
  \end{equation}
  where $(X_p)_{J-\dR}(\ul{\lambda}_J^{\natural}):=\prod_{v|p} (X_{\widetilde{v}})_{J_{\widetilde{v}}-\dR}(\ul{\lambda}_{J_{\widetilde{v}}}^{\natural})$ (cf. \S~\ref{sec: cclg-2.1}, recall $ (X_{\widetilde{v}})_{J_{\widetilde{v}}-\dR}(\ul{\lambda}_{J_{\widetilde{v}}}^{\natural})=X_{\widetilde{v}} \times_{X_{\tri}^{\square}(\overline{\rho}_{\widetilde{v}})} X_{\tri,J_{\widetilde{v}}-\dR}^{\square}(\overline{\rho}_{\widetilde{v}},\ul{\lambda}_{J_{\widetilde{v}}}^{\natural})$).
\end{theorem}
\begin{proof}
  Let $Z$ be the union of irreducible components of $X_p(\overline{\rho},\ul{\lambda}_J)$ containing $x$ (equipped with the reduced closed rigid subspace structure), which is thus equidimensional of dimension $g+4|S|+3[F^+:\Q]-2|J|$. Moreover, since $X_p(\overline{\rho},\ul{\lambda}_J)$ is reduced at $x$, one has $\widehat{\co}_{X_p(\overline{\rho},\ul{\lambda}_J),x} \cong \widehat{\co}_{Z,x}$. The closed embedding (\ref{equ: cclg-adb}) induces a closed embedding $Z\hookrightarrow X^p\times \bU^g \times \iota_p^{-1}((X_p)_{J-\dR}(\ul{\lambda}_J^{\natural}))$. We get thus a surjective morphism $\widehat{\co}_{X^p\times \bU^g \times \iota_p^{-1}((X_p)_{J-\dR}(\ul{\lambda}_J^{\natural})), x}\twoheadrightarrow\widehat{\co}_{Z,x}$. However, since $X^p$ is smooth at $(\rho_{x,\widetilde{v}})_{v\in S\setminus S_p}$ of dimension $4|S\setminus S_p|$, by Theorem \ref{thm: cclg-dxX} (1), one calculates:
  \begin{equation*}
    \dim_{k(x)} T_{X^p\times \bU^g \times \iota_p^{-1}((X_p)_{J-\dR}(\ul{\lambda}_J^{\natural})), x}=g+4|S|+3[F^+:\Q]-2|J|=\dim \widehat{\co}_{Z,x}.
  \end{equation*}
  The theorem follows.
\end{proof}
For $J'\subseteq J$, recall $X_p(\overline{\rho},\ul{\lambda}_J,J')\cong  X_p(\overline{\rho},\ul{\lambda}_{J'})\times_{\cT(\ul{\lambda}_{J'})} \cT(\ul{\lambda}_J)$. We put (cf. (\ref{equ: cclg-sJj}))
\begin{equation*}
  (X_p)_{J'-\dR}(\ul{\lambda}_J^{\natural}):=\prod_{v|p} (X_p)_{J'_{\widetilde{v}}-\dR}(\ul{\lambda}_{J_{\widetilde{v}}}^{\natural})\cong (X_p)_{J'-\dR}(\ul{\lambda}_{J'}^{\natural}) \times_{\cT(\ul{\lambda}_{J'}^{\natural})} \cT(\ul{\lambda}_J^{\natural}).
\end{equation*}
The following corollary follows easily   from Theorem \ref{thm: cclg-pox} \big(applied to $X_p(\overline{\rho},\ul{\lambda}_{J'})$\big):
 \begin{corollary}\label{equ: cclg-oms}
  Keep the situation of Theorem \ref{thm: cclg-pox}, and let $J'\subseteq J$. The isomorphism (\ref{equ: cclg-oaj}) (with $J$ replaced by $J'$) induces an isomorphism
  \begin{equation*}
    \widehat{\co}_{X_p(\overline{\rho},\ul{\lambda}_J,J'),x} \xlongrightarrow{\sim} \widehat{\co}_{X^p \times \bU^g \times \iota_p^{-1}((X_p)_{J'-\dR}(\ul{\lambda}_J^{\natural})),x}.
  \end{equation*}
 \end{corollary}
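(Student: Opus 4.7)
The plan is to deduce this corollary directly from Theorem \ref{thm: cclg-pox} applied with $J$ replaced by $J'$, by performing a base change along the closed embedding of weight spaces. The entire proof will be formal manipulation of Cartesian squares and completed local rings; no new geometric input is needed.

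First, I would recall the two Cartesian identities already established in the text. On the automorphic side, \eqref{equ: cclg-sJj} together with the second square of \eqref{equ: cclg-a1j} gives
\begin{equation*}
  X_p(\overline{\rho},\ul{\lambda}_J,J')\ \cong\ X_p(\overline{\rho},\ul{\lambda}_{J'})\times_{\widehat{T}_p(\ul{\lambda}_{J'})}\widehat{T}_p(\ul{\lambda}_J),
\end{equation*}
and on the Galois side, the corresponding identity (cf.\ \eqref{equ: cclg-sJj})
\begin{equation*}
  X^{\square}_{\tri,J'-\dR}(\overline{\rho}_p, \ul{\lambda}_J^{\natural})\ \cong\ X^{\square}_{\tri,J'-\dR}(\overline{\rho}_p, \ul{\lambda}_{J'}^{\natural}) \times_{\widehat{T}_p(\ul{\lambda}_{J'}^{\natural})}\widehat{T}_p(\ul{\lambda}_J^{\natural}).
\end{equation*}
Since $\iota_p$ is an automorphism of $\widehat{T}_p$ sending $\widehat{T}_p(\ul{\lambda}_{J'})$ onto $\widehat{T}_p(\ul{\lambda}_{J'}^{\natural})$, taking products with $\fX_{\overline{\rho}^p}^{\square}\times\bU^g$ yields the analogous Cartesian description of the right-hand target space in the corollary.

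Next, I would invoke Theorem \ref{thm: cclg-pox} with $J$ replaced by $J'$ to obtain the isomorphism
\begin{equation*}
    \widehat{\co}_{X_p(\overline{\rho},\ul{\lambda}_{J'}),x} \xlongrightarrow{\sim} \widehat{\co}_{\fX^{\square}_{\overline{\rho}^p}\times \bU^g \times \iota_p^{-1}(X^{\square}_{\tri,J'-\dR}(\overline{\rho}_p, \ul{\lambda}_{J'}^{\natural})), x},
\end{equation*}
which by construction is compatible with the two structure morphisms to $\widehat{T}_p(\ul{\lambda}_{J'})$ (after applying $\iota_p$). The closed embedding $\widehat{T}_p(\ul{\lambda}_J)\hookrightarrow \widehat{T}_p(\ul{\lambda}_{J'})$ is locally at the image $y$ of $x$ cut out by the regular sequence consisting of the $2|J\setminus J'|$ coordinate functions $\wt(\delta)_{i,\sigma}-\lambda_{i,\sigma}$ for $\sigma\in J\setminus J'$, $i=1,2$ (via the \'etale map \eqref{equ: cclg-arv} on each factor). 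In particular, on completed local rings it corresponds to passing to a quotient by an ideal generated by this regular sequence.

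Finally, I would use the standard fact that, for a closed immersion $Z\hookrightarrow Y$ and a morphism $X\to Y$ with $x\mapsto y$, one has $\widehat{\co}_{X\times_Y Z,x}\cong \widehat{\co}_{X,x}\otimes_{\widehat{\co}_{Y,y}} \widehat{\co}_{Z,y}$, and that tensoring with a quotient is a quotient; this applies both on the automorphic side (giving $\widehat{\co}_{X_p(\overline{\rho},\ul{\lambda}_J,J'),x}$) and on the Galois side (giving the completed local ring of the target). Since the iso from Theorem \ref{thm: cclg-pox} is compatible with the maps to the weight space, reducing both sides modulo the same ideal produces the desired isomorphism. The only point requiring any care is the naturality of the Theorem \ref{thm: cclg-pox} iso with respect to the weight projection, but this is built into its proof since the embedding \eqref{equ: cclg-rtht} lies over $\widehat{T}_p(\ul{\lambda}_{J'})$; so no genuine obstacle arises.
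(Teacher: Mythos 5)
Your proposal is correct and is essentially the paper's own argument: the paper likewise records the Cartesian descriptions $X_p(\overline{\rho},\ul{\lambda}_J,J')\cong X_p(\overline{\rho},\ul{\lambda}_{J'})\times_{\widehat{T}_p(\ul{\lambda}_{J'})}\widehat{T}_p(\ul{\lambda}_J)$ and $X^{\square}_{\tri,J'-\dR}(\overline{\rho}_p,\ul{\lambda}_J^{\natural})\cong X^{\square}_{\tri,J'-\dR}(\overline{\rho}_p,\ul{\lambda}_{J'}^{\natural})\times_{\widehat{T}_p(\ul{\lambda}_{J'}^{\natural})}\widehat{T}_p(\ul{\lambda}_J^{\natural})$ and deduces the corollary by base change of the isomorphism of Thm.\ref{thm: cclg-pox} applied to $J'$, which is compatible with the weight maps since the embedding (\ref{equ: cclg-pow}) lives over $\widehat{T}_p$ (up to $\iota_p$). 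Your extra remarks (regular sequence cutting out the weight-space immersion, completed tensor product formula) only make explicit what the paper leaves implicit; no gap.
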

 Let $\Sigma(x):=\cup_{v|p} \Sigma(x_{\widetilde{v}})$ (cf. \S~\ref{sec: cclg-xdt}). The following corollary will play a crucial role in our proof of the existence of companion points:
\begin{corollary}\label{cor: cclg-xts}
  Keep the situation of Theorem \ref{thm: cclg-pox}, and let $J'\subseteq J$. The following statements are equivalent:

  (i) the natural projection 
  \big(induced by the closed embedding $X_p(\overline{\rho},\ul{\lambda}_J) \hookrightarrow X_p(\overline{\rho},\ul{\lambda}_J,J')$\big)
  \begin{equation*}
    \widehat{\co}_{X_p(\overline{\rho},\ul{\lambda}_J,J'),x} \twoheadlongrightarrow \widehat{\co}_{X_p(\overline{\rho},\ul{\lambda}_J),x}
  \end{equation*}
  is an isomorphism;

  (ii) $X_p(\overline{\rho},\ul{\lambda}_J,J')$ is smooth at $x$;

  (iii) $(J\setminus J')\cap \Sigma(x)=\emptyset$.
\end{corollary}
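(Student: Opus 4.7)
The plan is to translate the statement into a question about the trianguline variety via Theorem~\ref{thm: cclg-pox} and Corollary~\ref{equ: cclg-oms}, and then combine a Krull dimension count for the local ring with the tangent space formula of Theorem~\ref{thm: cclg-dxX}(2). Applying Corollary~\ref{equ: cclg-oms} (together with Theorem~\ref{thm: cclg-pox} for the target) identifies the surjection appearing in (i) with the natural surjection
\begin{equation*}
\widehat{\co}_{\fX^{\square}_{\overline{\rho}^p}\times\bU^g\times\iota_p^{-1}(X^{\square}_{\tri,J'-\dR}(\overline{\rho}_p,\ul{\lambda}_J^{\natural})),x}\twoheadlongrightarrow\widehat{\co}_{\fX^{\square}_{\overline{\rho}^p}\times\bU^g\times\iota_p^{-1}(X^{\square}_{\tri,J-\dR}(\overline{\rho}_p,\ul{\lambda}_J^{\natural})),x}
\end{equation*}
induced by the closed embedding $X^{\square}_{\tri,J-\dR}(\overline{\rho}_p,\ul{\lambda}_J^{\natural})\hookrightarrow X^{\square}_{\tri,J'-\dR}(\overline{\rho}_p,\ul{\lambda}_J^{\natural})$. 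Since $\fX^{\square}_{\overline{\rho}^p}$ is smooth at $(\rho_{x,\widetilde{v}})_{v\in S\setminus S_p}$ (by the genericity hypothesis and \cite{BLGGT}) and $\bU^g$ is smooth, both (i) and (ii) decouple, place by place $v|p$, into the analogous statements for $X^{\square}_{\tri,J_{\widetilde{v}}-\dR}(\overline{\rho}_{\widetilde{v}},\ul{\lambda}_{J_{\widetilde{v}}}^{\natural})\hookrightarrow X^{\square}_{\tri,J'_{\widetilde{v}}-\dR}(\overline{\rho}_{\widetilde{v}},\ul{\lambda}_{J_{\widetilde{v}}}^{\natural})$ at $x_{\widetilde{v}}$.

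Next I would pin down the Krull dimension of each $\widehat{\co}_{X^{\square}_{\tri,J'_{\widetilde{v}}-\dR}(\overline{\rho}_{\widetilde{v}},\ul{\lambda}_{J_{\widetilde{v}}}^{\natural}),x_{\widetilde{v}}}$. Set $D:=g+4|S|+3[F^+:\Q]-2|J|$. The surjection of local rings coming from $X_p(\overline{\rho},\ul{\lambda}_J)\hookrightarrow X_p(\overline{\rho},\ul{\lambda}_J,J')$, whose source is smooth of dimension $D$ at $x$ by Theorem~\ref{thm: cclg-pox}, gives $\dim\widehat{\co}_{X_p(\overline{\rho},\ul{\lambda}_J,J'),x}\geq D$, while the global dimension formula stated in the corollary immediately following Corollary~\ref{cor: cclg-o0e} gives the matching upper bound $D$, hence $\dim\widehat{\co}_{X_p(\overline{\rho},\ul{\lambda}_J,J'),x}=D$. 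Via the identification of the previous paragraph this becomes
\begin{equation*}
\sum_{v|p}\dim\widehat{\co}_{X^{\square}_{\tri,J'_{\widetilde{v}}-\dR}(\overline{\rho}_{\widetilde{v}},\ul{\lambda}_{J_{\widetilde{v}}}^{\natural}),x_{\widetilde{v}}}=\sum_{v|p}\bigl(4+3d_{\widetilde{v}}-2|J_{\widetilde{v}}|\bigr),
\end{equation*}
and since the smooth closed subspace $X^{\square}_{\tri,J_{\widetilde{v}}-\dR}(\overline{\rho}_{\widetilde{v}},\ul{\lambda}_{J_{\widetilde{v}}}^{\natural})$ forces each left-hand term to be at least the corresponding right-hand term, the equality holds termwise.

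The equivalences now drop out. A short rearrangement of Theorem~\ref{thm: cclg-dxX}(2) gives
\begin{equation*}
\dim_{k(x)}T_{X^{\square}_{\tri,J'_{\widetilde{v}}-\dR}(\overline{\rho}_{\widetilde{v}},\ul{\lambda}_{J_{\widetilde{v}}}^{\natural}),x_{\widetilde{v}}}=\bigl(4+3d_{\widetilde{v}}-2|J_{\widetilde{v}}|\bigr)+|(J_{\widetilde{v}}\setminus J'_{\widetilde{v}})\cap\Sigma(x_{\widetilde{v}})|,
\end{equation*}
so smoothness of the $v$-th factor at $x_{\widetilde{v}}$ is equivalent to $(J_{\widetilde{v}}\setminus J'_{\widetilde{v}})\cap\Sigma(x_{\widetilde{v}})=\emptyset$; summing over $v|p$ produces (ii)$\Leftrightarrow$(iii). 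For (i)$\Rightarrow$(ii) the target of the surjection is regular by Theorem~\ref{thm: cclg-pox}, so (i) makes the source regular, i.e.\ (ii). Conversely (iii) implies (ii), and then both source and target of our surjection are regular local rings of the same Krull dimension $D$, so a surjection between regular local rings of equal dimension is automatically an isomorphism, yielding (iii)$\Rightarrow$(i). The one subtle point is justifying the equality $\dim\widehat{\co}_{X_p(\overline{\rho},\ul{\lambda}_J,J'),x}=D$ without appealing to equidimensionality of $X_p(\overline{\rho},\ul{\lambda}_J,J')$, which is why the smoothness of $X_p(\overline{\rho},\ul{\lambda}_J)$ at $x$ supplied by Theorem~\ref{thm: cclg-pox} is essential.
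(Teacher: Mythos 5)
Your proposal is correct and follows essentially the same route as the paper: transfer to the trianguline side via Theorem~\ref{thm: cclg-pox} and Corollary~\ref{equ: cclg-oms}, compare tangent-space dimension (Theorem~\ref{thm: cclg-dxX}(2), rearranged exactly as you do) with the Krull dimension $g+4|S|+3[F^+:\Q]-2|J|$, and get (i)$\Leftrightarrow$(ii) from the equality of dimensions together with smoothness of $X_p(\overline{\rho},\ul{\lambda}_J)$ at $x$. Your place-by-place decoupling and the explicit "surjection of regular local rings of equal dimension is an isomorphism" step are just fuller versions of what the paper leaves implicit.
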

\begin{proof}
  The equivalence of (i) and (ii) is clear since $X_p(\overline{\rho},\ul{\lambda}_J,J')$ has the same dimension \big($g+4|S|+3[F^+:\Q]-2|J|$\big) as $X_p(\overline{\rho},\ul{\lambda}_J)$, and $X_p(\overline{\rho},\ul{\lambda}_J)$ is smooth at $x$. As in the proof of Theorem \ref{thm: cclg-pox}, to prove (ii) is equivalent to (iii), it is sufficient to show that $(J\setminus J')\cap \Sigma(x)=\emptyset$ if and only if
  \begin{equation*}
    \dim_{k(x)} T_{X_p(\overline{\rho},\ul{\lambda}_J,J'),x}=g+4|S|+3[F^+:\Q]-2|J|,
  \end{equation*}
  thus by Corollary \ref{equ: cclg-oms}, if and only if
  \begin{equation*}
    \dim_{k(x)} T_{\fX_{\overline{\rho}^p}^{\square}\times \bU^g \times \iota_p^{-1}((X_p)_{J'-\dR}(\ul{\lambda}_J^{\natural})),x}=g+4|S|+3[F^+:\Q]-2|J|.
  \end{equation*}
 However,  this follows from Theorem \ref{thm: cclg-dxX} (2).
\end{proof}
\section{Companion points and local-global compatibility}
\subsection{Breuil's locally analytic socle conjecture}\label{sec: cclg-4.1}We recall Breuil's locally analytic socle conjecture (for the group $G$ of \S~\ref{sec: cclg-3.1}). Let $\rho: \Gal_F \ra \GL_2(E)$ be a continuous representation such that $\rho\otimes \varepsilon \cong \rho^{\vee}\circ c$, $\rho$ is unramified outside $S$, and $\overline{\rho}$ is absolutely irreducible. Suppose moreover
\begin{enumerate}\item $\widehat{S}(U^p,E)_{\overline{\rho}}^{\lalg}[\fm_{\rho}]\neq 0$;
\item $\rho_{\widetilde{v}}:=\rho|_{\Gal_{F_{\widetilde{v}}}}$ is regular crystalline of distinct Hodge-Tate weights for all $v|p$.
\end{enumerate} Note by the results in \cite{BHS2}, assuming (2) and Hypothesis \ref{hypo: cclg-TW}, the condition (1) can be replaced by $J_{B_p}\big(\widehat{S}(U^p,E)^{\an}[\fm_{\rho}]\big)\neq 0$. Let $\ul{\lambda}_{\Sigma_p}:=(\lambda_{1,\sigma},\lambda_{2,\sigma})_{\sigma \in \Sigma_p}\in \Z^{2|\Sigma_p|}$ be such that $\HT(\rho_{\widetilde{v}})=-\ul{\lambda}_{\Sigma_{\widetilde{v}}}^{\natural}=-(\lambda_{1,\sigma},\lambda_{2,\sigma}-1)_{\sigma\in \Sigma_{\widetilde{v}}}$ for $v|p$; let $\alpha_{\widetilde{v},1}$, $\alpha_{\widetilde{v},2}$ be the two eigenvalues of the crystalline Frobenius $\varphi^{[F_{\widetilde{v},0}:\Q_p]}$ on $D_{\cris}(\rho_{\widetilde{v}})$ for $v|p$ (note $\alpha_{\widetilde{v},1}\alpha_{\widetilde{v},2}^{-1}\neq 1, p^{\pm [F_{\widetilde{v},0}:\Q_p]}$ since $\rho_{\widetilde{v}}$ is regular). For $s=(s_{\widetilde{v}})_{v|p}\in \cS_2^{|S_p|}$ (which is in fact the Weyl group of $G_p$), put $\psi_s:=\otimes_{v|p} \unr_{\widetilde{v}}(q_{\widetilde{v}}^{-1}\alpha_{\widetilde{v},s_{\widetilde{v}}^{-1}(1)})\otimes \unr_{\widetilde{v}}(\alpha_{\widetilde{v},s_{\widetilde{v}}^{-1}(2)})$, and $\delta_s:=\psi_s \delta_{\ul{\lambda}_{\Sigma_p}}$,
which is a locally algebraic character of $T_p$. Since $\ul{\lambda}_{\Sigma_p}$ is dominant, $I(\delta_s\delta_{B_p}^{-1})$ is locally algebraic. Moreover, for $s,s'\in \cS_2^{|S_p|}$, we have $I(\delta_s\delta_{B_p}^{-1})\cong I(\delta_{s'}\delta_{B_p}^{-1})=:I_0(\rho_p)$. By the classical local Langlands correspondence, there exists an injection of locally analytic representations of $G_p$:
\begin{equation*}
  I_0(\rho_p) \hooklongrightarrow \widehat{S}(U^p,E)_{\overline{\rho}}^{\an}[\fm_{\rho}].
\end{equation*}
Such an  injection gives (by applying Jacquet-Emerton functor) $|\cS_2^{|S_p|}|$-classical points $\{z_s=(\fm_{\rho},\delta_s)\}_{s\in \cS_2^{|S_p|}}$ in $\cE(U^p)_{\overline{\rho}}$. For $v|p$, let $\Sigma(z_s)_{\widetilde{v}}\subseteq \Sigma_{\widetilde{v}}$ be such that $((\delta_{s,\widetilde{v}})^c_{\Sigma(z_s)_{\widetilde{v}}})^{\natural}$ (cf. (\ref{equ: cclg-add}), (\ref{equ: cclg-cJc})) is a trianguline parameter of $\rho_{\widetilde{v}}$, and put $\Sigma(z_s):=\cup_{v|p} \Sigma(z_s)_{\widetilde{v}}$.
\begin{conjecture}[Breuil]\label{conj: cclg-las2}Keep the situation, and let $\chi$ be a continuous character of $T_p$ over $E$. Then $I(\chi)\hookrightarrow \widehat{S}(U^p,E)^{\an}_{\overline{\rho}}[\fm_{\rho}]$ if and only if there exist $s\in  \cS_2^{|S_p|}$ and $J\subseteq \Sigma(z_s)_{\widetilde{v}}$ such that $\chi=(\delta_s)_J^c\delta_{B_p}^{-1}$.
\end{conjecture}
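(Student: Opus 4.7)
The plan is to translate Conjecture \ref{conj: cclg-las2} into a geometric statement on the patched eigenvariety $X_p(\overline{\rho})$ and its strata $X_p(\overline{\rho}, \ul{\lambda}_J)$, then argue by induction on $|J|$. Using the $G_p$-equivariant isomorphism $\Pi_\infty[\fa] \cong \widehat{S}(U^p,E)_{\overline{\rho}}$ from patching together with Breuil's adjunction formula (Proposition \ref{prop: cclg-pve}), the embedding $I((\delta_s)_J^c \delta_{B_p}^{-1}) \hooklongrightarrow \widehat{S}(U^p, E)^{\an}_{\overline{\rho}}[\fm_\rho]$ is equivalent to the \emph{companion point} $x^c_{s,J} := (\fm_\rho, (\delta_s)_J^c)$ lying on $X_p(\overline{\rho}, \ul{\lambda}_{\Sigma_p \setminus J})$. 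The conjecture thereby reduces to showing that $x^c_{s,J} \in X_p(\overline{\rho}, \ul{\lambda}_{\Sigma_p \setminus J})$ if and only if $J \subseteq \Sigma(z_s)$.

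The ``only if'' direction follows from Theorem \ref{thm: cclg-pen} via the global triangulation theory of \cite{KPX, Liu} and Shah's partially de Rham families \cite{Sha}, which force the trianguline parameter of $\rho_{\widetilde{v}}$ at places in $J$ to agree with the refinement of $\rho_p$ associated to $s$. For the ``if'' direction, I induct on $|J|$: the base case $J = \emptyset$ is the classical point $z_s$ supplied by Proposition \ref{prop: gln-stm} and classical local Langlands. Assuming $x^c_{s,J} \in X_p(\overline{\rho}, \ul{\lambda}_{\Sigma_p \setminus J})$ and picking $\sigma \in \Sigma(z_s) \setminus J$, I want $x^c_{s, J \cup \{\sigma\}} \in X_p(\overline{\rho}, \ul{\lambda}_{\Sigma_p \setminus (J \cup \{\sigma\})})$. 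The crucial input is the infinitesimal ``$R{=}T$'' comparison (Theorem \ref{thm: cclg-pox} and Corollary \ref{equ: cclg-oms}) which, under the genericity of $\rho_{\widetilde{v}}$ for $v \in S \setminus S_p$, identifies the complete local ring of $X_p(\overline{\rho}, \ul{\lambda}_{\Sigma_p \setminus J}, \Sigma_p \setminus (J \cup \{\sigma\}))$ at $x^c_{s,J}$ with that of a smooth factor times the partially de Rham trianguline variety $X^{\square}_{\tri, \Sigma_p \setminus (J \cup \{\sigma\})-\dR}(\overline{\rho}_p, \ul{\lambda}^{\natural}_{\Sigma_p \setminus J})$. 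Corollary \ref{cor: cclg-xts}, with $\sigma \in \Sigma(z_s)$, then makes this local ring strictly larger than that of $X_p(\overline{\rho}, \ul{\lambda}_{\Sigma_p \setminus J})$ at $x^c_{s,J}$, producing a genuine infinitesimal deformation of $x^c_{s,J}$ inside $X_p(\overline{\rho}, \ul{\lambda}_{\Sigma_p \setminus J}, \Sigma_p \setminus (J \cup \{\sigma\}))$ which is not tangent to $X_p(\overline{\rho}, \ul{\lambda}_{\Sigma_p \setminus J})$.

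The main obstacle, corresponding to the implication (b)$\Rightarrow$(c) of the introduction's theorem, is promoting this infinitesimal direction to an honest closed point $x^c_{s, J \cup \{\sigma\}}$ on $X_p(\overline{\rho}, \ul{\lambda}_{\Sigma_p \setminus (J \cup \{\sigma\})})$. I expect the argument to run through the sheaf $\cM_\infty$: combining its Cohen-Macaulay structure (Corollary \ref{cor: cclg-o0e}(2)) with the reducedness of $X_p(\overline{\rho}, \ul{\lambda}_{\Sigma_p \setminus (J \cup \{\sigma\})})$ at the abundant spherical very regular points (Theorem \ref{thm: cclg-red}), the extra tangent direction produces a non-zero vector of generalised weight $(\delta_s)_J^c$ in the $\fm_\rho$-localisation of $J_{B_p}\big(\Pi_\infty^{R_\infty-\an}(\ul{\lambda}_{\Sigma_p \setminus (J \cup \{\sigma\})})\big)$. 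Breuil's adjunction formula (Proposition \ref{prop: cclg-pve}) then converts this into a non-zero $G_p$-morphism from a suitable Verma-type Orlik-Strauch representation into $\Pi_\infty^{R_\infty-\an}[\fm_\rho]$, and a Jordan-H\"older analysis combined with the unitarity of $\Pi_\infty$ (ruling out constituents without a $G_p$-invariant lattice, in the spirit of Lemma \ref{equ: cclg-dtX}) forces an honest embedding of $I((\delta_s)_{J \cup \{\sigma\}}^c \delta_{B_p}^{-1})$, which gives the desired companion point. The delicate point is precisely this commutative-algebra passage from the Cohen-Macaulay module structure and the infinitesimal tangent direction to a genuine $(\delta_s)_{J \cup \{\sigma\}}^c$-eigenvector in $J_{B_p}$.
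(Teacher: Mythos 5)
Your skeleton is the paper's own (patching plus Prop.\ref{prop: cclg-pve} to translate the socle statement into companion points on $X_p(\overline{\rho},\ul{\lambda}_{\Sigma_p\setminus J})$, induction on $|J|$, the ``only if'' from global triangulation, and Cor.\ref{cor: cclg-xts} as the Galois-theoretic input), but the step you yourself flag as delicate is exactly the content of Thm.\ref{thm: cclg-lrt}, and what you sketch for it does not go through as stated. The passage from ``$\widehat{\co}_{X_p(\overline{\rho},\ul{\lambda}_{\Sigma_p\setminus J},\Sigma_p\setminus(J\cup\{\sigma\})),x}$ is strictly bigger than $\widehat{\co}_{X_p(\overline{\rho},\ul{\lambda}_{\Sigma_p\setminus J}),x}$'' to ``there is an extra vector of generalized character $(\delta_s)_J^c$ in $J_{B_p}\big(\Pi_{\infty}^{R_{\infty}-\an}(\ul{\lambda}_{\Sigma_p\setminus(J\cup\{\sigma\})})\big)$'' is not formal: a larger completed local ring of the support says nothing by itself about the fibre of the coherent sheaf. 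The paper's proof gets it by a multiplicity count: Cohen--Macaulayness of $\cM_{\infty}$ (Cor.\ref{cor: cclg-o0e} (2)) together with smoothness of the stratum at $x$ (Thm.\ref{thm: cclg-pox}) makes $\cM_{\infty}$ locally free over the eigenvariety by \cite[Cor.17.3.5]{EGA1}, the eigenvariety local ring is finite free over the weight-space local ring, and the fibre dimension at $\omega=\omega_{\infty}(x)$ factors as $s=re$; Cor.\ref{cor: cclg-xts} then gives that the rank $e$ over weight space for $X_p(\overline{\rho},\ul{\lambda}_{\Sigma_p\setminus J},\Sigma_p\setminus(J\cup\{\sigma\}))$ strictly exceeds that for $X_p(\overline{\rho},\ul{\lambda}_{\Sigma_p\setminus J})$, while the fibre ranks $r$ satisfy the easy inequality, whence $s_{J'}>s_J$ and the desired extra generalized eigenvector. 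This $s=re$ bookkeeping (or a substitute for it) is the missing commutative-algebra step; ``the extra tangent direction produces a non-zero vector'' is precisely what has to be proved.

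Second, your proposed representation-theoretic endgame via unitarity and nonexistence of invariant lattices ``in the spirit of Lem.\ref{equ: cclg-dtX}'' is aimed the wrong way. Lattice obstructions are what rule companion constituents \emph{out} (classicality at non-critical slope); they cannot force the new constituent \emph{in}, and at the critical point in question the constituent you would have to exclude, namely the quotient $V_0=\cF_{\overline{B}_p}^{G_p}\big(L(-\wt(\delta)),\pi_{\psi_{\delta}}\otimes_E\delta_{B_p}^{-1}\big)$ with $\delta=(\delta_s)_J^c$, has all its constituents isomorphic to $I((\delta_s)_J^c\delta_{B_p}^{-1})$, which certainly admit invariant lattices (they embed in the unitary $\Pi_{\infty}$ by your induction hypothesis). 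What actually excludes factoring through $V_0$ is the defining property of the extra vector $v$: if the map given by the adjunction of Prop.\ref{prop: cclg-pve} factored through $V_0$, its image would lie in $\Pi_{\infty}^{R_{\infty}-\an}(\ul{\lambda}_{\Sigma_p\setminus J})$, and applying $J_{B_p}$ would place $v$ back in the smaller Jacquet module, contradicting its choice; hence the map is nonzero on the subobject $V_{\sigma}$, a successive extension of copies of $I((\delta_s)_{J\cup\{\sigma\}}^c\delta_{B_p}^{-1})$, which yields a nonzero $\Hom$ into the generalized eigenspace, and finite-dimensionality (essential admissibility over $S_{\infty}$) lets one pass from $\{\fp_y\}$ to $[\fp_y]$ and hence to the honest companion point. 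With these two repairs your argument becomes the paper's proof of Thm.\ref{thm: cclg-lrt} and its corollary.
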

This conjecture is in fact equivalent to the following conjecture on companion points on the eigenvariety:
\begin{conjecture}\label{conj: cclg-comp}
  (1) Let $\chi: T_p\ra E^{\times}$, $(\fm_{\rho}, \chi)\in \cE(U^p)_{\overline{\rho}}$ if and only if there exist $s\in  \cS_2^{|S_p|}$ and $J\subseteq \Sigma(z_s)_{\widetilde{v}}$ such that $\chi=(\delta_s)_J^c$.

  (2) For $s\in \cS_2^{|S_p|}$ and $J\subseteq \Sigma(z_s)$, the point $(z_s)_J^c:=(\fm_{\rho},(\delta_s)_J^c)$ lies moreover in $\cE(U^p,\ul{\lambda}_{\Sigma_p\setminus J})_{\overline{\rho}}$.
\end{conjecture}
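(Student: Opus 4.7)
The plan is to reduce both conjectures to an assertion about the patched eigenvariety $X_p(\overline{\rho})$, using the Taylor--Wiles--Kisin patched representation $\Pi_\infty$ and the isomorphism $\Pi_\infty[\fa] \cong \widehat{S}(U^p,E)_{\overline{\rho}}$; under this reduction, the point $(\fm_{\rho}, (\delta_s)_J^c) \in \cE(U^p, \ul{\lambda}_{\Sigma_p \setminus J})_{\overline{\rho}}$ translates (by the adjunction formula Prop.\ref{prop: cclg-pve}) to the statement $I((\delta_s)_J^c\delta_{B_p}^{-1}) \hookrightarrow \Pi_\infty^{R_\infty-\an}(\ul{\lambda}_{\Sigma_p\setminus J})[\fm_{\rho}]$, which in turn is equivalent to $x_{s,J}^c := (\fm_{\rho}, (\delta_s)_J^c) \in X_p(\overline{\rho}, \ul{\lambda}_{\Sigma_p \setminus J})$ (with $\fm_{\rho}$ viewed as a maximal ideal of $R_\infty[1/p]$ via $R_\infty \twoheadrightarrow R_{\overline{\rho},\cS}$). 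The ``only if'' direction of (1) follows from global triangulation theory applied to $\cE(U^p)_{\overline{\rho}}$ (cf.\ Thm.\ref{thm: cclg-pen}): if $(\fm_{\rho},\chi) \in \cE(U^p)_{\overline{\rho}}$, then each $\rho_{\widetilde{v}}$ must be trianguline with parameter $\chi_{\widetilde{v}}^\natural$, and since $\rho_{\widetilde{v}}$ is crystalline with distinct Hodge--Tate weights and $\alpha_{\widetilde{v},1}\alpha_{\widetilde{v},2}^{-1} \neq 1, q_{\widetilde{v}}^{\pm 1}$, the possible trianguline parameters are precisely $(\delta_s)_J^c$ for some $s \in \cS_2^{|S_p|}$ and $J \subseteq \Sigma(z_s)$.

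For the ``if'' direction, equivalently part (2), I would induct on $|J|$. The base case $J = \emptyset$ is immediate: classical local Langlands gives $I_0(\rho_p) \hookrightarrow \widehat{S}(U^p,E)_{\overline{\rho}}[\fm_{\rho}]$, so $z_s = (\fm_{\rho},\delta_s) \in \cE(U^p, \ul{\lambda}_{\Sigma_p})_{\overline{\rho}}$ for all $s$. For the inductive step, suppose $x_{s,J}^c \in X_p(\overline{\rho}, \ul{\lambda}_{\Sigma_p\setminus J})$ and fix $\sigma \in \Sigma(z_s) \setminus J$; I claim $x_{s,J\cup\{\sigma\}}^c \in X_p(\overline{\rho}, \ul{\lambda}_{\Sigma_p\setminus(J\cup\{\sigma\})})$. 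First, by the classicality results (Prop.\ref{prop: cclg-dtf}) and the density of spherical very regular classical points (Thm.\ref{thm: cclg-try}), $x_{s,J}^c$ lies on an irreducible component meeting the locus where Thm.\ref{thm: cclg-pox} applies, so the smoothness and local structure results are available. Applying Cor.\ref{cor: cclg-xts} with the roles $J \leadsto \Sigma_p\setminus J$ and $J' \leadsto \Sigma_p\setminus(J\cup\{\sigma\})$, and noting that $((\Sigma_p\setminus J)\setminus(\Sigma_p\setminus(J\cup\{\sigma\}))) \cap \Sigma(x_{s,J}^c) = \{\sigma\} \cap \Sigma(z_s) \neq \emptyset$, I deduce that the natural surjection
\begin{equation*}
  \widehat{\co}_{X_p(\overline{\rho},\ul{\lambda}_{\Sigma_p\setminus J}, \Sigma_p\setminus(J\cup\{\sigma\})), x_{s,J}^c} \twoheadlongrightarrow \widehat{\co}_{X_p(\overline{\rho},\ul{\lambda}_{\Sigma_p\setminus J}), x_{s,J}^c}
\end{equation*}
is \emph{not} an isomorphism.

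The main obstacle is extracting from this failure of isomorphism the existence of the companion point $x_{s,J\cup\{\sigma\}}^c$ itself — this is the implication (b) $\Rightarrow$ (c) of the introduction. The strategy is as follows: the non-triviality of the kernel of the above surjection produces a tangent vector $t \in T_{X_p(\overline{\rho},\ul{\lambda}_{\Sigma_p\setminus J},\Sigma_p\setminus(J\cup\{\sigma\})), x_{s,J}^c}$ not coming from $T_{X_p(\overline{\rho},\ul{\lambda}_{\Sigma_p\setminus J}), x_{s,J}^c}$, which by Prop.\ref{prop: cclg-Fen} and the explicit description of $\cM_\infty(\ul{\lambda}_{\Sigma_p\setminus(J\cup\{\sigma\})})$ corresponds to a first-order deformation of the embedding $I((\delta_s)_J^c\delta_{B_p}^{-1}) \hookrightarrow \Pi_\infty^{R_\infty-\an}(\ul{\lambda}_{\Sigma_p\setminus(J\cup\{\sigma\})})[\fm_{\rho}]$ that is \emph{not} induced from a deformation inside $\Pi_\infty^{R_\infty-\an}(\ul{\lambda}_{\Sigma_p\setminus J})[\fm_{\rho}]$. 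Unpacking this deformation via Breuil's adjunction formula (Prop.\ref{prop: cclg-pve}) gives a $G_p$-equivariant map from $\cF_{\overline{B}_p}^{G_p}(M_{\Sigma_p\setminus(J\cup\{\sigma\})}(-\wt((\delta_s)_J^c))^\vee, \psi_{(\delta_s)_J^c}\delta_{B_p}^{-1})$ into $\Pi_\infty^{R_\infty-\an}[\fm_{\rho}]$ whose ``new'' Jordan--Hölder constituent (appearing in the $\sigma$-direction and absent from the $\Sigma_p\setminus J$-version) is exactly $I((\delta_s)_{J\cup\{\sigma\}}^c\delta_{B_p}^{-1})$; the unitarity of $\Pi_\infty$ rules out other constituents having invariant lattices, forcing the embedding to pass to this constituent. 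The delicate point is ensuring that the ``new'' deformation genuinely produces a map not factoring through $I((\delta_s)_J^c\delta_{B_p}^{-1})$ on the nose, which requires a careful commutative-algebra argument comparing generalized eigenspaces in $J_{B_p}(\Pi_\infty^{R_\infty-\an}(\ul{\lambda}_{\Sigma_p\setminus(J\cup\{\sigma\})}))$ against the finite projective structure of $\cM_\infty$ given by Prop.\ref{prop: cclg-Fen}.
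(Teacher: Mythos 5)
Your architecture is the same as the paper's: reduce both conjectures to statements about $X_p(\overline{\rho},\ul{\lambda}_{\Sigma_p\setminus J})$ via $\Pi_\infty[\fa]\cong\widehat{S}(U^p,E)_{\overline{\rho}}$ and the adjunction formula (this is Lem.\ref{lem: cclg-joa} plus the discussion in \S\ref{sec: cclg-4.1}), get the ``only if'' from global triangulation, induct on $|J|$, and feed in Cor.\ref{cor: cclg-xts} as the Galois-theoretic input. The genuine gap is exactly the step you flag as ``the main obstacle'': passing from the failure of the isomorphism $\widehat{\co}_{X_p(\overline{\rho},\ul{\lambda}_{\Sigma_p\setminus J},\Sigma_p\setminus(J\cup\{\sigma\})),x}\twoheadrightarrow\widehat{\co}_{X_p(\overline{\rho},\ul{\lambda}_{\Sigma_p\setminus J}),x}$ to the existence of the companion point. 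This is the entire content of Thm.\ref{thm: cclg-lrt}, and the mechanism you sketch for it does not work as stated. A tangent vector of the support of $\cM_\infty$ at $x^c_{s,J}$ is just a map from $\Spec k(x)[\epsilon]/\epsilon^2$ into the eigenvariety; it does not by itself produce a ``first-order deformation of the embedding'' $I((\delta_s)_J^c\delta_{B_p}^{-1})\hookrightarrow\Pi_\infty^{R_\infty-\an}(\ul{\lambda}_{\Sigma_p\setminus(J\cup\{\sigma\})})[\fm_\rho]$, nor an eigenvector over the dual numbers. What the paper does instead is a multiplicity count: Cohen--Macaulayness of $\cM_\infty$ (Cor.\ref{cor: cclg-o0e}(2)) together with smoothness at $x$ (Thm.\ref{thm: cclg-pox}) makes $\cM_\infty$ and the relevant completed local rings finite free over $\widehat{\co}_{\cW_\infty(\ul{\lambda}),\omega_\infty(x)}$; Cor.\ref{cor: cclg-xts} then forces the rank of $\widehat{\co}_{X_p(\overline{\rho},\ul{\lambda}_{\Sigma_p\setminus J},\Sigma_p\setminus(J\cup\{\sigma\})),x}$ to be strictly larger than that of $\widehat{\co}_{X_p(\overline{\rho},\ul{\lambda}_{\Sigma_p\setminus J}),x}$, whence the fibre of $\cM_\infty$ at the lower level is strictly larger, i.e.\ the generalized Jacquet--Emerton eigenspace grows strictly. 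It is this strict inequality of dimensions, not a tangent vector, that supplies a vector $v$ outside the $\ul{\lambda}_{\Sigma_p\setminus J}$-eigenspace.

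Your second mechanism is also off: unitarity of $\Pi_\infty$ cannot ``force the embedding to pass to'' $I((\delta_s)_{J\cup\{\sigma\}}^c\delta_{B_p}^{-1})$, since both Jordan--H\"older constituents of $\cF_{\overline{B}_p}^{G_p}\big(M_{\Sigma_p\setminus(J\cup\{\sigma\})}(-\wt((\delta_s)_J^c))^{\vee},\psi\delta_{B_p}^{-1}\big)$ admit invariant lattices (both eventually embed into $\widehat{S}(U^p,E)[\fm_\rho]$). What actually forces it, in the paper, is that the map $f$ attached to $v$ by Prop.\ref{prop: cclg-pve} cannot factor through the quotient $\cF_{\overline{B}_p}^{G_p}(L(-\wt((\delta_s)_J^c)),\cdot)$ (otherwise $v$ would already lie in the smaller eigenspace), so $f$ is nonzero on the subobject all of whose constituents are $I((\delta_s)_{J\cup\{\sigma\}}^c\delta_{B_p}^{-1})$; finite-dimensionality of the relevant generalized eigenspace then upgrades this to a map into $\Pi_\infty^{R_\infty-\an}(\ul{\lambda}_{\Sigma_p\setminus(J\cup\{\sigma\})})[\fp_y]$. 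Two smaller omissions: to invoke Thm.\ref{thm: cclg-pox}/Cor.\ref{cor: cclg-xts} at each stage you must verify that $\rho_{\widetilde{v}}$ is generic for $v\in S\setminus S_p$ (the paper gets this from the cuspidal strong base change being generic at all finite places; density of classical points is not the relevant hypothesis), and you should record that $\Sigma(x^c_{s,J})=\Sigma(x_s)\setminus J$ so that the induction hypothesis of Thm.\ref{thm: cclg-lrt} is met at the companion points.
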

First note the ``only if" part in (1) of Conjecture \ref{conj: cclg-comp} is an easy consequence of the global triangulation theory: if $(\fm_{\rho}, \chi)\in \cE(U^p)_{\overline{\rho}}$, by global triangulation theory \cite{KPX} \cite{Liu} applied to the point $(\fm_{\rho}, \chi)$, there exists $S\subseteq \Sigma^+(\chi)$  such that $((\chi_S^c)^{\natural})_{\widetilde{v}}$ is a trianguline parameter of $\rho_{\widetilde{v}}$ for all $v|p$. So there exists $s\in \cS_2^{|S_p|}$ such that $\chi_S^c=(\delta_s)^c_{\Sigma(z_s)}$. Moreover, since $S\subseteq \Sigma_p\setminus \Sigma^+(\chi_S^c)$, we have $S\subseteq \Sigma(z_s)$ and hence $\chi=(\delta_s)^c_{\Sigma(z_s)\setminus S}$.

We show the equivalence of the above two conjectures. Assuming Conjecture \ref{conj: cclg-las2}, for $s\in \cS_2^{|S_p|}$ and $J\subseteq \Sigma(z_s)$. By applying Jacquet-Emerton functor to an injection \begin{equation*}I\big((\delta_s)_J^c\delta_{B_p}^{-1}\big)\hooklongrightarrow \widehat{S}(U^p,E)_{\overline{\rho}}^{\an}[\fm_\rho]\end{equation*}
\big(which automatically factors through $\widehat{S}(U^p,E)_{\overline{\rho}}^{\an}(\ul{\lambda}_{\Sigma_p\setminus J})[\fm_{\rho}]$\big), we get the point $$(z_s)_J^c\in \cE(U^p,\ul{\lambda}_{\Sigma_p\setminus J}).$$ Conversely, assuming Conjecture \ref{conj: cclg-comp}, if $I(\chi)\hookrightarrow \widehat{S}(U^p,E)_{\overline{\rho}}[\fm_{\rho}]$,  applying Jacquet-Emerton functor, we get a point $(\fm_\rho,\chi\delta_B)\in \cE(U^p)_{\overline{\rho}}$. Thus the ``only if" part of Conjecture \ref{conj: cclg-las2} follows from the ``only if" part of Conjecture \ref{conj: cclg-comp}. The ``if" part of Conjecture \ref{conj: cclg-las2} follows directly from Conjecture \ref{conj: cclg-comp} (2) and the following bijections
\begin{multline}\label{equ: cclg-spE}\Hom_{G_p}\big(I((\delta_s)_J^c\delta_{B_p}^{-1}),\widehat{S}(U^p,E)_{\overline{\rho}}^{\an}[\fm_{\rho}]\big)\cong\Hom_{G_p}\big( I((\delta_s)_J^c\delta_{B_p}^{-1}), \widehat{S}(U^p,E)_{\overline{\rho}}^{\an}(\ul{\lambda}_{\Sigma_p\setminus J})[\fm_{\rho}]\big)\\
\cong\Hom_{T_p}\big((\delta_s)_J^c, J_{B_p}\big(\widehat{S}(U^p,E)^{\an}(\ul{\lambda}_{\Sigma_p\setminus J})\big)[\fm_{\rho}]\big)\end{multline}
for all $s\in \cS_2^{|S_p|}$ and $J\subseteq \Sigma_p$, where the first bijection is clear and the second follows from Proposition \ref{prop: cclg-pve}.

In the following, we assume Hypothesis \ref{hypo: cclg-TW}, and we prove Conjecture \ref{conj: cclg-las2} (and hence Conjecture \ref{conj: cclg-comp}) under the assumption in the next section. We will work with the patched eigenvariety and show a similar result of Conjecture \ref{conj: cclg-las2} with  $\widehat{S}(U^p,E)_{\overline{\rho}}$ replaced by $\Pi_{\infty}$. Indeed, since $\Pi_{\infty}[\fa]\cong \widehat{S}(U^p,E)_{\overline{\rho}}$, if we denote $\fm_{\rho}$ the maximal ideal of $R_{\infty}\otimes_{\co_E} E$ corresponding to $\rho$ (via $R_{\infty}/\fa \twoheadrightarrow \cR_{\cS,\overline{\rho}}$), then $\Pi_{\infty}[\fm_{\rho}]\cong \widehat{S}(U^p,E)[\fm_{\rho}]$. For $s\in \cS_2^{|S_p|}$, one has a point $x_s=(\fm_{\rho}, \delta_s)\in X_p(\overline{\rho})$, which is moreover classical and lies hence in $X_p(\overline{\rho},\ul{\lambda}_{\Sigma_p})$. By $\Pi_{\infty}[\fm_{\rho}]\cong \widehat{S}(U^p,E)[\fm_{\rho}]$ and the adjunction property in (\ref{equ: cclg-spE}), one has the following easy lemma
\begin{lemma}\label{lem: cclg-joa}
 The point $(x_s)_J^c\in X_p(\overline{\rho},\ul{\lambda}_{\Sigma_p\setminus J})$ if and only if $(z_s)_J^c\in \cE(U^p,\ul{\lambda}_{\Sigma_p\setminus J})$.
\end{lemma}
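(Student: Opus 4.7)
The plan is to rewrite both conditions as nonvanishing of the same $G_p$-representation Hom space via Breuil's adjunction (Prop.\ref{prop: cclg-pve}). By definition, $(z_s)_J^c \in \cE(U^p, \ul{\lambda}_{\Sigma_p \setminus J})_{\overline{\rho}}$ is equivalent to
\begin{equation*}
\Hom_{T_p}\!\big((\delta_s)_J^c,\, J_{B_p}\big(\widehat{S}(U^p, E)^{\an}(\ul{\lambda}_{\Sigma_p \setminus J})_{\overline{\rho}}\big)[\fm_\rho]\big) \neq 0,
\end{equation*}
which by (\ref{equ: cclg-spE}) is the same as $\Hom_{G_p}\!\big(I((\delta_s)_J^c \delta_{B_p}^{-1}),\, \widehat{S}(U^p, E)^{\an}_{\overline{\rho}}[\fm_\rho]\big) \neq 0$. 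Running the parallel adjunction for $\Pi_\infty^{R_\infty-\an}(\ul{\lambda}_{\Sigma_p\setminus J})$ converts $(x_s)_J^c \in X_p(\overline{\rho}, \ul{\lambda}_{\Sigma_p \setminus J})$ into the condition $\Hom_{G_p}\!\big(I((\delta_s)_J^c \delta_{B_p}^{-1}),\, \Pi_\infty^{R_\infty-\an}[\fm_\rho]\big) \neq 0$.

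It thus suffices to produce a $G_p$-equivariant isomorphism $\Pi_\infty^{R_\infty-\an}[\fm_\rho] \cong \widehat{S}(U^p, E)^{\an}_{\overline{\rho}}[\fm_\rho]$. The patching data supplies $\Pi_\infty[\fa] \cong \widehat{S}(U^p, E)_{\overline{\rho}}$, and since $\rho$ is a deformation of $\overline{\rho}$ of type $\cS$, the maximal ideal $\fm_\rho \subset R_\infty[1/p]$ contains $\fa R_\infty[1/p]$ via $R_\infty/\fa \cong R_{\overline{\rho},\cS}$. Hence at the Banach-space level, $\Pi_\infty[\fm_\rho] \cong \widehat{S}(U^p, E)_{\overline{\rho}}[\fm_\rho]$ as $G_p$-representations. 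The one mild technicality is matching locally $R_\infty$-analytic vectors on the patched side with locally $\Q_p$-analytic vectors for $G_p$ on the automorphic side: this holds because the maximality of $\fm_\rho$ in $R_\infty[1/p]$ forces the $R_\infty$-action on any $\fm_\rho$-torsion vector to factor through the finite $E$-algebra $R_\infty[1/p]/\fm_\rho$, and any action of $\Z_p^s$ through a presentation $\co_E\llbracket \Z_p^s \rrbracket \twoheadrightarrow R_\infty$ on a finite-dimensional $E$-space is automatically analytic.

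Combining these identifications yields $\Pi_\infty^{R_\infty-\an}[\fm_\rho] \cong \widehat{S}(U^p, E)^{\an}_{\overline{\rho}}[\fm_\rho]$, and the equivalence of the two Hom conditions, hence of the two membership statements. There is no essential obstacle, only bookkeeping: both usages of the symbol $\fm_\rho$—as a maximal ideal of $R_\infty[1/p]$ on the patched side and as a system of Hecke eigenvalues for $\bT^S$ on the eigenvariety side—refer to the same $\rho$, so the identifications above are compatible with the $\bT^S$-action since the $\bT^S$-action on $\widehat{S}(U^p, E)_{\overline{\rho}}$ factors through the image of $R_{\overline{\rho},\cS}$, which in the patching corresponds to $R_\infty/\fa$.
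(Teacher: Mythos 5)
Your proposal is correct and is essentially the paper's own (very brief) argument: both sides are converted, via Prop.\ref{prop: cclg-pve} / (\ref{equ: cclg-spE}), into nonvanishing of $\Hom_{G_p}\big(I((\delta_s)_J^c\delta_{B_p}^{-1}),\,\cdot\,\big)$, and the two targets are identified using $\Pi_{\infty}[\fa]\cong \widehat{S}(U^p,E)_{\overline{\rho}}$ together with $\fa\subseteq \fm_{\rho}$, giving $\Pi_{\infty}^{R_{\infty}-\an}[\fm_{\rho}]\cong \widehat{S}(U^p,E)^{\an}_{\overline{\rho}}[\fm_{\rho}]$. Your extra remark matching locally $R_{\infty}$-analytic vectors with locally $\Q_p$-analytic vectors on the $\fm_{\rho}$-torsion part is exactly the standard point the paper leaves implicit in calling this an easy lemma.
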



\subsection{Main results}Let $J\subseteq \Sigma_p$, and $\ul{\lambda}_J:=(\lambda_{1,\sigma}, \lambda_{2,\sigma})_{\sigma\in J}\in \Z^{2|J|}$ be dominant.
\begin{theorem}\label{thm: cclg-lrt}
  Let $x=(y,\delta)$ be a spherical\footnote{We put this assumption to apply Theorem \ref{thm: cclg-pox}, but as remarked in the footnote \ref{foot: sphe}, using the recent results of \cite{BHS3}, one can probably weaken this assumption.}, very regular point in $X_p(\overline{\rho},\ul{\lambda}_J)$ with $\Sigma^+(\delta)=J$, and $\rho_{x,\widetilde{v}}$ generic for all $v\in S\setminus S_p$. Suppose $\Sigma(x)\neq \emptyset$ (note $\Sigma(x)\subseteq \Sigma^+(\delta)=J$), then for all $\sigma\in \Sigma(x)$, $x_{\sigma}^c=(y,\delta_{\sigma}^c) \in X_p(\overline{\rho},\ul{\lambda}_{J\setminus \{\sigma\}})$ (note $\Sigma^+(\delta_{\sigma}^c)=J\setminus \{\sigma\}$).
\end{theorem}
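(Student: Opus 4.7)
The plan is to convert the failure of an isomorphism of complete local rings at $x$ (provided by Cor.~\ref{cor: cclg-xts}) into the existence of the companion point, using Breuil's locally analytic adjunction. Fix $\sigma\in\Sigma(x)$ and set $J':=J\setminus\{\sigma\}$. Note that the sought point $x_\sigma^c=(y,\delta_\sigma^c)$ would live in $X_p(\overline{\rho},\ul{\lambda}_{J'})$ but not in $X_p(\overline{\rho},\ul{\lambda}_J)$, since $\wt(\delta_\sigma^c)_\sigma=(\lambda_{2,\sigma}-1,\lambda_{1,\sigma}+1)\neq\ul{\lambda}_\sigma$.

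First I would verify that Thm.~\ref{thm: cclg-pox} applies at $x$ both for $J$ and for $J'$ (since $x\in X_p(\overline{\rho},\ul{\lambda}_J)\subseteq X_p(\overline{\rho},\ul{\lambda}_{J'})$ is spherical very regular with $\rho_{x,\widetilde{v}}$ generic for $v\in S\setminus S_p$); this gives that both $X_p(\overline{\rho},\ul{\lambda}_J)$ and $X_p(\overline{\rho},\ul{\lambda}_{J'})$ are smooth at $x$. The hypothesis $\sigma\in\Sigma(x)$ violates condition (iii) of Cor.~\ref{cor: cclg-xts} for the pair $(J,J')$, so
$$\widehat{\co}_{X_p(\overline{\rho},\ul{\lambda}_J,J'),x}\twoheadlongrightarrow\widehat{\co}_{X_p(\overline{\rho},\ul{\lambda}_J),x}$$
is not an isomorphism. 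Next I would observe that $X_p(\overline{\rho},\ul{\lambda}_J,J')\hookrightarrow X_p(\overline{\rho},\ul{\lambda}_{J'})$ is cut out near $x$ by the pullback of the two equations defining $\widehat{T}_p(\ul{\lambda}_J)\hookrightarrow\widehat{T}_p(\ul{\lambda}_{J'})$ at $\delta$, which by the dimension formula in Cor.~\ref{cor: cclg-o0e} form a regular sequence in the regular local ring $A:=\widehat{\co}_{X_p(\overline{\rho},\ul{\lambda}_{J'}),x}$. Hence $\widehat{\co}_{X_p(\overline{\rho},\ul{\lambda}_J,J'),x}$ is Cohen--Macaulay and equidimensional of dimension equal to $\dim X_p(\overline{\rho},\ul{\lambda}_J)$ at $x$, which together with the non-isomorphism forces the existence of an irreducible component $Y\subseteq X_p(\overline{\rho},\ul{\lambda}_J,J')$ through $x$ with $Y\not\subseteq X_p(\overline{\rho},\ul{\lambda}_J)$.

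The final step is to pass from $Y$ to $x_\sigma^c$ via locally analytic representation theory. The coherent sheaf $\cM_\infty(\ul{\lambda}_{J'})$ is Cohen--Macaulay of full support on $X_p(\overline{\rho},\ul{\lambda}_{J'})$ by Cor.~\ref{cor: cclg-o0e}(2), and the natural surjection $\cM_\infty(\ul{\lambda}_{J'})\twoheadrightarrow\cM_\infty(\ul{\lambda}_J)$ coming from $\Pi_\infty^{R_\infty-\an}(\ul{\lambda}_J)\hookrightarrow\Pi_\infty^{R_\infty-\an}(\ul{\lambda}_{J'})$ has non-trivial kernel along $Y$ (its set-theoretic support contains the closure of $X_p(\overline{\rho},\ul{\lambda}_J,J')\setminus X_p(\overline{\rho},\ul{\lambda}_J)$, hence $x$). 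A careful Nakayama-type extraction, exploiting that the two equations defining $\widehat{T}_p(\ul{\lambda}_J)\subseteq\widehat{T}_p(\ul{\lambda}_{J'})$ are precisely the weight coordinates at $\sigma$ and that the dot action of the simple reflection $s_\sigma$ at $\sigma$ sends $\delta$ to $\delta_\sigma^c$, then produces a non-zero $T_p$-eigenvector in $J_{B_p}(\Pi_\infty^{R_\infty-\an}(\ul{\lambda}_{J'}))[\fp_y,T_p=\delta_\sigma^c]$. Equivalently, via Breuil's adjunction (Prop.~\ref{prop: cclg-pve}), one obtains a non-zero $G_p$-equivariant morphism $I(\delta_\sigma^c\delta_{B_p}^{-1})\to\Pi_\infty^{R_\infty-\an}(\ul{\lambda}_{J'})[\fp_y]$, yielding $x_\sigma^c\in X_p(\overline{\rho},\ul{\lambda}_{J'})$.

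The main obstacle is making the last paragraph rigorous: the stalk $\cM_\infty(\ul{\lambda}_{J'})_x^\wedge$ is naturally a module of $T_p^0$-eigenvectors at $\delta\vert_{T_p^0}$, whereas $\delta_\sigma^c\vert_{T_p^0}\neq\delta\vert_{T_p^0}$, so the extra sections produced by $Y$ do not \emph{a priori} carry the correct $T_p$-eigencharacter. Bridging this gap requires a precise analysis of the composition series of the generalized Verma induction $\cF_{\overline{B}_p}^{G_p}(M_{J'}(-\wt(\delta))^\vee,\psi_\delta\delta_{B_p}^{-1})$ (in the spirit of \cite{Br}), together with a ruling out, via the specific $\sigma$-directional nature of the failure in Cor.~\ref{cor: cclg-xts}, that the extra sections along $Y$ could correspond to some Jordan--Hölder constituent $I(\delta_{J''}^c\delta_{B_p}^{-1})$ with $J''\neq\{\sigma\}$.
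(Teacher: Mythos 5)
Your reduction to Cor.\ref{cor: cclg-xts} is the right starting point, but the geometric step that follows is a genuine gap: the failure of $\widehat{\co}_{X_p(\overline{\rho},\ul{\lambda}_J,J'),x}\twoheadrightarrow\widehat{\co}_{X_p(\overline{\rho},\ul{\lambda}_J),x}$ to be an isomorphism does \emph{not} force an irreducible component $Y$ of $X_p(\overline{\rho},\ul{\lambda}_J,J')$ through $x$ not contained in $X_p(\overline{\rho},\ul{\lambda}_J)$. Your complete-intersection/Cohen--Macaulay observation only gives equidimensionality and absence of embedded primes; it is perfectly consistent with $X_p(\overline{\rho},\ul{\lambda}_J,J')$ being a nilpotent thickening of $X_p(\overline{\rho},\ul{\lambda}_J)$ near $x$ (e.g.\ $\widehat{\co}\cong B[t]/(t^2)$ with $B$ regular, which is a quotient of a regular ring of dimension $d+2$ by a regular sequence of length $2$ and has tangent dimension $d+1$, exactly what Thm.\ref{thm: cclg-dxX}(2) predicts when $J\setminus J'=\{\sigma\}\subseteq\Sigma(x)$). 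The paper's proof is designed precisely for this possibly purely infinitesimal situation: using Prop.\ref{prop: cclg-Fen} it chooses affinoids where $M_J$, $M_{J'}$ are free over the weight algebras $A_J$, $A_{J'}$, uses the Cohen--Macaulayness of $\cM_\infty(\ul{\lambda}_J)$ (Cor.\ref{cor: cclg-o0e}(2)) together with smoothness of $X_p(\overline{\rho},\ul{\lambda}_J)$ and $X_p(\overline{\rho},\ul{\lambda}_{J'})$ at $x$ (Thm.\ref{thm: cclg-pox}) to make $M_J$ free over $B_J$, and then converts Cor.\ref{cor: cclg-xts} into a strict inequality of ranks $e_J<e_{J'}$ of $(B_J)^{\wedge}_{\fm_x}$ and $(B'_J)^{\wedge}_{\fm_x}$ over the common regular base $(A_J)^{\wedge}_{\fm_{\omega}}$; this yields $\dim J_{B_p}(\Pi_\infty^{R_\infty-\an}(\ul{\lambda}_J))[\fm_\omega]\{\fm_x\}<\dim J_{B_p}(\Pi_\infty^{R_\infty-\an}(\ul{\lambda}_{J'}))[\fm_\omega]\{\fm_x\}$ even when no new component exists.

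The second gap is the one you flag yourself as ``the main obstacle'': you never actually produce the eigenvector of character $\delta_\sigma^c$, and your worry about which Jordan--H\"older constituent the ``extra sections'' see is a symptom of not having set up the comparison at a fixed character. The paper's route closes this cleanly: the extra vector $v$ produced by the rank inequality is a \emph{generalized $\delta$-eigenvector} lying in the $J'$-Jacquet module but not the $J$-one; feeding the finite-dimensional $T_p$-module it generates into Breuil's adjunction (Prop.\ref{prop: cclg-pve}) for $\Pi_\infty^{R_\infty-\an}(\ul{\lambda}_{J'})$ gives a map from $\cF_{\overline{B}_p}^{G_p}\big(M_{J'}(-\wt(\delta))^{\vee},\pi_{\psi_\delta}\otimes_E\delta_{B_p}^{-1}\big)$ which cannot factor through the locally algebraic quotient $V_0$ (else $v$ would lie in the $J$-Jacquet module). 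Since $N(\delta)=J$ and $J\setminus J'=\{\sigma\}$, this object has exactly two layers, $V_\sigma$ and $V_0$, and every irreducible constituent of $V_\sigma$ is $I(\delta_\sigma^c\delta_{B_p}^{-1})$ --- so there is nothing to rule out, and no other $J''$ can occur. One then uses essential admissibility (finite-dimensionality of the generalized eigenspace) to pass from $\{\fp_y\}$ to $[\fp_y]$ and obtain $x_\sigma^c\in X_p(\overline{\rho},\ul{\lambda}_{J'})$. So your overall strategy points in the right direction, but both the component extraction and the final eigencharacter extraction need to be replaced by the module-theoretic and Verma-filtration arguments above to yield a proof.
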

\begin{remark}By Proposition \ref{prop: cclg-pve} and the assumption $J=\Sigma^+(\delta)$, 
one sees  $x=(y,\delta)\in X_p(\overline{\rho},\ul{\lambda}_{\Sigma^+(\delta)})$ is equivalent to the existence of an injection of locally $\Q_p$-analytic representations of $G_p$
\begin{equation*}
  I(\delta\delta_{B_p}^{-1}) \hooklongrightarrow \Pi_{\infty}^{R_{\infty}-\an}(\ul{\lambda}_{\Sigma^+(\delta)})[\fm_y].
\end{equation*}
Similarly, $x_{\sigma}^c \in X_p(\overline{\rho}, \ul{\lambda}_{\Sigma^+(\delta)\setminus \{\sigma\}})$ is equivalent to the existence of an injection $$I(\delta_{\sigma}^c\delta_{B_p}^{-1}) \hooklongrightarrow \Pi_{\infty}^{R_{\infty}-\an}(\ul{\lambda}_{\Sigma^+(\delta)\setminus \{\sigma\}})[\fm_y].$$
\end{remark}
\begin{proof}Suppose $\Sigma(x)\neq \emptyset$, and let $\sigma\in \Sigma(x)$, $J':=J\setminus \{\sigma\}$. Consider the following closed rigid subspaces of $X_p(\overline{\rho})$:
  \begin{equation*}
    X_p(\overline{\rho},\ul{\lambda}_{J}) \hooklongrightarrow X_p(\overline{\rho},\ul{\lambda}_J,J') \hooklongrightarrow X_p(\overline{\rho}, \ul{\lambda}_{J'}).
  \end{equation*}
  By Proposition \ref{prop: cclg-Fen}, there exists an open affinoid neighborhood $U_J:=\Spm B_J$ (resp. $U_{J'}:=\Spm B_{J'}$) of $x$ in $X_p(\overline{\rho},\ul{\lambda}_J)$ \big(resp. in $X_p(\overline{\rho},\ul{\lambda}_{J'})$\big), such that
  \begin{itemize}\item $\omega_{\infty}(U_J)$ (resp. $\omega_{\infty}(U_{J'})$) is an affinoid open, denoted by $\Spm A_J$ (resp. $\Spm A_{J'}$) in $\cW_{\infty}(\ul{\lambda}_J)$ \big(resp. $\cW_{\infty}(\ul{\lambda}_{J'})$\big);
  \item $\kappa_{\infty}^{-1}(\{\kappa_{\infty}(x)\})=\{x\}$, i.e. $x$ is the only point in $U_J$ (resp. $U_{J'}$) lying above $\omega:=\omega_{\infty}(x)$;
  \item $\overline{M}_J:=\Gamma(U_J, \cM_{\infty}(\ul{\lambda}_J))$ \big(resp. $M_{J'}:=\Gamma(U_{J'}, \cM_{\infty}(\ul{\lambda}_{J'}))$\big) is a locally free $A_J$ (resp. $A_{J'}$)-module.
  \end{itemize}One has thus (where $\fm_{\omega}$ denotes the maximal ideal of $A_J$ or $A_{J'}$ corresponding to $\omega$, $\fm_x$ the maximal ideal of $B_J$ or $B_{J'}$ corresponding to $x$, recall ``$[\cdot]$" denotes eigenspaces and ``$\{\cdot\}$" denotes generalized eigenspaces)
  \begin{equation*}
    (\overline{M}_J/\fm_{\omega}\overline{M}_J)^{\vee}\xlongrightarrow{\sim}
     J_{B_p}\big(\Pi_{\infty}^{R_{\infty}-\an}(\ul{\lambda}_J)\big)[\fm_{\omega}]\{\fm_x\}
  \end{equation*}
    \begin{equation*}
    \text{\Big(resp. }(M_{J'}/\fm_{\omega}M_{J'})^{\vee}
    \xlongrightarrow{\sim}
     J_{B_p}\big(\Pi_{\infty}^{R_{\infty}-\an}(\ul{\lambda}_{J'})\big)[\fm_{\omega}]\{\fm_x\}\text{\Big)}.
  \end{equation*}

\textbf{Claim:} The natural injection
\begin{equation}\label{equ: cclg-yxx}
  J_{B_p}\big(\Pi_{\infty}^{R_{\infty}-\an}(\ul{\lambda}_J)\big)[\fm_{\omega}]\{\fm_x\}
 \\ \hooklongrightarrow  J_{B_p}\big(\Pi_{\infty}^{R_{\infty}-\an}(\ul{\lambda}_{J'})\big)[\fm_{\omega}]\{\fm_x\}
\end{equation}
is not surjective (note $\sigma\in \Sigma(x)\cap J$).

    Assuming the claim, the theorem then follows by applying Breuil's adjunction formula to the right set of (\ref{equ: cclg-yxx}): Denote by $\fp_y$ (resp. $\fp_y'$) the prime ideal of $R_{\infty}$ (resp. of $S_{\infty}$) corresponding to $y$, thus
    \begin{equation*}
        J_{B_p}\big(W\big)[\fm_{\omega}]\{\fm_x\}=  J_{B_p}\big(W\big)[\fp_y', T_p^0=\delta]\{\fp_y, T_p=\delta\}
    \end{equation*}
    for any locally $R_{\infty}$-analytic closed subrepresentation $W$ of $\Pi_{\infty}^{R_{\infty}-\an}$. By (\ref{equ: cclg-yxx}), there exists \begin{equation*}
      v\in J_{B_p}\big(\Pi_{\infty}^{R_{\infty}-\an}(\ul{\lambda}_{J'})\big)[\fp_y', T_p^0=\delta]\{\fp_y, T_p=\delta\} \setminus J_{B_p}\big(\Pi_{\infty}^{R_{\infty}-\an}(\ul{\lambda}_{J})\big)[\fp_y', T_p^0=\delta]\{\fp_y, T_p=\delta\}.
    \end{equation*}
    Consider the $T_p$-subrepresentation generated by $v$, which has the form $\delta_{\wt(\delta)}\otimes_E \pi_{\psi_{\delta}}$ where $\pi_{\psi_{\delta}}$ is a finite dimensional unramified smooth representation of $T_p$ with Jodan-Holder factors all isomorphic to $\psi_{\delta}=\delta\delta_{\wt(\delta)}^{-1}$.  By Proposition \ref{prop: cclg-pve}, one has (see Appendix \ref{sec: cclg-A.2} for the notation)
    \begin{multline}\label{equ: cclg-MeE}
      \Hom_{G_p}\big(\cF_{\overline{B}_p}^{G_p}\big(\overline{M}_{J'}(-\wt(\delta))^{\vee}, \pi_{\psi_{\delta}}\otimes_E \delta_{B_p}^{-1}\big), \Pi_{\infty}^{R_{\infty}-\an}(\ul{\lambda}_{J'})[\fp_y']\{\fp_y\}\big)\\ \xlongrightarrow{\sim} \Hom_{T_p}\big(\pi_{\psi_{\delta}}\otimes_E \delta_{\wt(\delta)}, J_{B_p}\big(\Pi_{\infty}^{R_{\infty}-\an}(\ul{\lambda}_{J'})[\fp_y']\{\fp_y\}\big)\big).
    \end{multline}
    On the other hand, $\cF_{\overline{B}_p}^{G_p}\big(\overline{M}_{J'}(-\wt(\delta))^{\vee}, \pi_{\psi_{\delta}}\otimes_E \delta_{B_p}^{-1}\big)$ sits in an exact sequence (e.g. see Proposition \ref{prop: cclg-cnee})
    \begin{multline*}
    0 \lra V_{\sigma}:=\cF_{\overline{B}_p}^{G_p}\big(\overline{L}(-s_{\sigma}\cdot \wt(\delta)), \pi_{\psi_{\delta}}\otimes_E \delta_{B_p}^{-1}\big)\lra \cF_{\overline{B}_p}^{G_p}\big(\overline{M}_{J'}(-\wt(\delta))^{\vee}, \pi_{\psi_{\delta}}\otimes_E \delta_{B_p}^{-1}\big)\\  \lra  V_0:=\cF_{\overline{B}_p}^{G_p}\big(\overline{L}(-\wt(\delta)), \pi_{\psi_{\delta}}\otimes_E \delta_{B_p}^{-1}\big)\lra 0.
    \end{multline*}
    Let $f: \cF_{\overline{B}_p}^{G_p}\big(\overline{M}_{J'}(-\wt(\delta))^{\vee}, \pi_{\psi_{\delta}}\otimes_E \delta_{B_p}^{-1}\big) \ra \Pi_{\infty}^{R_{\infty}-\an}(\ul{\lambda}_{J'})[\fp_y']\{\fp_y\}$ be the map in the set of the left hand side of (\ref{equ: cclg-MeE}) corresponding to the injection map induced by $v$ (as an element in the set of the right hand side). We see $f$ does not factor through $V_0$, since otherwise, $\Ima(f)$ would be contained in $\Pi_{\infty}^{R_{\infty}-\an}(\ul{\lambda}_J)[\fp_y']\{\fp_y\}$, and hence $v\in J_{B_p}\big(\Pi_{\infty}^{R_{\infty}-\an}(\ul{\lambda}_{J})[\fp_y']\{\fp_y\}\big)$ by taking Jacquet-Emerton functor, a contradiction. Thus we see
    \begin{equation*}
      \Hom_{G_p}\big(V_{\sigma}, \Pi_{\infty}^{R_{\infty}-\an}(\ul{\lambda}_{J'})\big)[\fp_y']\{\fp_y\}=\Hom_{G_p}\big(V_{\sigma},\Pi_{\infty}^{R_{\infty}-\an}(\ul{\lambda}_{J'})[\fp_y']\{\fp_y\}\big) \neq 0.
    \end{equation*}Since any irreducible constituent of $V_{\sigma}$ is isomorphic to $I(\delta_{\sigma}^c\delta_{B_p}^{-1})$, we deduce
    \begin{equation}\label{equ: cclg-apy}
      \Hom_{G_p}\big(I(\delta_{\sigma}^c\delta_{B_p}^{-1}), \Pi_{\infty}^{R_{\infty}-\an}(\ul{\lambda}_{J'})[\fp_y']\{\fp_y\}\big) \neq 0.
    \end{equation}
    By Proposition \ref{prop: cclg-pve}, the set in (\ref{equ: cclg-apy}) can be identified with $J_{B_p}(\Pi_{\infty}^{R_{\infty}-\an}(\ul{\lambda}_{J'}))[\fp_y', T_p=\delta_{\sigma}^c]\{\fp_y\}$, which is in particular finite dimensional \big(since $J_{B_p}(\Pi_{\infty}^{R_{\infty}-\an}(\ul{\lambda}_{J'}))$ is essentially admissible as a $\Z_p^q \times T_p$-representation, where $S_{\infty}\cong \co_E\llbracket \Z_p^q \rrbracket$\big). One can then deduce from (\ref{equ: cclg-apy}):
    \begin{equation*}\Hom_{G_p}\big(I(\delta_{\sigma}^c\delta_{B_p}^{-1}), \Pi_{\infty}^{R_{\infty}-\an}(\ul{\lambda}_{J'})[\fp_y]\big) \neq 0,\end{equation*}
    the theorem follows.

     We prove the claim. Consider the finite free $(A_J)_{\fm_{\omega}}$-module (resp. $(A_{J'})_{\fm_{\omega}}$-module) $(\overline{M}_J)_{\fm_{\omega}}$ (resp. $(M_{J'})_{\fm_{\omega}}$), and let $s_J:=\rk_{(A_J)_{\fm_{\omega}}}  (\overline{M}_J)_{\fm_{\omega}}$ (resp. $s_{J'}:=\rk_{(A_{J'})_{\fm_{\omega}}}  (M_{J'})_{\fm_{\omega}}$). By the isomorphisms before the claim, it is sufficient to prove $s_J<s_J'$.

     By Corollary \ref{cor: cclg-o0e} (2), $\cM_{\infty}(\ul{\lambda}_J)$ (resp. $\cM_{\infty}(\ul{\lambda}_{J'})$) is Cohen-Macaulay over $X_p(\overline{\rho},\ul{\lambda}_J)$ \big(resp. over $X_p(\overline{\rho},\ul{\lambda}_{J'})$\big). By Theorem \ref{thm: cclg-pox}, $X_p(\overline{\rho},\ul{\lambda}_J)$ (resp. $X_p(\overline{\rho},\ul{\lambda}_{J'})$) is smooth at $x$. Thus by \cite[Cor. 17.3.5 (i)]{EGAiv1}, shrinking $U_J$ (resp. $U_{J'}$), one can assume $M_{J}$ (resp. $M_{J'})$ is locally free as a $B_{J}$-module (resp. as a $B_{J'}$-module). Hence $(\overline{M}_J)_{\fm_{\omega}}=(\overline{M}_J)_{\fm_x}$ \big(resp. $(M_{J'})_{\fm_{\omega}}=(M_{J'})_{\fm_x}$\big) is a free $(B_J)_{\fm_{\omega}}=(B_J)_{\fm_x}$-module  \big(resp. $(B_{J'})_{\fm_{\omega}}=(B_{J'})_{\fm_x}$-module\big), say of rank $r_J$ (resp. $r_{J'}$). Since $(\overline{M}_J)_{\fm_\omega}$ (resp. $(M_{J'})_{\fm_{\omega}}$) is free over $(A_J)_{\fm_{\omega}}$ (resp. $(A_{J'})_{\fm_{\omega}}$), this implies in particular $(B_J)_{\fm_x}$ (resp. $(B_{J'})_{\fm_x}$), as a direct factor of $(\overline{M}_J)_{\fm_{\omega}}$ (resp. of $(M_{J'})_{\fm_{\omega}}$),  is also a free $(A_J)_{\fm_{\omega}}$ (resp. $(A_{J'})_{\fm_{\omega}}$)-module, say of rank $e_J$ (resp. $e_{J'}$). It is straightforward to see $s_J=r_Je_J$, $s_{J'}=r_{J'}e_{J'}$.

We have
    \begin{equation*}
      (\overline{M}_J/\fm_x \overline{M}_J)^{\vee}\xlongrightarrow{\sim}  J_{B_p}\big(\Pi_{\infty}^{R_{\infty}-\an}(\ul{\lambda}_J)[\fm_x]\big)
    \end{equation*}
    \begin{equation*}
      \Big(\text{resp. }(M_{J'}/\fm_x M_{J'})^{\vee}\xlongrightarrow{\sim}  J_{B_p}\big(\Pi_{\infty}^{R_{\infty}-\an}(\ul{\lambda}_{J'})[\fm_x]\big)\Big).
    \end{equation*}
From the natural injection
    \begin{equation*}J_{B_p}\big(\Pi_{\infty}^{R_{\infty}-\an}(\ul{\lambda}_J)[\fm_x]\big)\hooklongrightarrow  J_{B_p}\big(\Pi_{\infty}^{R_{\infty}-\an}(\ul{\lambda}_{J'})[\fm_x]\big),\end{equation*}
  We deduce  $r_J=\dim_{k(x)} \overline{M}_J/\fm_xM_J\leq \dim_{k(x)} M_{J'}/\fm_x M_{J'}= r_{J'}$. It is thus sufficient to prove $e_J<e_{J'}$, which would follow from Corollary \ref{cor: cclg-xts}:

    Let $B_J'$ (resp. $A_J'$) be the quotient of $B_{J'}$  (resp. of $A_{J'}$) with $\Spm B_J'\cong \Spm B_{J'}\times_{\cT_p^0(\ul{\lambda}_{J'})} \cT_p^0(\ul{\lambda}_J)$ \big(resp. $\Spm A_J'\cong \Spm A_{J'}\times_{\cT_p^0(\ul{\lambda}_{J'})} \cT_p^0(\ul{\lambda}_J)$\big), $\overline{M}_J':=M_{J'}\otimes_{B_{J'}} B_J' \cong M_{J'} \otimes_{A_{J'}} A_J'$. Since $B_{J'}$ is locally free over $A_{J'}$, $B_J'$ is locally free over $A_J'$ and in particular, $(B_J')_{\fm_x}$ is a free $(A_J')_{\fm_{\omega}}$-module of rank $e_{J'}$. For a noetherian local $E$-algebra $R$, denote by $R^{\wedge}$ the completion of $R$ at its maximal ideal. One gets in particular complete noetherian local $E$-algebras $(B_J)_{\fm_x}^{\wedge}$, $(B_J')_{\fm_x}^{\wedge}$, $(A_J)_{\fm_{\omega}}^{\wedge}$, $(A_J')_{\fm_{\omega}}^{\wedge}$, which are in fact the complete local algebras of $X_p(\overline{\rho},\ul{\lambda}_J)$ at $x$, $X_p(\overline{\rho},\ul{\lambda}_J, J')$ at $x$, $\cW_{\infty}(\ul{\lambda}_J)$ at ${\omega}$, $\cW_{\infty}(\ul{\lambda}_J)$ at ${\omega}$ respectively. In particular, $(A_J)_{\fm_{\omega}}^{\wedge}\cong (A_J')_{\fm_{\omega}}^{\wedge}$. The natural morphisms $X_p(\overline{\rho},\ul{\lambda}_J) \hookrightarrow X_p(\overline{\rho},\ul{\lambda}_J,J')\ra \cW_{\infty}(\ul{\lambda}_J)$ induce
    \begin{equation*}
      (A_J)_{\fm_{\omega}}^{\wedge} \hooklongrightarrow (B_J')_{\fm_x}^{\wedge} \twoheadlongrightarrow (B_J)_{\fm_x}^{\wedge}.
    \end{equation*}
    By Corollary \ref{cor: cclg-xts}, the last map is \emph{not} bijective \big(since $J\setminus J'=\{\sigma\}\subset \Sigma(x)$\big). Since $(B_J')_{\fm_x}^{\wedge}$ (resp. $(B_J)_{\fm_x}^{\wedge}$) is free of rank $e_{J'}$ (resp. $e_J$) over $(A_J)_{\fm_{\omega}}^{\wedge}$, one gets $e_{J'}>e_J$ and hence $s_{J'}>s_J$, the claim (hence the theorem) follows.
\end{proof}
\begin{corollary}Assume Hypothesis \ref{hypo: cclg-TW}, then
  Conjecture \ref{conj: cclg-comp} (and hence Conjecture \ref{conj: cclg-las2}) is true.
\end{corollary}
\begin{proof}
  With the notation of Conjecture \ref{conj: cclg-comp}, by the discussion following Conjecture \ref{conj: cclg-comp}, it is sufficient to show $(z_s)_J^c:=(\fm_{\rho},(\delta_s)_J^c)\in \cE(U^p,\ul{\lambda}_{\Sigma_p\setminus J})$ for all $s\in \cS_2^{|S_p|}$, $J\subseteq \Sigma(z_s)$.
  We use the notation as in the end of \S~\ref{sec: cclg-4.1}, namely, for each $s\in \cS_2^{|S_p|}$, we have a classical point $x_s=(\fm_{\rho},\delta_s)$ in $X_p(\overline{\rho},\ul{\lambda}_{\Sigma_p})$. Note that $\Sigma(x_s)$ (defined right before  Corollary \ref{cor: cclg-xts}) is no other than the set $\Sigma(z_s)$ defined in \S~\ref{sec: cclg-4.1}. Note also that $\rho_{\widetilde{v}}$ is generic for $v\in S\setminus S_p$. Indeed,  $\rho$ corresponds to an automorphic representation $\pi$ of $G(\bA_{F^+})$ with cuspidal strong base change $\Pi$ to $\GL_2(\bA_{F})$ (e.g. see \cite[Prop. 3.4]{BHS2}) which is generic at all finite places of $F$. Applying Theorem \ref{thm: cclg-lrt} inductively on $|J|$ (starting with $J=\emptyset$), one sees $(x_s)_J^c\in X_p(\overline{\rho},\ul{\lambda}_{\Sigma_p\setminus J})$ for all $J \subseteq \Sigma(x_s)$ \big(note also $\Sigma((x_s)_J^c)=\Sigma(x_s)\setminus J$\big), which together with Lemma \ref{lem: cclg-joa} conclude the proof.
\end{proof}
We can also deduce from Theorem \ref{thm: cclg-lrt} some results on the existence of companion points in trianguline case. Keep Hypothesis \ref{hypo: cclg-TW}.  Let $\rho$ be a continuous representation of $\Gal_F$  such that $\rho\otimes \varepsilon \cong \rho^{\vee}\circ c$,  $\rho$ is unramified outside $S$, and suppose \begin{equation}\label{equ: cclg-fs}J_{B_p}\big(\widehat{S}(U^p,E)_{\overline{\rho}}^{\an}[\fm_{\rho}]\big)\neq 0.\end{equation} The latter is equivalent to that $\rho$ is attached to some  point in $\cE(U^p)_{\overline{\rho}}$ and implies that $\rho_{\widetilde{v}}$ is trianguline for all $v|p$. For $v|p$, let $\chi_{\widetilde{v}}$ be a continuous character of $T_{\widetilde{v}}$ in $E^{\times}$ such that $\chi_{\widetilde{v}}^{\natural}$ is a trianguline parameter of $\rho_{\widetilde{v}}$. We suppose
\begin{itemize}
  \item $\chi_{\widetilde{v}}$ is locally algebraic, very regular, and $\chi_{\widetilde{v}}\chi_{\wt(\chi_{\widetilde{v}})}^{-1}$ is unramified for all $v|p$;
  \item $\wt(\chi_{\widetilde{v}}^{\natural})_{\sigma,1}\neq \wt(\chi_{\widetilde{v}}^{\natural})_{\sigma,2}$ (distinct Hodge-Tate weights condition);
  \item for $v\in S\setminus S_p$, $\rho_{\widetilde{v}}$ is generic.
\end{itemize}
Let $S_p^+(\rho):=\{v\in S_p\ |\ \text{$\rho_{\widetilde{v}}$ is crystalline}\}$, and $\Sigma_p^+(\rho):=\cup_{v\in S_p^+(\rho)}\Sigma_{\widetilde{v}}$.

Let $v\in S_p^+(\rho)$. Let $\ul{\lambda}_{\Sigma_{\widetilde{v}}}:=(\lambda_{1,\sigma},\lambda_{2,\sigma})_{\sigma \in \Sigma_{\widetilde{v}}}\in \Z^{2|\Sigma_{\widetilde{v}}|}$ such that $\HT(\rho_{\widetilde{v}})=-\ul{\lambda}_{\Sigma_{\widetilde{v}}}^{\natural}=-(\lambda_{1,\sigma},\lambda_{2,\sigma}-1)_{\sigma\in \Sigma_{\widetilde{v}}}$. We have thus for $\sigma\in \Sigma_{\widetilde{v}}$,
\begin{equation*}\{\lambda_{1,\sigma}, \lambda_{2,\sigma}-1\}=\{\wt(\chi_{\widetilde{v}})_{\sigma,1}, \wt(\chi_{\widetilde{v}})_{\sigma,2}\}.\end{equation*}
Let $\alpha_{\widetilde{v},1}$, $\alpha_{\widetilde{v},2}$ be the two eigenvalues of the crystalline Frobenius $\varphi^{[F_{\widetilde{v},0}:\Q_p]}$ on $D_{\cris}(\rho_{\widetilde{v}})$. Since $\chi_{\widetilde{v}}$ is very regular, $\alpha_{\widetilde{v},1}\alpha_{\widetilde{v},2}^{-1}\neq 1, q_{\widetilde{v}}^{\pm 1}$. For $w_{\widetilde{v}} \in \cS_2$, put $\psi_{\widetilde{v}, w_{\widetilde{v}}}:= \unr_{\widetilde{v}}(q_{\widetilde{v}}^{-1}\alpha_{\widetilde{v},w_{\widetilde{v}}^{-1}(1)})\otimes \unr_{\widetilde{v}}(\alpha_{\widetilde{v},w_{\widetilde{v}}^{-1}(2)})$, and $\delta_{\widetilde{v}, w_{\widetilde{v}}}:=\psi_{\widetilde{v}, w_{\widetilde{v}}} \delta_{\ul{\lambda}_{\widetilde{v}}}$,
which is a locally algebraic character of $T_{\widetilde{v}}$. For  $w_{\widetilde{v}}\in \cS_2$ there exists $\Sigma(w_{\widetilde{v}}) \subseteq \Sigma_{\widetilde{v}}$ such that $\big((\delta_{w,\widetilde{v}})_{\Sigma(w_{\widetilde{v}})}^c\big)^{\natural}$ is a trianguline parameter of $\rho_{\widetilde{v}}$. 

Let $v\in S_p\setminus S_p^+(\rho)$. Thus $\rho_{\widetilde{v}}$ is trianguline non crystalline. Since $\chi_{\widetilde{v}}$ is very regular, we know $\rho_{\widetilde{v}}$ admits a unique triangulation given by $\chi_{\widetilde{v}}^{\natural}$. Let
 \begin{equation*}\Sigma(\rho_{\widetilde{v}}):=\{\sigma\in \Sigma_{\widetilde{v}}\ |\ \wt(\chi_{\widetilde{v}})_{\sigma,1}<\wt(\chi_{\widetilde{v}})_{\sigma,2}\}.\end{equation*}
  Thus there exists a locally algebraic character $\delta_{\widetilde{v}}$ of $T_{\widetilde{v}}$ with $\wt(\delta_{\widetilde{v}})$ dominant such that $(\delta_{\widetilde{v}})_{\Sigma(\rho_{\widetilde{v}})}^c=\chi_{\widetilde{v}}$.
For $w=(w_{\widetilde{v}})_{v\in S_p^+(\rho)}\in \cS_2^{|S_p^+(\rho)|}$, put
\begin{equation*}
  \delta_w:=(\otimes_{v\in S_p^+(\rho)} \delta_{\widetilde{v}, w_{\widetilde{v}}})\otimes (\otimes_{v\in S_p\setminus S_p^+(\rho)} \delta_{\widetilde{v}}),
\end{equation*}
and $\psi_w:=\delta_w \delta_{\wt(\delta_w)}^{-1}$ (which is an unramified character of $T_p$). The following lemma is an easy consequence of the global triangulation theory (e.g. see the discussion below Conjecture \ref{conj: cclg-comp}).
\begin{lemma}\label{cclg-triptA}
Keep the above situation, if $(\fm_{\rho}, \chi)\in \cE(U^p)_{\overline{\rho}}$, then there exist $w=(w_{\widetilde{v}})_{v\in S_p^+(\rho)}\in \cS_2^{|S_p^+(\rho)|}$, and $J_{\widetilde{v}}\subseteq \Sigma(w_{\widetilde{v}})$ \big(resp. $J_{\widetilde{v}}\subseteq \Sigma(\rho_{\widetilde{v}})$\big) if $v\in S_p^+(\rho)$ (resp. if $v\in S_p\setminus S_p^+(\rho)$) such that $\chi=(\delta_w)_{J}^s$ with $J:=\cup_{v\in S_p} J_{\widetilde{v}}$.
\end{lemma}
By assumption (\ref{equ: cclg-fs}), there exists $(\fm_{\rho}, \chi)\in \cE(U^p)$ for certain $\chi$. Let $\fw \in (\fw_{\widetilde{v}})_{v\in S_p^+(\rho)}\in \cS_2^{|S_p^+(\rho)|}$ be attached to $\chi$ as in the above lemma. By Theorem \ref{thm: cclg-lrt}, we have the following results on the companion points of $(\fm_{\rho}, \chi)$.
\begin{corollary}\label{cor: cclg-net}
We have $(\fm_{\rho},(\delta_{\fw})_J^c)\in \cE(U^p)_{\overline{\rho}}$ for any $J\subseteq \Sigma(\fw):=(\cup_{v\in S_p^+(\rho)} \Sigma(\fw_{\widetilde{v}})) \cup (\cup_{v\in S_p\setminus S_p^+(\rho)} \Sigma(\rho_{\widetilde{v}}))$.
\end{corollary}
\begin{proof}
Let $J\subseteq \Sigma(\fw)$ be such that $(\fm_{\rho}, (\delta_\fw)_J^c)\in \cE(U^p)_{\overline{\rho}}$ (i.e. $\chi=(\delta_\fw)_J^c$, cf. Lemma \ref{cclg-triptA}) . Thus
\begin{equation*}J_{B_p}(\widehat{S}(U^p,E)^{\an}_{\overline{\rho}}[\fm_{\rho}])[T_p=(\delta_\fw)_J^c]\neq 0.\end{equation*}Applying Breuil's adjunction formula, we see
  \begin{equation*}
    \Hom_{G_p}\Big(\cF_{\overline{B}_p}^{G_p}\big(\overline{M}(-\wt(\delta_J^c))^{\vee}, \psi_{\fw}\delta_{B_p}^{-1}\big), \widehat{S}(U^p,E)_{\overline{\rho}}^{\an}[\fm_{\rho}]\Big)=J_{B_p}\big(\widehat{S}(U^p,E)_{\overline{\rho}}^{\an}[\fm_{\rho}]\big)[T_p=(\delta_{\fw})_J^c]\neq 0.
  \end{equation*}Since the irreducible constituents of $\cF_{\overline{B}_p}^{G_p}\big(\overline{M}(-\wt((\delta_{\fw})_J^c))^{\vee}, \psi_{\delta}\delta_{B_p}^{-1}\big)$ are given by $$\{I((\delta_\fw)_{J\cup S}^c\delta_{B_p}^{-1})\}_{S\subseteq \Sigma_p\setminus J},$$ (where the irreducibility follows from the assumption on $\chi_{\widetilde{v}}$), there exists $S\subseteq \Sigma_p\setminus J$ such that
  \begin{equation*}
    I((\delta_\wp)_{J\cup S}^c \delta_{B_p}^{-1}) \hooklongrightarrow  \widehat{S}(U^p,E)_{\overline{\rho}}^{\an}[\fm_{\rho}] \cong \Pi_{\infty}^{R_{\infty}-\an}[\fm_{\rho}]
  \end{equation*}
  where we also use $\fm_{\rho}$ to denote the maximal ideal of $R_{\infty}[1/p]$ associated to $\rho$. We thus get a point $z_{J\cup S}^c$ (resp. $x_{J\cup S}^c$) in $\cE(U^p, \wt(\delta)_{\Sigma_p \setminus (J\cup S)})_{\overline{\rho}}$ \big(resp. in $X_p(\overline{\rho}, \wt(\delta)_{\Sigma_p \setminus (J\cup S)})$\big). By Lemma \ref{cclg-triptA}, we have $J\cup S\subseteq \Sigma(\fw)$. Note also that  the point $x_{J\cup S}^c$ satisfies the conditions in Theorem \ref{thm: cclg-lrt}. Thus by Theorem \ref{thm: cclg-lrt} (inductively), we get $x_{J'}^c\in X_p(\overline{\rho},\wt(\delta)_{\Sigma_p \setminus J'})$ for all $S \cup J \subseteq J' \subseteq \Sigma(\fw)$. Using a similar result as in Lemma \ref{lem: cclg-joa}, one gets $z_{J'}^c=(\fm_{\rho},(\delta_{\fw})_{J'}^c)\in \cE(U^p,\wt(\delta)_{\Sigma_p \setminus J'})_{\overline{\rho}}$ for all $S \cup J \subseteq J' \subseteq \Sigma(\fw)$. The direction that $z_{\Sigma(\fw)}^c\in \cE(U^p)_{\overline{\rho}}\Rightarrow z_{J}^c\in \cE(U^p)_{\overline{\rho}}$ for all $J\subseteq \Sigma(\fw)$ follows from \cite[Prop. 8.1 (ii)]{Br13II} (see \cite[Thm. 5.1]{BHS2} for patched eigenvariety case), which allows to conclude.
\end{proof}At last, we propose a locally analytic socle conjecture in this case\footnote{We will prove this conjecture in the next section, using the theory of \cite{BHS3} (together with Corollary \ref{cor: cclg-net}).}. Denote by $C(\rho):=\cup_{v\in S_p} C(\rho_{\widetilde{v}})$ (cf. \S~\ref{sec: cclg-xdt}). Note that if $C(\rho)_{\widetilde{v}}=\Sigma_{\widetilde{v}}$, then $v\in S_p^+(\rho)$. By \cite[Prop.A.3]{Ding4}, $C(\rho)\supseteq \Sigma_p\setminus \Sigma(w)$ (for all $w$).
\begin{conjecture}\label{conj: cclg-soctri}
  Keep the notation and assumption as above, then $I(\chi\delta_{B_p}^{-1}) \hookrightarrow \widehat{S}(U^p,E)^{\an}_{\overline{\rho}}[\fm_{\rho}]$ if and only if there exists $w=(w_{\widetilde{v}})_{v\in S_p^+(\rho)} \in \cS_2^{|S_p^+(\rho)|}$, and $\Sigma_p\setminus C(\rho) \subseteq J \subseteq \Sigma(w)$ such that $\chi=(\delta_w)_J^c$.
\end{conjecture}
\begin{remark}
 (1) In particular, $\widehat{S}(U^p,E)^{\an}_{\overline{\rho}}(\wt(\delta_w)_{C(\rho)})[\fm_{\rho}]\neq 0$ for all $w\in S_2^{|S_p^+(\rho)|}$. In other words, $\widehat{S}(U^p,E)^{\an}_{\overline{\rho}}[\fm_{\rho}]$ should have non zero $C(\rho)$-classical vectors.

 (2) The ``only if" part is known. Indeed, if $I(\chi\delta_{B_p}^{-1})\hookrightarrow\widehat{S}(U^p,E)^{\an}_{\overline{\rho}}[\fm_{\rho}]$, then $(\fm_{\rho}, \chi)\in \cE(U^p)_{\overline{\rho}}$. By Lemma \ref{cclg-triptA}, there exist $w\in \cS_2^{|S_p^+(\rho)|}$ and $J\subseteq \Sigma(w)$ such that $\chi=\delta_{J}^c$. Moreover, the injection implies also that $(\fm_{\rho}, (\delta_w)_J^c)\in \cE(U^p, (\wt(\delta_w))_{\Sigma_p\setminus J})_{\overline{\rho}}$. By Theorem \ref{thm: cclg-pen}, $\rho_{\widetilde{v}}$ is $\Sigma_{\widetilde{v}}\setminus (J\cap \Sigma_{\widetilde{v}})$-de Rham. Hence $J\supseteq \Sigma_p\setminus C(\rho)$.
\end{remark}
In terms of companion points, we should have
\begin{conjecture}\label{conj: cclg-comptri}
    Keep the notation and assumption as above.

(1) A point $(\fm_{\rho}, \chi)\in \cE(U^p)_{\overline{\rho}}$ if and only if there exists $w\in \cS_2^{|S_p^+(\rho)|}$, and $J \subseteq \Sigma(w)$ such that $\chi=(\delta_w)_J^c$.

(2) For $w\in \cS_2^{|S_p^+(\rho)|}$, if $J\supseteq \Sigma_p\setminus C(\rho)$, then $(\fm_{\rho},(\delta_w)_J^c)$ lies moreover in $\cE(U^p, \wt(\delta_w)_{\Sigma_p\setminus J})_{\overline{\rho}}$.
\end{conjecture}
\begin{remark}
(1) Note that in (1), we do not have $J\supseteq \Sigma_p\setminus C(\rho)$. Actually, as seen in the proof of Corollary \ref{cor: cclg-net}, for $J_1\subseteq J_2$, by \cite[Prop. 8.1 (ii)]{Br13II}, if $(\fm_{\rho}, (\delta_w)_{J_2}^c)\in \cE(U^p)_{\overline{\rho}}$, then $(\fm_{\rho}, (\delta_w)_{J_1}^c)\in \cE(U^p)_{\overline{\rho}}$.

(2) In the case where $S_p^+(\rho)=\emptyset$, (1) is already known by Lemma \ref{cclg-triptA} and Corollary \ref{cor: cclg-net}.
\end{remark}
 By the same argument as in \S~\ref{sec: cclg-4.1}, we have
\begin{lemma}\label{conj: cclg-equivtri}The conjectures \ref{conj: cclg-soctri} and \ref{conj: cclg-comptri} are equivalent.
\end{lemma}

\section{Partial classicality}
In this section, we apply the theory of Breuil-Hellmann-Schraen \cite{BHS3} on the local model of the trianguline variety to the closed subspaces considered in \S~\ref{sec: cclg-2.1}. In particular, we prove Conjecture \ref{conj: cclg-soctri} (hence also Conjecture \ref{conj: cclg-comptri}).
\subsection{Preliminaries}\label{sec: cclg-5.1}Recall some geometric representation theory and we refer to \cite[\S~2]{BHS3} for details. In this section, we let $G:=\GL_2/E$,  $B$ be the Borel subgroup of $G$ of upper triangular matrices, and let $\ug:=\gl_2$, $\ub$ be the Lie algebra (over $E$) of $G$, $B$ respectively. Let $\widetilde{\ug}$ be the $E$-scheme:
\begin{equation*}\widetilde{\ug}:=\{(gB, \psi)\in G/B\times \ug\ |\ \Ad(g^{-1}) \psi \in \ub\}.\end{equation*}
It is known that $\widetilde{\ug}$ is smooth and irreducible. The natural morphism $\widetilde{\ug} \ra \ug$, $(gB, \psi)\mapsto \psi$ is proper and surjective.

Let $X:=\widetilde{\ug}\times_{\ug} \widetilde{\ug}=\{(g_1 B, g_2 B, \psi)\in G/B \times G/B \times \ug\ |\ \Ad(g_1^{-1})\psi \in \ub, \ \Ad(g_2^{-1}) \psi \in \ub\}$. For $w\in \cS_2$ (the Weyl group of $G$), put $U_w:=G(1,\dot{w})B\times B\subset G/B \times G/B$ where $\dot{w}\in N_G(T)$ is some lift of $w$. We have
\begin{equation*}
  G/B \times G/B =\sqcup_{w\in \cS_2} U_w.
\end{equation*}
It is known that $U_w$ is a locally closed subscheme,  and is smooth of dimension $\dim G-\dim B+\lg(w)$. Consider
\begin{equation*}
  \pi: X \hooklongrightarrow G/B \times G/B \times \ug \twoheadlongrightarrow G/B \times G/B.
\end{equation*}
Let $V_w:=\pi^{-1}(U_w)$, and $X_w$ be the Zariski closure of $V_w$ in $X$. It is known that the $E$-schemes $X$, $X_w$ for $w\in \cS_2$ are equidimensional of dimension $\dim G=4$, and are reduced. Moreover, the irreducible components of $X$ are given by $X_w$ for $w\in \cS_2$.

Denote by $\kappa: X \ra \ug$ the natural morphism, and $\overline{X}$ the fiber of $X$ at $0\in \ug$ (which is thus a closed subscheme of $X$). it is easy to see $\overline{X}\cong G/B \times G/B$ (since the fiber of $\widetilde{\ug}$ at $0\in \ug$ is isomorphic to $G/B$). For $w\in \cS_2$, let $\overline{X}_w:=\overline{X}\cap X_w=\overline{X}\times_{X} X_w$, which is thus the fiber of $X_w$ at $0\in \ug$.
\begin{lemma}\label{lem: cclg-grt}
  We have $\overline{X_1}\cong G/B\xrightarrow{\Delta} G/B\times G/B\cong \overline{X}$ (where $\Delta$ denotes the diagonal morphism), and $\overline{X_s}\cong \overline{X}$ (where $1\neq s\in \cS_2$). In particular, $\overline{X_1}$ (resp. $\overline{X_s}$) is smooth of dimension $1$ (resp. $2$).
\end{lemma}
\begin{proof}
  By \cite[Lem.  2.2.4]{BHS3}, $X_1\cong V_1$. By definition $V_1\cong \widetilde{\ug}\xrightarrow{\Delta} \widetilde{\ug}\times_{\ug} \widetilde{\ug}$. Since the fiber of $\widetilde{\ug}$ at $0\in \ug$ is naturally isomorphic to $G/B$. The first part follows. Let $\overline{V_s}$ be the fiber of $V_s$ at $0\in \ug$. Since $U_s$ is Zariski-dense in $G/B\times G/B$, it is not difficult to check $\overline{V_s}$ is Zariski-dense in $\overline{X}$. Since $\overline{X}$ is reduced, we deduce $\overline{X_s}   \xrightarrow{\sim} \overline{X}$.
\end{proof}
\subsection{Local model of the trianguline variety}
Let $J \subseteq \Sigma_L$, $\ul{k}_J=(k_{1,\sigma}, k_{2,\sigma})_{\sigma\in J}\in \Z ^{2|J|}$ with $k_{1,\sigma}> k_{2,\sigma}$ for all $\sigma\in J$. For $\fu=(\fu_{\sigma})_{\sigma\in J}\in \prod_{\sigma\in J} \cS_2$, denote by $\ul{k}_J^{\fu}:=\big(k_{\fu_{\sigma}^{-1}(1),\sigma}, k_{\fu_{\sigma}^{-1}(2), \sigma}\big)_{\sigma\in J}$.
Let $x=(r,\delta)$ be an $E$-point of  $X_{\tri,J-\dR}(\overline{r}_L, \ul{k}_J^{\fu})$ (cf. \S~\ref{sec: cclg-2.1}), and suppose $\delta=\delta_1 \otimes \delta_2$ is locally algebraic, very regular with $\mathrm{wt}(\delta)_{1,\sigma}\neq \mathrm{wt}(\delta)_{2,\sigma}$ for all $\sigma\in \Sigma_L$.
We have $\wt(\delta_i)_{\sigma}=k_{\fu_{\sigma}^{-1}(i), \sigma}$ for all $\sigma\in J$, and
\begin{equation*}
  \Sigma^+(\delta) \cap J=\{\sigma\in J\ |\ \fu_{\sigma} =1\}.
\end{equation*}
Note also  $J\subseteq C(r)$ (cf. \S~\ref{sec: cclg-2.1}) since $x\in X_{\tri,J-\dR}(\overline{r}_L, \ul{k}_J^{\fu})$. 

We recall some results of \cite{BHS3} (where we only consider the $\GL_2$ case and we refer to \emph{loc. cit.} for details and for more general statements), from which, in particular, we deduce Theorem \ref{thm: cclg-locmod}.  Let $\cC_E$ denote the category of local artinian $E$-algebras of residue field isomorphic to $E$.

As in \cite[\S~3.6]{BHS3}, denote by $X_r$ the groupoid over $\cC_{E}$  of deformations of $r: \Gal_{L} \ra \GL_2(E)$, $V$ be the representation of $\Gal_L$ associated to $r$, and $X_V$ be the groupoid over $\cC_E$ of deformations of $V$. Thus $X_r$ is pro-representable by the formal scheme $\Spf R^{\square}_{V}$, where $R^{\square}_V$ denotes the framed universal deformation ring of $V$. There is a natural morphism $X_r\ra X_V$ (forgetting the framing), which is relatively representable, formally smooth of relative dimension $4$.

Let $D:=D_{\rig}(V)$ (which carries the same information as $r$), $X_D$ be the groupoid over $\cC_E$ of deformations of $D$. Thus $X_D\cong X_V$. Let $\cM:=D[1/t]$, which admits a triangulation $\cM_{\bullet}:=(\cM_i)_{i=1,2}$ of $(\varphi,\Gamma)$-modules over $\cR_E[1/t]$ such that $\cM_1\cong \cR_E(\delta_1)[1/t]$, and $\cM_2\cong \cM$, $\cM_2/\cM_1\cong \cR_E(\delta_2)[1/t]$. Let $X_{\cM}$ be the groupoid over $\cC_E$ of deformations of $\cM$, $X_{\cM, \cM_{\bullet}}$ be the groupoid over $\cC_E$ of deformations of $(\cM, \cM_{\bullet})$ (cf. \cite[\S~3.3]{BHS3}). There is a natural morphism (by forgetting the filtration $\cM_{\bullet}$):
\begin{equation*}
  X_{\cM, \cM_{\bullet}} \ra X_{\cM}.
\end{equation*}
There is also a natural morphism (by inverting $t$) $X_D \ra X_{\cM}$, and we put $X_{D,\cM_{\bullet}}:=X_D \times_{X_{\cM}} X_{\cM, \cM_{\bullet}}$, $X_{r,\cM_{\bullet}}:=X_r\times_{X_D} X_{D,\cM_{\bullet}}$.

Recall a little on Fontaine's theory of almost de Rham representations (e.g. see \cite[\S~3.1]{BHS3}). Let $B_{\pdR}^+:= B_{\dR}^+[\log t]$, $B_{\pdR}:=B_{\pdR}^+[1/t]\cong B_{\dR}\otimes_{B_{\dR}^+} B_{\pdR}^+$. The $\Gal_L$-action on $B_{\dR}$ extends uniquely to an action of $\Gal_L$ on $B_{\pdR}$ with $g(\log t)=\log(t)+\log(\chi_{\cyc} (g))$. Let $\nu_{B_{\pdR}}$ denote the unique $B_{\dR}$-derivation  of $B_{\pdR}$ such that $\nu_{B_{\pdR}}(\log(t))=-1$. We have that $\nu_{B_{\pdR}}$ and $\Gal_L$ commute, and both preserve $B_{\pdR}^+$.

Let $W:=B_{\dR}\otimes_{\Q_p} r$,  $W^+:=B_{\dR}^+\otimes_{\Q_p} r\cong W_{\dR}^+(D)$, where $W_{\dR}^+$ denotes the functor from the category of $(\varphi,\Gamma)$-modules over $\cR_E$ to the category of $B_{\dR}^+$-representations (cf. \cite[Prop. 2.2.6 (ii)]{Ber08}). As in \cite[\S~3.3]{BHS3}, one can extend $W_{\dR}^+$ to a functor $W_{\dR}$ from the category of $(\varphi,\Gamma)$-modules over $\cR_E[1/t]$ to the category of $B_{\dR}$-representations. In particular, applying $W_{\dR}$ to $\cM_{\bullet}$, we obtain a filtration of $B_{\dR}$-subrepresentations   of $W\cong W_{\dR}(\cM)$: \begin{equation}\label{equ: cclg-Fil1}
  \cF_{\bullet}=\{\cF_i\}:=\{W_{\dR}(\cM_i)\}.
\end{equation}
 Since $r$ has of distinct Sen weights, we know in particular $r$ is almost de Rham, i.e. $$D_{\pdR}(r):=(r\otimes_{\Q_p} B_{\pdR})^{\Gal_L}\cong (W\otimes_{B_{\dR}} B_{\pdR})^{\Gal_L}=:D_{\pdR}(W)$$ is free of rank $2$ over $L\otimes_{\Q_p} E$. Moreover, $D_{\pdR}(W)$ is equipped with an $L\otimes_{\Q_p} E$-linear (nilpotent) endomorphism $\nu_W$ induced by $\nu_{B_{\pdR}}\otimes 1$ on $B_{\pdR}\otimes_{B_{\dR}} W$. Note that by $L\otimes_{\Q_p} E\cong \prod_{\sigma\in \Sigma_L} E$, we have a natural decomposition $D_{\pdR}(W)\cong \prod_{\sigma\in \Sigma_L} D_{\pdR}(W)_{\sigma}$, where $\dim_{E} D_{\pdR}(W)_{\sigma}=2$, and  $\nu_W=(\nu_{W,\sigma})_{\sigma\in \Sigma_L}$, where $\nu_{W,\sigma}$ is a $E$-linear nilpotent operator on $D_{\pdR}(W)_{\sigma}$. For $\sigma\in \Sigma_L$,   $W$ is $\sigma$-de Rham (i.e. $r$ is $\sigma$-de Rham) if and only if $\nu_{W,\sigma}=0$.   The filtration $\cF_{\bullet}$ induces a complete flag $\cD_{\bullet}=\{\cD_i\}:=\{(\cF_i\otimes_{B_{\dR}} B_{\pdR})^{\Gal_L}\}$ of $D_{\pdR}(W)$. The $B_{\dR}^+$-lattice $W^+$ in $W$ induces another complete flag $\Fil_{W^+,\bullet}$ of $D_{\pdR}(W)$ with
\begin{equation*}
  \Fil_{W^+, i}(D_{\pdR}(W)):=\oplus_{\sigma\in \Sigma_L} \Fil_{W^+}^{k_{i,\sigma}}D_{\pdR}(W)_{\sigma}:=\oplus_{\sigma\in \Sigma_L} (t^{-k_{i,\sigma}} B_{\pdR}^+ \otimes_{B_{\dR}^+} W^+)^{\Gal_L}_{\sigma}.
\end{equation*}
We fix an isomorphism $\alpha: \big(E\otimes_{\Q_p} L\big)^2 \xrightarrow{\sim} D_{\pdR}(W)$. Via the isomorphism $\alpha$,  $\cD_{\bullet}$ (resp. $\Fil_{W^+, \bullet}$) corresponds thus to an $E$-point of the flag variety $\prod_{\sigma\in \Sigma_L} \GL_2/B$, still denoted by $\cD_{\bullet}$ (resp. by $\Fil_{W^+, \bullet}$). We also have an nilpotent element $N_W:=\alpha^{-1} \circ \nu_W \circ \alpha\in \ug_{\Sigma_L}(E)\cong \End_{E\otimes_{\Q_p} L} \big((E\otimes_{\Q_p} L)^2\big)$,\footnote{For a Lie algebra $\fh$ over $L$ and $\sigma\in \Sigma_L$, denote by $\fh_{\sigma}:=\fh\otimes_{L,\sigma} E$. For $J'\subseteq \Sigma_L$, denote by $\fh_{J'}:=\prod_{\sigma\in J'} \fh_{\sigma}$. Note we have $\fh_{\Sigma_L}\cong \fh\otimes_{\Q_p} E$.} which can decompose as $N_W=(N_{W,\sigma})_{\sigma\in \Sigma_L}$. Both the filtrations $\cD_{\bullet}$ and $\Fil_{W^+, \bullet}$ are stable by $\nu_W$. Hence we have
\begin{equation}\label{equ: cclg-point}y:=(y_{\sigma})_{\sigma\in \Sigma_L}:=(\Fil_{W^+, \bullet}, \cD_{\bullet}, N_W) \in \prod_{\sigma\in \Sigma_L} (G/B \times G/B \times \ug_{\sigma})\end{equation}actually lies in $X_{\Sigma_L}:=\prod_{\sigma\in \Sigma_L} X_{\sigma}$ (where $X_{\sigma}$ is isomorphic to the $E$-scheme $X$ in \S~\ref{sec: cclg-5.1}).

Let $X_W$ denote the groupoid over $\cC_E$ of deformations of $W$, and $X_W^{\square}$ be the groupoid over $\cC_E$ as in \cite[\S~3.1]{BHS3} (with respect to the framing $\alpha$\big). By \cite[Cor. 3.1.6]{BHS3}, the functor $D_{\pdR}$ induces an isomorphism of functors between $\iota: |X_W^{\square}|\xrightarrow{\sim} \widehat{\ug}_{{\Sigma_L}, N_W}$, where the latter denotes the completion of $\ug_{\Sigma_L}$ at $N_W$. Recall for $A\in \cC_E$, $|X_W^{\square}|(A)=\{(W_A, \iota_A, \alpha_A)\}/\sim$, where $W_A$ is an $A\otimes_{\Q_p} B_{\dR}$-representation of $\Gal_L$, $\iota_A: W_A\otimes_A E\xrightarrow{\sim} W$ (which implies that $W_A$ is almost de Rham, cf. \cite[Rem. 3.1.5]{BHS3}), and $\alpha_A: (A\otimes_{\Q_p} L)^2\xrightarrow{\sim} D_{\pdR}(W_A)$ is compatible with $\alpha$ and $\iota_A$. The morphism $\iota$ is given by sending $(W_A, \iota_A, \alpha_A)$ to $N_{W_A}:=\alpha_A^{-1} \circ \nu_{W_A} \circ \alpha_A \in \widehat{\ug}_{\Sigma_L, N_W}(A)$.

 Similarly as for $W$, we have $D_{\pdR}(W_A)\cong \prod_{\sigma\in \Sigma_L} D_{\pdR}(W_A)_{\sigma}$, which is equipped with an $L\otimes_{\Q_p} A$-linear nilpotent  operator $N_{W_A}= \prod_{\sigma \in \Sigma_L} N_{W_A, \sigma}$. Moreover, for $\sigma\in \Sigma_L$, $N_{W_A,\sigma}=0$ if and only if $W_A$ is $\sigma$-de Rham\footnote{We call $W_A$ $\sigma$-de Rham if $D_{\dR}(W_A)_{\sigma}$ is free of rank $2$ over $A$, which is equivalent to $\dim_E D_{\dR}(W_A)_{\sigma}=2\dim_E A$ in our case.}. We put $X_{W,J-\dR}$ to be the full subcategory of $X_W$ such that the objects $(A,W_A, \iota_A)$ in $X_{W,J-\dR}$ satisfy moreover that $W_A$ is $J$-de Rham. And we put $X_{W,J-\dR}^{\square}:=X_W^{\square}\times_{X_W} X_{W,J-\dR}$, which is the full subcategory of $X_W^{\square}$ consisting of $J$-de Rham objects. Since the natural morphism $X_W^{\square} \ra X_W$ is formally smooth, so is the induced morphism $X_{W,J-\dR}^{\square} \ra X_{W,J-\dR}$.
 Since $r$ is $J$-de Rham, $N_{W,\sigma}=0$ for $\sigma\in J$, and we put $N_{W}^J:=(N_{W,\sigma})_{\sigma\in \Sigma_L\setminus J}\in \ug_{\Sigma_L\setminus J}$.
 We view $\ug_{\Sigma_L \setminus J}\hookrightarrow \ug_{\Sigma_L}$ as the fiber of $\ug_{\Sigma_L}$ at $0\in \ug_J$, and let $\widehat{\ug}_{\Sigma_L\setminus J, N_{W}^J}$ be the completion of $\ug_{\Sigma_L \setminus J}$ at $N_{W}^J$. Thus
 \begin{equation*}
   \widehat{\ug}_{\Sigma_L \setminus J, N_{W}^{J}} \cong \widehat{\ug}_{\Sigma_L, N_W}\times_{\ug_{\Sigma_L}} \ug_{\Sigma_L \setminus J}.
 \end{equation*}
 By \cite[Cor.  3.1.6]{BHS3} and the above discussion, we have
\begin{lemma}
The isomorphism $\iota$ induces an isomorphism of functors
\begin{equation*}
  \iota: |X_{W, J-\dR}^{\square}| \xlongrightarrow{\sim} \widehat{\ug}_{\Sigma_L \setminus J, N_{W}^{J}},
\end{equation*}
and we have $X_{W,J-\dR}^{\square}\cong X_{W}^{\square} \times_{\widehat{\ug}_{\Sigma_L, N_W}} \widehat{\ug}_{\Sigma_L \setminus J, N_W^J}$.
\end{lemma}
Similarly, we have groupoids $X_{W^+}$, $X_{W^+}^{\square}$ over $\cC_E$ of deformations and framed deformations of $W^+$ respectively (cf. \cite[\S~3.2]{BHS3}). There exists a natural functor $X_{W^+} \ra X_{W}$ (by inverting $t$), and we have $X_{W^+}^{\square}\cong X_{W^+} \times_{X_W} X_W^{\square}$. As in \cite[\S~3.1]{BHS3} (see the discussion below \cite[Def. 3.1.8]{BHS3}), denote by $X_{W,\cF_{\bullet}}$ the groupoid over $\cC_E$ of deformations of $W$ together with the filtration $\cF_{\bullet}$ (cf. (\ref{equ: cclg-Fil1})). Put $X_{W,\cF_{\bullet}}^{\square}:=X_{W}^{\square}\times_{X_W} X_{W, \cF_{\bullet}}$, $X_{W^+, \cF_{\bullet}}:=X_{W^+} \times_{X_W} X_{W, \cF_{\bullet}}$ and $X^{\square}_{W^+, \cF_{\bullet}}:=X_{W^+, \cF_{\bullet}}\times_{X_{W^+}} X_{W^+}^{\square}$. By \cite[Cor.  3.5.8]{BHS3} and the proof, we have
\begin{equation}\label{equ: cclg-locmod}
  \big(|X_{W^+, \cF_{\bullet}}^{\square}|\cong \big) X_{W^+, \cF_{\bullet}}^{\square} \xlongrightarrow{\sim} \widehat{X}_{\Sigma_L, y}
\end{equation}
where the latter denotes the completion of $X_{\Sigma_L}=\prod_{\sigma\in \Sigma_L} X_{\sigma}$ at the point $y$  (cf. (\ref{equ: cclg-point})). Moreover, we have a commutative diagram of natural morphisms
\begin{equation*}
  \begin{CD}   X_{W^+, \cF_{\bullet}}^{\square} @> \sim>> \widehat{X}_{\Sigma_L, y}\\
  @VVV @VVV \\
|X_W^{\square}| @> \sim>> \widehat{\ug}_{\Sigma_L, N_W}
  \end{CD}.
\end{equation*}
For $w=(w_{\sigma})_{\sigma\in \Sigma_L}\in \prod_{\sigma\in \Sigma_L} \cS_2$, denote by $X_{W^+, \cF_{\bullet}}^{\square, w}:=X_{W^+, \cF_{\bullet}}^{\square} \times_{X_{\Sigma_L}} X_{\Sigma_L}^w$ where $X_{\Sigma_L}^w:=\prod_{\sigma\in \Sigma_L} X_{\sigma, w_{\sigma}}$ and $X_{\sigma,w_{\sigma}}$ is isomorphic to $X_{w_{\sigma}}$ of \S~\ref{sec: cclg-5.1} (which is thus an irreducible component of $X_{\sigma}$). 
Denote by $\widehat{X}^w_{\Sigma_L, y}$ the completion of $X_{\Sigma_L}^w$ at the point $y$ (if $y$ does not lie on $X_{\Sigma_L}^w$ then $\widehat{X}^w_{\Sigma_L, y}$ is empty).  By \cite[Thm.  2.3.6]{BHS3}, $X_{\Sigma_L}^w$ is normal, together with the fact that $X_{\Sigma_L}^w$ is irreducible, we deduce $\widehat{X}^w_{\Sigma_L, y}$ is irreducible.  Put
\begin{equation*}
  X_{W^+, \cF_{\bullet}}^{\square, w}:=X_{W^+, \cF_{\bullet}}^{\square}\times_{\widehat{X}_{\Sigma_L, y}} \widehat{X}_{\Sigma_L,y}^w.
\end{equation*}

For any groupoid $Y$ over $X_W$ (in this section), denote by $Y_{J-\dR}:=Y\times_{X_W} X_{W,J-\dR}$. Thus $Y_{J-\dR}$ is the full subcategory of $Y$ consisting of the objects which are sent to $J$-de Rham objects in $X_{W,J-\dR}$ via $Y\ra X_{W}$.
We have an isomorphism
\begin{equation*}X_{\Sigma_L}\times_{\ug_{\Sigma_L}} \ug_{\Sigma_L \setminus J} \cong \prod_{\sigma\in \Sigma_L \setminus J} X_{\sigma}\times \prod_{\sigma\in J} \overline{X_{\sigma}}=:Z_J
 \end{equation*}
 where $\overline{X_{\sigma}}$ is the fiber of $X_{\sigma}$ at  $0\in \ug_{\sigma}$  via the projection $X_{\sigma}\ra \ug_{\sigma}$, and is thus isomorphic to the $\overline{X}$ of \S~\ref{sec: cclg-5.1}. The point $y$ lies in $Z_J$ (since $r$ is $J$-de Rham), and we have $\widehat{X}_{\Sigma_L, y}\times_{\ug_{\Sigma_L}} \ug_{\Sigma_L\setminus J}\cong \widehat{Z}_{J,y}$,  where the latter denotes the completion of $Z_J$ at $y$. Similarly, we have
 \begin{equation}\label{equ cclg-ZJw}
   X_{\Sigma_L}^w\times_{\ug_{\Sigma_L}}  \ug_{\Sigma_L \setminus J} \cong \prod_{\sigma\in \Sigma_L \setminus J} X_{\sigma,w_{\sigma}} \times \prod_{\sigma\in J} \overline{X}_{\sigma, w_{\sigma}}=:Z_J^w
 \end{equation}
where $\overline{X}_{\sigma,w_{\sigma}}$ is isomorphic to the $\overline{X}_{w_{\sigma}}$  of \S~\ref{sec: cclg-5.1}. Let $\widehat{Z}_{J,y}^w$ be the completion of $Z_J^w$ at $y$ (which is empty if $y \notin Z_J^w$). We deduce from (\ref{equ: cclg-locmod}):
\begin{lemma}\label{lem: cclg-LM-lm} The groupoid $ X_{W^+, \cF_{\bullet}, J-\dR}^{\square} $ \big(resp. $  X_{W^+, \cF_{\bullet}, J-\dR}^{\square,w}$\big) is pro-representable by the formal scheme $\widehat{Z}_{J,y}$ (resp. $\widehat{Z}_{J,y}^w$).
\end{lemma}
\begin{proof}
We have (see (\ref{equ: cclg-locmod}) for the second isomorphism)
\begin{equation*}X_{W^+, \cF_{\bullet},J-\dR}^{\square}\cong X_{W^+, \cF_{\bullet}}^{\square}\times_{|X_W^{\square}|} |X_{W, J-\dR}^{\square}|\cong \widehat{X}_{\Sigma_L, y}\times_{\widehat{\ug}_{\Sigma_L, N_W}} \widehat{\ug}_{\Sigma_L \setminus J, N_W^J} \cong \widehat{Z}_{J,y}.
\end{equation*}
The case for $X_{W^+, \cF_{\bullet}, J-\dR}^{\square, w}$ is similar.
\end{proof}
The functor $W_{\dR}^+$ induces a natural morphism of groupoids over $\cC_E$: $X_D \ra X_{W^+}$. Similarly, the functor $W_{\dR}$ induces natural morphisms of groupoids over $\cC_E$: $X_{\cM} \ra X_{W}$, $X_{\cM, \cM_{\bullet}} \ra X_{W, \cF_{\bullet}}$. We deduce then a morphism
\begin{equation}\label{equ: cclg-LM-sm}
  X_{D, \cM_{\bullet}} \cong X_D\times_{X_{\cM}} X_{\cM, \cM_{\bullet}} \lra X_{W^+} \times_{X_W} X_{W, \cF_{\bullet}} \cong X_{W^+, \cF_{\bullet}}.
\end{equation}
Let $X_{D, \cM_{\bullet}}^{\square}:=X_{D, \cM_{\bullet}} \times_{X_W} X_W^{\square}$, and we have a morphism
\begin{equation}\label{equ: cclg-LM-sm2}X_{D, \cM_{\bullet}}^{\square} \lra X_{W^+, \cF_{\bullet}}^{\square}.
 \end{equation}
 By \cite[Cor.  3.5.6]{BHS3}, the morphisms (\ref{equ: cclg-LM-sm}) (\ref{equ: cclg-LM-sm2}) are formally smooth. Denote by $X_{W^+, \cF_{\bullet}}^{w}$ the image of $X_{W^+, \cF_{\bullet}}^{\square, w}$ by the forgetful morphism $X_{W^+,\cF_{\bullet}}^{\square} \ra X_{W^+, \cF_{\bullet}}$. By \cite[(3.26)]{BHS3}, we have actually $$X_{W^+, \cF_{\bullet}}^{\square, w}\cong X_{W^+, \cF_{\bullet}}^w \times_{X_{W^+, \cF_{\bullet}}} X_{W^+, \cF_{\bullet}}^{\square}.$$ Put $X_{D, \cM_{\bullet}}^w:=X_{D, \cM_{\bullet}}\times_{X_{W^+, \cF_{\bullet}}} X_{W^+, \cF_{\bullet}}^w$, $X_{D, \cM_{\bullet}}^{\square, w}:=X_{D, \cM_{\bullet}}^{\square} \times_{X_{W^+, \cF_{\bullet}}^{\square}} X_{W^+, \cF_{\bullet}}^{\square, w}$, which are hence formally smooth over $X_{W^+, \cF_{\bullet}}^w$, $X_{W^+, \cF_{\bullet}}^{\square,w}$ respectively. Let $X_{r, \cM_{\bullet}}^{w}:=X_{r, \cM_{\bullet}}\times_{X_{D, \cM_{\bullet}}} X_{D, \cM_{\bullet}}^{w}$, $X_{r,\cM_{\bullet}}^{\square, w}:=X_{D, \cM_{\bullet}}^{\square, w} \times_{X_D} X_r$. These groupoids are naturally over $X_W$, and we have thus groupoids $X_{D, \cM_{\bullet}, J-\dR}^{\square, w}$, $X_{r, \cM_{\bullet}, J-\dR}^{w}$, and $X_{r,\cM_{\bullet}, J-\dR}^{\square, w}$ etc.

Let $\fw:=\prod_{\sigma \in \Sigma^+(\delta)} s_{\sigma}\in \prod_{\sigma \in \Sigma^+(\delta)} \cS_2 \hookrightarrow \prod_{\sigma\in \Sigma_L} \cS_2$ (where $s_{\sigma}$ denotes the unique non-trivial element). By \cite[Cor.  3.7.8]{BHS3} (see the discussion below \cite[Prop. 3.7.3]{BHS3} and see \cite[(3.33)]{BHS3}), we have
\begin{equation}\label{equ: cclg-LM-key}
  \widehat{X_{\tri}(\overline{r}_L)}_x \xlongrightarrow{\sim} X_{r,\cM_{\bullet}}^{\fw} \longleftarrow X_{r,\cM_{\bullet}}^{\square,\fw} \lra X_{D, \cM_{\bullet}}^{\square, \fw} \lra X_{W^+, \cF_{\bullet}}^{\square, \fw} \cong \widehat{X}_{\Sigma_L, y}^{\fw},
\end{equation}
where all the morphisms are formally smooth.  Denote by $\widehat{X}_{\tri,J-\dR}(\overline{r}_L, \ul{k}_J^{\fu})_x$ the completion of the rigid space $X_{\tri,J-\dR}(\overline{r}_L, \ul{k}_J^{\fu})$ at the point $x$ (which is thus isomorphic to the completion of $X_{\tri,J-\dR}(\overline{r}_L, |\ul{k}_J|)$ at $x$, see (\ref{equ: cclg-JdR0})). By Lemma  \ref{lem: cclg-LM-lm}, we deduce from (\ref{equ: cclg-LM-key})
\begin{theorem} \label{thm: cclg-locmod}
  We have formally smooth morphisms:
  \begin{multline}\label{equ: cclg-locmod1}
  \widehat{X}_{\tri,J-\dR}(\overline{r}_L, \ul{k}_J^{\fu})_x\xlongrightarrow{\sim} X_{r,\cM_{\bullet}, J-\dR}^{\fw} \longleftarrow X_{r,\cM_{\bullet}, J-\dR}^{\square,\fw} \lra X_{D, \cM_{\bullet}, J-\dR}^{\square, \fw} \lra X_{W^+, \cF_{\bullet}, J-\dR}^{\square, \fw}\\ \cong \widehat{Z}_{J, y}^{\fw}.
  \end{multline}
\end{theorem}
\begin{proof}The composition
\begin{equation*}
  \widehat{X_{\tri}(\overline{r}_L)}_x \xlongrightarrow{\sim} X_{r,\cM_{\bullet}}^{\fw} \lra X_W
\end{equation*}
sends $(r_A,\delta_A)$ to $B_{\dR}\otimes_{\Q_p} r_A$ for $A\in \cC_E$. By definition (cf. \S~\ref{sec: cclg-2.1}), for $A\in \cC_E$ and $r_A\in \widehat{X_{\tri}(\overline{r}_L)}_x(A)$, $r_A\in \widehat{X}_{\tri,J-\dR}(\overline{r}_L, \ul{k}_J^{\fu})_x(A)$  if and only if $r_A$ is $J$-de Rham (or equivalently, $B_{\dR}\otimes_{\Q_p} r_A$ is $J$-de Rham). Thus the composition
\begin{equation*}
  \widehat{X}_{\tri,J-\dR}(\overline{r}_L, \ul{k}_J^{\fu})_x \hooklongrightarrow  \widehat{X_{\tri}(\overline{r}_L)}_x \lra X_W
\end{equation*}
factors through $X_{W,J-\dR}$, and induces an isomorphism
\begin{equation*}
  \widehat{X}_{\tri,J-\dR}(\overline{r}_L, \ul{k}_J^{\fu})_x \xlongrightarrow{\sim} \widehat{X_{\tri}(\overline{r}_L)}_x \times_{X_W} X_{W,J-\dR}.
\end{equation*}
The morphisms in (\ref{equ: cclg-locmod1}) then follow from (\ref{equ: cclg-LM-key}) by taking the fiber product $(-)\times_{X_W} X_{W,J-\dR}$.
\end{proof}
\begin{corollary}\label{cor: cclg-pjsd}
Keep the notation and assumption, $X_{\tri, J-\dR}(\overline{r}_L, \ul{k}_J^{\fu})$ is smooth of dimension
  \begin{equation*}4+3d_L-2|J\cap \Sigma^+(\delta)|-3|J\cap \Sigma^-(\delta)|
  \end{equation*}
  at the point $x$.
\end{corollary}
\begin{proof}By Lemma \ref{lem: cclg-grt}, $\dim \widehat{Z}_{J,y}^{\fw}=2|\Sigma^+(\delta)\cap J|+|\Sigma^-(\delta)\cap J|+4|\Sigma_L\setminus J|$. We know the second morphism in (\ref{equ: cclg-locmod1}) is formally smooth of relative dimension $4d_L$, the third morphism is formally smooth of relative dimension $4$, and by \cite[Cor. 3.5.8]{BHS3}, the fourth morphism is of relative dimension $3d_L$. Putting these together, we have
\begin{equation*}
  \dim \widehat{X}_{\tri, J-\dR}(\overline{r}_L, \ul{k}_J^{\fu})_x=4+3d_L-2|J\cap \Sigma^+(\delta)|-3|J\cap \Sigma^-(\delta)|.
\end{equation*}
By the same argument as in the proof of \cite[Cor. 3.6.3]{BHS3}, we have
\begin{equation*}
  \dim_E X_{r,\cM_{\bullet}, J-\dR}^{\fw}(E[\epsilon]/\epsilon^2)=4-d_L+\dim_E T_{Z_J^{\fw},y}.
\end{equation*}
By \cite[Prop. 2.5.3]{BHS3} and Lemma \ref{lem: cclg-grt}, we know $Z_J^{\fw}$ is smooth of dimension $2|\Sigma^+(\delta)\cap J|+|\Sigma^-(\delta)\cap J|+4|\Sigma_L\setminus J|$. Thus
\begin{equation*}
  \dim_E X_{r,\cM_{\bullet}, J-\dR}^{\fw}(E[\epsilon]/\epsilon^2)=\dim \widehat{X}_{\tri, J-\dR}(\overline{r}_L, \ul{k}_J^{\fu})_x.
\end{equation*}
The corollary follows.
\end{proof}
Let $\omega$ denote the natural morphism $X_{\tri}(\overline{r}_L) \ra \cT_L$, which induces a morphism
\begin{equation}\label{equ: cclg-wtm3}
  \omega: X_{\tri, J-\dR}(\overline{r}_L, \ul{k}_J^{\fu}) \lra \cT_L(\ul{k}_J^{\fu}).
\end{equation}
Denote by $\widehat{\ft}_{\Sigma_L}$ (resp. $\widehat{\ft}_{\Sigma_L\setminus J}$) the completion of $\ft_{\Sigma_L}$ (resp. of $\ft_{\Sigma_L\setminus J}$) at $0$, $\widehat{\cT}_{L,\delta}$ (resp. $\widehat{\cT_L}(\ul{k}_J^{\fu})_{\delta}$) the completion of $\cT_L$ (resp. $\cT_L(\ul{k}_J^{\fu})$) at $\delta$. We have natural morphisms $\widehat{\cT}_{L,\delta} \ra \widehat{\ft}_{\Sigma_L}$ and $\widehat{\cT_L}(\ul{k}_J^{\fu})_{\delta} \ra \widehat{\ft}_{\Sigma_L\setminus J}$. Moreover, we have an isomorphism (where $\ft_{\Sigma_L\setminus J}$ is viewed as the fiber  of $\ft_{\Sigma_L}$ at $0\in \ft_{\Sigma_J}$)
\begin{equation*}
  \widehat{\cT}_{L,\delta}\times_{\widehat{\ft}_{\Sigma_L}} \widehat{\ft}_{\Sigma_L\setminus J} \cong \widehat{\cT_L}(\ul{k}_J^{\fu})_{\delta}.
\end{equation*}
Recall (see \cite[(3.3)]{BHS3}) that we have a natural morphism $X_{W, \cF_{\bullet}} \lra \widehat{\ft}_{\Sigma_L}$. It is easy to see this morphism induces $X_{W,\cF_{\bullet},J-\dR} \lra \widehat{\ft}_{\Sigma_L\setminus J}$. Thus we can deduce from \cite[Thm. 3.4.4]{BHS3} a formally smooth morphism
\begin{equation*}
  X_{\cM, \cM_{\bullet},J-\dR} \lra \widehat{\cT}_{L,\delta} \times_{\widehat{\ft}_{\Sigma_L}} X_{W, \cF_{\bullet},J-\dR} \cong \widehat{\cT_L}(\ul{k}_J^{\fu})_{\delta} \times_{\widehat{\ft}_{\Sigma_L\setminus J}} X_{X,\cF_{\bullet}, J-\dR}.
\end{equation*}
\begin{proposition}The rigid space  $X_{\tri, J-\dR}(\overline{r}_L, \ul{k}_J^{\fu})$ is flat over $ \cT_L(\ul{k}_J^{\fu})$ in a neighborhood of $x$.
\end{proposition}
\begin{proof}Recall for $\sigma\in \Sigma_L$, we have a morphism $\kappa_{1,\sigma}: X_{\sigma} \ra \ft_{\sigma}$, $(g_1 B, g_2 B, \psi)\mapsto \overline{\Ad(g_1^{-1}) \psi} \in \ft_{\sigma}$. By \cite[Prop. 2.3.3]{BHS3}, we see that the morphism
\begin{equation*}
  \kappa_{1,\fw}^J: Z_J^{\fw} \lra \prod_{\sigma\in \Sigma_L\setminus J} X_{\sigma, \fw_{\sigma}} \xlongrightarrow{(\kappa_{1,\sigma})_{\sigma\in\Sigma_L\setminus J}} \prod_{\sigma \in \Sigma_L \setminus J} \ft_{\sigma}
\end{equation*}
is flat (and is also flat after completion). The proposition follows then by an easy variation of the proof of \cite[Prop. 4.1.1]{BHS3} (using the discussion that precedes the proposition and Theorem \ref{thm: cclg-locmod}).
\end{proof}
\begin{proposition}\label{prop: cclg-pdacc}Suppose $\fu=1$  (i.e. $J\subseteq \Sigma^+(\delta)$)  and $J\neq \Sigma_L$. Then the set $Z$ of points $z=(r_z, \delta_z)\in X_{\tri,J-\dR}(\overline{r}_L, \ul{k}_J)$ satisfying
  \begin{itemize}\item $\delta_z$ is locally algebraic, very regular and $\Sigma^+(\delta_z)=\Sigma_L$,
    \item $r_z$ is de Rham, non-$\Sigma_L\setminus J$ -critical (i.e. $\Sigma(z)\cap (\Sigma_L \setminus J)=\emptyset$, see \S~\ref{sec: cclg-xdt} for $\Sigma(z)$),
  \end{itemize}  accumulates at $x=(r,\delta)$. If $\delta$ is moreover spherically algebraic, then the points $z=(r_z,\delta_z)\in Z$ with $\delta_z$ spherically algebraic also accumulate at $x$.
\end{proposition}
\begin{proof}By the flatness of $\omega$, there exists an affinoid neighborhood $U$ of $x$ in $X_{\tri,J-\dR}(\overline{r}_L, \ul{k}_J)$ such that $\omega(U)$ is open and isomorphic to a finite union of open affinoids in $\cT_L(\ul{k}_J)$. We have thus
  \begin{equation}\label{equ: cclg-clatri}
    C_1:=\sup_{z\in U} \val_L(\delta_{1,z}(\varpi_L))<+\infty.
  \end{equation}
Let $\cT_L^0$ denote the rigid space over $E$ parameterizing continuous character of $T(\co_L)$, and $\cT_L^0(\ul{k}_J)$ the closed subspace of $\cT_L^0$ of continuous characters $\delta^0$ with $\wt(\delta^0)_{\sigma, i}=k_{i,\sigma}$ for $\sigma\in J$ and $i=1,2$.
We have $\cT_L(\ul{k}_J)\cong \cT_L^0(\ul{k}_J)\times \bG_m^2$, and hence the composition $\omega_0: X_{\tri,J-\dR}(\overline{r}_L,\ul{k}_J) \xrightarrow{\text{(\ref{equ: cclg-wtm3})}} \cT_L(\underline{k}_J)\ra  \cT_L^0(\ul{k}_J)$ is also flat. Shrinking $U$, we assume $\omega_0(U)$ is open in $\cT_L^0(\ul{k}_J)$. Let $C_2>C_1$, $C_2\geq 1$ and let $\cZ$  denote the set of  points $\delta_0\in \omega_0(U)$ (which we view as a character of $T(\co_L)$) satisfying that
\begin{itemize}
  \item $\delta_0$  is locally algebraic,
  \item $\wt(\delta_0)_{1,\sigma}-\wt(\delta_0)_{2,\sigma} \geq C_2$ for all $\sigma\in \Sigma_L\setminus J$ (note for $\sigma\in J$, we have $\wt(\delta_0)_{i,\sigma}=k_{i,\sigma}$, and thus $\wt(\delta_0)_{1,\sigma}>\wt(\delta_0)_{2,\sigma}$).
\end{itemize}
Thus the set $\cZ$ accumulates at $\delta|_{T(\co_L)}\in U$. Shrinking $U$, we can assume $\cZ$ is Zariski-dense in $\omega_0(U)$. Since $\omega_0$ is flat, we deduce that $\omega_0^{-1}(\cZ)$ is Zariski-dense in $U$.

Let $(r_z,\delta_z)\in \omega_0^{-1}(\cZ)$, thus
\begin{equation*}
  \delta_z':=\delta_z \prod_{\sigma\in \Sigma(z)}( \sigma^{\mathrm{wt}(\delta_z)_{2,\sigma}-\mathrm{wt}(\delta_z)_{1,\sigma}} \otimes\sigma^{\wt(\delta_z)_{1,\sigma}-\wt(\delta_z)_{2,\sigma}})
\end{equation*}
is a trianguline parameter of $r_z$. By Kedlaya's slope filtration theory \cite[Thm. 1.7.1]{Ked10}, we have
\begin{equation*}
  \val_L((\delta_z)'_1(\varpi_L))=\val_L(\delta_{z,1}(\varpi_L))+\sum_{\sigma\in \Sigma(z)} (\wt(\delta_z)_{2,\sigma}-\wt(\delta_z)_{1,\sigma}) \geq 0.
\end{equation*}
By our assumption on the points in $\cZ$ and (\ref{equ: cclg-clatri}), we easily deduce $\Sigma(z)\cap (\Sigma_L\setminus J)=\emptyset$. By \cite[Prop. A.3]{Ding4}, $r_z$ is thus  $\Sigma_L\setminus J$-de Rham (hence is de Rham, since $r_z$ is $J$-de Rham \emph{a priori}). Finally,  by the same argument as in the last paragraph of the proof of Theorem \ref{thm: cclg-try} (2), if $C_2$ is big enough, we have that $\delta_z$ is very regular. The first part of the proposition follows. If $\delta$ is moreover spherically algebraic,  by shrinking $U$ if necessary, the subset $\cZ_0$ of $\cZ$ of algebraic characters is also Zariski-dense in $\omega_0(U)$. The second part follows by the same argument  replacing $\cZ$ by $\cZ_0$.
\end{proof}
\begin{remark}\label{rem: cclg-pdacc}(1) If we only assume $x=(r,\delta)\in X_{\tri}^{\square}(\overline{r}_L, \ul{k}_J)$ (with $J\neq \Sigma_L$, $\delta$ locally  algebraic, very regular, and $\wt(\delta)_{1,\sigma}\neq \wt(\delta)_{2,\sigma}$ for all $\sigma\in \Sigma_L$). By Proposition \ref{prop: cclg-pdacc}  and the fact that $X_{\tri,J-\dR}^{\square}(\overline{r}_L, \ul{k}_J)$ is Zariski-closed in $X_{\tri}^{\square}(\overline{r}_L)$,  the followings are actually equivalent:
\begin{itemize}
  \item $x\in X_{\tri,J-\dR}^{\square}(\overline{r}_L, \ul{k}_J)$,
  \item there exists a set $Z$ of points satisfying the properties in Proposition \ref{prop: cclg-pdacc} (note that such points have fixed weights for embeddings in $J$) such that $Z$ accumulates at $x$.
\end{itemize}

(2) The proposition should also be true when $J=\Sigma_L$. Indeed if $J=\Sigma_L$ and $r$ is moreover crystalline (i.e. $\delta$ is spherically algebraic), then it follows from results in \cite[\S~2.2]{BHS2}:  Let $\widetilde{\fX}_{\overline{r}_L}^{\square, \ul{k}_{\Sigma_L}-\crr}$ be the refined crystalline deformation space as in \cite[\S~2.2]{BHS2}. There exists a natural closed embedding (cf. \cite[(2.5)]{BHS2})
\begin{equation*}
 \widetilde{\fX}_{\overline{r}_L}^{\square, \ul{k}_{\Sigma_L}-\crr} \hooklongrightarrow X_{\tri}^{\square}(\overline{r}_L),
\end{equation*}
which obviously factors through $X_{\tri, \Sigma_L-\dR}^{\square}(\overline{r}_L, \ul{k}_{\Sigma_L})$. The closed embedding $$\widetilde{\fX}_{\overline{r}_L}^{\square, \ul{k}_{\Sigma_L}-\crr} \hooklongrightarrow X_{\tri, \Sigma_L-\dR}^{\square}(\overline{r}_L,\ul{k}_{\Sigma_L})$$ is actually a local isomorphism at $x$ since both of the two rigid spaces are smooth at $x$ (see \cite[Lem. 2.4]{BHS2}, Corollary \ref{cor: cclg-pjsd}), and have the same dimension $4+d_L$. The proposition in this case then easily follows from \cite[Lem. 2.4]{BHS2}.
\end{remark}
\subsection{Partial classicality} 
We use the notation of \S~\ref{sec: cclg-3} and suppose Hypothesis \ref{hypo: cclg-TW}. The main result of this section is
\begin{theorem} \label{thm: cclg-pcl}Let $x=(y,\delta)$ be a spherical, very regular point of $X_p(\overline{\rho}, \ul{\lambda}_J)'$ (cf. \S~ \ref{sec: cclg-3.3.2}) with $\mathrm{wt}(\delta)_{1,\sigma}\neq \mathrm{wt}(\delta)_{2,\sigma}-1$ for all $\sigma\in \Sigma_p$. The following statements are equivalent.
\begin{enumerate}
  \item $x\in X_p(\overline{\rho}, \ul{\lambda}_J)$,
  \item $\rho_{x,\widetilde{v}}$ is $J_{\widetilde{v}}$-de Rham for all $v|p$.
\end{enumerate}
\end{theorem}
\begin{proof}
``(1) $\Rightarrow$ (2)" follows from (\ref{equ: cclg-adb}). We prove ``(2) $\Rightarrow$ (1)". Let $X$ be an irreducible component of $X_p(\overline{\rho})$ containing $x$, which thus has the form
\begin{equation*}
  X=X^p \times \bU^g \times \prod_{v|p} \iota_{\widetilde{v}}^{-1}(X_{\widetilde{v}})
\end{equation*}
where $X_{\widetilde{v}}$ is the \emph{unique} irreducible component of $X_{\tri}^{\square}(\overline{\rho}_{\widetilde{v}})$ containing $x_{\widetilde{v}}$ (by \cite[Cor. 3.7.10]{BHS3}). Let  $\cZ_p$ be the set  of points $(z_{\widetilde{v}})_{v\in S_p} \in \prod_{v|p} X_{\widetilde{v}}$ satisfying for $v|p$,
\begin{itemize}
  \item $z_{\widetilde{v}}=(\rho_{z_{\widetilde{v}}}, \delta_{z_{\widetilde{v}}})\in X_{\tri, J-\dR}^{\square}(\overline{\rho}_{\widetilde{v}}, \ul{\lambda}_{J_{\widetilde{v}}}^{\flat})$,
  \item $\delta_{z_{\widetilde{v}}}$ is spherically algebraic, very regular and $\Sigma^+(\delta_{z_{\widetilde{v}}})=\Sigma_{\widetilde{v}}$,
  \item $\rho_{z_{\widetilde{v}}}$ is crystalline and $z_{\widetilde{v}}$ is non-$\Sigma_{\widetilde{v}}\setminus J_{\widetilde{v}}$-critical.
\end{itemize}
By Proposition \ref{prop: cclg-pdacc} (and Remark \ref{rem: cclg-pdacc} (2) for the case $J_{\widetilde{v}}=\Sigma_{\widetilde{v}}$), $\cZ_p$ accumulates at the point $(x_{\widetilde{v}})_{v|p}\in \prod_{v|p} X_{\widetilde{v},J_{\widetilde{v}}-\dR}(\ul{\lambda}_{J_{\widetilde{v}}}^{\flat})$ (where $X_{\widetilde{v},J_{\widetilde{v}}-\dR}(\ul{\lambda}_{J_{\widetilde{v}}}^{\flat}):=X_{\widetilde{v}}\times_{X_{\tri}^{\square}(\overline{\rho}_{\widetilde{v}})} X_{\tri, J_{\widetilde{v}}-\dR}^{\square}(\overline{\rho}_{\widetilde{v}}, \ul{\lambda}_{J_{\widetilde{v}}}^{\flat})$. By \cite[Thm. 3.9, Cor. 3.12]{BHS2} (although the statement of \emph{loc. cit.} is for the eigenvariety, the proof actually shows that the same holds for the  eigenvariety), any point $z\in X$ with $(z_{\widetilde{v}})_{v|p}\in \cZ_p$ is classical. Hence the Zariski closure of $X^p \times \bU^g \times  \iota_p^{-1} (\cZ_p)$ is contained in $X_p(\overline{\rho}, \ul{\lambda}_J)$ and contains $x$, (1) follows.
\end{proof}
\begin{corollary}\label{cor: cclg-pcl}.
Let $z=(\fm_{\rho}, \delta)\in \cE(U^p, \ul{\lambda}_J)'_{\overline{\rho}}$ (cf. \S~\ref{sec: cclg-3.2}) such that $\delta$ is spherically algebraic, very regular, and $\wt(\delta)_{1,\sigma}\neq \wt(\delta)_{2,\sigma}-1$ for all $\sigma\in \Sigma_p$. Then the followings are equivalent
    \begin{enumerate}
    \item $z \in \cE(U^p, \ul{\lambda}_J)_{\overline{\rho}}$,
    \item $\rho_{z, \widetilde{v}}$ is $J_{\widetilde{v}}$-de Rham for all $v|p$.
  \end{enumerate}
\end{corollary}
\begin{proof}
  The direction ``(1) $\Rightarrow$ (2)" follows from Theorem \ref{thm: cclg-pen}.

(2) $\Rightarrow$ (1): We can (and do) view $z$ as a point in $X_p(\overline{\rho})$ (e.g. see the arguments above Lemma \ref{lem: cclg-joa}), and hence a point in $X_p(\overline{\rho}, \ul{\lambda}_J)'$ (since $z\in \cE(U^p, \ul{\lambda}_J)'_{\overline{\rho}}$). By Theorem \ref{thm: cclg-pcl}, (3) implies  $z\in X_p(\overline{\rho}, \ul{\lambda}_J)$ which is equivalent to that
  \begin{equation*}
J_{B_p}\big(\widehat{S}(U^p,E)^{\an}(\ul{\lambda}_J)[\fm_{\rho}]\big)    \cong J_{B_p}\big(\Pi_{\infty}^{R_{\infty}-\an}(\ul{\lambda}_J)[\fm_{\rho}]\big)\neq 0.
  \end{equation*}
Hence $z\in \cE(U^p, \ul{\lambda}_J)_{\overline{\rho}}$.
\end{proof}
\begin{remark}
  This corollary gives support for \cite[Conj. A.9]{Ding4}.
\end{remark}
\begin{corollary}\label{cor: cclg-soctri}Suppose Hypothesis \ref{hypo: cclg-TW}, then  Conjecture \ref{conj: cclg-comptri} (hence Conjecture \ref{conj: cclg-soctri}, see Lemma \ref{conj: cclg-equivtri}) is true.
\end{corollary}
\begin{proof}We use the notation of Conjecture \ref{conj: cclg-comptri}. By Corollary \ref{cor: cclg-net}, there exists $\fw\in \cS_2^{|S_p^+(\rho)|}$ (see the discussion above Lemma \ref{cclg-triptA}) such that $(z_{\fw})_J^c:=(\fm_{\rho}, (\delta_{\fw})_J^c)\in \cE(U^p)_{\overline{\rho}}$ for $J\subseteq \Sigma(\fw)$ (see Corollary \ref{cor: cclg-net} for $\Sigma(\fw)$). Note  that $(z_{\fw})_J^c$ actually lies in $\cE(U^p, \wt(\delta_{\fw})_{\Sigma_L\setminus J})'_{\overline{\rho}}$. By Corollary \ref{cor: cclg-pcl} (applied to $(z_{\fw})_J^c$), if $\Sigma_p\setminus J \subseteq C(\rho)$, then $(z_{\fw})_J^c\in \cE(U^p, \wt(\delta_{\fw})_{\Sigma_p\setminus J})_{\overline{\rho}}$ \big(in summary, we have proved Conjecture \ref{conj: cclg-comptri} for the refinement $\delta_{\fw}$\big).

We need to find the points for other refinements. By Proposition \ref{prop: cclg-pve} (see also (\ref{equ: cclg-spE})), we have
\begin{multline}\label{equ: cclg-Adj}
  \Hom_{T_p}\big((\delta_{\fw})_J^c, J_{B_p}\big(\widehat{S}(U^p,E)^{\an}(\wt(\delta_{\fw})_{\Sigma_p\setminus J})\big)[\fm_{\rho}]\big)\big) \\
\cong \Hom_{G_p}\big( I((\delta_{\fw})_J^c\delta_{B_p}^{-1}), \widehat{S}(U^p,E)^{\an}(\wt(\delta_{\fw})_{\Sigma_p\setminus J})[\fm_{\rho}]\big).
\end{multline}
Suppose $J_{\widetilde{v}}=\emptyset$ for $v\in S_p^+(\rho)$ (recall $\rho_{\widetilde{v}}$ is crystalline for such $v$), and consider
\begin{equation*}
  I((\delta_{\fw})_J^c\delta_{B_p}^{-1}) \cong \big(\otimes_{v\in S_p^+(\rho)} I((\delta_{\fw})_{\widetilde{v}}\delta_{B_{\widetilde{v}}}^{-1})\big) \otimes_E \big(\otimes_{v\in S_p\setminus S_p^+(\rho)} I((\delta_{\fw})_{\widetilde{v}, J_{\widetilde{v}}}^c\delta_{B_{\widetilde{v}}}^{-1}) \big).
\end{equation*}
Note that $I((\delta_{\fw})_{\widetilde{v}}\delta_{B_{\widetilde{v}}}^{-1})$ is locally algebraic (since $\delta_{\fw}$ is dominant), and its associated Jacquet-Emerton module gives two refinement of $\rho_{\widetilde{v}}$. Since $(z_{\fw})_J^c\in \cE(U^p, \wt(\delta_{\fw})_{\Sigma_p\setminus J})_{\overline{\rho}}$, by (\ref{equ: cclg-Adj}), there exists an injection
\begin{equation*}
 I((\delta_{\fw})_J^c\delta_{B_p}^{-1})  \hooklongrightarrow \widehat{S}(U^p,E)^{\an}_{\overline{\rho}}[\fm_{\rho}].
\end{equation*}
Applying the Jacquet-Emerton functor, we obtain $2^{|S_p^+(\rho)|}$-points $(z_w)_J^c:=(\fm_{\rho}, (\delta_w)_J^c)\in \cE(U^p)_{\overline{\rho}}$ for all $w\in \cS_2^{|S_p^+(\rho)|}$. By Corollary \ref{cor: cclg-net}, we get then  $(z_w)_J^c\in \cE(U^p)_{\overline{\rho}}$ for  any $J\subseteq \Sigma(w)$ (where $\Sigma(w)$ is defined in the same way as $\Sigma(\fw)$). Then by the argument in the first paragraph with $\fw$ replaced by $w$, we have
\begin{equation*}(z_w)_J^c\in \cE(U^p, \wt(\delta_w)_{\Sigma_p\setminus J})_{\overline{\rho}}.
\end{equation*}
This concludes the proof (note that the ``only if" part of Conjecture \ref{conj: cclg-comptri} (1) follows from Lemma \ref{cclg-triptA}).
\end{proof}

\appendix
\section{Partially de Rham $B$-pairs}\label{sec: cclg-A.1}Recall some results on $B$-pairs.
Let $A$ be an artinian local $E$-algebra, recall (cf. \cite[\S~2]{Ber08}, \cite[Def. 2.11]{Na2} and \cite[Def. 1.3]{Ding4})
\begin{definition}
  (1) An $A$-$B$-pair $W$ is a pair $(W_e,W_{\dR}^+)$ where $W_e$ is a free $B_e\otimes_{\Q_p} A$-module, and $W_{\dR}^+$ is a $\Gal_L$-invariant $B_{\dR}^+ \otimes_{\Q_p} A$-lattice in $W_{\dR}:=W_e \otimes_{B_e} B_{\dR}$. The rank of $W$ is defined to be the rank of $W_e$ over $B_e\otimes_{\Q_p} A$.

  (2) An $A$-$B$-pair $W$ is called $J$-de Rham if $W_{\dR,\sigma}^{\Gal_L}$ is a free $A$-module of rank $\rk W$ for all $\sigma\in J$ \big(where $W_{\dR}\cong \oplus_{\sigma\in \Sigma_L} W_{\dR,\sigma}$ with respect to the isomorphism $B_{\dR}\otimes_{\Q_p} A\cong \oplus_{\sigma \in \Sigma_L} B_{\dR,\sigma}$, $B_{\dR,\sigma}:=B_{\dR}\otimes_{L,\sigma} A$\big).
\end{definition}
\begin{example}\label{ex: cclg-bpair}
  (1) Let $V$ be a continuous representation of $\Gal_L$ over $A$, one can associate to $V$ an $A$-$B$-pair: $W(V)=(B_e\otimes_{\Q_p} V, B_{\dR}^+ \otimes_{\Q_p} V)$. For $J\subseteq \Sigma_L$, $W(V)$ is $J$-de Rham if and only if $V$ is $J$-de Rham.

  (2) Let $\delta: L^{\times} \ra A^{\times}$ be a continuous character, as in \cite[\S~2.1.2]{Na2}, one can associate to $\delta$ a rank $1$ $A$-$B$-pair denoted by $B_A(\delta)$. In fact, by \cite[Prop. 2.16]{Na2}, for any rank one $A$-$B$-pair $W$, there exists $\delta: L^{\times} \ra A^{\times}$ such that $W\cong B_A(\delta)$.

  (3) Let $\delta: L^{\times} \ra E^{\times}$ be a continuous character, $J\subset \Sigma_L$, by \cite[Lem. A.1]{Ding4}, $B_E(\delta)$ is $J$-de Rham if and only if $\wt(\delta)_{\sigma}\in \Z$ for all $\sigma\in J$.

  (4) Let $\delta: L^{\times} \ra (E[\epsilon]/\epsilon^2)^{\times}$ be a continuous character, $J\subset \Sigma_L$, by \cite[Lem. 1.15]{Ding4}, $B_{E[\epsilon]/\epsilon^2}(\delta)$ is $J$-de Rham if and only if $\wt(\delta)_{\sigma}\in \Z$ for all $\sigma\in J$. Let $\overline{\delta}:=\delta \pmod{\epsilon}: L^{\times} \ra E^{\times}$, thus for any $\sigma \in \Sigma_L$, there exists $d_{\sigma}\in E$ such that $\wt(\delta)_{\sigma}=\wt(\overline{\delta})_{\sigma}+d_{\sigma} \epsilon$, so $B_{E[\epsilon]/\epsilon^2}(\delta)$ is $J$-de Rham if and only if $B_E(\overline{\delta})$ is $J$-de Rham and $d_{\sigma}=0$ for all $\sigma\in J$.
\end{example}
For an $A$-$B$-pair $W$, denote by $C^{\bullet}(W)$ the $\Gal_L$-complex:  $[W_e\oplus W_{\dR}^+ \xrightarrow{(x,y)\mapsto x-y} W_{\dR}]$. Following \cite[App.]{Na2}, let $H^i(\Gal_L, W):=H^i(\Gal_L, C^{\bullet}(W))$. By definition, one has an exact sequence
\begin{multline*}
  0 \ra H^0(\Gal_L, W) \ra W_e^{\Gal_L} \oplus (W_{\dR}^+)^{\Gal_L} \ra W_{\dR}^{\Gal_L}\\ \ra H^1(\Gal_L,W) \ra H^1(\Gal_L, W_e) \oplus H^1(\Gal_L, W_{\dR}^+) \ra H^1(\Gal_L, W_{\dR}).
\end{multline*}
One can (and does) identify $H^1(\Gal_L,W)$ with the group of extensions of $A$-$B$-pairs $\Ext^1(B_A, W)$. If $W\cong W(V)$ for some continuous representation $V$ of $\Gal_L$ over $A$, then we have canonical isomorphisms $H^i(\Gal_L, W)\cong H^i(\Gal_L, W(V))$.
For $J\subseteq \Sigma_L$, consider the following morphism $C^{\bullet}(W)\ra [W_e\ra 0]\ra [W_{\dR}\ra 0]\ra [W_{\dR,J}\ra 0]$ where $W_{\dR,J}:=\oplus_{\sigma\in J} W_{\dR,\sigma}$, and the last map is the natural projection. As in \cite[\S~1.2]{Ding4}, put $H^1_{g,J}(\Gal_L,W):=\Ker\big(H^1(\Gal_L, W)\ra H^1(\Gal_L, W_{\dR,J})\big)$. it is obviously that $H^1_{g,J_1}(\Gal_L,W)\cap H^1_{g,J_2}(\Gal_L,W)=H^1_{g,J_1\cup J_2}(\Gal_L,W)$. Moreover, if $W$ is $J$-de Rham, for $[W']\in H^1(\Gal_L, W)$, $[W']\in H^1_{g,J}(\Gal_L, W)$ if and only if $W'$ is $J$-de Rham. Note that a morphism of $E$-$B$-pairs: $f: W\ra W'$ induces a natural morphism $H^1_{g,J}(\Gal_L, W)\ra H^1_{g,J}(\Gal_L,W')$.

For an $E$-$B$-pair $W$, an algebraic character $\delta=\prod_{\sigma\in \Sigma_L} \sigma^{\wt(\delta)_{\sigma}}$ of $L^{\times}$ over $E$, denote by $W(\delta):=W \otimes B_E(\delta)$ \big(recall the tensor product $W_1\otimes W_2$ of $E$-$B$-pairs is defined to be the pair $(W_{1,e}\otimes_{B_e} W_{2,e}, W_{1,\dR}^+ \otimes_{B_{\dR}^+} W_{2,\dR}^+)$\big). One has in fact
\begin{equation*}
  W(\delta)_e\cong W_e, \ W(\delta)_{\dR,\sigma}^+\cong t^{\wt(\delta)_{\sigma}} W_{\dR,\sigma}^+, \ \forall \sigma\in \Sigma_L.
\end{equation*}
For algebraic characters $\delta_1$, $\delta_2$ with $\wt(\delta_1)\geq \wt(\delta_2)$, one has a natural morphism $i=(i_e,i_{\dR}^+): W(\delta_1) \ra W(\delta_2)$ with $i_e: W(\delta_1)_e\xrightarrow{\sim} W(\delta_2)_e$, and $i_{\dR}^+: W(\delta_1)_{\dR}^+ \hookrightarrow W(\delta_2)_{\dR}^+$ the natural injection. One gets thus an exact sequence of $\Gal_L$-complexes:
\begin{multline}\label{equ: cclg-s1R}
  0\lra [W(\delta_1)_e \oplus W(\delta_1)_{\dR}^+\ra W(\delta_1)_{\dR}] \lra [W(\delta_2)_e \oplus W(\delta_2)_{\dR}^+\ra W(\delta_2)_{\dR}] \\ \lra [\oplus_{\sigma\in \Sigma_L} W(\delta_2)_{\dR,\sigma}^+/t^{\wt(\delta_1)_{\sigma}-\wt(\delta_2)_{\sigma}}\ra 0] \ra 0.
\end{multline}
Thus $H^0(\Gal_L, W(\delta_1))\xrightarrow{\sim} H^0(\Gal_L, W(\delta_2))$ if $H^0(\Gal_L, W(\delta_2)_{\dR,\sigma}^+/t^{\wt(\delta_1)_{\sigma}-\wt(\delta_2)_{\sigma}})=0$ for all $\sigma\in J$.
 Suppose $W$ is $J$-de Rham, let $\delta=\prod_{\sigma\in J} \sigma^{k_{\sigma}}$ be an algebraic character of $L^{\times}$ with $k_{\sigma}\in \Z_{\geq 0}$ such that $H^0(\Gal_L, W(\delta)_{\dR,\sigma}^+)=0$ for all $\sigma\in J$, in other words, the generalized Hodge-Tate weights of $W(\delta)$ are negative for $\sigma\in J$ (which holds when $\wt(\delta)_{\sigma}$ for all $\sigma\in J$ are sufficiently large). Put \begin{equation}\widetilde{H}^2_{J}(\Gal_L, W):=H^2(\Gal_L, W(\delta)),\end{equation} which is in fact independent of the choice of $\delta$.
 Indeed, for $\delta_1$, $\delta_2$ algebraic characters of $L^{\times}$ satisfying the above assumptions (for $\delta$), suppose $\wt(\delta_1)_{\sigma}\geq \wt(\delta_2)_{\sigma}$ for all $\sigma\in J$ (the general case can be easily reduced to this case), the Tate dual of the natural map $H^2(\Gal_L, W(\delta_1))\ra H^2(\Gal_L,W(\delta_2))$ is thus $H^0(\Gal_L, W^{\vee}(\delta_2^{-1}\chi_{\cyc}))\hookrightarrow H^0(\Gal_L, W^{\vee}(\delta_1^{-1}\chi_{\cyc}))$, which is in fact bijective by (\ref{equ: cclg-s1R}): by the assumption $H^0(\Gal_L, W(\delta_i)_{\dR,\sigma}^+)=0$ for all $\sigma\in J$, $i=1,2$, we have
 \begin{equation*}
   H^0\big(\Gal_L, (W^{\vee}(\delta_1^{-1}\chi_{\cyc}))^+_{\dR,\sigma}/t^{k_{1,\sigma}-k_{2,\sigma}}\big)=0
 \end{equation*}
for all $\sigma\in J$. Note also that one has a natural projection $\widetilde{H}^2_{J}(\Gal_L,W)\twoheadrightarrow H^2(\Gal_L,W)$.

\begin{proposition}\label{prop: cclg-wrn}Let $W$ be a $J$-de Rham $E$-$B$-pair, then
  \begin{multline}\label{prop: cclg-fjd}
  \dim_{E}H^1_{g,J}(\mathrm{Gal}_L,W)\\ =[L:\Q_p]\rk W+\dim_E H^0(\Gal_L, W) + \dim_E \widetilde{H}^2_J(\Gal_L,W)
  -\dim_E H^0(\Gal_L, W_{\dR,J}^+).
  \end{multline}
\end{proposition}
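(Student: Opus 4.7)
The plan is to derive the formula via an Euler--Poincar\'e characteristic argument combining Tate--Liu's Euler formula for Galois cohomology of $B$-pairs with the long exact sequence \cite[(7)]{Ding4} that is cited immediately before the statement.

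First, by the Tate--Liu Euler--Poincar\'e formula for $B$-pairs,
\[
\chi(\Gal_L,W):=\sum_{i=0}^{2}(-1)^i\dim_E H^i(\Gal_L,W)=-[L:\Q_p]\,\rk W,
\]
so that $\dim_E H^1(\Gal_L,W)=[L:\Q_p]\,\rk W+\dim_E H^0(\Gal_L,W)+\dim_E H^2(\Gal_L,W)$. Since $H^1_{g,J}(\Gal_L,W)=\Ker\bigl(H^1(\Gal_L,W)\to H^1(\Gal_L,W_{\dR,J})\bigr)$ by definition, the task reduces to computing the dimension of the image of this map and matching the $H^2$-correction with $\widetilde{H}^2_J$.

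Second, the exact sequence \cite[(7)]{Ding4} fits $H^\bullet_{g,J}(\Gal_L,W)$ into a long exact sequence involving $H^\bullet(\Gal_L,W)$ and the Galois cohomology of the de Rham pieces $W_{\dR,J}$ and $W_{\dR,J}^+$, arising from a short exact sequence of complexes whose fiber computes $H^\bullet_{g,J}$ and whose quotient term is supported on the $J$-de Rham components. For each $\sigma\in J$, the $t$-adic filtration on $W_{\dR,\sigma}^+$ together with Fontaine's classical computation of $H^i(\Gal_L,B_{\dR,\sigma}^+/t^n)$ reduces the alternating sum of dimensions along this sequence to a single surviving term, namely $-\dim_E H^0(\Gal_L,W_{\dR,J}^+)$.

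Third, the $H^2$-correction is identified with $\widetilde{H}^2_J(\Gal_L,W)$ via the devissage exact sequence \eqref{equ: cclg-s1R}. Choosing $\delta=\prod_{\sigma\in J}\sigma^{k_\sigma}$ with $k_\sigma\gg 0$ so that $H^0(W(\delta)_{\dR,\sigma}^+)=0$ for all $\sigma\in J$, one obtains the natural surjection $\widetilde{H}^2_J(W)\twoheadrightarrow H^2(\Gal_L,W)$ whose kernel, via Tate local duality for $B$-pairs, precisely matches the $H^1$-contribution appearing in the image of $H^1(\Gal_L,W)\to H^1(\Gal_L,W_{\dR,J})$. Substituting these identifications into the Euler characteristic sum gives the stated identity.

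The main obstacle is the bookkeeping of the two long exact sequences (of \cite[(7)]{Ding4} and of \eqref{equ: cclg-s1R}): one must carefully track the $H^0$, $H^1$, $H^2$-terms of $W$, $W_{\dR,J}$, $W_{\dR,J}^+$, and the twist $W(\delta)$, and apply Tate duality to collapse the alternating sum into the claimed closed form. Once these identifications are aligned, the formula follows from direct linear algebra.
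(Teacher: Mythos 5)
You are working with the right ingredients (the Euler--Poincar\'e formula for $B$-pairs, the d\'evissage (\ref{equ: cclg-s1R}), the exact sequence (7) of \cite{Ding4}), but the decisive identification in your third step is false as stated, and that is exactly where the content of the proposition sits. From the long exact sequence attached to (\ref{equ: cclg-s1R}) (with $\delta_1=\delta$ and $\delta_2$ trivial), the kernel of the surjection $\widetilde{H}^2_J(\Gal_L,W)=H^2(\Gal_L,W(\delta))\twoheadrightarrow H^2(\Gal_L,W)$ is the \emph{cokernel} of $H^1(\Gal_L,W)\ra \oplus_{\sigma\in J}H^1(\Gal_L,W^+_{\dR,\sigma}/t^{k_\sigma})$; it is neither the image of $H^1(\Gal_L,W)\ra H^1(\Gal_L,W_{\dR,J})$ nor of the same dimension in general. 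For instance, for $L=\Q_p$, $W=B_E$ the trivial rank one $B$-pair and $J=\Sigma_L$, one has $\widetilde{H}^2_J(\Gal_L,W)=H^2(\Gal_L,W)=0$, so the kernel vanishes, while the image of $H^1(\Gal_L,W)\ra H^1(\Gal_L,W_{\dR,J})$ is one-dimensional (the unramified class dies, the class $\log\chi_{\cyc}$ does not). What your route actually needs is the equality
\begin{equation*}
  \dim_E\Ima\big(H^1(\Gal_L,W)\ra H^1(\Gal_L,W_{\dR,J})\big)=\dim_E H^0(\Gal_L,W^+_{\dR,J})-\dim_E\Ker\big(\widetilde{H}^2_J(\Gal_L,W)\ra H^2(\Gal_L,W)\big),
\end{equation*}
and proving it forces you to compare cohomology valued in $W_{\dR,J}$ with cohomology valued in $W^+_{\dR,J}/t^{k}$ (using, e.g., $\dim_E H^0=\dim_E H^1$ for $B_{\dR}^+/t^k$-representations); Tate duality by itself does not produce this --- in this circle of arguments duality is only needed to check that $\widetilde{H}^2_J$ is independent of the auxiliary twist $\delta$. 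Likewise, the collapse in your second step of the alternating sum to the single surviving term $-\dim_E H^0(\Gal_L,W_{\dR,J}^+)$ is asserted rather than proved, and it is precisely the nontrivial bookkeeping.

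The paper sidesteps all of this by never applying the Euler characteristic to $W$ itself: the exact sequence of \cite{Ding4}, specialized to the twist $\delta$ above, gives directly $\dim_E H^1_{g,J}(\Gal_L,W)=\dim_E H^1(\Gal_L,W(\delta))+\dim_E H^0(\Gal_L,W)-\dim_E H^0(\Gal_L,W^+_{\dR,J})$, and then the Euler--Poincar\'e formula is applied to $W(\delta)$, whose $H^0$ vanishes and whose $H^2$ is $\widetilde{H}^2_J(\Gal_L,W)$ by definition. If you reorganize your computation in that order (twist first, then count dimensions) the formula falls out in two lines; as written, your plan has a genuine gap at the computation of $\dim_E\Ima\big(H^1(\Gal_L,W)\ra H^1(\Gal_L,W_{\dR,J})\big)$.
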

\begin{proof}
  We use the notation of \cite[(1.7)]{Ding4}. By \emph{loc. cit.} we have By the exact sequence
  \begin{equation*}
    \dim_{E}H^1_{g,J}(\Gal_L,W)=\dim_E H^1(\Gal_L,W(\delta))+\dim_E H^0(\Gal_L, W)-\dim_E H^0(\Gal_L, W_{\dR,J}^+).
  \end{equation*}
However, by our assumption on $\delta$, we have $H^0(\Gal_L,W(\delta))=0$, $H^2(\Gal_L,W(\delta))\cong\widetilde{H}^2_J(\Gal_L,W)$, and hence $\dim_E H^1(\Gal_L,W(\delta))=[L:\Q_p]\rk W+\dim_E \widetilde{H}^2_J(\Gal_L,W)$. The proposition follows.
\end{proof}
\begin{corollary}\label{cor: cclg-pdR}
  Let $W$ be a $J$-de Rham $E$-$B$-pair, if $\widetilde{H}^2(\Gal_L,W)=H^2(\Gal_L,W)$, then we have
  \begin{equation*}\dim_E H^1_{g,J}(\Gal_L,W)=\dim_E H^1(\Gal_L,W)-\dim_E H^0(\Gal_L, W_{\dR,J}^+).\end{equation*}
\end{corollary}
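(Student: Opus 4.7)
My plan is to deduce this corollary formally from Proposition~\ref{prop: cclg-wrn} combined with the Euler--Poincar\'e characteristic formula for $B$-pairs, namely
\[
\dim_E H^0(\Gal_L,W) - \dim_E H^1(\Gal_L,W) + \dim_E H^2(\Gal_L,W) = -[L:\Q_p]\,\rk W
\]
for any $E$-$B$-pair $W$. This is a standard fact; I would cite \cite{Na2} for a direct reference, although one can also derive it from the ingredients already in play by applying the dimension count in the proof of Proposition~\ref{prop: cclg-wrn} to a sufficiently positive twist $W(\delta)$ and then comparing cohomologies of $W$ and $W(\delta)$ via the exact sequence (\ref{equ: cclg-s1R}).

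Granting this formula, I would solve for $\dim_E H^1(\Gal_L,W)$ to obtain
\[
\dim_E H^1(\Gal_L,W) = [L:\Q_p]\,\rk W + \dim_E H^0(\Gal_L,W) + \dim_E H^2(\Gal_L,W),
\]
and then substitute the hypothesis $\widetilde{H}^2_J(\Gal_L,W) = H^2(\Gal_L,W)$ (I read the $\widetilde{H}^2$ in the statement as $\widetilde{H}^2_J$, cf.\ the notation introduced just before Proposition~\ref{prop: cclg-wrn}) into the right-hand side of the formula supplied by that proposition. Direct comparison of the two expressions then yields
\[
\dim_E H^1_{g,J}(\Gal_L,W) = \dim_E H^1(\Gal_L,W) - \dim_E H^0(\Gal_L, W_{\dR,J}^+),
\]
which is the desired identity.

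In short, the corollary is a one-line consequence of the preceding proposition together with the Euler characteristic formula, so there is no significant obstacle to overcome; the only bookkeeping task is to verify that $\widetilde{H}^2_J(\Gal_L, W)$ really agrees with $H^2(\Gal_L, W)$, which is precisely what the assumption grants. The genuinely nontrivial content sits upstream, in Proposition~\ref{prop: cclg-wrn}; here I am merely repackaging it using a global Euler characteristic invariant in place of the explicit $\widetilde{H}^2_J$ term.
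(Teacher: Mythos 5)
Your argument is correct and is exactly the intended one: the paper states this corollary without proof as an immediate consequence of Proposition~\ref{prop: cclg-wrn} together with the Euler--Poincar\'e formula $\dim_E H^1(\Gal_L,W)=[L:\Q_p]\rk W+\dim_E H^0(\Gal_L,W)+\dim_E H^2(\Gal_L,W)$ (which the paper itself already invokes, applied to $W(\delta)$, inside the proof of that proposition), and your reading of $\widetilde{H}^2$ as $\widetilde{H}^2_J$ is the right one.
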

\begin{proposition}\label{prop: cclg-exbp}Given an exact sequence of $E$-$B$-pairs
\begin{equation}\label{equ: cclg-a3w}
  0 \ra W_1 \ra W_2 \ra W_3\ra 0,
\end{equation}
suppose $W_i$ are $J$-de Rham for all $i=1,2,3$ and $\widetilde{H}^2_J(\Gal_L,W_1)=0$ (which implies in particular $H^2(\Gal_L, W_1)=0$), then (\ref{equ: cclg-a3w}) induces a long exact sequence
\begin{multline}\label{equ: cclg-a02}
  0 \ra H^0(\Gal_L, W_1) \ra H^0(\Gal_L, W_2) \ra H^0(\Gal_L,W_3) \ra H^1_{g,J}(\Gal_L,W_1) \\ \ra H^1_{g,J}(\Gal_L,W_2) \ra H^1_{g,J}(\Gal_L,W_3) \ra 0.
\end{multline}
\end{proposition}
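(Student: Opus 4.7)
My plan is to derive the six-term sequence (\ref{equ: cclg-a02}) by combining the standard hypercohomology long exact sequence attached to the short exact sequence of $\Gal_L$-complexes $0 \to C^\bullet(W_1) \to C^\bullet(W_2) \to C^\bullet(W_3) \to 0$ with a snake-lemma comparison to the analogous sequence on the $W_{\dR,J}$-side. The hypothesis $\widetilde{H}^2_J(\Gal_L, W_1) = 0$ together with the canonical surjection $\widetilde{H}^2_J \twoheadrightarrow H^2$ noted in the text forces $H^2(\Gal_L, W_1) = 0$, which I will use both for truncating the top LES and, in a twisted form, for the surjectivity on the right.

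The first two steps are then: (a) truncate the standard long exact sequence to
\begin{equation*}
0 \to H^0(W_1) \to H^0(W_2) \to H^0(W_3) \to H^1(W_1) \to H^1(W_2) \to H^1(W_3) \to 0;
\end{equation*}
(b) use $J$-de Rhamness together with rank additivity to show that $0 \to W_{1,\dR,J}^{\Gal_L} \to W_{2,\dR,J}^{\Gal_L} \to W_{3,\dR,J}^{\Gal_L} \to 0$ is exact, the dimensions being forced to match by $\dim_E W_{i,\dR,\sigma}^{\Gal_L} = \rk W_i$ for $\sigma \in J$ and the additivity of rank. Since each $W_{i,\dR,\sigma} \cong W_{i,\dR,\sigma}^{\Gal_L} \otimes_E B_{\dR,\sigma}$ as a $\Gal_L$-module (trivial action on the first factor), applying $H^i(\Gal_L,\cdot)$ yields term-by-term short exact sequences on the $W_{i,\dR,J}$-cohomology.

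Third, I apply the snake lemma to the commutative diagram joining the two long exact sequences via the canonical morphism $C^\bullet(W_i) \to [W_{i,\dR,J} \to 0]$. Setting $H^1_{g,J}(W_i) = \ker \pi_i$, and using that the bottom connecting map $H^0(W_{3,\dR,J}) \to H^1(W_{1,\dR,J})$ vanishes (by step (b)) to see that the top connecting map $H^0(W_3) \to H^1(W_1)$ factors through $H^1_{g,J}(W_1)$, the snake construction yields the desired sequence except possibly for surjectivity at $H^1_{g,J}(W_3)$.

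The main obstacle I anticipate is this final surjectivity. My intended route is via a twist: choose $\delta = \prod_{\sigma \in J}\sigma^{k_\sigma}$ with each $k_\sigma$ large enough that $H^0(\Gal_L, W_i(\delta)_{\dR,J}^+) = 0$ for $i=1,2,3$. Since $\widetilde{H}^2_J(W_1) = H^2(W_1(\delta))$ for such $\delta$, the hypothesis forces $H^2(W_1(\delta)) = 0$, and the long exact sequence of the twisted short exact sequence gives $H^1(W_2(\delta)) \twoheadrightarrow H^1(W_3(\delta))$. The delicate step will be to verify that the natural morphism $W_i(\delta) \to W_i$ induces a surjection $H^1(W_i(\delta)) \twoheadrightarrow H^1_{g,J}(W_i)$: that the image lies in $H^1_{g,J}(W_i)$ follows from the dimension identity $\dim_E \mathrm{image}(\pi_{W_i(\delta)}) = \dim_E H^0(W_i(\delta)_{\dR,J}^+) - \dim_E\ker(\widetilde{H}^2_J(W_i(\delta)) \twoheadrightarrow H^2(W_i(\delta)))$ combined with the vanishing $H^0(W_i(\delta)_{\dR,J}^+) = 0$, and then surjectivity onto $H^1_{g,J}(W_i)$ can be obtained by matching dimensions via Prop.~\ref{prop: cclg-wrn} and Cor.~\ref{cor: cclg-pdR}, using the long exact sequence of $0 \to C^\bullet(W_i(\delta)) \to C^\bullet(W_i) \to [\oplus_{\sigma \in J} W_{i,\dR,\sigma}^+/t^{k_\sigma} \to 0] \to 0$ to identify image and target. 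Once that surjection is in place, the required surjectivity $H^1_{g,J}(W_2) \twoheadrightarrow H^1_{g,J}(W_3)$ drops out of a diagram chase on the square relating the twisted and untwisted $H^1$'s.
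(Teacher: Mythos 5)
Your argument is correct, and up to the last step it coincides with the paper's proof: both truncate the long exact sequence of $B$-pair cohomology using $H^2(\Gal_L,W_1)=0$, both use $J$-de Rhamness to control the $W_{\dR,J}$-side (in particular the injectivity of $H^1(\Gal_L,W_{1,\dR,J})\ra H^1(\Gal_L,W_{2,\dR,J})$ and the vanishing of the connecting map $H^0(\Gal_L,W_{3,\dR,J})\ra H^1(\Gal_L,W_{1,\dR,J})$), and both thereby reduce to the surjectivity of $H^1_{g,J}(\Gal_L,W_2)\ra H^1_{g,J}(\Gal_L,W_3)$. Where you genuinely diverge is in that final surjectivity. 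The paper settles it numerically: Prop.\ref{prop: cclg-wrn} computes each $\dim H^1_{g,J}$, a twisting argument gives $\widetilde{H}^2_J(\Gal_L,W_2)\cong\widetilde{H}^2_J(\Gal_L,W_3)$, and $J$-de Rhamness gives additivity of $\dim H^0(\Gal_L,W_{i,\dR,J}^+)$; the resulting dimension identity, combined with the exactness already established, forces the last map to be onto. You instead twist the whole short exact sequence by a large $\delta$ supported on $J$, use $\widetilde{H}^2_J(\Gal_L,W_1)=H^2(\Gal_L,W_1(\delta))=0$ to get $H^1(\Gal_L,W_2(\delta))\twoheadrightarrow H^1(\Gal_L,W_3(\delta))$, and transport this through the surjection $H^1(\Gal_L,W_i(\delta))\twoheadrightarrow H^1_{g,J}(\Gal_L,W_i)$; that surjection is essentially the exact sequence (7) of \cite{Ding4} underlying Prop.\ref{prop: cclg-wrn}, so your route unwinds the same input. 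It is somewhat longer, but it makes visible where the hypothesis $\widetilde{H}^2_J(\Gal_L,W_1)=0$ acts (once to truncate, once to make the twisted $H^1$'s surject), whereas the paper's count is shorter but purely numerical.

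One repair in your "delicate step": a dimension identity cannot by itself show that the image of $H^1(\Gal_L,W_i(\delta))\ra H^1(\Gal_L,W_i)$ is \emph{contained} in $H^1_{g,J}(\Gal_L,W_i)$ — dimensions never certify containment. The correct argument is that the composite $H^1(\Gal_L,W_i(\delta))\ra H^1(\Gal_L,W_{i,\dR,J})$ factors through $H^1\big(\Gal_L,\oplus_{\sigma\in J}t^{k_\sigma}W_{i,\dR,\sigma}^+\big)$, which vanishes once the $k_\sigma$ are large enough that all Hodge--Tate weights of $t^{k_\sigma}W_{i,\dR,\sigma}^+$ are positive. With that fixed, your dimension matching (using $H^0(\Gal_L,W_{i,\dR,\sigma}^+)\cong H^0(\Gal_L,W_{i,\dR,\sigma}^+/t^{k_\sigma})$ for large $k_\sigma$, $H^0(\Gal_L,W_i(\delta))=0$, and Prop.\ref{prop: cclg-wrn}) does give the surjection onto $H^1_{g,J}(\Gal_L,W_i)$, and the concluding diagram chase is fine.
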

\begin{proof}
  Since $H^2(\Gal_L,W_1)=0$, (\ref{equ: cclg-a3w}) induces a long exact sequence
 \begin{multline*}
  0 \ra H^0(\Gal_L, W_1) \ra H^0(\Gal_L, W_2) \ra H^0(\Gal_L,W_3) \\ \ra H^1(\Gal_L,W_1) \ra H^1(\Gal_L,W_2) \ra H^1(\Gal_L,W_3) \ra 0.
\end{multline*}
Since $W_i$ are $J$-de Rham for all $i=1,2,3$, the exact sequence
\begin{equation*}
  0 \ra W_{1,\dR,J} \ra W_{2,\dR,J}\ra W_{3,\dR,J}\ra 0
\end{equation*}
induces an exact sequence
\begin{equation*}
  0 \ra H^1(\Gal_L, W_{1,\dR,J}) \ra H^1(\Gal_L, W_{2,\dR,J}) \ra H^1(\Gal_L, W_{3,\dR,J}) \ra 0;
\end{equation*}moreover, 
the following diagram commutes:
\begin{equation*}\footnotesize%
\begin{CD}
  H^0(\Gal_L,W_3)@>>>H^1(\Gal_L,W_1) @>>> H^1(\Gal_L,W_2) @>>> H^1(\Gal_L,W_3) @>>> 0 \\
  @VVV @VVV @VVV @VVV \\
  H^0(\Gal_L, W_{3,\dR,J}) @>>> H^1(\Gal_L,W_{1,\dR,J}) @>>> H^1(\Gal_L,W_{2,\dR,J}) @>>> H^1(\Gal_L,W_{3,\dR,J}) @>>> 0
\end{CD}.
\end{equation*}From which we get the exact sequence (\ref{equ: cclg-a02}) except the last map \big(note that the map $$H^0(\Gal_L, W_{3,\dR,J})\lra H^1(\Gal_L, W_{1,\dR,J})$$ equals zero, since $W_2$ is $J$-de Rham\big).
Thus it is sufficient to show the induced map $$H^1_{g,J}(\Gal_L,W_2) \lra H^1_{g,J}(\Gal_L,W_3)$$ is surjective and we prove it by dimension calculation. By Proposition \ref{prop: cclg-wrn}, \begin{multline*}\dim_{E}H^1_{g,J}(\Gal_L,W_i)=d_L \rk W_i+\dim_{E} \widetilde{H}^2_J(\Gal_L, W_i)+\dim_{E}H^0(\Gal_L,W_i)\\ -\dim_{E} H^0(\Gal_L, W_{i,\dR,J}^+).\end{multline*}
Since $\widetilde{H}^2_J(\Gal_L,W_1)=0$, we have $\widetilde{H}^2_J(\Gal_L,W_2)\cong \widetilde{H}^2_J(\Gal_L,W_3)$. Indeed, let $\delta$ be an algebraic character of $L^{\times}$ over $E$ with $\wt(\delta)_{\sigma}\in \Z_{\geq 0}$ for $\sigma\in J$ and $\wt(\delta)_{\sigma}=0$ for $\sigma \notin J$ satisfying that $H^0(\Gal_L, W_2(\delta)_{\dR,J}^+)=0$ \big(thus $H^0(\Gal_L, W_i(\delta)_{\dR,J}^+)=0$ for all $i=1,2,3$\big). One gets an isomorphism $H^2(\Gal_L, W_2(\delta))\cong H^2(\Gal_L,W_3(\delta))$ (hence the precedent isomorphism) from the exact sequence of $E$-$B$-pairs $0 \ra W_1(\delta) \ra W_2(\delta) \ra W_3(\delta) \ra 0$ (using $H^2(\Gal_L, W_1(\delta))=0$).
Since $W_i$ is $J$-de Rham for all $i$, $\dim_{E}H^0(\Gal_L,W_{1,\dR,J}^+) +\dim_{E}H^0(\Gal_L, W_{3,\dR,J}^+)=\dim_{E} H^0(\Gal_L, W_{2,\dR,J}^+)$. Combining the above calculation, we see
\begin{multline*}
  \dim_{E} H^0(\Gal_L,W_1)+\dim_{k(x)}H^0(\Gal_L,W_3)+\dim_{k(x)}H^1_{g,J}(\Gal_L,W_2)\\
  =\dim_{E} H^0(\Gal_L, W_2) + \dim_{k(x)}H^1_{g,J}(\Gal_L, W_1)+\dim_{k(x)} H^1_{g,J}(\Gal_L,W_3).
\end{multline*}
The proposition follows.
\end{proof}
\section{Some locally analytic representation theory}\label{sec: cclg-A.2}
We use the notation of \S~\ref{sec: cclg-3.1}. Let $V$ be a locally $\Q_p$-analytic representation of $G_p$ over $E$ and $J\subseteq \Sigma_p$, a vector $v\in V$ is called \emph{locally $J$-analytic}, if the induced action of $\ug_{\Sigma_p}$ on $v$ factors through $\ug_{J}$. We denote by $V^{J-\an}$ the subrepresentation of $V$ of locally $J$-analytic vectors and  $V$ is called \emph{locally $J$-analytic} if $V=V^{J-\an}$. A vector $v\in V$ is called \emph{$\text{U}(\ug_{J})$-finite} (or \emph{$J$-classical}) if the $E$-vector space $\text{U}(\ug_{J})v$ is finite dimensional, and $V$ is called \emph{$\text{U}(\ug_{J})$-finite} if all the vectors in $V$ are $\text{U}(\ug_{J})$-finite. In particular, $V$ is $\text{U}(\ug_{\Sigma_p\setminus J})$-finite if $V$ is locally $J$-analytic. As in \cite[Prop. 6.1.3]{Ding}, we have
\begin{proposition}\label{prop: cclg-cnfp}
  Let $V$ be a locally $\Q_p$-analytic representation of $G_p$ over $E$, $J\subseteq \Sigma_p$, $W$ an irreducible algebraic representation of $G_p$ over $E$ which is moreover locally $J$-analytic, then the following composition
  \begin{equation*}
    (V\otimes_E W)^{\Sigma_p \setminus J-\an}\otimes_E W'\lra V\otimes_E W\otimes_E W'\lra V, \ v\otimes w \otimes w'\mapsto w'(w)v,
  \end{equation*}
  is injective.
\end{proposition}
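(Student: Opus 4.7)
The plan is to reinterpret the map in question as an evaluation between $\ug_J$-equivariant Hom spaces and then conclude by a Schur-type argument. Since $W$ is finite-dimensional over $E$, we have the standard $G_p$-equivariant isomorphism
$$V \otimes_E W \xlongrightarrow{\sim} \Hom_E(W',V), \quad v \otimes w \longmapsto \bigl(w' \mapsto w'(w)v\bigr),$$
under which the given composition becomes the natural evaluation map $\Hom_E(W',V) \otimes_E W' \to V$, $\phi \otimes w' \mapsto \phi(w')$.

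Next I would check that this identification carries $(V \otimes_E W)^{\Sigma_p \setminus J-\an}$ onto $\Hom_{\ug_J}(W',V)$. Since $W$ is algebraic, $V \otimes_E W$ inherits a natural locally $\Q_p$-analytic structure, and $(V \otimes_E W)^{\Sigma_p \setminus J-\an}$ is, by the paper's definition, the subspace on which the derived action of $\ug_J \subset \ug_p \otimes_{\Q_p} E$ vanishes. Using the standard formula $(X\phi)(w') = X \phi(w') - \phi(X w')$ for the induced $\ug_J$-action on $\Hom_E(W',V)$, this vanishing is equivalent to $\phi$ being $\ug_J$-equivariant.

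It then remains to prove injectivity of the evaluation map $\Hom_{\ug_J}(W',V) \otimes_E W' \to V$. Since $W$ is irreducible algebraic and locally $J$-analytic and $E$ contains all embeddings of $F^+$ into $\overline{\Q_p}$, the restriction of $W'$ to $\ug_J$ is an absolutely simple $U(\ug_J)$-module with $\End_{\ug_J}(W') = E$. Hence $\Hom_{\ug_J}(W',V) \otimes_E W'$ is $W'$-isotypic as a $\ug_J$-representation (with $\ug_J$ acting only on the $W'$-factor), and every $\ug_J$-submodule has the form $K_0 \otimes_E W'$ for some $E$-subspace $K_0 \subseteq \Hom_{\ug_J}(W',V)$. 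If $K_0 \otimes_E W'$ lies in the kernel of evaluation then $\phi(w') = 0$ for all $\phi \in K_0$ and $w' \in W'$, forcing $K_0 = 0$. The argument is essentially formal; the one place that needs genuine verification is the Lie-algebraic interpretation of the analytic invariants in the second step, which reduces to the definition of locally $\Sigma_p \setminus J$-analytic vectors and the compatibility of the derived $\ug_p \otimes_{\Q_p} E$-action with tensor products.
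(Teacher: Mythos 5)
Your proposal is correct. Note that the paper does not actually prove this proposition in the text --- it just invokes \cite[Prop.5.1.3]{Ding} --- so there is no in-text argument to compare with; your proof is a complete, self-contained substitute, and each step checks out. Since $W$ is finite-dimensional, the map $V\otimes_E W\to \Hom_E(W',V)$, $v\otimes w\mapsto (w'\mapsto w'(w)v)$, is a $G_p$-equivariant isomorphism turning the composition of the statement into evaluation; with the paper's definition of locally $\Sigma_p\setminus J$-analytic vectors (those killed by $\ug_J$), the formula $(X\phi)(w')=X\phi(w')-\phi(Xw')$ identifies $(V\otimes_E W)^{\Sigma_p\setminus J-\an}$ with $\Hom_{\ug_J}(W',V)$; and evaluation $\Hom_{\ug_J}(W',V)\otimes_E W'\to V$ is $\ug_J$-equivariant for the action through the $W'$-factor precisely because the first factor consists of $\ug_J$-maps, so its kernel is a $\ug_J$-submodule of the $W'$-isotypic, semisimple module $\Hom_{\ug_J}(W',V)\otimes_E W'$. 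Since $W$ is irreducible algebraic and locally $J$-analytic and $E$ is large, $W'|_{\ug_J}$ is absolutely irreducible with $\End_{\ug_J}(W')=E$, so $\Hom_{\ug_J}(W',M\otimes_E W')\cong M$ for any $E$-vector space $M$ with trivial action (finite-dimensionality of $W'$ is what makes this valid for infinite-dimensional $M$), whence every submodule is indeed of the form $K_0\otimes_E W'$ and the kernel must vanish. These two verifications --- the equivariance of evaluation and the shape of submodules of the isotypic module --- are exactly the points that needed care, and both hold, so the argument is complete.
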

Let $J\subseteq \Sigma_p$, $\ul{\lambda}_J=(\lambda_{1,\sigma}, \lambda_{2,\sigma})_{\sigma \in J} \in \Z^{2|J|}$ be a dominant integral weight of $\ft_p$ (with respect to $B_p$, i.e. $\lambda_{1,\sigma}\geq \lambda_{2,\sigma}$), and $L(\ul{\lambda}_J)$ be the irreducible algebraic representation of $G_p$ with highest weight $\ul{\lambda}_J$. Let $V$ be a locally $\Q_p$-analytic representation of $G_p$ over $E$, put $V(\ul{\lambda}_J):=(V\otimes_E L(\ul{\lambda}_J)')^{\Sigma_p\setminus J-\an}\otimes_E L(\ul{\lambda}_J)$.
\begin{corollary}\label{cor: cclg-clsu}
  $V(\ul{\lambda}_J)$ is a subrepresentation of $V$. If $V$ is moreover admissible, then $V(\ul{\lambda}_J)$ is a closed admissible subrepresentation of $V$.
\end{corollary}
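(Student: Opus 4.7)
The first assertion is an immediate application of Proposition~\ref{prop: cclg-cnfp}. Take $W:=L(\ul{\lambda}_J)'$, which is irreducible algebraic and locally $J$-analytic (being the contragredient of $L(\ul{\lambda}_J)$), and $W':=L(\ul{\lambda}_J)\cong L(\ul{\lambda}_J)''$. The composition furnished by the proposition, namely
\begin{equation*}
    (V\otimes_E L(\ul{\lambda}_J)')^{\Sigma_p\setminus J-\an}\otimes_E L(\ul{\lambda}_J)\lra V\otimes_E L(\ul{\lambda}_J)'\otimes_E L(\ul{\lambda}_J)\lra V,
\end{equation*}
with the last arrow induced by the natural evaluation pairing $L(\ul{\lambda}_J)'\otimes_E L(\ul{\lambda}_J)\to E$, is $G_p$-equivariant and injective; this identifies $V(\ul{\lambda}_J)$ with a $G_p$-stable subspace of $V$.

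Now assume $V$ is admissible. I would establish admissibility and closedness of $V(\ul{\lambda}_J)$ in three steps. First, $V\otimes_E L(\ul{\lambda}_J)'$ is admissible, since tensoring with a finite-dimensional algebraic representation of $G_p$ preserves the admissibility condition on the strong dual. Second, I invoke the standard fact (due essentially to Schneider--Teitelbaum and extended to the partial analyticity setting by Emerton, see also \cite[Prop.4.2.36]{Em11} and the discussion of locally $J$-analytic vectors in the body of the paper) that for any admissible locally $\Q_p$-analytic representation $U$ of $G_p$ over $E$ and any $H\subseteq \Sigma_p$, the subspace $U^{H-\an}$ is a closed admissible subrepresentation of $U$. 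Applied to $U=V\otimes_E L(\ul{\lambda}_J)'$ with $H=\Sigma_p\setminus J$, we conclude that $\big(V\otimes_E L(\ul{\lambda}_J)'\big)^{\Sigma_p\setminus J-\an}$ is closed admissible in $V\otimes_E L(\ul{\lambda}_J)'$. Third, tensoring again with the finite-dimensional $L(\ul{\lambda}_J)$ preserves admissibility, so the representation $V(\ul{\lambda}_J)$ is intrinsically admissible as a locally $\Q_p$-analytic representation of $G_p$.

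Finally, to upgrade the injection $V(\ul{\lambda}_J)\hookrightarrow V$ to a topological embedding with closed image, I would pass to strong duals. The continuous $G_p$-equivariant injection induces a continuous $G_p$-equivariant surjection $V'\twoheadrightarrow V(\ul{\lambda}_J)'$ between the strong duals, which are coadmissible modules over the distribution algebra $D(H,E)$ for any compact open subgroup $H$ of $G_p$. Since morphisms between coadmissible modules over a Fr\'echet--Stein algebra are automatically strict by the Schneider--Teitelbaum theory, this surjection is strict; dualizing back shows the original injection has closed image. This combined with the previous paragraph gives the claim. The main subtlety in this argument is the appeal to the closedness and admissibility of the $\Sigma_p\setminus J$-analytic vectors inside an admissible representation, which is the essential input we borrow from the general locally analytic representation theory.
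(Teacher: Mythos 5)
Your proof is correct and follows essentially the same route as the paper: the injection is Prop.~\ref{prop: cclg-cnfp}, admissibility comes from the stability of admissibility under tensoring with finite-dimensional representations together with the fact that the locally $\Sigma_p\setminus J$-analytic vectors (the common kernel of $\ug_J$) form a closed, hence admissible, subrepresentation, and the closed image comes from \cite[Prop.6.4]{ST03}. The only caveat is that your intermediate claim that $V'\twoheadrightarrow V(\ul{\lambda}_J)'$ is surjective is not automatic from injectivity of $V(\ul{\lambda}_J)\hookrightarrow V$ alone; it is cleaner (and is what the paper does) to invoke directly that any morphism between admissible locally analytic representations is strict with closed image, per \emph{loc.\ cit.}
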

\begin{proof}
  The first statement follows directly from Proposition \ref{prop: cclg-cnfp}. If $V$ is admissible, so is $V\otimes_E L(\ul{\lambda}_J)' \otimes_E L(\ul{\lambda}_J)$. Since $V(\ul{\lambda}_J)$ is obviously a closed subrepresentation of $V\otimes_E L(\ul{\lambda}_J)'\otimes_E L(\ul{\lambda}_J)$, by \cite[Prop. 6.4]{ST03}, we see $V(\ul{\lambda}_J)$ is also admissible. By \emph{loc. cit.,} the map $V(\ul{\lambda}_J)\hookrightarrow V$ is strict and has closed image, which concludes the proof.
\end{proof}
We have moreover the following easy lemma.
\begin{lemma}\label{lem: cclg-j'js}
  (1) An morphism $V\ra W$ of locally $\Q_p$-analytic representations induces $V(\ul{\lambda}_J)\ra W(\ul{\lambda}_J)$.

  (2) Suppose there exists a locally $\Sigma_p \setminus J$-analytic representation $W$ such that $V\cong W\otimes_E L(\ul{\lambda}_J)$, then $V(\ul{\lambda}_J)\cong V$.

  (3) Let $J'\subseteq J$, then $V(\ul{\lambda}_J)$ is a subrepresentation of $V(\ul{\lambda}_{J'})$.
\end{lemma}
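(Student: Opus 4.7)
\textbf{Proof proposal for Lemma \ref{lem: cclg-j'js}.}
For (1), the plan is to observe that the construction $V \mapsto V(\ul{\lambda}_J)$ is functorial in $V$: a $G_p$-equivariant map $V \to W$ of locally $\Q_p$-analytic representations induces a map $V \otimes_E L(\ul{\lambda}_J)' \to W \otimes_E L(\ul{\lambda}_J)'$, and the formation of $\Sigma_p \setminus J$-analytic vectors (being defined purely by the condition that $\ug_J$ act trivially) is preserved by any $\ug_p$-equivariant map. Tensoring with $L(\ul{\lambda}_J)$ yields the desired $V(\ul{\lambda}_J) \to W(\ul{\lambda}_J)$.

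For (2), suppose $V \cong W \otimes_E L(\ul{\lambda}_J)$ with $W$ locally $\Sigma_p \setminus J$-analytic. Then
\[
V \otimes_E L(\ul{\lambda}_J)' \;\cong\; W \otimes_E \bigl(L(\ul{\lambda}_J) \otimes_E L(\ul{\lambda}_J)'\bigr) \;\cong\; W \otimes_E \End_E(L(\ul{\lambda}_J)).
\]
Since $\ug_J$ acts trivially on $W$ (because $W$ is $\Sigma_p \setminus J$-analytic) and $\ug_{\Sigma_p \setminus J}$ acts trivially on $\End_E(L(\ul{\lambda}_J))$ (because $L(\ul{\lambda}_J)$ is $J$-analytic), the $\Sigma_p \setminus J$-analytic subspace is $W \otimes_E \End_E(L(\ul{\lambda}_J))^{\ug_J}$. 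The key observation is that $L(\ul{\lambda}_J) \cong \otimes_{\sigma \in J} L(\ul{\lambda}_{\{\sigma\}})$ is an irreducible algebraic representation, so by Schur's lemma $\End_E(L(\ul{\lambda}_J))^{\ug_J} = E \cdot \id_{L(\ul{\lambda}_J)}$ is one-dimensional. Hence $(V \otimes_E L(\ul{\lambda}_J)')^{\Sigma_p \setminus J-\an} \cong W$, and therefore $V(\ul{\lambda}_J) \cong W \otimes_E L(\ul{\lambda}_J) \cong V$; one then needs to check that the resulting identification agrees with the canonical injection $V(\ul{\lambda}_J) \hookrightarrow V$ from Prop.\ref{prop: cclg-cnfp}, which follows from tracing the evaluation map.

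For (3), the strategy is to combine (1) and (2). Writing $L(\ul{\lambda}_J) \cong L(\ul{\lambda}_{J'}) \otimes_E L(\ul{\lambda}_{J \setminus J'})$, set $U := (V \otimes_E L(\ul{\lambda}_J)')^{\Sigma_p \setminus J-\an}$ so that $V(\ul{\lambda}_J) \cong U \otimes_E L(\ul{\lambda}_J)$. Now $U \otimes_E L(\ul{\lambda}_{J\setminus J'})$ is killed by $\ug_{J'}$: the action of $\ug_{J'} \subseteq \ug_J$ is trivial on $U$, and $\ug_{J'} \subseteq \ug_{\Sigma_p \setminus (J\setminus J')}$ is trivial on $L(\ul{\lambda}_{J\setminus J'})$. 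Hence $U \otimes_E L(\ul{\lambda}_{J \setminus J'})$ is locally $\Sigma_p \setminus J'$-analytic, and $V(\ul{\lambda}_J) \cong \bigl(U \otimes_E L(\ul{\lambda}_{J\setminus J'})\bigr) \otimes_E L(\ul{\lambda}_{J'})$. Applying (2) to the representation $V(\ul{\lambda}_J)$ with $J'$ in place of $J$ gives $V(\ul{\lambda}_J)(\ul{\lambda}_{J'}) \cong V(\ul{\lambda}_J)$, and applying the functoriality from (1) to the inclusion $V(\ul{\lambda}_J) \hookrightarrow V$ gives a map $V(\ul{\lambda}_J) = V(\ul{\lambda}_J)(\ul{\lambda}_{J'}) \to V(\ul{\lambda}_{J'})$; the fact that this is a subrepresentation inclusion is again witnessed by Prop.\ref{prop: cclg-cnfp}.

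The main obstacle is the Schur-type computation in part (2)—everything else is formal bookkeeping—and once that identification is in hand, one should be careful that the several isomorphisms involved (in particular the identification $V(\ul{\lambda}_J)(\ul{\lambda}_{J'}) = V(\ul{\lambda}_J)$ and its compatibility with the embeddings into $V$) are canonical, so that (3) really produces an inclusion of subrepresentations of $V$ rather than merely an abstract injection.
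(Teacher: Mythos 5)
Your proposal is correct and follows essentially the same route as the paper: (1) is formal, and (2) is exactly the paper's Schur-lemma computation $\End_{\ug_J}(L(\ul{\lambda}_J))=E\cdot \id$. For (3) the paper writes $V(\ul{\lambda}_J)\cong V(\ul{\lambda}_{J'})(\ul{\lambda}_{J\setminus J'})$ and concludes with Cor.\ref{cor: cclg-clsu}, while you write $V(\ul{\lambda}_J)\cong V(\ul{\lambda}_J)(\ul{\lambda}_{J'})$ and push along $V(\ul{\lambda}_J)\hookrightarrow V$ by functoriality; this is only a cosmetic reorganization of the same tensor manipulations, both resting on $L(\ul{\lambda}_J)\cong L(\ul{\lambda}_{J'})\otimes_E L(\ul{\lambda}_{J\setminus J'})$, the analyticity of the factors, and the naturality of the inclusion from Prop.\ref{prop: cclg-cnfp}.
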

\begin{proof}
  (1) is obvious.

  For (2), it is sufficient to prove $(W\otimes_E L(\ul{\lambda}_J)\otimes_E L(\ul{\lambda}_J)')^{\Sigma_p\setminus J-an}\cong W$. Note taking locally $\Sigma_p \setminus J$-analytic vectors is the same as taking $\text{U}(\ug_J)$-invariant vectors. One has however $(W\otimes_E L(\ul{\lambda}_J)'\otimes_E L(\ul{\lambda}_J))^{\text{U}(\ug_J)}\cong W\otimes_E \End_E(L(\ul{\lambda}_J))^{\text{U}(\ug_J)}\cong W\otimes_E \End_{\ug_J}(L(\ul{\lambda}_J))\cong W$, where the first isomorphism is from the fact that $W$ is locally $\Sigma_p\setminus J$-analytic, and the last one from the irreducibility of $L(\ul{\lambda}_J)$ as a representation of $\ug_J$.

  For (3), one has
  \begin{multline*}
    V(\ul{\lambda}_J)\cong(V\otimes_E L(\ul{\lambda}_J)')^{\Sigma_p\setminus J-\an} \otimes_E L(\ul{\lambda}_J)\\ \cong
    (V\otimes_E L(\ul{\lambda}_{J'})'\otimes_E L(\ul{\lambda}_{J\setminus J'})')^{\Sigma_p\setminus J-\an}\otimes_E L(\ul{\lambda}_{J'})\otimes_E L(\ul{\lambda}_{J \setminus J'}) \\
    \cong \big((V\otimes_E L(\ul{\lambda}_{J'})')^{\Sigma_p\setminus J'-\an}\otimes_E L(\ul{\lambda}_{J\setminus J'})'\big)^{\Sigma_p\setminus (J\setminus J')-\an}\otimes_E L(\ul{\lambda}_{J'})\otimes_E L(\ul{\lambda}_{J\setminus J'})\\
    \cong \big((V \otimes_E L(\ul{\lambda}_{J'}')^{\Sigma_p\setminus J'-\an}\otimes_E L(\ul{\lambda}_{J'}) \otimes_E L(\ul{\lambda}_{J\setminus J'})'\big)^{\Sigma_p\setminus (J\setminus J')-\an} \otimes_E L(\ul{\lambda}_{J\setminus J'}) \\
    \cong V(\ul{\lambda}_J')(\ul{\lambda}_{J\setminus J'}),
  \end{multline*}
  where the third isomorphism follows from the fact that $L(\ul{\lambda}_{J\setminus J'})$ is locally $\Sigma_p\setminus J'$-analytic, and the fourth from that $L(\ul{\lambda}_{J'})$ is locally $\Sigma_p\setminus (J\setminus J')$-analytic.  (3) follows.
\end{proof}

Following \cite{OS}, to any object $M$ in the BGG category $\co^{\overline{\ub}_{\Sigma_p}}_{\alg}$, and any finite length smooth representation $\pi$ of $T_p$ over $E$, can be associated a locally $\Q_p$-analytic representation $\cF_{\overline{B}_p}^{G_p}(M,\pi)$ of $G_p$ over $E$. The functor $\cF_{\overline{B}_p}^{G_p}(\cdot, \cdot)$ is exact in both arguments, covariant for finite length smooth representations of $T_p$, and contravariant for $\co_{\alg}^{\overline{\ub}_{\Sigma_p}}$ (cf. \cite[Thm]{OS}).

Let $\ul{\lambda}_{\Sigma_p}=(\lambda_{1,\sigma}, \lambda_{2,\sigma})_{\sigma\in \Sigma_p}\in E^{2|\Sigma_p|}$ be an integral weight of $\ft_p$ and $\ul{\lambda}_{\sigma}:=(\lambda_{1,\sigma}, \lambda_{2,\sigma})$, denote by $\overline{M}(\ul{\lambda}_{\Sigma_p}):=\text{U}(\ug_{p,\Sigma_p}) \otimes_{\text{U}(\overline{\ub}_{p,\Sigma_p})} \ul{\lambda}_{\Sigma_p}\cong \otimes_{\sigma\in \Sigma_p} \text{U}(\ug_{p,\sigma}) \otimes_{\text{U}(\overline{\ub}_{p,\sigma})} \ul{\lambda}_{\sigma}=\otimes_{\sigma\in \Sigma_p} \overline{M}_{\sigma}(\ul{\lambda}_{\sigma})$ the Verma module of highest weight $\ul{\lambda}_{\Sigma_p}$, which admits a unique simple quotient denoted by $\overline{L}(\ul{\lambda}_{\Sigma_p})$.  Put
\begin{equation*}\overline{\Sigma}^+(\ul{\lambda}):=\{\sigma\in \Sigma_p\ |\ \lambda_{2,\sigma}\geq \lambda_{1,\sigma}\}.\end{equation*}
 For $\sigma\in \Sigma_p$, $\overline{M}_{\sigma}(\ul{\lambda}_{\sigma})$ is irreducible if $\sigma\notin {\overline{\Sigma}^+(\ul{\lambda})}$; for $\sigma\in {\overline{\Sigma}^+(\ul{\lambda})}$, $\overline{M}_{\sigma}(\ul{\lambda}_{\sigma})$ lies in an exact sequence
 \begin{equation*}
   0 \ra \overline{M}_{\sigma}(s_{\sigma}\cdot \ul{\lambda}_{\sigma}) \ra \overline{L}_{\sigma}(\ul{\lambda}_{\sigma}) \ra \overline{L}(\ul{\lambda}_{\sigma})\ra 0.
 \end{equation*}
We deduce that $\overline{M}(\ul{\lambda}_{\Sigma_p})$ admits a decreasing filtration \big(of objects in $\co_{\alg}^{\overline{\ub}_{\Sigma_p}}$\big):
\begin{equation*}
  0=\Fil^{|{\overline{\Sigma}^+(\ul{\lambda})}|+1} \overline{M}(\ul{\lambda}_{\Sigma_p}) \subset \Fil^{|{\overline{\Sigma}^+(\ul{\lambda})}|} \overline{M}(\ul{\lambda}_{\Sigma_p}) \subset \cdots \subset \Fil^{0} \overline{M}(\ul{\lambda}_{\Sigma_p})=\overline{M}(\ul{\lambda}_{\Sigma_p})
\end{equation*}
such that
\begin{equation*}
  \Fil^{i} \overline{M}(\ul{\lambda}_{\Sigma_p})/\Fil^{i+1} \overline{M}(\ul{\lambda}_{\Sigma_p}) \cong \oplus_{J\subseteq {\overline{\Sigma}^+(\ul{\lambda})}, |J|=i} \overline{L}(s_J \cdot \ul{\lambda}_{\Sigma_p}).
\end{equation*}
In fact, one has
\begin{equation*}\Fil^i \overline{M}(\ul{\lambda}_{\Sigma_p})=\Ima\big(\oplus_{J\subseteq {\overline{\Sigma}^+(\ul{\lambda})}, |J|=i} \overline{M}(s_J \cdot \ul{\lambda}_{\Sigma_p}) \ra \overline{M}(\ul{\lambda}_{\Sigma_p})\big).\end{equation*}
For $J\subseteq {\overline{\Sigma}^+(\ul{\lambda})}$, denote by $\overline{M}_J(\ul{\lambda}_{\Sigma_p}):=\overline{M}(\ul{\lambda}_{\Sigma_p})/\Ima\big(\oplus_{\sigma\in J} \overline{M}(s_{\sigma}\cdot \ul{\lambda}_{\Sigma_p})\ra \overline{M}(\ul{\lambda}_{\Sigma_p})\big)$, which is in fact the maximal $\text{U}(\ug_J)$-finite quotient of $\overline{M}(\ul{\lambda}_{\Sigma_p})$. Indeed, one has
\begin{equation}\label{equ: cclg-jad}
  \overline{M}_J(\ul{\lambda}_{\Sigma_p})\cong \overline{M}(\ul{\lambda}_{\Sigma_p\setminus J}) \otimes_E \overline{L}(\ul{\lambda}_J).
\end{equation}
By \cite[Thm]{OS}, one easily deduces from the above discussion:
\begin{proposition}\label{prop: cclg-cnee}
  Keep the above notation and let $\pi$ be a finite length smooth representation of $T_p$ over $E$,
then the locally $\Q_p$-analytic representation $\cF_{\overline{B}_p}^{G_p}\big(\overline{M}(\ul{\lambda}_{\Sigma_p})^{\vee}, \pi\big)$ admits a decreasing filtration (where ``$\vee$" denotes the dual in the BGG category $\co^{\overline{\ub}_{\Sigma_p}}$) \begin{equation*}\Fil^i\cF_{\overline{B}_p}^{G_p}\big(\overline{M}(\ul{\lambda}_{\Sigma_p})^{\vee}, \pi\big):=\cF_{\overline{B}_p}^{G_p}\big((\Fil^i \overline{M}(\ul{\lambda}_{\Sigma_p}))^{\vee}, \pi\big),\end{equation*}
  with $(\Fil^i/\Fil^{i+1}) \cF_{\overline{B}_p}^{G_p}\big(\overline{M}(\ul{\lambda}_{\Sigma_p})^{\vee}, \pi\big) \cong\oplus_{J\subseteq {\overline{\Sigma}^+(\ul{\lambda})}, |J|=i}  \cF_{\overline{B}_p}^{G_p}\big(\overline{L}(s_J\cdot \ul{\lambda}_{\Sigma_p}), \pi\big)$.
\end{proposition}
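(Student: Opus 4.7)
The plan is to deduce the proposition by transporting the filtration on $M(\ul{\lambda}_{\Sigma_p})$ described just before the statement through the Orlik--Strauch functor, using that $\cF_{\overline{B}_p}^{G_p}(\cdot,\cdot)$ is exact and contravariant in its first argument while the duality $M \mapsto M^{\vee}$ in $\co^{\overline{\ub}_{\Sigma_p}}$ is an exact anti-involution.

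First, from the injections $\Fil^{i+1} M(\ul{\lambda}_{\Sigma_p}) \hookrightarrow \Fil^i M(\ul{\lambda}_{\Sigma_p})$ I dualise to obtain surjections $(\Fil^i M(\ul{\lambda}_{\Sigma_p}))^{\vee}\twoheadrightarrow (\Fil^{i+1} M(\ul{\lambda}_{\Sigma_p}))^{\vee}$ in $\co^{\overline{\ub}_{\Sigma_p}}$, and then by the contravariant exactness of $\cF_{\overline{B}_p}^{G_p}(-,\pi)$ these become closed embeddings of locally $\Q_p$-analytic representations
\begin{equation*}
\cF_{\overline{B}_p}^{G_p}\big((\Fil^{i+1} M(\ul{\lambda}_{\Sigma_p}))^{\vee},\pi\big) \hooklongrightarrow \cF_{\overline{B}_p}^{G_p}\big((\Fil^i M(\ul{\lambda}_{\Sigma_p}))^{\vee},\pi\big),
\end{equation*}
realising $\Fil^i \cF_{\overline{B}_p}^{G_p}(M(\ul{\lambda}_{\Sigma_p})^{\vee},\pi)$ as a decreasing filtration by closed subrepresentations of $\cF_{\overline{B}_p}^{G_p}(M(\ul{\lambda}_{\Sigma_p})^{\vee},\pi)$, with bottom piece $0$ at $i=|N(\ul{\lambda})|+1$.

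Next, for each $i$, I dualise the short exact sequence
\begin{equation*}
0 \ra \Fil^{i+1} M(\ul{\lambda}_{\Sigma_p}) \ra \Fil^i M(\ul{\lambda}_{\Sigma_p}) \ra \oplus_{J\subseteq N(\ul{\lambda}),\, |J|=i} L(s_J\cdot \ul{\lambda}_{\Sigma_p}) \ra 0
\end{equation*}
in $\co^{\overline{\ub}_{\Sigma_p}}$ to get, using $L(\mu)^{\vee}\cong L(\mu)$ for simple objects in the BGG category,
\begin{equation*}
0 \ra \oplus_{J\subseteq N(\ul{\lambda}),\, |J|=i} L(s_J\cdot \ul{\lambda}_{\Sigma_p}) \ra (\Fil^i M(\ul{\lambda}_{\Sigma_p}))^{\vee} \ra (\Fil^{i+1} M(\ul{\lambda}_{\Sigma_p}))^{\vee} \ra 0,
\end{equation*}
and apply the exact functor $\cF_{\overline{B}_p}^{G_p}(-,\pi)$, which is also additive and hence commutes with the finite direct sums indexed by the $J$'s. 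This gives
\begin{equation*}
0 \ra \Fil^{i+1}\cF_{\overline{B}_p}^{G_p}(M(\ul{\lambda}_{\Sigma_p})^{\vee},\pi) \ra \Fil^i\cF_{\overline{B}_p}^{G_p}(M(\ul{\lambda}_{\Sigma_p})^{\vee},\pi) \ra \oplus_{J\subseteq N(\ul{\lambda}),\, |J|=i} \cF_{\overline{B}_p}^{G_p}(L(s_J\cdot \ul{\lambda}_{\Sigma_p}),\pi) \ra 0,
\end{equation*}
which is precisely the desired identification of successive quotients. The only non-routine input is the recollection that the BGG duality is exact, contravariant and fixes simple modules, together with \cite[Thm]{OS} for the two exactness properties of $\cF_{\overline{B}_p}^{G_p}(\cdot,\cdot)$; no serious obstacle is expected.
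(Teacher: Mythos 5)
Your argument is correct and is essentially the paper's own: the paper deduces the proposition directly from the filtration on $M(\ul{\lambda}_{\Sigma_p})$ constructed just above, the exactness and contravariance (in the first argument) of $\cF_{\overline{B}_p}^{G_p}(\cdot,\pi)$ from \cite[Thm]{OS}, and the standard properties of BGG duality, exactly as you spell out. Your write-up merely makes explicit the dualisation of the short exact sequences and the identification of graded pieces that the paper leaves as "easily deduces".
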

Let $\psi$ be a smooth character of $T_p$ over $E$,
we have in fact (cf. (\ref{equ: cclg-idel}))
\begin{equation*}
  I(\psi\delta_{\ul{\lambda}_{\Sigma_p}})\cong  \cF_{\overline{B}_p}^{G_p}(\overline{L}(-\ul{\lambda}_{\Sigma_p}), \psi).
\end{equation*}
The following proposition follows from \cite[Thm. 4.3]{Br13II}:
\begin{proposition}\label{prop: cclg-pve}Let $V$ be a very strongly admissible representation of $G_p$ over $E$, $\pi$ a finite length smooth representation of $T_p$ over $E$, $J\subseteq \overline{\Sigma}^+(-\ul{\lambda})$ (thus $\ul{\lambda}_J$ is dominant with respect to $B_p$), then there exists a bijection
\begin{equation*}
  \Hom_{G_p}\big(\cF_{\overline{B}_p}^{G_p}(\overline{M}_J(-\ul{\lambda}_{\Sigma_p})^{\vee}, \pi), V(\ul{\lambda}_J)\big)
  \xlongrightarrow{\sim} \Hom_{T_p}\big(\pi\otimes_E \delta_{B_p}, J_{B_p}(V(\ul{\lambda}_J))\big).
\end{equation*}
\end{proposition}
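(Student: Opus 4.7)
The plan is to reduce the statement to Breuil's adjunction formula [Br13II, Thm.4.3] by first showing that every morphism on the left-hand side automatically factors through the closed subrepresentation $V(\ul{\lambda}_J) \hookrightarrow V$, and then applying Breuil's theorem directly with $V(\ul{\lambda}_J)$ as the target.

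For the first reduction, the crucial input is the decomposition (\ref{equ: cclg-jad}): $M_J(-\ul{\lambda}_{\Sigma_p}) \cong M(-\ul{\lambda}_{\Sigma_p\setminus J}) \otimes_E L(-\ul{\lambda}_J)$, which shows that the $\text{U}(\ug_J)$-action on $M_J(-\ul{\lambda}_{\Sigma_p})$ factors through the finite-dimensional irreducible $\ug_J$-module $L(-\ul{\lambda}_J)$. By the compatibility of the functor $\cF_{\overline{B}_p}^{G_p}(-,-)$ with tensoring by finite-dimensional algebraic representations, this allows one to write $\cF_{\overline{B}_p}^{G_p}(M_J(-\ul{\lambda}_{\Sigma_p})^\vee, \pi)$ as a twist by $L(\ul{\lambda}_J)$ of a locally $(\Sigma_p\setminus J)$-analytic representation. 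Consequently, for any $G_p$-equivariant morphism into $V$, Prop.~\ref{prop: cclg-cnfp} forces the image to lie in $(V \otimes_E L(\ul{\lambda}_J)')^{\Sigma_p\setminus J-\an} \otimes_E L(\ul{\lambda}_J) = V(\ul{\lambda}_J)$, yielding
\begin{equation*}
\Hom_{G_p}\big(\cF_{\overline{B}_p}^{G_p}(M_J(-\ul{\lambda}_{\Sigma_p})^\vee, \pi), V\big) = \Hom_{G_p}\big(\cF_{\overline{B}_p}^{G_p}(M_J(-\ul{\lambda}_{\Sigma_p})^\vee, \pi), V(\ul{\lambda}_J)\big).
\end{equation*}
For the second step, I would apply [Br13II, Thm.4.3] directly to $V(\ul{\lambda}_J)$: by Cor.~\ref{cor: cclg-clsu}, $V(\ul{\lambda}_J)$ is a closed admissible subrepresentation of the very strongly admissible representation $V$, hence itself very strongly admissible. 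Breuil's adjunction then produces a natural bijection with $\Hom_{T_p}\big(\pi \otimes_E \delta_{B_p}, J_{B_p}(V(\ul{\lambda}_J))\big)$, where the infinitesimal-weight constraint imposed by the highest weight of $M_J(-\ul{\lambda}_{\Sigma_p})$ is automatically met, since every $T_p$-eigenvector in $J_{B_p}(V(\ul{\lambda}_J))$ has $\ft_J$-weight equal to $\ul{\lambda}_J$ by construction.

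The main obstacle is the first step: rigorously justifying the automatic factorization through $V(\ul{\lambda}_J)$. The idea --- that $V(\ul{\lambda}_J)$ is the maximal closed subrepresentation of $V$ whose vectors become locally $(\Sigma_p\setminus J)$-analytic after twisting by $L(\ul{\lambda}_J)'$ --- is natural, but one must match it carefully to the partial Verma construction $M_J(-\ul{\lambda}_{\Sigma_p})^\vee$. A clean route is to reduce via the exactness of $\cF_{\overline{B}_p}^{G_p}(-,-)$ and the filtration from Prop.~\ref{prop: cclg-cnee} to the simple constituents of the form $\cF_{\overline{B}_p}^{G_p}(L(-\ul{\lambda}_{\Sigma_p}), \pi')$, where the identification with an object of the form $I(\cdot)$ is transparent, and then reassemble using Lem.~\ref{lem: cclg-j'js}. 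Once this factorization is secured, the remainder is a direct invocation of Breuil's theorem.
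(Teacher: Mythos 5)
Your proposal has a genuine gap, and it sits exactly where you declare the argument becomes routine. Breuil's adjunction formula \cite[Thm.4.3]{Br13II}, as used in this paper, is a statement about the \emph{full} Verma module: it identifies $\Hom_{G_p}\big(\cF_{\overline{B}_p}^{G_p}(M(-\ul{\lambda}_{\Sigma_p})^{\vee},\pi\otimes_E\delta_{B_p}^{-1}),W\big)$ with $\Hom_{T_p}\big(\pi\otimes_E\delta_{\ul{\lambda}_{\Sigma_p}},J_{B_p}(W)\big)$ for $W$ very strongly admissible; there is no ready-made version with $M_J(-\ul{\lambda}_{\Sigma_p})$ in the first slot, and producing such a ``partial'' adjunction is precisely the content of the proposition. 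Applying Thm.4.3 to $W=V(\ul{\lambda}_J)$ (legitimate, by Cor.\ref{cor: cclg-clsu}) gives the right-hand side, but with $\cF_{\overline{B}_p}^{G_p}(M(-\ul{\lambda}_{\Sigma_p})^{\vee},\pi\otimes_E\delta_{B_p}^{-1})$ as the source; since $\cF_{\overline{B}_p}^{G_p}(M_J(-\ul{\lambda}_{\Sigma_p})^{\vee},\pi\otimes_E\delta_{B_p}^{-1})$ is a quotient of that source, one only obtains \emph{a priori} an injection of the proposition's left-hand side into this Hom space. The missing step --- the heart of the paper's proof --- is that every $G_p$-map from $\cF_{\overline{B}_p}^{G_p}(M(-\ul{\lambda}_{\Sigma_p})^{\vee},\pi\otimes_E\delta_{B_p}^{-1})$ into $V(\ul{\lambda}_J)$ kills the kernel of this quotient map: the kernel's Jordan--H\"older constituents are of the form $I(\psi\delta_{s_{J'}\cdot\ul{\lambda}_{\Sigma_p}})$ with $J'\cap J\neq\emptyset$, and these have no nonzero $\text{U}(\ug_J)$-finite vectors (Schraen's results \cite[\S 2]{Sch10} together with (\ref{equ: cclg-gpa})), whereas $V(\ul{\lambda}_J)$ is $\text{U}(\ug_J)$-finite. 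Your remark that every $T_p$-eigenvector of $J_{B_p}(V(\ul{\lambda}_J))$ has $\ft_J$-weight $\ul{\lambda}_J$ cannot substitute for this: the character $\pi\otimes_E\delta_{\ul{\lambda}_{\Sigma_p}}$ already has the correct $\ft_J$-weight, so nothing on the Jacquet side distinguishes $M$ from $M_J$; the distinction has to be made on the $G_p$-side by the constituent analysis above.

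By contrast, the step you single out as ``the main obstacle'' --- that maps $\cF_{\overline{B}_p}^{G_p}(M_J(-\ul{\lambda}_{\Sigma_p})^{\vee},\pi)\ra V$ automatically land in $V(\ul{\lambda}_J)$ --- is not needed for the statement at all, since the Hom space in the proposition already has target $V(\ul{\lambda}_J)$. It is a true and useful observation (via (\ref{equ: cclg-jad}) and Prop.\ref{prop: cclg-cnfp}), and it is what makes the first bijection in (\ref{equ: cclg-spE}) obvious, but it concerns enlarging the target, not shrinking the source from $M(-\ul{\lambda}_{\Sigma_p})$ to $M_J(-\ul{\lambda}_{\Sigma_p})$. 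As written, your proposal establishes the easy containment and leaves the essential reduction unjustified; to repair it, insert the factorization-through-the-quotient argument above (or an equivalent argument showing the constituents $I(\psi\delta_{s_{J'}\cdot\ul{\lambda}_{\Sigma_p}})$, $J'\cap J\neq\emptyset$, admit no nonzero maps to a $\text{U}(\ug_J)$-finite representation), after which the invocation of \cite[Thm.4.3]{Br13II} does conclude as in the paper.
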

\begin{proof}
 By Breuil's adjunction formula \cite[Thm. 4.3]{Br13II}, one has
  \begin{equation*}
   \Hom_{G_p}\big(\cF_{\overline{B}_p}^{G_p}(\overline{M}(-\ul{\lambda}_{\Sigma_p})^{\vee}, \pi\otimes_E \delta_{B_p}^{-1}), V(\ul{\lambda}_{J})\big) \xlongrightarrow{\sim} \Hom_{T_p}\big(\pi\otimes_E \delta_{\ul{\lambda}_{\Sigma_p}}, J_{B_p}(V(\ul{\lambda}_{J}))\big).
  \end{equation*}
However, since $V(\ul{\lambda}_{J})$ is $\text{U}(\ug_{J})$-finite, any map in the left set factors though the quotient (cf. (\ref{equ: cclg-jad})) \begin{equation*}\cF_{\overline{B}_p}^{G_p}\big(\overline{M}_J(-\ul{\lambda}_{\Sigma_p})^{\vee}, \pi\otimes_E \delta_{B_p}^{-1}\big).\end{equation*}
   Indeed, by \cite[Thm]{OS} and the discussion above Proposition \ref{prop: cclg-cnee}, any irreducible constituent of the kernel  $\cF_{\overline{B}_p}^{G_p}(\overline{M}(-\ul{\lambda}_{\Sigma_p})^{\vee}, \pi\otimes_E \delta_{B_p}^{-1})\twoheadrightarrow \cF_{\overline{B}_p}^{G_p}(\overline{M}_J(-\ul{\lambda}_{\Sigma_p})^{\vee}, \pi\otimes_E \delta_{B_p}^{-1})$ would be an irreducible constituent of a locally analytic representation of the form:
  \begin{equation*}
    \cF_{\overline{B}_p}^{G_p}\big(\overline{L}(-s_{J'}\cdot \ul{\lambda}_{\Sigma_p}),\psi\big)\cong I(\psi\delta_{s_{J'}\cdot \ul{\lambda}_{\Sigma_p}})
  \end{equation*}
  where $J'\subseteq \overline{\Sigma}^+(-\ul{\lambda})$, $J'\cap J\neq \emptyset$, and $\psi$ is a smooth character of $T_p$ appearing as an irreducible constituent in $\pi\otimes_E \delta_{B_p}^{-1}$. While $I(\psi\delta_{s_{J'}\cdot \ul{\lambda}_{\Sigma_p}})$ does not have non-zero $\text{U}(\ug_J)$-finite vectors (e.g. by \cite[\S~2]{Sch10} and (\ref{equ: cclg-gpa})), the proposition follows.
\end{proof}

\end{document}